\author{Andreas Klein}
\title{Symplectic monodromy, Leray residues and quasi-homogeneous polynomials}
\newcommand{\R}{\mathbb{R}}
\newcommand{\C}{\mathbb{C}}
\newcommand{\leftsc}{\langle}
\newcommand{\rightsc}{\rangle}
\newcommand{\suchthat}{\; : \;}
\renewcommand{\ker}{\mathrm{ker}}
\newtheorem{theorem}{Theorem}[section]
\newtheorem{Def}[theorem]{Definition}
\newtheorem{prop}[theorem]{Proposition}
\newtheorem{lemma}[theorem]{Lemma}
\newtheorem{folg}[theorem]{Corollary}
\newtheorem{ass}[theorem]{Assumption}
\newtheorem{conj}[theorem]{Conjecture}
\begin{document}
\maketitle
\begin{abstract}
We formulate certain sufficient conditions for the symplectic monodromy of an isolated quasihomogeneous singularity to be of infinite order in the relative symplectic mapping class group of the Milnor fibre and give a proof using Maslov classes, stability theory for Lagrangian folds resp. stable Morse theory for generating families as well as algebraic results about relative cohomology of smoothings of isolated singularities. Our conditions being slightly more restrictive than Seidel's, in contrary to Seidel's proof, we do not use Floer theory to derive this result. An alternative approach using bounding disks in fibred Lagrangian families is given and its possible application to generalizations to the non-quasihomogeneous case is discussed.
\end{abstract}
\section{Introduction}
The fundamental question of this work arose as a consequence of Paul Seidel's work on the kernel of the homomorphism
\begin{equation}
\pi_0({\rm Symp}(F,\partial F,\omega))\rightarrow \pi_0({\rm Diff}(F,\partial F)),
\end{equation}
where here $F$ denotes the Milnor fibre of a quasihomogenous polynomial $f:\mathbb{C}^{n+1}\rightarrow \mathbb{C}$ with isolated singularity in $0 \in \mathbb{C}^{n+1}$, that is there are integers $\beta_0, \dots\beta_n,\beta > 0$ such that $f\circ \sigma(t)=t^\beta f$ for any $t\in \mathbb{C}^*$, where $\sigma(t)(z_0,\dots,z_n)=(t^{\beta_0} z_0,\dots,t^{\beta_n} z_n)$ is the weighted $\mathbb{C}^*$-action on $\mathbb{C}^{n+1}$. $F$ can be considered as a symplectic (complex) submanifold of $\mathbb{C}^{n+1}$ with its standard symplectic structure $\Omega_{\mathbb{C}^{n+1}}$ and the groups in question denote the relative isotopy group of symplectomorphisms resp. diffeomorphisms fixing the boundary $\partial F$ pointwise ('relative' here means the isotopy fixes the boundary pointwise). It is known as a consequence of work of Stevens \cite{stevens} resp. Kauffman and Krylov \cite{krylov} that if one represents the bundle $\tilde Y$ as a mapping cylinder
\[
\tilde Y=F\times [0,1]/(x,0)\sim (\rho(x),1)=:F_\rho
\]
for some element $\rho \in {\rm Diff}(F,\partial F)$, then under certain conditions, $\rho$ is of {\it finite} order in $\pi_0({\rm Diff}(F,\partial F))$, namely if $n \geq 4$, $n$ even and $V(\rho^d)=0$, where $V$ is the 'variation mapping' of $\tilde Y$ (cf. \cite{krylov}), then $\rho^{4d}=id$, while for $n=6$ we have $\rho^d=id$. This implies the diffeomorphism periodicity of the branched cyclic covers $k>1$ of $S^{2n+1}, \ n\geq 2$, $n$ even, given by the union
\[
M_k=F_{\rho^k}\cup_r (\partial F\times D^2)
\]
where $r:\partial F_{\rho^k}\rightarrow F\times S^1$ is an appropriate glueing map. In fact in \cite{krylov} it is proven that for any even $k\in \mathbb{N}$, $M_k$ is diffeomorphic to $M_{k+4d}$ if $V(\rho^d)=0$, while for the cases $n=2,6$ we have similarly $M_k\simeq M_{k+d}$. Using Cerf's \cite{cerf} theorem to identify isotopy and pseudoisotopy for $n\geq 3$ and the proof of Corollary 4 in \cite{krylov} one arrives at the above claims on $\pi_0({\rm Diff}(F,\partial F))$. Note that such periodicities do not follow trivially from the fact that $h=\sigma(e^{2\pi i/\beta})$ has order $\beta$ in ${\rm Diff}(F)$, since $h$ does not fix $\partial F$ pointwise. However, $h^\beta=id$ shows $(h^*)^{\beta}=id$ while it is well-known (see e.g. \cite{stevens}), that for links of isolated hypersurface singularities $(h^*)^{\beta}=id$ and $K_f$ being a rational homology sphere are equivalent to $V(\rho^\beta)=0$, if $\rho$ represents $h$ in $\pi_0({\rm Diff}(F,\partial F))$, implying that in such cases (and $n\geq 3$) $\rho$ has finite order in $\pi_0({\rm Diff}(F,\partial F))$. Now Seidel shows (\cite{seidel}) that if $n\geq 2$ and under the condition
\begin{equation}\label{seidelcondition}
m(f)=\sum_{i=1}^{\mu}\beta_i-\beta\neq 0,
\end{equation}
the symplectic monodromy $\rho \in \pi_0({\rm Symp}(F,\partial F,\omega))$ is of infinite order. Note that $Y$ carries the structure of a symplectic fibration $(Y,\Omega)$ and symplectic parallel transport defined by the $\Omega$-orthogonal complement to the fibres defines an action $\rho:\pi_1(S^1)\rightarrow {\rm Symp}(F,\partial F,\omega)$, we will call the image of a generator $1$ of $\pi_1(S^1)$ $\rho:=\rho(1)$ the {\it symplectic monodromy} of $f$, as will be explained in some detail in Section \ref{chapter2sympl}. Now, while $m(f)$ equals the evaluation of some Maslov class on $F$ on a path of Lagrangian subspaces $t \mapsto L_t$ over $(TF|\partial F,\omega)$, where $\omega$ denotes the symplectic form on $F$ induced by restriction of $\Omega$ (cf. \cite{seidel}), it has an interpretation as an element of the spectrum of $f$. Recall that the {\rm spectrum} ${\rm Sp}(f)$ of the quasihomogeneous singularity (see Definition \ref{spectrumdef}) is a set of rational numbers $\{\gamma_i\}_{i=1,\dots,\mu}$ being defined as the normalized logarithm $\gamma_i= (-1/2\pi i){\rm log} \lambda_i$ of the eigenvalues $\lambda_i, i=1\dots,\mu$ of the monodromy. Here, the normalization is determined by the asymptotic Hodge filtration on the 'canonical' Milnor fibre, for that terminology, see for instance Kulikov (\cite{kulikov}) resp. Section \ref{relcohom}. Since $f$ is quasihomogeneous, in terms of a monomial basis $(z^{\alpha(i)})_{i=1}^\mu$, where $\alpha(i) \in \Lambda \subset \mathbb{N}^{n+1}, \ |\Lambda|=\mu, \alpha(1)=0$, of the Milnor algebra 
\begin{equation}\label{mfdecomp}
M(f):=\mathcal{O}_{\mathbb{C}^{n+1},0}/(\frac{\partial f}{\partial z_0},\dots,\frac{\partial f}{\partial z_n})\mathcal{O}_{\mathbb{C}^{n+1},0}=\oplus_l M(f)_l
\end{equation}
one has $\gamma_i= l(\alpha(i))-1$, where $l(\alpha(i))=\sum_k(\alpha(i)_k+1)w_k$, $w_k=\beta_k/\beta$, hence written as a 'divisor', ${\rm Sp}(f)=\sum_{\alpha(i) \in \Lambda} \left(l(\alpha(i))-1\right) \in \mathbb{Z}^{(\mathbb{Q})}$ (\cite{kulikov}) and we have $m(f)=\beta\gamma_1$. On the other hand, each element $\gamma_i$ is associated to a section of the sheaf $\mathcal{H}''=f_*(\Omega^{n+1}_X)/df\wedge d(f_*\Omega^{n-1}_{X/D})$ over a small open disk $D\subset \mathbb{C}$ containing $0$, the so-called Brieskorn lattice. Then the map $\phi$ sending $z^{\alpha(i)}$ to the class given by
\[
\phi(z^{\alpha(i)})=z^{\alpha(i)}dz_0\wedge\dots\wedge dz_n
\]
in $\mathcal{H}''_{0}$, defines a $\mathbb{C}$-isomorphism of vectorspaces $\phi:M(f)\simeq \mathcal{H}''_{0}/f\mathcal{H}''_{0}$, being, since $\mathcal{H}''$ is coherent, even free (\cite{bries}), also an isomorphism of the respective $\mathcal{O}_{D,0}$-modules. Finally recall that $\mathcal{H}^n_{X/D}|D^*\simeq \mathcal{H}''|D^*$, where $\mathcal{H}^n_{X/D}$ is the relative cohomology sheaf on $D$ and the isomorphism is given by the Poincare-Leray-residue (cf. Malgrange \cite{malg2}, Section \ref{relcohom}).
Then, the above facts gave rise to the following question:\\

{\it Is there a 'singularity-theory' proof of the fact that the map $\pi_0({\rm Symp}(F,\partial F,\omega))\rightarrow \pi_0({\rm Diff}(F,\partial F))$, for $F$ being the Milnor fibre of a quasihomogeneous polynomial, has an infinite kernel if $K_f$ is a rational homology sphere, $n\geq 4$, $n$ even and condition (\ref{seidelcondition}) holds true? Does the non-vanishing of {\it any} element of the spectrum of $f$ imply the above result?}\\

Note that the latter conditions are not mutually exclusive, in fact, $K_f$ being a rational homology sphere implies the non-vanishing of all $\gamma_i, i=1, \dots,\mu$ by Lemma \ref{condition}. To connect the number $\gamma_1= l(0)-1$ to the symplectic geometry of $Y$ consider the $\beta$-fold cyclic covering of $Y$ (note that we work with the 'deformed' Milnor bundle as described in (\ref{milnorfibr}) which has a well-defined symplectic parallel transport)
\[
Y^\beta\simeq F\times [0,1]/(x,0)\sim (\rho^\beta(x),1),
\]
where here, $\rho \in {\rm Symp}(F,\partial F,\omega)$ is the symplectic monodromy of $Y$ and we consider $\pi^\beta:Y^\beta\rightarrow Y$ as a {\it symplectic} covering by lifting the corresponding structure on $Y$. Then lift the section $s_0 \in \Gamma(\mathcal{H}^n_{X/D^*}|S^1)$ being associated to $\alpha(1)=0$, so to $\gamma_1$ by the above, to a section $s_0^\beta=(\pi^\beta)^*s_0 \in \Gamma(\mathcal{H}^n_{X^\beta/D^*}|S^1)$, that is to a smooth section of the bundle of fibrewise $n$-th cohomology of $Y^\beta$ and assume that $Q_{z_0}\subset F$ is any smooth Lagrangian cycle so that $[{\rm ev}_F(s_0)]\cup PD[Q_{z_0}](F)\neq 0$ and $[{\rm ev}_F(s_0)]\in {\rm im }\ H^n(F,\partial F, \mathbb{C})\rightarrow  H^n(F,\mathbb{C})$. Then if $Q_\tau=\mathcal{P}_\tau(Q_{z_0})$, where $\mathcal{P}_t$ is symplectic parallel transport along the null direction of $(\pi^\beta)^*(\Omega|Y)$ along the path $\tau\mapsto e^{2\pi i\tau}z_0$ in $D$ then (Lemma \ref{bla347})
\begin{equation}\label{cohwind}
\beta\cdot\gamma_1={\rm wind}({\rm ev}(s_0^\beta)(Q_\tau))_{\tau \in [0,1]}.
\end{equation}
On the other hand, $Y^\beta$ sits as a submanifold in the symplectic covering $X^\beta\rightarrow X$ of $f:X\rightarrow D^*$ and $Q:=\bigcup_{\tau\in [0,1]}Q_\tau$ is (given it defines a closed submanifold) Lagrangian in $X^\beta$. Note that the condition that $Q$ 'closes' under symplectic parallel transport is not necessary, as will be described in Section \ref{relativen0} resp. Section \ref{app3}. Now {\it assuming} $\rho^k \in \pi_0({\rm Symp}(F,\partial F,\omega))$ would be trivial, $X^\beta$ has an extension $X^\beta_e$ to the unpunctured disk as a symplectic fibration which can be assumed to be the trivial fibration $X_0$ over a neighbourhood of $0 \in D$ (Lemma \ref{extension}). Now observe that by definition of $s_0$ and the family $Q_\tau$
\begin{equation}\label{blasection2}
\tilde \alpha_0(\tau):={\rm ev}(s_0^\beta)(Q_\tau)=i_{X_f}(\pi^\beta)^*(dz_0\wedge\dots\wedge dz_n)(Q_\tau)=(e^{i\theta}i_{X_f}{\rm vol_Q})(Q_\tau),
\end{equation}
where $d(f\circ \pi^\beta)_*(X_f)=1$ and $X_f$ is horizontal and $e^{i\theta}:Q\rightarrow S^1$ is the 'phase' of $Q$ in $X^\beta$. Now it is well-known (\cite{cielie}) that $d\theta=\sigma_Q$, where $\sigma_Q$ is the mean curvature form of $Q$ in $X^\beta$. Then  we can deform $Q$ to a family of (Lagrangian) submanifolds $Q^t\subset X^\beta_e,\ t\in [0,1]$ fibred over $S_{\delta(t)}^1\subset D, \ \delta(t)\rightarrow 0, t\rightarrow 1$ into Lagrangian cycles while $Q^0=Q$ and $Q^1\subset X_0$. Further one associates to each $Q^t$ a function $\alpha_t:S^1 \rightarrow\mathbb{C}^*$. To define this note that the mean curvature form of $Q$ in the presence of a disk $D\subset X_e^k$ with boundary in $Q$ as introduced in (\cite{cielie}) decomposes along $\partial D$ as
\[
2i\sigma_Q=\eta_D+i d\vartheta,
\]
where $\eta_D$ is the connection $1$-form associated to a fixed trivialization of the canonical bundle along $D$ and $\vartheta:\partial D\subset Q\rightarrow \mathbb{R}/\mathbb{Z}$ is a function so that $d\vartheta(\partial F)$ equals the Maslov index of $D$ in $Q$. We then want to define $\alpha_t$ for $t \in [0,1]$ by a winding number of a fixed disk $D$ along its intersection with each Lagrangian $Q^t$ so that the definition is consistent at $t=0$ with (\ref{cohwind}). Assuming $\partial D$ would be a closed {\it horizontal} path in $Q$ (horizontal w.r.t. to $(\pi^k)^*(\Omega$)) (this is also not essential, which can be seen by 'pushing' $D$ towards the boundary, see Section \ref{app3}), we first show $\eta_D|\partial D=0$, which replaces the function $e^{i\theta}$ along $\partial D$ in $Q^0$ in (\ref{blasection2}) by the phase $e^{i\vartheta}$. Then the {\it key step} is to show that the winding number of $\tilde \alpha_0:S^1\rightarrow \mathbb{C}^*$ actually coincides with the winding number of the function $e^{i\theta}$ along $\partial D$. For this (Lemma \ref{tildealpha} resp. Proposition \ref{keylemma}) we need a genericity-assumption on the behaviour of a certain representative of the Poincare-dual of the Maslov-class of $Q$, resp. its intersection with a fixed fibre $F$, under a specific symplectic isotopy of $Q\cap F$, this is in detail described in Assumption \ref{ass4} and is proven in Section \ref{app4} using stability theory (for Lagrangian fold singularities), generating families and stable Morse theory. Note also it is necessary to assume that $Q_{z_0}$ is closed and $Q_{z_0}\cap \partial F =\emptyset$ (the validity of this again follows from (\ref{seidelcondition})). Then one can directly calculate (cf. Propositions  \ref{alphatau2} and \ref{keylemma}):
\begin{prop}
Assume $n\geq 2$ and $\rho^\beta= id$ in $\pi_0({\rm Symp}(F,\partial F,\omega))$. Then, using the above construction, one infers ${\rm wind}(\alpha_1)=0$.
\end{prop}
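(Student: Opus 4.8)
The plan is to feed the hypothesis $\rho^\beta=\id$ into Lemma~\ref{extension}, which produces an extension $X^\beta_e$ of the $\beta$-fold symplectic covering $X^\beta$ over the whole disk $D$ that, after a fibre-preserving symplectomorphism, is the \emph{trivial} symplectic fibration $X_0=F\times D_\delta$ on a neighbourhood of $0$. The deformation $Q^t$ of the construction then ends with a Lagrangian cycle $Q^1\subset X_0$ fibred over the small circle $S^1_{\delta(1)}\subset D_\delta$, with $Q^1\cap(\partial F\times D_\delta)=\emptyset$ (this survives from $Q_{z_0}\cap\partial F=\emptyset$, i.e.\ from condition (\ref{seidelcondition})). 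Since $X_0$ is a \emph{product} symplectic fibration, symplectic parallel transport around $S^1_{\delta(1)}$ is the identity, so $Q^1$ is, up to the fibre-preserving symplectomorphism, a product $L\times S^1_{\delta(1)}$ with $L\subset F$ a fixed Lagrangian.

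The computation of $\mathrm{wind}(\alpha_1)$ is then immediate in this product picture. By Proposition~\ref{keylemma} applied at $t=1$ one has $\mathrm{wind}(\alpha_1)=\mathrm{wind}(\tilde\alpha_1)$ with $\tilde\alpha_1(\tau)=\mathrm{ev}(s_0^\beta)(Q^1_\tau)$, where $Q^1_\tau$ is the fibre of $Q^1$ over $z_\tau=\delta(1)e^{2\pi i\tau}$. In $X_0=F\times D_\delta$ the fibrewise cohomology bundle carries no monodromy and $\{Q^1_\tau\}_\tau$ is a \emph{constant} family, so $\tilde\alpha_1$ differs from a constant function only by the nowhere-zero factor relating $s_0^\beta$ to a flat section; by the argument principle on the disk $\{|z|\le\delta(1)\}$ this factor has winding number $0$ along $|z|=\delta(1)$, and for $\delta(1)$ small it is anyway close to its value at $0$, which is nonzero because $[\mathrm{ev}_F(s_0)]\cup PD[Q_{z_0}](F)\neq0$. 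Equivalently, in the phase language of (\ref{blasection2}): trivialising the canonical bundle of $X_0$ globally makes $\eta_D\equiv0$ along a horizontal disk $D=\{q\}\times\{|z|\le\delta(1)\}$, and then the phase $e^{i\vartheta}$ along $\partial D$ is the \emph{constant} Lagrangian phase of $L$ at $q$, the only potential contribution — the monodromy of the vertical Lagrangian family $\{Q^1\cap F_s\}_{s}$ around $S^1_{\delta(1)}$ — having been killed by the triviality of the symplectic monodromy (the base direction plays no role, being contracted away by the horizontal vector field $X_f$ in the Gelfand--Leray form). Combining with Proposition~\ref{alphatau2} yields $\mathrm{wind}(\alpha_1)=0$.

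The genuine difficulties all lie upstream of this statement: that the deformation $Q^t$ can be carried through Lagrangian cycles into the trivial region with $Q^t\cap(\partial F\times D_{\delta(t)})=\emptyset$ and the relevant disk remaining horizontal (or else via the pushing-to-the-boundary device of Section~\ref{app3}), so that $\alpha_t$ stays defined and $\mathrm{wind}(\alpha_t)$ varies continuously in $t$; that Proposition~\ref{keylemma} holds, resting on the genericity Assumption~\ref{ass4} proved in Section~\ref{app4} via stability theory for Lagrangian fold singularities and generating-family/stable Morse theory; and Lemma~\ref{extension} itself. Granting these, the present proposition is the easy assembly step sketched above; the one point still deserving a line of care is the verification that $\eta_D|_{\partial D}=0$ in the trivial model and that the deformed cycle $Q^1_\tau$ still pairs nontrivially with $[\mathrm{ev}_F(s_0)]$ — both of which follow at once, the first being the local model of the general vanishing established earlier, the second since the deformation does not change the homology class.
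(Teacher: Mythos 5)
Your final computation in the trivial region has a genuine gap. You assert that the phase $e^{i\vartheta}$ along $\partial D$ is the \emph{constant} Lagrangian phase of $L\subset F$, and that the base direction ``plays no role, being contracted away by the horizontal vector field $X_f$''. Neither holds. In the trivial model $X_0\simeq F\times D_\kappa$ the cycle $Q^1\simeq L\times S^1_\kappa$ carries the extra $S^1$-tangent direction, which rotates as one traverses $\partial D$; the phase $e^{i\vartheta}$ of $Q^1$ \emph{in} $X_0$ includes that rotation and therefore winds. Meanwhile the horizontal field $X_{f^k}$ is normalized by $d(f\circ\pi_k)(X_{f^k})=1$ with $f\circ\pi_k=(f^k)^k$; in the base coordinate $z=f^k$ it behaves like $(kz^{k-1})^{-1}\partial_z$, so the Gelfand--Leray contraction $i_{X_{f^k}}\kappa_{Q_\kappa}$ restricted to $Q^1_\tau$ winds with exponent $-k$ (compare (\ref{insertk}) and the end of the proof of Proposition~\ref{alphatau2}). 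The identity ${\rm wind}(\tilde\alpha_\kappa)=0$ is thus the result of a genuine \emph{cancellation} of two nonzero contributions --- the mean-curvature/phase integral contributes $+k$, the Gelfand--Leray contraction contributes $-k$ --- not of each factor being individually constant.

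The argument-principle alternative fails for an even more basic reason: $s_0^\beta=(\pi^\beta)^*s_0$ is only defined over the annulus $D_{[\epsilon,\delta]}$ on which $\tilde X^\beta$ lives, and has no tautological extension over the artificially capped region $X^\beta_e|_{D_\kappa}$. The factor relating $s_0^\beta$ to the Gauss--Manin-flat section is $e^{2\pi i\gamma t}$ with $\gamma=\beta\gamma_1\neq 0$, as in (\ref{eulermulti}); that is precisely the ``on the other hand'' computation which the paper plays off against the present proposition to reach a contradiction, so deducing winding $0$ for it by the argument principle would collapse the whole argument. The route the paper actually takes (and which you cite but then step away from) is: establish ${\rm wind}(\alpha)={\rm wind}(e^{i\theta\circ c})-k$ via the Maslov-cycle machinery of Proposition~\ref{keylemma}, where Assumption~\ref{ass4} does its work, and then evaluate ${\rm wind}(e^{i\theta\circ c})=k$ either through the boundary triviality of $\mathbf{s}^k$ (Lemma~\ref{relativen}, Section~\ref{relativen0}), or through the bounding-disk cancellation of Lemma~\ref{tildealpha} and Proposition~\ref{alphatau2} in Section~\ref{boundingdisks}.
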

On the other hand ${\rm wind}(\alpha_1)=\beta \cdot \gamma_1=(\sum_{i=1}^{\mu}\beta_i-\beta)$ and the latter is $\neq 0$ since that is already implied by the condition that $[{\rm ev}_F(s_0)]$ represents an element of $H^n(F,\partial F,\mathbb{C})$ which is equivalent to $(\sum_{i=1}^{\mu}\beta_i-\beta)/\beta\notin \mathbb{Z}$ (Lemma \ref{condition}). So we prove, by repeating the above for any covering $X^k\rightarrow X$, $k=m\cdot \beta, m\in \mathbb{N}^+$ (cf. Theorem \ref{theorem34}):
\begin{theorem}\label{symplecticmonodromy}
Let $n\geq 2$ and assume that the evaluation of the section $s \in \Gamma(\mathcal{H}^n_{X/D})$ associated to $1 \in M(f)$ restricted to a fixed fibre is contained in ${\rm im}\ i^*:H^n(F, \partial F, \mathbb{C})\rightarrow H^n(F,\mathbb{C})$, then $\rho$ in $\pi_0({\rm Symp}(F,\partial F,\omega))$ is of infinite order.
\end{theorem}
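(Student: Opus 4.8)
The plan is to argue by contradiction, running the chain of constructions sketched above. Suppose $\rho$ had finite order in $\pi_0(\mathrm{Symp}(F,\partial F,\omega))$, so $\rho^{N}=\id$ there for some $N\geq 1$; then also $\rho^{k}=\id$ for $k:=N\beta\in\beta\N^{+}$. I would pass to the $k$-fold symplectic cyclic covering $\pi^{k}\colon X^{k}\to X$, which contains $Y^{k}\to Y$ and sits inside the symplectic fibration over $D^{*}$. By the hypothesis that $[{\rm ev}_F(s_0)]$ lies in $\im\bigl(i^{*}\colon H^{n}(F,\partial F,\C)\to H^{n}(F,\C)\bigr)$ together with Lemma \ref{condition}, one gets $\gamma_1=(\sum_{i=1}^{\mu}\beta_i-\beta)/\beta\notin\Z$; in particular $\gamma_1\neq 0$, and since $[{\rm ev}_F(s_0)]$ is represented rel $\partial F$ one may choose a smooth closed Lagrangian cycle $Q_{z_0}\subset F$ with $Q_{z_0}\cap\partial F=\emptyset$ and $[{\rm ev}_F(s_0)]\cup PD[Q_{z_0}](F)\neq 0$.

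First I would build the Lagrangian family over the monodromy loop: put $Q_\tau=\PP_\tau(Q_{z_0})$ by symplectic parallel transport along $\tau\mapsto e^{2\pi i\tau}z_0$ and $Q=\bigcup_{\tau\in[0,1]}Q_\tau$, a Lagrangian submanifold of $X^{k}$, and invoke Lemma \ref{bla347} in the form
\[
{\rm wind}\bigl(\tilde\alpha_0\bigr)_{\tau\in[0,1]}=k\cdot\gamma_1,\qquad \tilde\alpha_0(\tau):={\rm ev}(s_0^{k})(Q_\tau)=\bigl(e^{i\theta}\,i_{X_f}{\rm vol}_Q\bigr)(Q_\tau),
\]
the last equality being (\ref{blasection2}), with $e^{i\theta}$ the phase of $Q$ in $X^{k}$. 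The finite-order hypothesis then provides, via Lemma \ref{extension}, a symplectic extension $X^{k}_e$ of $X^{k}$ over the unpunctured disk, which I normalise to be the trivial fibration $X_0$ near $0\in D$. Inside $X^{k}_e$ I would deform $Q$ through a family of Lagrangian cycles $Q^{t}$, $t\in[0,1]$, fibred over circles $S_{\delta(t)}^{1}\subset D$ with $\delta(t)\to 0$ as $t\to 1$, keeping them closed and disjoint from $\partial F$, with $Q^{0}=Q$ and $Q^{1}\subset X_0$.

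To each $Q^{t}$ I would attach $\alpha_t\colon S^{1}\to\C^{*}$ as in Section \ref{relativen0}, defined by the winding number of a fixed disk $D$ --- whose boundary I may take to be a horizontal loop in $Q^{t}$, cf.\ Section \ref{app3} --- along its intersection with $Q^{t}$, using the decomposition $2i\sigma_Q=\eta_D+i\,d\vartheta$ of the mean curvature form from \cite{cielie} and the vanishing $\eta_D|\partial D=0$, which replaces $e^{i\theta}$ by the phase $e^{i\vartheta}$ along $\partial D$. The \emph{key step}, which I expect to be the main obstacle, is the identification ${\rm wind}(\alpha_0)={\rm wind}(\tilde\alpha_0)$ --- that the disk-boundary phase carries the same winding as $\tilde\alpha_0$; this is Lemma \ref{tildealpha} resp.\ Proposition \ref{keylemma}, whose proof depends on the genericity statement of Assumption \ref{ass4} describing the behaviour, under a prescribed symplectic isotopy, of a chosen representative of the Poincar\'e dual of the Maslov class of $Q$ (equivalently of $Q\cap F$). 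That genericity is what Section \ref{app4} establishes, via stability theory for Lagrangian fold singularities, generating families and stable Morse theory; the delicacy lies both in that stability analysis and in arranging the deformation $Q^{t}$ so that the cycles remain closed and the slices $Q^{t}\cap F$ stay generic, developing at worst Lagrangian fold singularities, throughout. Granting this, winding numbers are locally constant in $t$, whence ${\rm wind}(\alpha_1)={\rm wind}(\alpha_0)$.

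Finally, since $Q^{1}\subset X_0$ lies in the trivial fibration, the Proposition preceding this theorem (see also Propositions \ref{alphatau2} and \ref{keylemma}) yields ${\rm wind}(\alpha_1)=0$. Stringing the equalities together,
\[
0={\rm wind}(\alpha_1)={\rm wind}(\alpha_0)={\rm wind}(\tilde\alpha_0)=k\cdot\gamma_1=N\cdot\Bigl(\sum_{i=1}^{\mu}\beta_i-\beta\Bigr),
\]
and the right-hand side is nonzero exactly because $\gamma_1\notin\Z$ by Lemma \ref{condition} --- a contradiction. Hence $\rho^{N}\neq\id$ in $\pi_0(\mathrm{Symp}(F,\partial F,\omega))$ for every $N\geq 1$, i.e.\ $\rho$ is of infinite order, as claimed (cf.\ Theorem \ref{theorem34}).
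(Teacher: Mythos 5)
Your proposal is correct in outline and reproduces faithfully the sketch given in the paper's introduction, which is the ``bounding-disk'' approach of Section \ref{boundingdisks}. However, you should be aware that this is the paper's \emph{alternative} proof and not the one the body of the paper presents as primary; the theorem you cite is Theorem \ref{theorem34} in the body, and the sentence ``It is clear that in view of (\ref{maslovindex36}) that Proposition \ref{keylemma} proves Theorem \ref{theorem34}'' attributes the proof to Section \ref{relativen0}. The two proofs differ in one substantive respect. Your version defines $Q=\bigcup_\tau Q_\tau$ by parallel transport and works with a horizontal bounding disk $F$ and the mean-curvature decomposition $2i\sigma_Q=\eta_D+i\,d\vartheta$ from \cite{cielie}; for $Q$ to close up into a submanifold of $\tilde X^k$ and for the horizontal boundary loop to exist, you implicitly need the extra Assumption \ref{ass5} (conditions (\ref{horizontality})(a) and (b)), which is not a consequence of the theorem's hypotheses. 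The Section \ref{relativen0} proof avoids this: it replaces the parallel-transported $Q$ by the construction (\ref{hatq})--(\ref{hatqt}), which is built out of a chosen isotopy $\rho^k_t$ so that it always closes up without any fixed-point hypothesis; it replaces the bounding disk and the $\eta_D$-vanishing by a family of fibrewise non-vanishing $(n,0)$-forms $\mathbf{s}^k$ (Lemma \ref{relativen}) and the section $\mathbf{Q}^v$ of $\mathcal{L}^v(Y^k)$ in (\ref{lagpoli}); and it proves the key winding identity ${\rm wind}(\alpha)={\rm wind}(e^{i\theta\circ c})-k$ directly in Proposition \ref{keylemma} (Claims 1 and 2) via a cobordism of Maslov cycles $N^\mathcal{M}_\tau$, using Assumption \ref{ass4}. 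Your appeal to Section \ref{app3} is the right pointer for relaxing the closedness/horizontality conditions, but as written your plan rests on \ref{ass5} without saying so; in the sense of what the blind hypotheses of the theorem actually guarantee, that is a genuine gap, which the paper repairs precisely by shifting to the Section \ref{relativen0} argument. Aside from that, your chain $0={\rm wind}(\alpha_1)={\rm wind}(\alpha_0)={\rm wind}(\tilde\alpha_0)=k\gamma_1\neq 0$ correctly mirrors the paper's logic in Proposition \ref{alphatau2} (and, in the other version, Proposition \ref{keylemma} together with (\ref{maslovindex36})), with the hard work concentrated in the genericity Assumption \ref{ass4} and the stability analysis of Section \ref{app4}, as you rightly identify.
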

Note that our condition implies Seidel's condition $m(f)\neq 0$, which in turn implies that $(\rho^*)^k\neq Id \in {\rm Aut}(H^n(F,\mathbb{C}))$ for $k\neq m\cdot\beta, \ m\in \mathbb{N}^+$, which is why we could restrict to an examination of the powers $X^{m\beta}$. Note that the assumption on the existence of a horizontal path in $Q$ respectively the assumption of well-definedness of $Q$ as a closed Lagrangian submanifold in $X^{m\beta}$ can be avoided which is discussed in Section \ref{relativen0} resp. Section \ref{app3}. In Section \ref{relativen0}, we give a modified proof of Theorem \ref{symplecticmonodromy}, which completely dispenses from the use of bounding disks and Lagrangians in $X^{m\beta}$ given in Section \ref{boundingdisks} (note the reversed order of presentation in this introduction) and instead makes use of a family of fibrewise nonvanishing $(n,0)$-forms enabling one to define Maslov-Indices along paths in $Y$ being lifted to the bundle of Lagrangian subspaces $\mathcal{L}^v(Y,\omega)$ of the vertical tangent bundle $T^vY$ of $Y$, i.e. a {\it fibrewise} Lagrangian submanifold $Q\subset Y^{m\beta}$ and a path in $Q$ define such data. On the other hand, Section \ref{app3} combines both approaches and should apply if one considers more general cases than a single isolated singularity at the origin, which is left for future investigation. However, the 'hard' arguments of Section \ref{app4} are essential in all approaches. In general, given a holomorphic function $f:\mathbb{C}^{n+1}\rightarrow \mathbb{C}$ with an isolated singularity at $0$, the monodromy $\rho^*\in {\rm Aut}(H^n(F,\mathbb{C}))$ is not semi-simple one can conjecture that the sum of the spectra plays the role of $\gamma_1$ in our case. In fact we conjecture, coming back to the question we posed at the beginning:
\begin{conj}\label{conjspec}
Let $f:\mathbb{C}^{n+1}\rightarrow \mathbb{C}$, $n\geq 2$, be quasihomogenous and let $\gamma_i \in \mathbb{Q}, i=1, \dots, \mu$ be the set of spectral numbers introduced in Definition \ref{spectrumdef}. If there exists $i\in \{1,\dots, \mu\}$ s.t. $\gamma_i\neq 0$, then $\rho$ in $\pi_0({\rm Symp}(F,\partial F,\omega))$ is of infinite order. If $f$ is a general polynomial with an isolated singularity at $0$ and the set $\{\gamma_i\}_{i=1}^\mu\subset \mathbb{Q}$ presents its spectrum and if $\sum_{i=1}^\mu \gamma_i\neq 0$, then $\rho$ in $\pi_0({\rm Symp}(F,\partial F,\omega))$ is of infinite order.
\end{conj}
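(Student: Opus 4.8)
The plan for a proof of Conjecture \ref{conjspec} is to carry the machinery behind Theorem \ref{symplecticmonodromy} from the monomial section $s_0$ attached to $1\in M(f)$ to an arbitrary monomial section. In the quasihomogeneous case, pick $i$ with $\gamma_i\neq 0$ and let $s_i\in\Gamma(\mathcal{H}^n_{X/D})$ be the section associated under $\phi$ to $z^{\alpha(i)}$. The winding identity (\ref{cohwind}) generalises to $\beta\gamma_i=\mathrm{wind}(\mathrm{ev}(s_i^\beta)(Q_\tau))$, and in (\ref{blasection2}) one simply replaces $(\pi^\beta)^*(dz_0\wedge\dots\wedge dz_n)$ by $(\pi^\beta)^*(z^{\alpha(i)}dz_0\wedge\dots\wedge dz_n)$; the bounding-disk construction of Section \ref{boundingdisks} and the computation of Section \ref{app4} then go through unchanged and give $\mathrm{wind}(\alpha_1)=m\beta\gamma_i$, while the hypothesis $\rho^{m\beta}=\mathrm{id}$ forces $\mathrm{wind}(\alpha_1)=0$; running this for all $m\in\mathbb{N}^+$ as in Theorem \ref{theorem34} yields infinite order. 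For this one needs a smooth Lagrangian cycle $Q_{z_i}\subset F$ with $Q_{z_i}\cap\partial F=\emptyset$, $[\mathrm{ev}_F(s_i)]\cup PD[Q_{z_i}](F)\neq 0$ and $[\mathrm{ev}_F(s_i)]\in\mathrm{im}\,i^*$; the last condition is, by (the proof of) Lemma \ref{condition}, equivalent to $\gamma_i\notin\mathbb{Z}$, and existence of such a $Q_{z_i}$ follows as for $s_0$ from non-degeneracy of the intersection form on the span of the vanishing cycles in $H_n(F,\mathbb{C})$.

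It remains to treat the case in which every nonzero $\gamma_i$ is an integer. With the normalisation of Definition \ref{spectrumdef} the spectrum lies in $(-1,n)$ and is symmetric about $(n-1)/2$, so for $n\geq 2$ it can never reduce to $\{0\}$; a short check shows that in the remaining situation either some $\gamma_j\notin\mathbb{Z}$, so that the above applies, or else $\mu=1$ with $\gamma_1$ a nonzero integer, in which case $m(f)=\beta\gamma_1\neq 0$ and Seidel's theorem \cite{seidel} applies directly. Alternatively one can bypass the integral case uniformly: replace $s_i$ by a $\mathbb{C}$-linear combination $\sum_i c_i s_i$ whose fibre evaluation lies in $\mathrm{im}\,i^*$ and pairs nontrivially with some Lagrangian cycle, observe that the winding of the resulting $\tilde\alpha_0$ is the corresponding weighted combination of the $\beta\gamma_i$, and keep it nonzero by a suitable choice of the $c_i$; the genericity input of Section \ref{app4} then has to be re-proved for the Lagrangian attached to this combination, which we expect to follow from the same Lagrangian-fold stability theory applied to the family of folds parametrised by $(c_i)$.

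For a general isolated singularity the quasihomogeneous $\mathbb{C}^*$-action — hence the distinguished $\beta$-fold symplectic covering carrying $s_0$ — is no longer available and $\rho^*$ need not be semisimple; note that, still with Definition \ref{spectrumdef}, one has $\sum_i\gamma_i=\mu(n-1)/2$, so the stated hypothesis merely excludes $n=1$ and the real content is that the argument should reach every isolated hypersurface singularity with $n\geq 2$. One route is by deformation: if $f$ lies in a $\mu$-constant family $\{f_\lambda\}$ with $f_0$ quasihomogeneous (for instance $f$ Newton non-degenerate), then by Varchenko's theorem the spectrum, hence each $\gamma_i(f_0)$, is constant along the family, and one reduces to the quasihomogeneous case applied to $f_0$ — provided the symplectic monodromy class in $\pi_0(\mathrm{Symp}(F,\partial F,\omega))$ is known to be locally constant along $\{f_\lambda\}$. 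The other route, needed when no such deformation exists, is to work intrinsically with the Brieskorn lattice $(\mathcal{H}'',\nabla)$ of $f$, where $\sum_i\gamma_i$ is the trace of the residue of $t\nabla_{\partial_t}$ on $\Gr$ of the $V$-filtration, hence an invariant of the limit mixed Hodge structure insensitive to the Jordan type of $\rho^*$; one would assemble, over a basis of $\mathcal{H}''_0/t\mathcal{H}''_0$, a family of \emph{fibrewise} Lagrangian submanifolds of an $m$-fold cover $Y^m$ and paths in them as in Section \ref{relativen0}, define Maslov indices using a family of fibrewise non-vanishing $(n,0)$-forms representing the Leray residues of the $z^{\alpha(i)}dz_0\wedge\dots\wedge dz_n$, and prove that the total winding of the associated $\mathbb{C}^*$-valued functions equals $\sum_i\gamma_i$ up to the relevant integral normalisation, so that triviality of $\rho^m$ makes it vanish.

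The principal obstacle is in every case the one already isolated in the paper: one must verify an analogue of Assumption \ref{ass4}, i.e. that the relevant representative of the Poincar\'e dual of the Maslov class of the (now possibly non-closed, only fibrewise, and built from several monomials) Lagrangian family behaves generically under the auxiliary symplectic isotopy, and this forces the stability analysis of Section \ref{app4} to be run for families of fold maps depending on the extra parameters. A second difficulty is specific to the deformation route: it needs the \emph{symplectic} — not merely the diffeomorphism — monodromy to be constant in $\mu$-constant families, a symplectic refinement of the L\^e--Ramanujam theorem that is not known in general and whose proof, even for Newton non-degenerate $f$, would likely be the bulk of the work. Finally, replacing the algebraic input of Section \ref{relcohom} — where quasihomogeneity is used to identify $M(f)$ with $\mathcal{H}''_0/t\mathcal{H}''_0$ and to read the $\gamma_i$ off the weights — by the corresponding statements for a general Brieskorn lattice (the $V$-filtration, Saito's higher residue pairings) is routine in principle but must be carried out carefully to keep track of the integral normalisation entering the winding identity.
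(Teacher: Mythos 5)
This statement is a conjecture, not a theorem; the paper does not prove it, and Section \ref{generalspec} only records partial progress (Proposition \ref{foldstab}, Conjecture \ref{maslovspec}) together with an explicit identification of the obstruction. Your first paragraph substitutes for $s_0$ the section $s_i$ attached to $z^{\alpha(i)}$ and asserts that ``the bounding-disk construction of Section \ref{boundingdisks} and the computation of Section \ref{app4} then go through unchanged.'' This is exactly where the argument breaks, and the paper says so: the form $z^{\alpha(i)}dz_0\wedge\dots\wedge dz_n$ vanishes along the divisor $\mathcal{Z}_{i,\tau}=Q_\tau\cap\{z^{\alpha(i)}=0\}$, so the phase function $g^i_{\tau,x(t)}$ and the Maslov data attached to $s_i$ are only defined on the punctured Lagrangian $Q^i_\tau=Q_\tau\setminus\mathcal{Z}_{i,\tau}$. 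For $i\neq 1$ this complement is no longer simply connected, $H^1(Q^i_\tau,\mathbb{Z})\neq 0$, and the whole Claim (1.) machinery in the proof of Proposition \ref{keylemma} relies crucially on $H^1(Q_x,\mathbb{Z})=0$. The paper flags this as the central open problem at the end of Section \ref{generalspec} (``one is confronted with the problem that $L_\tau^i$ is not simply connected anymore for $n\geq 2$ for $i\neq 1$'') and only conjectures a remedy via a relative cohomology with log-type singularities along $\mathcal{Z}_{i,\tau}$. Your closing paragraph concedes that Assumption \ref{ass4} must be re-verified, but that concession contradicts the earlier ``unchanged'' claim and, more importantly, never names the simply-connectedness obstruction, which is the difficulty the paper actually isolates. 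Your integral-case discussion and the linear-combination device are orthogonal to this: a combination $\sum c_i s_i$ still has a zero set, and even when $\gamma_i\notin\mathbb{Z}$ so that $[\mathrm{ev}_F(s_i)]\in\mathrm{im}\,i^*$, the puncturing problem persists.

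Two lesser remarks. Your observation that, under the normalisation of Definition \ref{spectrumdef}, $\sum_i\gamma_i=\mu(n-1)/2$ is correct and does make the hypothesis of the second half of the conjecture automatic for $n\geq 2$; this is a genuine point about the phrasing but contributes nothing to a proof. The deformation route for general isolated singularities requires the symplectic-monodromy class to be locally constant along $\mu$-constant families, which, as you note, is unknown (and not every isolated singularity deforms to a quasihomogeneous one); the intrinsic Brieskorn-lattice route is in the spirit of what Section \ref{generalspec} is aiming at, but neither you nor the paper completes it, and the end of Section \ref{generalspec} is precisely an acknowledgment that this remains open.
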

Note that in Section \ref{generalspec}, we discuss the related Conjecture \ref{maslovspec} for the quasihomogeneous case, namely the connection between general elements of the spectrum of $f$ and certain Maslov-indizes if $\rho$ is of finite order in ${\rm Symp}(F,\omega)$. We prove an important step in that direction which again involves stability of Lagrangian folds.\\
We finally describe in Section \ref{generalpol} how the 'bounding-disk' method of Section \ref{boundingdisks} leads to the formulation of a necessary condition for the existence of certain Lagrangian submanifolds in $X^k$ in the case of a general polynomial with an isolated singularity at $0$, resp. its 'good representative' $f:X\rightarrow D$ with fibre $F$. Here, suppose $k$ is an integer chosen so that the corresponding power of the symplectic monodromy of $f$ is trivial in $\pi_0({\rm Symp}(F,\partial F,\omega))$. Then we can define the number $\alpha(\tau):={\rm ev}(s_0^k)(Q_\tau)\in \mathbb{C}$ and $s_0^k,\ Q_t$ for $\tau \in [0,1]/\{0,1\}$ exactly as in the quasihomogeneous case in (\ref{blasection2}) and we have the following simple observation (cf. Corollary \ref{generallag}):
\begin{folg}
If the set $Q:=\bigcup_{\tau\in [0,1]}Q_\tau$ defines a Lagrangian submanifold in $X^k$ then any horizontal disk $u:D\rightarrow X^k_e$ with boundary in $Q$ has Maslov Index $\mu(u)=k$. Furthermore, in case such a disk exists, $\alpha(\tau)\neq 0$ for all $\tau \in[0,1]$ and Assumption \ref{ass3} holds, then one has necessarily ${\rm wind}(\alpha)=\mu(u)-k=0$.
\end{folg}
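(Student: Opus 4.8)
The statement is the formal residue of the argument carried out in Section~\ref{boundingdisks} for the quasihomogeneous case, so the plan is to re-run that argument, isolating the one place where quasihomogeneity entered (the identification of a winding number with a spectral number) and replacing $\beta$ by the general covering degree $k$. First I would record the setup: since by hypothesis $\rho^k$ is trivial in $\pi_0({\rm Symp}(F,\partial F,\omega))$, Lemma~\ref{extension} provides the extension $X^k_e$ of $X^k\to X$ to a symplectic fibration over the unpunctured disk, and the data $s_0^k=(\pi^k)^*s_0$, $Q_\tau=\mathcal{P}_\tau(Q_{z_0})$, $\alpha(\tau)={\rm ev}(s_0^k)(Q_\tau)$ are defined word for word as in~(\ref{blasection2}). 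Neither~(\ref{blasection2}) nor the decomposition of the mean curvature form borrowed from~\cite{cielie} uses quasihomogeneity: along the boundary of a horizontal disk $u:D\to X^k_e$ with $\partial u\subset Q$ one still has $\alpha(\tau)=(e^{i\theta}\,i_{X_f}{\rm vol}_Q)(Q_\tau)$ with $d\theta=\sigma_Q$, and $2i\sigma_Q=\eta_D+i\,d\vartheta$ with $\eta_D|\partial D=0$ (because $\partial u$ is horizontal) and $\int_{\partial D}d\vartheta=\mu(u)$.

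For the first assertion, let $Q$ be a closed Lagrangian in $X^k_e$ and $u$ a horizontal disk it bounds. The canonical bundle of $X^k=X^k_e\setminus F_0$ ($F_0$ the fibre of $X^k_e$ over $0$) is trivialized by the pull-back $(\pi^k)^*(dz_0\wedge\dots\wedge dz_n)$ of the holomorphic volume form, which does not extend across $F_0$; a horizontal disk whose projection to the base is a disk enclosing $0$ crosses $F_0$ exactly once. Comparing $\mu(u)$, computed against a trivialization of $u^*K_{X^k_e}$ over all of $D$, with the same quantity computed against $(\pi^k)^*(dz_0\wedge\dots\wedge dz_n)$ on the complement of the crossing point --- this is the Maslov index computation of Section~\ref{boundingdisks} --- and using that the horizontal loop $\partial u$ runs $k$ times around the puncture of the original Milnor fibration, one obtains $\mu(u)=k$; this is precisely the step that, in the quasihomogeneous case, turned the hypothesis $\rho^k=id$ into ${\rm wind}(\alpha_1)=0$.

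For the second assertion, assume moreover that such a disk exists, that $\alpha(\tau)\neq 0$ for all $\tau\in[0,1]$ (so that ${\rm wind}(\alpha)$ is defined), and that Assumption~\ref{ass3} holds. By Assumption~\ref{ass3} --- the non-quasihomogeneous analogue of Assumption~\ref{ass4}, whose validity is the subject of Section~\ref{app4} --- one identifies ${\rm wind}(\alpha)$ with the winding of the phase $e^{i\vartheta}$ along $\partial u$, exactly as in Lemma~\ref{tildealpha} and Proposition~\ref{keylemma}. Feeding this into the decomposition $2i\sigma_Q=\eta_D+i\,d\vartheta$, together with $\eta_D|\partial D=0$, $\int_{\partial D}d\vartheta=\mu(u)$ and the covering correction computed in the previous step, yields ${\rm wind}(\alpha)=\mu(u)-k$, which equals $0$ by the first assertion.

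The one genuinely nontrivial point in all of this is the bookkeeping that pins the covering correction down to exactly $k$; equivalently, one may instead deform $Q$ through the family $Q^t$ that terminates in the trivial fibration $X_0$, as in Section~\ref{boundingdisks}, and check that ${\rm wind}(\alpha)$ is unchanged along the deformation. Everything else is formal once Assumption~\ref{ass3} is taken as given, which the statement permits.
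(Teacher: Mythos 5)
Your proposal is on the same track as the paper's own argument: it reproduces the chain horizontality of $u$ $\Rightarrow$ $\eta_D|\partial D=0$ (as in Lemma~\ref{tildetheta3}), then decomposes $2i\sigma_Q=\eta_D+i\,d\vartheta$ along $\partial D$ and uses the cobordism / trivialization-at-the-origin computation of Proposition~\ref{alphatau2} together with Lemma~\ref{tildealpha}/Corollary~\ref{tildealpha2} to pin $\mu(u)=k$ and then ${\rm wind}(\alpha)=\mu(u)-k=0$. Your alternative description of the first assertion (comparing a global trivialization of $u^*K_{X^k_e}$ with the one furnished by $(\pi^k)^*(dz_0\wedge\dots\wedge dz_n)$ away from the central fibre, and extracting the covering correction $k$) is the same computation in slightly different packaging, and you rightly defer the bookkeeping to Section~\ref{boundingdisks}.

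One inaccuracy worth correcting: Assumption~\ref{ass3} is \emph{not} the non-quasihomogeneous analogue of Assumption~\ref{ass4}, and its validity is not what Section~\ref{app4} proves. Assumption~\ref{ass3} is a package of three hypotheses particular to Section~\ref{generalpol}: (1) the cohomological condition $[s_x]\in{\rm im}\,i^*$, (2) the existence of a Hamiltonian flow $\Phi_H$ with $\Phi_H(1)=\rho^k$ together with a nonvanishing function $\alpha$ satisfying the cocycle relation $\alpha(t)\,s^k_x=(\Phi_H(t)^{-1})^*\mathcal{P}_{x,y(t)}^*s^k_{y(t)}$, and (3) the flatness/K\"ahler condition on a neighbourhood $\mathcal{U}$ of $Y^k$ in $X^k_e$. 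Condition (3), together with the horizontality of $u$, is precisely what yields $\eta_F|\partial F=0$ (it replaces the argument of Lemma~\ref{tildetheta3} where the quasihomogeneous Milnor fibration supplied the Kähler collar for free), while conditions (1) and (2) are what make $\alpha$ a well-defined $\mathbb{C}^*$-valued circle map whose winding can be compared to $\mu(u)$. The structural analogue of Assumption~\ref{ass4} in the bounding-disk approach is instead Assumption~\ref{ass5}, and the genericity machinery of Section~\ref{app4} enters only indirectly, through the reliance of Lemma~\ref{tildealpha} on Proposition~\ref{keylemma}. With that substitution your sketch matches the paper's proof.
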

\tableofcontents

\section{Symplectic monodromy and Lagrangian folds}\label{chapter2sympl}
\subsection{Symplectic geometry of Milnor fibrations}
Recall, for $n\geq 1, n \in \mathbb{N}$, the Milnor fibration for a (quasihomogeneous) polynomial $f$ on $\mathbb{C}^{n+1}$, with singular fibre at $0$:
\begin{equation}\label{milnor1}
f:  \hat X=\bigcup_{z \in D_\delta}\hat X_z:=f^{-1}(z) \cap B_1^{2n+2}\rightarrow D^*_\delta,
\end{equation}
where $D_\delta$ is the closed disk in $\mathbb{C}$ with radius $\delta$, $0<\delta$ is sufficiently small, $B_1^{2n+2}$ denotes the closed unit ball in $\mathbb{C}^{n+1}$. Fix a cutoff function $\psi_m:[0,1]\rightarrow [0,1]$ with $\psi_m(t^2)=1$ for $t \leq 1-2/m$ and $\psi(t^2)=0$ for $t \geq 1-1/m$ for some $m>2, m \in \mathbb{N}$ (which will be fixed below). Set $X_z=\{x \in B_1^{2n+2}|f(x) = \psi_m(|x|^2)z\}$. Then set for $0<\delta$ small enough (we will be more precise in a moment)
\begin{equation}\label{milnorfibr}
f: X\ = \ \bigcup_{ z \in D_\delta}\ X_z \times \{z\} \subset \mathbb{C}^{n+2}\longrightarrow D_\delta,
\end{equation}
we will denote the projection $f:X\rightarrow D^*_\delta$ again by $f$, since this will cause no confusion in the following. To be more explicit, (\ref{milnorfibr}) means that if one sets $\hat f(x,z)=f(x)-\psi_m(|x|^2)z$, $(x,z)\in \mathbb{C}^{n+1}\times D^*_\delta$ and
\begin{equation}\label{bigf}
F:\mathbb{C}^{n+1}\times D^*_\delta\rightarrow \C \times D_\delta^*,\quad F(x,z)=(\hat f(x,z),z),
\end{equation}
to define $X=B_1^{2n+2}\cap \hat f^{-1}(0)$ and $f:=pr_2\circ F:X \rightarrow D^*_\delta$. Let now $\theta_{\mathbb{C}^{n+1}}=\frac{i}{4}\sum_j(z_jd \overline z_j-\overline z_jdz_j) \in \Omega^1(\mathbb{C}^{n+1})$ and $\Omega_{\mathbb{C}^{n+1}}=d\theta_{\mathbb{C}^{n+1}}\in \Omega^2(\mathbb{C}^{n+1})$ the standard forms in $\mathbb{C}^{n+1}$.The following two Lemmata are essentially taken from Seidel \cite{seidel}:
\begin{lemma} \label{th:milnor-fibre}
There is an $\delta'>0$ such that for all $0 < |z| < \delta'$, and fixed $m\in \mathbb{N}$, $(X_z,\omega_z:=\Omega_{\mathbb{C}^{n+1}}|TX_z)$ is a smooth symplectic submanifold of $B^{2n+2}$ with boundary $\partial X_z$.
\end{lemma}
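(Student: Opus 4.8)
The plan is to dissect $X_z=\{x\in B_1^{2n+2}: f(x)=\psi_m(|x|^2)z\}$ according to the three regions singled out by the cutoff: where $\psi_m(|x|^2)\equiv 1$, where $\psi_m(|x|^2)\equiv 0$, and the compact transition annulus $A_m:=\{x : 1-2/m\le|x|^2\le 1-1/m\}$ on which $\psi_m'$ may be nonzero. Recall that, the singularity of $f$ at $0$ being isolated, we may assume (rescaling the coordinates if necessary) that $B_1^{2n+2}$ is a Milnor ball, so that $df(x)\neq 0$ for all $0<|x|\le 1$ and every sphere $S_r=\partial B_r^{2n+2}$, $0<r\le 1$, is transverse to $V:=f^{-1}(0)$; in particular $0$ is the only critical value of $f|_{B_1^{2n+2}}$, hence every $z\neq 0$ is a regular value of $f|_{B_1^{2n+2}}$.

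For smoothness, set $\hat f(x,z)=f(x)-\psi_m(|x|^2)z$, so that $X_z=\hat f(\cdot,z)^{-1}(0)\cap B_1^{2n+2}$, and I would show that $0$ is a regular value of $x\mapsto\hat f(x,z)$ on the interior of $B_1^{2n+2}$ for $0<|z|<\delta'$. On $\{|x|^2\le 1-2/m\}$ one has $\hat f(\cdot,z)=f-z$, a submersion because $z$ is a regular value of $f$; on $\{|x|^2\ge 1-1/m\}$ one has $\hat f(\cdot,z)=f$, a submersion there since this region excludes $0$. On $A_m$, a compact subset of $B_1^{2n+2}\setminus\{0\}$, one has $d_x\hat f(x,z)=df(x)-\psi_m'(|x|^2)z\,d(|x|^2)$, and since $|df|\ge c_1>0$, $|d(|x|^2)|\le c_2$ and $|\psi_m'|\le c_3$ there, this does not vanish once $|z|<c_1/(c_2c_3)$. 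Thus for small $|z|$ the set $X_z$ is a smooth $2n$-manifold in the interior; near $S_1$ the cutoff vanishes, so $X_z$ agrees with $V\setminus\{0\}$, which meets $S_1$ transversally by the Milnor-ball property, whence $X_z$ is a manifold with boundary $\partial X_z=V\cap S_1=K_f$, independent of $z$.

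For the symplectic property: on $\{|x|^2\le 1-2/m\}$ the set $X_z=f^{-1}(z)$ is a complex submanifold, and on $\{|x|^2\ge 1-1/m\}$ the set $X_z=V\setminus\{0\}$ is a complex submanifold, so on these ranges $\Omega_{\mathbb{C}^{n+1}}|TX_z$ is nondegenerate by the Wirtinger/Kähler argument. On $A_m$, $X_z$ is genuinely non-complex, but the implicit function theorem applied to $\hat f$ shows that $X_z\cap A_m$ is contained in a shrinking neighbourhood of $V\cap A_m$ and converges to $V$ there in the $C^1$ sense as $z\to 0$; since nondegeneracy of $\Omega_{\mathbb{C}^{n+1}}$ restricted to a $2n$-plane is an open condition and $V$ is symplectic on $A_m$, one obtains that $\Omega_{\mathbb{C}^{n+1}}|TX_z$ is nondegenerate on $A_m$ for $|z|$ small. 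Taking $\delta'$ to be the minimum of $\delta$ and the finitely many smallness bounds produced above yields the claim.

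The main obstacle is entirely confined to the transition annulus $A_m$, where the non-holomorphic factor $\psi_m(|x|^2)$ perturbs the holomorphic function $f$: one must verify both that $\hat f(\cdot,z)$ remains a submersion and that the non-complex level set $X_z$ remains symplectic there. Both follow from the same mechanism, namely uniform estimates / $C^1$-closeness to $V$ on the compact set $A_m$ for $|z|$ small, and it is precisely this point — not the routine complex-geometry input on the two constant regions — that forces the reduction from $\delta$ to the possibly smaller $\delta'$. Note that $m$ is fixed throughout, so the constants $c_1,c_2,c_3$ and hence $\delta'$ are well defined.
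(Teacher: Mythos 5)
Your proposal is correct and follows essentially the same strategy as the paper: compute the linearization $d_x\hat f(x,z)=df(x)-2z\psi_m'(|x|^2)\langle x,\cdot\rangle$, observe it is automatically surjective outside the transition annulus, and use openness of surjectivity (resp. openness of the symplectic condition on a $2n$-plane) together with compactness of the annulus to get both smoothness and nondegeneracy of $\Omega_{\mathbb{C}^{n+1}}|TX_z$ for $|z|$ small. You spell out the boundary transversality $\partial X_z=V\cap S_1$ explicitly where the paper leaves it implicit, but the core argument is the same.
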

\begin{proof} The tangent space of $X_z$ outside $x=0, x \in \mathbb{C}^{n+1}$ is $(TX_z)_x = {\rm ker}\ L(x,z)$ with $L(x,z):\mathbb{C}^{n+1} \longrightarrow \C, z \in D_\delta$, $L(x,z)\xi = df(x)\xi - 2z\,\psi'_m(|x|^2)\leftsc x,\xi \rightsc$. Obviously $L(x,z)$ is surjective if either $z=0$ and $|x|>0$ or if $z \neq 0$ and $0 < |x| \leq 1/1-2m$ or $|x| \geq 1-1/m$, in the following we will denote the intersection of this set with each fibre $X_z$, $z\neq 0$ by $A_z(m)$, note that $A_z(m)$ is closed in $X_z$. On the other hand, the set of those $(x,z)$ for which $L(x,z)$ is onto must be open, since surjectiveness is an open condition. This implies that $L(x,z)$ is surjective for all $(x,z) \in F_z$, provided that $z \neq 0$ is sufficiently small.  Since the fact that $(TX_z)_x$ is a symplectic subspace of $\C^{n+1}$ for a given small $z \neq 0$ is also an open condition, one can argue as in the case of smoothness.
\end{proof}
Furthermore, we note that
\begin{lemma} \label{th:classical-milnor-fibre}
There is an $\epsilon>0$ such that for all $z \in \mathbb{C}, 0 < |z| < \epsilon$, $X_z$ is diffeomorphic to the 'classical' Milnor fibre to $\hat X_z=f^{-1}(z) \cap B^{2n+2}$, that is, there is a diffeomorphism $\Psi_z:X_z\rightarrow \hat X_z$ for any $z \in D^*_\epsilon$.
\end{lemma}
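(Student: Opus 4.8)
The plan is to realise both $X_z$ and $\hat X_z$ as the zero fibres of a single homotopy of defining functions over $B_1^{2n+2}$ and then apply Ehresmann's fibration theorem for manifolds with boundary. Concretely, for $t\in[0,1]$ put $g_t(s)=(1-t)\psi_m(s)+t$, so that $g_t(|x|^2)\equiv 1$ on $\{|x|\le 1-2/m\}$, $g_t(|x|^2)\equiv t$ on $\{|x|\ge 1-1/m\}$, $g_t\in[0,1]$ and $g_t'=(1-t)\psi_m'$, and set
\[
\hat f_t(x)=f(x)-g_t(|x|^2)\,z,\qquad Z_t:=\hat f_t^{-1}(0)\cap B_1^{2n+2}.
\]
Then $\hat f_0=\hat f(\cdot,z)$, so $Z_0=X_z$, while $\hat f_1=f-z$, so $Z_1=f^{-1}(z)\cap B_1^{2n+2}=\hat X_z$. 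I will show that for all sufficiently small $|z|>0$ the total space $Z:=\{(x,t)\in B_1^{2n+2}\times[0,1]:\hat f_t(x)=0\}$ is a smooth compact submanifold with corners and that $\pi:=\mathrm{pr}_{[0,1]}|_Z:Z\to[0,1]$ is a proper submersion whose restriction to the fibrewise boundary $Z\cap(S_1\times[0,1])$ is still a submersion; Ehresmann then trivialises $\pi$, and composing the trivialisation's values at $t=0$ and $t=1$ yields $\Psi_z:X_z\xrightarrow{\ \sim\ }\hat X_z$.

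The interior regularity reduces verbatim to Lemma \ref{th:milnor-fibre}: one computes $d_x\hat f_t(x)=df(x)-2(1-t)\psi_m'(|x|^2)\,z\,\langle x,\cdot\rangle=L\big(x,(1-t)z\big)$ in the notation of its proof, and since $|(1-t)z|\le|z|$, the surjectivity of $L(x,w)$ established there for $|w|<\delta'$ — obtained by combining the cases $\psi_m'=0$, where $L(x,w)=df(x)$ is onto since $df$ vanishes only at $x=0$, which is not in $Z_t$ because $\hat f_t(0)=f(0)-z=-z\ne0$, with openness of surjectivity and compactness of the shell $\{1-2/m\le|x|\le 1-1/m\}$ at $w=0$ — shows that $d_x\hat f_t(x)$ is onto for every $x\in Z_t\cap(B_1^{2n+2})^{\circ}$, $t\in[0,1]$ and $0<|z|<\delta'$. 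On $S_1=\partial B_1^{2n+2}$ one has $\psi_m\equiv0$, hence $\hat f_t|_{S_1}=f-tz$, and I need $f^{-1}(tz)\pitchfork S_1$ for all $t\in[0,1]$: for quasihomogeneous $f$ the Euler field is tangent to $f^{-1}(0)$ and transverse to $S_1$, so $f^{-1}(0)\pitchfork S_1$, and then $f^{-1}(w)\pitchfork S_1$ for all $|w|<\epsilon$ after shrinking $\epsilon$, using openness of transversality, compactness of $K_f=f^{-1}(0)\cap S_1$ and a limiting argument. Choosing $\epsilon\le\delta'$ so small that all of this holds, a stratumwise check (the interior, the face $S_1\times(0,1)$, and the faces $t\in\{0,1\}$) shows that $Z$ is a neat codimension-two submanifold with corners along which $G(x,t):=\hat f_t(x)$ and its restriction to $Z\cap(S_1\times[0,1])$ are submersions onto $\mathbb{C}$; and $\pi$ is proper since $B_1^{2n+2}\times[0,1]$ is compact.

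Finally I would invoke Ehresmann's theorem for manifolds with boundary: lift $\partial_t$ to a vector field on $Z$ tangent to the fibrewise boundary $Z\cap(S_1\times[0,1])$ — possible because $\pi$ and its restriction to that boundary are both submersions, so local lifts exist and patch by a partition of unity, tangency being preserved under convex combinations — and flow; by properness the flow is complete over $[0,1]$ and its time-one map is a diffeomorphism $Z_0\to Z_1$, i.e. $\Psi_z:X_z\xrightarrow{\ \sim\ }\hat X_z$ (restricting also to a diffeomorphism $\partial X_z\to\partial\hat X_z$). The only genuine content is the uniform-in-$(t,z)$ transversality: in the interior it is literally the proof of Lemma \ref{th:milnor-fibre} applied to the parameter $(1-t)z$, and along $S_1$ it is the classical good-sphere statement, here immediate from quasihomogeneity together with a limiting argument; the Ehresmann step itself is routine. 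A more hands-on alternative that avoids the corner bookkeeping is to observe that $X_z$ and $\hat X_z$ already coincide over the core $\{|x|\le 1-2/m\}$ and to build the isotopy only over the shell, but the homotopy-of-defining-functions argument above seems cleanest.
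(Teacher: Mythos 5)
Your proposal follows the same route as the paper: the paper introduces the homotopy of defining conditions $G_{(z,t)} = \{x \in B^{2n+2} : f(x) = t\psi_m(|x|^2)z + (1-t)z\}$ (your $g_t$ with $t\leftrightarrow 1-t$), cites the Lemma~\ref{th:milnor-fibre} argument for fibrewise regularity, and concludes by trivialising the resulting fibre bundle over $[0,1]$. You simply spell out what the paper leaves tacit — boundary transversality along $S_1$ via the Euler field for small $|z|$, and the Ehresmann-with-corners step — but the mechanism is identical.
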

\begin{proof} For $(z,t) \in (\mathbb{C} \setminus \{0\}) \times [0;1]$ consider $G_{(z,t)} = \{ x \in
B^{2n+2} \suchthat f(x) = t\psi(|x|^2)z + (1-t)z\}$. Using the same argument as in the proof of Lemma \ref{th:milnor-fibre} these are smooth manifolds for all sufficiently small $z$. If we fix such a $z$, the $G_{(z,t)}$ form a differentiable fibre bundle over $[0;1]$. Hence $G_{(z,1)} = X_z$ and $G_{(z,0)} = f^{-1}(z) \cap B^{2n+2}$ are diffeomorphic. 
\end{proof}
We can now deduce
\begin{lemma}\label{symplfibres}
Choose $\delta>0$ s.t. $0<\delta< {\rm min} (\delta',\epsilon)$ with $\delta',\epsilon$ as in Lemma \ref{th:classical-milnor-fibre} resp. Lemma \ref{th:milnor-fibre}, then for any two choices $z,z'$ with $0<|z|,|z'|<\delta$ we have that $(X_{z}, \omega_z)$ is symplectomorphic to $(X_{z'},\omega_{z'})$, furthermore any two choices of $m>2, m \in \mathbb{N}$ such that $X_z$ in in (\ref{milnorfibr}) remains smooth lead to symplectomorphic fibres $X_z$.
\end{lemma}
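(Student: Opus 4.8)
The plan is to deduce both parts of the statement — that the symplectic type of $(X_z,\omega_z)$ is independent of $z\in D^*_\delta$, and that it is independent of the choice of cutoff $\psi_m$ among those for which the fibres $X_z$ stay smooth — from the standard principle that symplectic parallel transport in a fibration whose total space carries a closed $2$-form restricting fibrewise to a symplectic form induces symplectomorphisms between fibres; the only delicate point is completeness of the relevant horizontal vector fields near the boundary $\partial X_z=K_f$.

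For the $z$-dependence I would regard $f:X\to D^*_\delta$ as such a fibration, equipping the total space with the closed $2$-form $\Omega:=\mathrm{pr}_1^*\Omega_{\C^{n+1}}|X$, where $\mathrm{pr}_1:\C^{n+1}\times D_\delta\to\C^{n+1}$; by Lemma \ref{th:milnor-fibre} its restriction to $X_z$ is the non-degenerate form $\omega_z$, so the $\Omega$-orthogonal complement $H$ of the vertical subbundle $T^vX=\ker df$ is a connection, and I would transport along a path $\gamma$ in $D^*_\delta$ by integrating the $H$-horizontal lift $\widetilde u$ of $\dot\gamma$. The observation securing completeness is that on $\{\,|x|^2\ge 1-1/m\,\}$ one has $\psi_m\equiv0$, so there the fibre equals $\{\,x:\,1-1/m\le|x|^2\le1,\ f(x)=0\,\}$ independently of $z$; hence the fibration is a product near $\partial B_1^{2n+2}$, and since $\Omega$ is pulled back along $\mathrm{pr}_1$ it annihilates every vector tangent to the fibres of $\mathrm{pr}_1$, so in the product region the naive (fibrewise-constant) lift of $\dot\gamma$ is tangent to $X$ and $\Omega$-orthogonal to everything, hence equals $\widetilde u$; in particular $\widetilde u$ is there tangent to $\partial B_1^{2n+2}$. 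Together with compactness of $B_1^{2n+2}$, with the fact that $0\notin X_z$ for $z\ne0$, and with the fact that over any compact subarc of $D^*_\delta$ the fibres stay a fixed positive distance from $x=0$, this makes the flow of $\widetilde u$ defined for all time. I would then run the usual Moser computation — $\mathcal{L}_{\widetilde u}\Omega=d\,\iota_{\widetilde u}\Omega$, while $\iota_{\widetilde u}\Omega$ annihilates vertical vectors by the definition of $H$ and hence so does its exterior derivative — to conclude that the transport maps pull $\omega_{z'}$ back to $\omega_z$; connectedness of $D^*_\delta$ then gives that all $(X_z,\omega_z)$ with $0<|z|<\delta$ are mutually symplectomorphic.

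For the $m$-dependence I would fix admissible $m_1,m_2$, pick $0<a<b<1$ with both $\psi_{m_1},\psi_{m_2}$ equal to $1$ on $[0,a]$ and to $0$ on $[b,1]$, and interpolate linearly, $\psi_s:=(1-s)\psi_{m_1}+s\psi_{m_2}$ for $s\in[0,1]$, which remains a path of admissible cutoffs in that class. Over $[0,1]$ I would form $\mathcal{X}:=\{\,(x,s):\,f(x)=\psi_s(|x|^2)\,z\,\}$ with the closed form $\mathrm{pr}_1^*\Omega_{\C^{n+1}}$ and re-run the compactness argument of Lemma \ref{th:milnor-fibre} with the extra parameter $s$: since $0$ is the only critical point of $f$ near the origin (so small $z\ne0$ is a regular value) and on $\{z=0\}$ as well as near $x=0$ and near $\partial B_1^{2n+2}$ the fibres are smooth symplectic complex hypersurfaces, this produces a $\delta_0>0$ so that for $|z|<\delta_0$ every $X_z^{\psi_s}$ is a smooth symplectic submanifold, the family being a product in $s$ near $\{|x|^2\le a\}$ and near $\{|x|^2\ge b\}$. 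Symplectic parallel transport over $[0,1]$, complete for the same reasons as above, then yields a symplectomorphism $X_z^{m_1}=X_z^{\psi_0}\to X_z^{\psi_1}=X_z^{m_2}$ whenever $|z|<\delta_0$; for a $z$ admissible for both $m_1,m_2$ one first uses the first part to move to such a small value of $z$.

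The step I expect to cost the most care is precisely this completeness near $\partial X_z=K_f$, together with the claim that the interpolating cutoffs still give smooth symplectic fibres; both should be dispatched by the product structure of the fibrations near $\{|x|^2\ge b\}$ and the compactness argument already contained in Lemma \ref{th:milnor-fibre}, so no genuinely new ingredient beyond the two preceding Lemmata should be needed.
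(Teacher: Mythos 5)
Your proof of the $z$-independence is the same as the paper's (symplectic parallel transport along the $\Omega$-orthogonal connection, with the product structure near $\partial_h X$ securing that horizontal lifts stay tangent to $\partial B_1^{2n+2}$; the paper defers the boundary-tangency to Lemma \ref{milnorsymplectic}, you argue it directly, but the idea is identical). Your proof of the $m$-independence, however, takes a genuinely different route. The paper fixes $z$, pulls the forms $\omega_{X_{(z,s)}}$ back to one fibre $X_{(z,m)}$ by diffeomorphisms that are the identity near the boundary, observes that the resulting path of symplectic forms has constant relative cohomology class, and then applies Moser to obtain the symplectomorphism. You instead build a one-parameter fibration $\mathcal X \to [0,1]$ by interpolating the cutoffs, re-run the surjectivity-plus-compactness argument of Lemma \ref{th:milnor-fibre} with the extra parameter to get smoothness of the fibres for small $z$, and then parallel transport there — the same mechanism as in the first part, so your argument is more uniform. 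There is a second, more substantive benefit: the paper asserts $H^2(X_{(z,m)},\partial X_{(z,m)},\mathbb{C})=0$, but by Poincar\'e--Lefschetz duality $H^2(X,\partial X;\mathbb{C})\cong H_{2n-2}(X;\mathbb{C})$, which for $n=2$ is $\mathbb{C}^\mu\neq 0$ (and $\mathbb{C}$ for $n=1$), so the cohomological step as written fails outside $n\ge 3$. (It can be repaired by noting that all the $\omega_s$ are exact with explicit primitives pulled back from $\theta_{\mathbb{C}^{n+1}}$ and that these primitives agree on a collar of $\partial X$, so one gets the required $\theta_s$ directly without invoking the vanishing of $H^2(X,\partial X)$.) Your parallel-transport construction sidesteps this cohomological input entirely, since $\mathcal L_{\tilde u}\Omega = d\,\iota_{\tilde u}\Omega$ already kills the fibrewise Lie derivative once $\iota_{\tilde u}\Omega$ annihilates vertical vectors, regardless of the topology of the fibre; the only thing you must supply is smoothness of $X_z^{\psi_s}$ for the interpolated cutoffs, which your extended compactness argument does (the only dangerous region is the annulus $a\le|x|^2\le b$, which is bounded away from $x=0$ and from $\partial B_1^{2n+2}$, so the argument of Lemma \ref{th:milnor-fibre} carries over verbatim with the extra compact parameter $s$).
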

\begin{proof}
As we will show in Lemma \ref{milnorsymplectic}, $X$, provided $0<\delta$ is small enough, is a symplectic fibration with closed $2$-form $\Omega$ on $X$ and the annihilator $T^hX$ of the vertical tangent space $T^vX$ w.r.t. $\Omega$ is tangent to $\partial_h X: =\bigcup_{z\in D^*}\partial X_z$, hence parallel transport along $T^hX$ is well defined and a fibrewise symplectomorphism. To prove the second assertion, fix any $2<m<m' \in \mathbb{N}$ and a $z$ s.t. $0<|z|<\delta$ and define a family of symplectic forms on $X_{z,m}$ by fixing a smooth family of diffeomorphisms s.t. $\tilde \Psi_{(z,s)}(X_{(z,m)})=X_{z,s}, \ s\in  [m,m']$. Here, $X_{z,s}=\{x \in B_1^{2n+2}|f(x) = \psi_s(|x|^2)z\}$. That this family exists follows since the $X_{(z,s)}$ form (for $z$ fixed) a differentiable  fibre bundle over $[m,m']$, we can assume that $\Psi_{(z,m')}(X_{(z,m)})= X_{(z,m')}$ and $\Psi_{(z,m)}=Id_{X_{(z,s)}}$. So define a family of symplectic foms on $X_{(z,m)}$ by
\[
\omega_s=\Psi_{(z,s)}^*\omega_{X_{(z,s)}},
\]
where the latter is the symplectic form on each $X_{(z,s)}$ given by Lemma \ref{th:milnor-fibre}. Now since the boundaries of the $X_{(z,s)}$ coincide, we can assume that $\Psi_{(z,s)}|\partial X_{(z,s)}=id$, hence $[\omega_s-\omega_m]\in H^2(X_{(z,m)},\partial X_{(z,m)},\mathbb{C})$. Now $H^2(X_{(z,m)},\partial X_{(z,m)},\mathbb{C})=0$ since $X_{(z,m)}$ is diffeomorphic to the Milnor fibre $\hat X_z$, so there is a smooth family of $1$-forms $\theta_s$ on $X_{(z,m)}$ vanishing on its boundary s.t.
\[
d\theta_s=\frac{d}{dt}\omega_s,\ s \in [m,m'],
\]
so by Moser's Lemma there is a family of diffeomorphisms $\Phi_s:X_{(z,m)}\rightarrow X_{(s,m)}, s\in [m,m'] $ so that $\Phi_m=id$, $\Phi_s|\partial X_{(z,m)}=id, s \in [m,m']$ and $\omega_m=\Psi_s^*(\omega_s)$, i.e. $\omega_{m'}=\Psi_{(z,m')}^*\omega_{X_{(z,m')}}$ is symplectomorphic to $\omega_m=\Omega_{\mathbb{C}^{n+1}}|X_{(z,s)}$ which is the assertion.
\end{proof}
We will show in the following that $(X, \Omega)$, for any $\delta$ chosen as above and restricted to $D^*_\delta:=D_\delta\setminus \{0\}$ (here, $\Omega$ is the restriction of $\Omega_{\mathbb{C}^{n+2}}$ to $X$), is a symplectic fibration in the following sense (the form of the definition is mainly drawn from \cite{oancea}).
\begin{Def}\label{symplfibr}
Let $\pi:F \hookrightarrow E \rightarrow S$ be a locally trivial fibration over a symplectic manifold $(S,\beta)$ with boundary, so that the fibre $F$ is a compact manifold with boundary. Let $\partial_h E$ denote the union of the boundaries of the fibres, let $\partial_v E:=\pi^{-1}(\partial S)$, so that $\partial E=\partial_h E\cup \partial _v E$ and $E$ is a manifold with corners of codimension $2$ and let for any smooth $\gamma:[0,1]\rightarrow S$ $E_\gamma:=\pi^{-1}({\rm im}\ \gamma)$. We will call $(E,\pi)$ a {\it symplectic fibration} with {\it contact type fibre boundary} if there is a $\Omega \in \Omega^2(E,\mathbb{R})$ and a vertical vector field $Z$ defined on an open neighbourhood $\mathcal{U}\subset E$ of $\partial_h E$ so that 
\begin{enumerate}
\item $\Omega$ is nondegenerate along the fibres and globally closed on $E$,
\item $Z$ is transverse and outward pointing along $\partial_h E$ and satisfies $L_Z\Omega|E_\gamma =\Omega|E_\gamma$,
for any path $\gamma$ as above.
\item For some $z \in S$ consider the trivial fibration $\tilde{\pi} : \tilde{E} = S
\times E_z \rightarrow S$, with $\tilde \Omega=pr_2^*(\Omega|E_z) +\tilde \pi^*(\beta)$ for some closed $\beta \in \Omega^2(S)$ and $\tilde Z$ satisfies $(pr_2)_*(\tilde Z)=Z_z$ using the projection $pr_2:\tilde E\rightarrow E_z$. Then there are neighbourhoods $N\subset \mathcal{U}$ of $\partial_h E$ and $\tilde N$ of $\partial_h\tilde E$ so that there is a diffeomorphism $\Theta:N\rightarrow \tilde N$ s.t. $\Theta^*\tilde \Omega=\Omega$, $\Theta_*Z=\tilde Z$ and
\begin{equation} \label{boundtriv}
\xymatrix{
 {N} \ar[rr]_{\Theta} \ar[dr]_{\pi} && {\tilde{N}} \ar[dl]^{\tilde{\pi}} \\
 & {S} &
}
\end{equation}
\end{enumerate}
For each $z \in S$ let $\omega_x:= \Omega|_{TE_z}$ and $\alpha_z:= i_Z\Omega|_{T\partial E_z}$. Then if  $[\omega_z,\alpha_z]\in H^2(E_z,\partial E_z,\mathbb{R})$ is zero we say $(E,\Omega)$ is an {\it exact} symplectic fibration.
\end{Def}
Note that $\Omega_s = \Omega + s\pi^*\beta$, $s >0$, is symplectic for $s$ big enough if $\beta$ is symplectic. Setting $\Omega_\gamma:=\Omega|E_\gamma$, $\gamma$ any path as above, $d\Omega_\gamma=0$ and $L_Z\Omega_\gamma=\Omega_\gamma$ imply that $\Omega_\gamma$ is exact on $\mathcal{U}_\gamma=\mathcal{U}\cap E_\gamma$, with primitive
\[
\Theta_\gamma=i_Z\Omega_\gamma,
\]
and with $\phi_t$ the flow of $Z$ there is a collar
\begin{equation}\label{utriv}
\begin{split}
\hat \Psi:\partial_h E_\gamma \times [1-\delta;1]\rightarrow \mathcal{U}_\gamma\\
(x,\hat S)\mapsto \phi_{\rm ln\ \hat S}(x)
\end{split}
\end{equation}
for some $\delta >0$. Further one has $\phi_t^*\Omega_\gamma=e^t\Omega_\gamma$ so (using $e^t=\hat S$)
\begin{equation}\label{ftriv}
\hat \Psi^*\Theta_\gamma=e^t\Theta|_{\partial_h E_\gamma},
\end{equation}
note that each fibre $E_z, z \in S$ is a symplectic manifold with contact type boundary. Note that by the condition (\ref{boundtriv}) and the fact that by (\ref{ftriv}) we have $\hat \Psi^*\Omega_\gamma=d(\hat S\Theta_\gamma|_{\partial_h E_\gamma})$ imply that the horizontal distribution $T^hE=({\rm ker}\  \pi_*)^{\perp_\Omega}$ is tangent to $\partial_h E$ and  
\begin{equation}\label{Striv}
d\hat S|T^hE_\gamma=0,
\end{equation}
i.e. that the paralleltransport $\tau_\gamma$ along the path $\gamma:[a,b]\rightarrow S$ is well-defined and 
\[
\tau_\gamma:E_{\gamma(a)}\rightarrow E_{\gamma(b)}
\]
a symplectic diffeomorphism. Note furthermore that there is no reason for $\Omega_\epsilon$ to be exact in $\mathcal{U}$, since $\pi^*\beta$ is not assumed to be exact (although this will be the case in the following).
\begin{ass}\label{compatiblecplx}
For a fixed complex structure $j$ on $S$ compatible with $\beta$ we call an almost complex structure $J$ on $E$ {\it compatible relative to} $j$ if the following conditions are satisfied:
\begin{enumerate}
\item $D\pi \circ J=j\circ D\pi$,
\item $\Omega(\cdot,J\cdot)$ is symmetric on $E$, positive definite along the fibres and $J(TE^h)=TE^h$,
\item In a neighbourhood $\mathcal{U}$ of $\partial_h E$, one has that $Z \perp \{x \in \mathcal{U}: \hat S={\rm const.}\}$, i.e. $Z \perp T(\partial _h E)$ with respect to $g=\Omega(\cdot, J\cdot)$.
\end{enumerate}
\end{ass}
Relative to the splitting $TE_z=TE^v_z\oplus TE^h_z \simeq TE^v_z\oplus TS_z$ where $TE^v={\rm ker\ } \pi_*$, $z=\pi(x)$, the second condition above means that $J$ is symmetric:
\begin{equation} \label{compatible-j}
 J_x = \begin{pmatrix} \pi^*(j)(x) & 0 \\ 0 & J_z^{v}(x) \end{pmatrix}.
\end{equation}
Furthermore, by the symmetry of $J$, the second property from above and since $\beta$ is compatible with $j$, it follows that for sufficiently positive $s>0$ that $\Omega_s = \Omega+s \pi^*\beta$ is compatible with $J$. Note that we have the fibrewise splitting (setting $\hat S=e^t$)
\begin{equation} \label{eq:fine-splitting}
T_xE^v_z \simeq \R Z(x) \oplus \R R(x) \oplus (\ker\, \Theta(x) \cap \ker\, dt(x) \cap
T_xE^v_z),\ x \in E\cap\mathcal{U}, \pi(x)=z,
\end{equation}
where $Z$ is the Liouville vector field of $\theta_z$, that is, $\theta_z=i_Z\Omega_z|E_z$ (here, $z=\pi(x)$) and $R$ is defined by $R=JZ$ on $\mathcal{U}$ (hence $\theta_z(R)=i_Z\omega_z(R)=g(Z,Z)>0$ and $R|\partial_h E$ is proportional to the Reeb vector field of $\alpha$). Then from the above it follows that with respect to (\ref{eq:fine-splitting}),
\begin{equation} \label{eq:boundary-j}
 J_z^{v}(x) = \begin{pmatrix} 0 & -1 & 0 \\ 1 & 0 & 0 \\ 0 & 0 & * \end{pmatrix}.
\end{equation}
From this we conclude that the Liouville vector field $Z$ defines a fibrewise metric collar on $\mathcal{U}$ by using the decomposition for $z=\pi(x) \in S$
\[
T_xE^v_z=\mathbb{R}Z(x)\oplus (\mathbb{R}Z(x))^\perp
\]
where the complement is taken with respect to the metric $g_z=\Omega(\cdot, J\cdot)|_{TE_z}$, by formula (\ref{boundtriv}) setting $g_s=\Omega_s(\cdot, J\cdot)$ these fit together to define an orthogonal splitting relative to coordinates $(t,x)$ on $\mathcal{U}, t \in [1-\delta,1], x \in \partial_h E$
\begin{equation}\label{collarmetric2}
\Psi^*(g_s|_{\mathcal{U}})(t,x)= (d\hat S)^2\oplus g_s|{\partial_h E\times\{t\}}(x,t),
\end{equation}
it is clear that this splitting can be extended along the flow of $Z$ to get a 'metric collar' $\mathcal{U}'=(0,\epsilon]\times\partial_h E$, i.e.
$g_s|{\partial_h E\times\{t\}}(x,t)$ is independent of $t$ on $\mathcal{U}'$.
\begin{lemma}\label{milnorsymplectic}
The Milnor fibration $(X,\Omega)$, given by $f:X\rightarrow D^*_{\delta}$ as in (\ref{milnorfibr}) is an exact symplectic fibration with $\Omega$ given by the restriction of $\Omega:=\Omega_{\mathbb{C}^{n+2}}$ to $X\subset \mathbb{C}^{n+2}$. Furthermore, on each fibre $X_z\subset X,\ z \in D^*_\delta$, there is a nearly complex structure $J^v_z$, which coincides on $A_z(m)\subset X_z$ for any $z \in D^*_\delta$ with the restriction of the canonical complex structure on $\mathbb{C}^{n+1}$ to $X_z$, is compatible to $\omega_z=\Omega|TX_z$, varies smoothly in $z$ and satisfies on a neighbourhood $N\subset X$ of $\partial_h X$ $J^v_{z}=({\rm pr}_2\circ \Theta_{z})^*(J_{z'}), \ z\in D_\delta^*$ for some fixed $z' \in D^*_\delta$. Setting for $\pi(x)=z$
\[
J_x=J^v_z(x) \oplus_{\Omega} f^*(j)(x),
\]
(the splitting is relative to $\Omega$) where $j$ is the canonical complex strucure on $D^*_\delta$, equips $(X,\Omega)$ with an almost complex structure so that $J|\bigcup_z A_z=J_{\mathbb{C}^{n+1}}|\bigcup_z A_z$and that is compatible to $j$ and to $\Omega$.
\end{lemma}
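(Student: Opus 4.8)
The plan is to verify in turn the three conditions of Definition~\ref{symplfibr} and the exactness condition, and then to build the almost complex structure fibrewise; the technical heart is the last step. The key structural observation I would use is that the fibration is a genuine product near $\partial_h X$: since $\psi_m(t^2)=0$ for $t\geq 1-1/m$, the set $X_z\cap\{|x|\geq 1-1/m\}=\{x:1-1/m\leq|x|\leq 1,\ f(x)=0\}$ is independent of $z$, so a neighbourhood of $\partial_h X$ in $X$ is identified, as a fibration over $D^*_\delta$, with $\big(f^{-1}(0)\cap\{1-1/m\leq|x|\leq 1\}\big)\times D^*_\delta$, and under this identification $\Omega=\Omega_{\C^{n+1}}|_{f^{-1}(0)}\oplus\Omega_\C|_{D^*_\delta}$ because $\Omega_{\C^{n+2}}=\Omega_{\C^{n+1}}\oplus\Omega_\C$ splits there; in particular $\partial X_z$ is the link $K=f^{-1}(0)\cap S^{2n+1}$, which for our (small) choice of radius is a regular level set of $\varphi:=\tfrac14|z|^2$ restricted to $f^{-1}(0)\setminus\{0\}$.

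First I would note that $\Omega=\Omega_{\C^{n+2}}|_X$ is globally closed, being the restriction of a closed form, and that by Lemma~\ref{th:milnor-fibre} and the choice $\delta<\min(\delta',\epsilon)$ of Lemma~\ref{symplfibres} it is nondegenerate along the fibres; moreover $\Omega|_X$ is itself symplectic for $\delta$ small, since on $\{|x|\leq 1-2/m\}$ resp.\ $\{|x|\geq 1-1/m\}$ the set $X$ is a complex submanifold of $\C^{n+2}$ (a graph of $f$, resp.\ $f^{-1}(0)\times D^*_\delta$) carrying the K\"ahler form, while in the transition annulus $X$ is $C^1$-close to $f^{-1}(0)\times D^*_\delta$ once $\delta$ is small; hence $\Omega|_{T^hX}$ with $T^hX:=(T^vX)^{\perp_\Omega}$ is nondegenerate and, by the same continuity, positive for the orientation induced by $f^*j$. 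For the vertical Liouville field I would take, near $\partial_h X$, a cutoff of the gradient of $\varphi$ along $f^{-1}(0)\setminus\{0\}$ with respect to the induced K\"ahler metric, extended by zero in the $D^*_\delta$-direction through the product structure; since $\varphi$ is the restriction of the K\"ahler potential of $\Omega_{\C^{n+1}}$, one has $i_Z\Omega|_{TX_z}=\theta_{\C^{n+1}}|_{TX_z}$ near $\partial X_z$, whence $L_Z\Omega|_{X_\gamma}=\Omega|_{X_\gamma}$ for every path $\gamma$, while $Z$ is transverse and outward along $\partial_h X=K$ because $\varphi|_{f^{-1}(0)}$ has no critical points on $K$. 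This gives conditions (1) and (2) of Definition~\ref{symplfibr}; condition (3) then holds literally with $\beta=\Omega_\C|_{D^*_\delta}$, $\Theta$ the above product identification and $\tilde Z=Z$. Exactness follows at once, since $\theta_{\C^{n+1}}|_{TX_z}$ is a primitive of $\omega_z$ on all of $X_z$ whose restriction to $\partial X_z$ is $i_Z\Omega|_{T\partial X_z}=\alpha_z$; for $n\geq 3$ it is anyway automatic because $H^2(X_z,\partial X_z;\R)\cong H_{2n-2}(X_z)=0$.

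For the fibrewise almost complex structure I would proceed as follows. On $A_z(m)=X_z\cap\big(\{0<|x|\leq 1-2/m\}\cup\{|x|\geq 1-1/m\}\big)$ the set $X_z$ is a complex submanifold of $(\C^{n+1},J_{\C^{n+1}})$ --- there it equals $f^{-1}(z)$, resp.\ $f^{-1}(0)$ --- so $J^v_z:=J_{\C^{n+1}}|_{TX_z}$ is defined there and, $\Omega_{\C^{n+1}}$ being K\"ahler, is $\omega_z$-compatible; on the boundary collar this already agrees with $(\mathrm{pr}_2\circ\Theta_z)^*J_{z'}$ once $J_{z'}$ has been fixed to equal $J_{\C^{n+1}}$ on $A_{z'}(m)$, so the two prescriptions are consistent on the overlap. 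I would then extend using that the space of $\omega$-compatible linear complex structures on a symplectic vector space is contractible: the bundle over $X$ whose fibre over $x$ is the space of $\omega_{\pi(x)}$-compatible complex structures on $T^v_xX$ has contractible fibres, so a section prescribed on the closed subset $\bigcup_z A_z(m)\cup N$ extends to a smooth global section, which is the required family $\{J^v_z\}_{z\in D^*_\delta}$. Finally, with $J:=J^v\oplus_\Omega f^*j$ for the $\Omega$-orthogonal splitting $TX=T^vX\oplus T^hX$, one verifies: $J$ is $j$-compatible because $Df\circ f^*j=j\circ Df$ and $T^vX=\ker Df$; $J$ preserves $T^hX$; $\Omega(\cdot,J\cdot)$ is block-diagonal for the $\Omega$-orthogonal splitting, symmetric and positive on $T^vX$ by compatibility of $J^v$, and on the $2$-plane $T^hX$ it is symmetric automatically and positive by the nondegeneracy/positivity of $\Omega|_{T^hX}$ above; and on $\bigcup_z A_z$, where $X$ is a complex submanifold of $\C^{n+2}$ and $f$ is holomorphic, the $\Omega$-orthogonal complement of the complex subspace $T^vX$ is complex, so $J$ coincides with $J_{\C^{n+1}}$ there.

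The \emph{main obstacle} is the construction of the family $\{J^v_z\}$ meeting all four requirements simultaneously --- agreement with $J_{\C^{n+1}}$ on the closed set $A_z(m)$, $\omega_z$-compatibility, smooth dependence on $z$, and the pulled-back model on $N$ --- which rests on the parametrized relative contractibility of the space of compatible almost complex structures; a secondary point is keeping track of the successive shrinkings of $\delta$ (from Lemmata~\ref{th:milnor-fibre} and \ref{th:classical-milnor-fibre}, the symplecticity of $\Omega|_X$, and the positivity of $\Omega|_{T^hX}$) and checking they can be met at once. Once the product structure near $\partial_h X$ is in place, the verification of the conditions of Definition~\ref{symplfibr} and of exactness is essentially bookkeeping.
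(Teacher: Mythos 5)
Your proof is essentially correct and covers the same ground as the paper, but you construct the fibrewise almost complex structure $J^v_z$ by a genuinely different route. The paper's proof applies the polar-decomposition normalization to the pair $(\omega_z,g_z) = (\Omega_{\C^{n+1}}|_{X_z}, g_{\C^{n+1}}|_{X_z})$ \emph{globally on each fibre}; this produces a compatible $J_z$ in one step, and the agreement with $J_{\C^{n+1}}$ on $A_z(m)$ comes out automatically precisely because there $X_z$ is a complex (hence K\"ahler) submanifold of $\C^{n+1}$, so the restricted $(\omega,g,J)$ are already compatible and polar decomposition returns the induced $J$. You instead prescribe $J^v_z=J_{\C^{n+1}}|_{TX_z}$ on $A_z(m)$ (and on $N$) and then invoke the relative contractibility of the space of $\omega_z$-compatible complex structures to extend across the transition annulus $\{1-2/m<|x|<1-1/m\}$. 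Both arguments are correct; the paper's is shorter and avoids the parametrized extension lemma, while yours has the advantage of making explicit the product structure of $(X,\Omega)$ near $\partial_h X$ — the observation that $\psi_m\equiv 0$ on $\{|x|\geq 1-1/m\}$ collapses $X$ there to $\bigl(f^{-1}(0)\cap\{1-1/m\leq|x|\leq 1\}\bigr)\times D^*_\delta$ — which the paper uses only implicitly, and which is in fact what makes condition (3) of Definition \ref{symplfibr} and the pulled-back form $J^v_z=(\mathrm{pr}_2\circ\Theta_z)^*J_{z'}$ on $N$ transparent. Your discussion of positivity of $\Omega|_{T^hX}$ is more than the statement requires (Assumption \ref{compatiblecplx} only demands positive-definiteness along the fibres, with positivity on $T^hE$ recovered for $\Omega_s=\Omega+s\pi^*\beta$ with $s\gg 0$), but it does no harm.
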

\begin{proof}
That there exists a fibrewise almost complex structure with the asserted properties follows by the standard procedure associating to a symplectic form $\omega_z=\Omega_{\mathbb{C}^{n+1}}|X_z$ and the fibrewise restricted metric $g_z=g_{\mathbb{C}^{n+1}}|X_z$ a (unique) compatible almost complex structure $J_z$ such that $g_z=\omega_z(\cdot, J\cdot)$, it is clear that the construction is smooth in $z$ and that $J_z$ coincides with the induced complex structure on $A_z\subset X_z$. Now define $\Omega$ by restricting $\Omega_{\mathbb{C}^{n+2}}$ to $X$, this defines a closed two-form on $X$ which is fibrewise symplectic by Lemma \ref{th:milnor-fibre}. Furthermore $d(\theta_{\mathbb{C}^{n+1}})|X_z=\omega_z$ follows by definition and it is well-known (see for instance \cite{abe}) that $\alpha_z:=\theta_{\mathbb{C}^{n+1}}|\partial X_z$ defines a family of contact forms on the fibrewise boundaries. To be explicit, the vector field $Z$ defined fibrewise by $i_Z\Omega|TX_z=\theta_{\mathbb{C}^{n+1}}|T(X_z\cap \mathcal{U})$ in a neighbourhood $\mathcal{U}\cap X_z$ of 
$\partial X_z$ satisfies the required $L_Z\Omega_\gamma=\Omega_\gamma$, for any path $\gamma:[0,1]\rightarrow D_\delta^*$, which implies that $\alpha_z$ defines a contact form for any $z \in D^*_\delta$ (see \cite{duff}). That the orthogonality requirement of \ref{compatiblecplx} is satisfied follows from the fact that the gradient $Z_r:={\rm grad}(r)$ of the radius function $r$ on $B_r^{2n+2}, \ 0<r<1$ satisfies by direct calculation $i_{\frac{1}{2}{Z_r}}\Omega_{\mathbb{C}^{n+1}}=\theta_{\mathbb{C}^{n+1}}$, so $Z_z, \ z\in D_\delta$ equals the orthogonal projection of $Z_r/2$ to $X_z$, hence $Z$ is orthogonal to the levels sets of the function $\tilde S_z:=r|X_z$ which is defined on a neighbourhood of $\partial X_z$ for any $z \in D^*_\delta$, from which it follows that, over $\mathcal{U}$, $Z$ is orthogonal to the level sets of $\hat S$.
\end{proof}

\subsection{Relative $(n,0)$-forms and winding numbers}\label{relativen0}
Let now $n\geq 2$ and $\hat s$ be the global section of $\mathcal{H}^n(f_*\Omega^\cdot_{\hat X/D^*_{\delta}})$ defined by the polynomial $1 \in M(f)$ ($M(f)$ denoting the Milnor algebra), that is, having the property that 
\begin{equation}\label{features}
dz_0\wedge\dots\wedge dz_n=d f\wedge \hat s \in \Omega^{n+1}(\hat X,\mathbb{C}).
\end{equation}
Now note that for any $x \in D^*_\delta$, $s_x:=\Psi_x^*(\hat s|\hat X_x)$ defines a closed $n$-form on $X_x$, which is holomorphic on the set 
\begin{equation}\label{tildea}
\tilde A_x(m):=\{z\in X_x|\psi_{m}(|z|^2)=1\}\subset A_x(m)\subset X_x,
\end{equation}
we will denote the resulting global section of ${\bf H}^n(Z,\mathbb{C})$ by $s$. Here, ${\bf H}^n(Z,\mathbb{C})$ denotes the vector bundle whose fibre over $u \in S^1$ is the $n$-th cohomology of the complex $\Omega^*(X_u,\mathbb{C})$ (writing $Z\hookrightarrow X\rightarrow S^1_\delta$ and $Z_u=X_u$). In the following it will be convenient to assume that the family $\Psi_x, x \in D^*_\delta$ from Lemma \ref{th:classical-milnor-fibre} is chosen to be equivariant with respect to the quasihomogeneous circle action which defines a fibre bundle homomorphism on $\hat X$ as well as on $X$ by setting on the latter $\sigma_t(z,x)=(\sigma_t(z),e^{2\pi it}x), \ t \in [0,1]$, so $\Psi_x=\sigma_t^{-1}\Psi_{e^{2\pi it}x}\sigma_t, t \in [0,1]$. Choose any $\epsilon >0$ so that $\epsilon<\delta$ and fix $x \in S^1_\epsilon$ and denote by $M=X_x$ the fibre at $x$, following the discussion above $(M; \omega_x=:\omega)$ is a symplectic manifold with contact boundary. Let $[s_x]=[s|M]\in H^n(M, \mathbb{C})$ be the class in $M$ induced by $s$. Then we will assume that the following holds for any small enough $\epsilon >0$:
\begin{ass}\label{ass2}
There is an oriented closed Lagrangian submanifold $Q_x\subset M$, that is ${\rm dim} Q_x=n$ and $\omega|Q_x=0$, so that $Q_x \cap \partial M=\emptyset$, $Q_x$ represents a non-zero class $[Q_x] \in H_n(M,\mathbb{C})$ and
\begin{enumerate}
\item $H^1(Q_x,\mathbb{C})=H_1(Q_x, \mathbb{Z})=0$,
\item $\int_M[s_x]\wedge PD[Q_x]=c \neq 0$,
\end{enumerate}
\end{ass}
In fact, if an obvious topological condition on $[s_x]$ is satisfied, the two conditions in Assumption \ref{ass2} are provable by means of generically perturbing $f$ and subsequent embedding of a neighbourhood of the zero section of $T^*S^n$ into $M$ (see Appendix A). To be precise we have even further:
\begin{lemma}\label{conditionsproof}
Assume $[s_x]$ lies in the image of $i^*:H^n(M, \partial M, \mathbb{C})\rightarrow H^n(M,\mathbb{C})$. Then there exists an oriented, closed Lagrangian submanifold $Q_x\subset M$ so that 1. and 2. of Assumption \ref{ass2} are satisfied. Further, $Q_x\subset M$ is diffeomorphic to $S^n$ and there exists $s\in\mathbb{R}, x \in D_\delta^*$ so that $Q_x\subset \tilde A_x(s)\subset M:=X_{(x,s)}$ (see (\ref{milnorfibr})). If $[\rho^k]=[Id_M]$, then the corresponding isotopy $\rho^k_t \in {\rm Symp}(M,\partial M,\omega)$ can be chosen so that $\rho^k_t(\tilde A_x(s))\subset \tilde A_x(s)$ for all $t \in [0,1]$.
\end{lemma}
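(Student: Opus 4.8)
The plan is to realize $Q_x$ as a Lagrangian vanishing sphere of a Morsification of $f$, placed near the origin so that it lands inside the ``holomorphic'' region $\tilde A_x(s)$, and to extract condition~2 of Assumption~\ref{ass2} from the fact that vanishing cycles form a basis of $H_n$. First I would record that $[s_x]\neq 0$ in $H^n(M,\C)$: by the Brieskorn isomorphism $\phi:M(f)\simeq\mathcal{H}''_0/f\mathcal{H}''_0$ recalled in the introduction, $1\in M(f)$ is sent to a class not lying in $f\mathcal{H}''_0$, and since $\mathcal{H}''$ is free over $\mathcal{O}_{D,0}$ with $\mathcal{H}''|_{D^*}\simeq\mathcal{H}^n_{X/D^*}$, the associated section $s$ of $\mathbf{H}^n(Z,\C)$ is nowhere zero, so $[s_x]\neq 0$ for every $x\in S^1_\epsilon$. (Under the hypothesis $[s_x]\in\operatorname{im}(i^*)$ this is also visible from Lemma~\ref{condition}: $s_x$ is then a nonzero monodromy eigenvector with eigenvalue $\neq 1$.) By universal coefficients over $\C$, $[s_x]$ is thus a nonzero linear functional on $H_n(M,\C)$.

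Next I would Morsify: replace $f$ by a generic small perturbation $f_\eta$ with exactly $\mu$ nondegenerate critical points, all inside an arbitrarily small ball about $0$ and with distinct critical values in a small disk, and run the cutoff construction~(\ref{milnorfibr}) with $f_\eta$ in place of $f$. For $\eta$ small and the cutoff parameter $s$ large, a regular fibre of the resulting family over a suitable $x\in D^*_\delta$ — which I rename $M:=X_{(x,s)}$ — is diffeomorphic to the Milnor fibre of $f$ and, by a Moser argument along the lines of Lemma~\ref{symplfibres} respecting the Liouville collar of the contact-type boundary, symplectomorphic to the original $(M,\omega)$; over the region $\tilde A_x(s)$ one recovers the Lefschetz fibration of the honest Morsified Milnor fibration. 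A distinguished system of vanishing paths then yields embedded Lagrangian spheres $L_1,\dots,L_\mu\subset M$, each the boundary of a Lefschetz thimble and hence Lagrangian for $\omega$, whose classes $\delta_i=[L_i]$ form a $\Z$-basis of $H_n(M,\Z)$; since the critical points of $f_\eta$ cluster near $0$, the thimbles, and with them the $L_i$, may be taken to lie well inside $\tilde A_x(s)$, and each $L_i\cong S^n$. As the $\delta_i$ span $H_n(M,\C)$ while $[s_x]$ is a nonzero functional on it, there is an index $i$ with $\langle[s_x],\delta_i\rangle\neq 0$; I set $Q_x:=L_i$. Then $Q_x\cong S^n$ yields $H^1(Q_x,\C)=H_1(Q_x,\Z)=0$ (as $n\geq 2$), $Q_x\cap\partial M=\emptyset$ and $Q_x\subset\tilde A_x(s)$ hold by construction, and
\[
\int_M[s_x]\wedge PD[Q_x]=\langle[s_x],[Q_x]\rangle=\langle[s_x],\delta_i\rangle=c\neq 0,
\]
which is condition~2 of Assumption~\ref{ass2} and forces $[Q_x]\neq 0$ in $H_n(M,\C)$.

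For the last assertion, suppose $[\rho^k]=[\mathrm{Id}_M]$. By Definition~\ref{symplfibr}(3) and Lemma~\ref{milnorsymplectic} the fibration $X$ is standard near $\partial_h X$, so symplectic parallel transport — hence $\rho$ and $\rho^k$ — is the identity there; after enlarging $s$ this neighbourhood contains $M\setminus\tilde A_x(s)$, so $\rho^k$ itself already preserves $\tilde A_x(s)$. Given any isotopy $\rho^k_t$, $t\in[0,1]$, rel $\partial M$ from $\mathrm{Id}_M$ to $\rho^k$, I would use the standardness of the collar together with a relative Moser / isotopy-extension argument — and, over $\tilde A_x(s)$, the $S^1$-equivariant structure fixed before Assumption~\ref{ass2} — to homotope it, rel $\partial M$ and with fixed endpoints, into an isotopy that is the identity near $M\setminus\tilde A_x(s)$; any such isotopy then satisfies $\rho^k_t(\tilde A_x(s))=\tilde A_x(s)$ for all $t$.

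I expect this last homotopy — deforming the given isotopy, rel $\partial M$ and with its endpoints fixed, so as to respect the splitting $M=\tilde A_x(s)\cup(\text{collar})$ — to be the main obstacle, together with the (lesser) point of matching, near the contact-type boundary, the standard K\"ahler form on the Morsified Milnor fibre with the form $\omega$ on $X_{(x,s)}$. By contrast, producing a Lagrangian sphere that pairs nontrivially with $[s_x]$ is a by now standard combination of Morsification theory with the Picard--Lefschetz picture of vanishing cycles, and conditions~1 and~2 of Assumption~\ref{ass2} come out essentially for free once such a sphere is in hand.
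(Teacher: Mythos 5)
Your first step is essentially the paper's: take the basis $\{\delta_j\}$ of embedded Lagrangian vanishing $n$-spheres of $H_n(M,\Z)$ from Lemma~\ref{lagrangianbasis} and note that $[s_x]\neq 0$ must pair nontrivially with some $\delta_k$; your universal-coefficients phrasing is, if anything, more direct than the paper's detour through the exact sequence~(\ref{seq3}). Conditions (1) and (2) of Assumption~\ref{ass2} and the diffeomorphism type of $Q_x$ then come out as you say. The genuine divergence --- and the gap you yourself flag --- is in the remaining two assertions, $Q_x\subset\tilde A_x(s)$ and $\rho^k_t(\tilde A_x(s))\subset\tilde A_x(s)$.

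The paper handles both with one device you do not use: it varies the cutoff parameter to build a symplectic fibration $\hat Y=\bigcup_{s\in[m,s']}X_{\phi(s),s}\times\{\phi(s)\}$ over $[m,s']$; because symplectic parallel transport $\tau_\phi$ in $\hat Y$ preserves the Liouville radial coordinate by~(\ref{Striv}) and Assumption~\ref{compatiblecplx}(3), the transported cycle $\tau_\phi(Q_x)$ lands in $\tilde A_{\phi(s')}(s')$ once $s'$ is so large that the depth-$c$ collar (with $c$ chosen so that it misses $Q_x$) engulfs $X_{\phi(s'),s'}\setminus\tilde A_{\phi(s')}(s')$, and the \emph{conjugated} isotopy $\tau_\phi\circ\rho^k_t\circ\tau_\phi^{-1}$ then preserves $\tilde A_{\phi(s')}(s')$ for the same reason, given only that $\rho^k_t$ is the identity on that small fixed collar of $X_x$. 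The conclusion is stated for the \emph{new} fibre $M:=X_{\phi(s'),s'}$, not the original one. By contrast, your plan to ``homotope $\rho^k_t$, rel $\partial M$ and with fixed endpoints, into an isotopy that is the identity near $M\setminus\tilde A_x(s)$'' by a relative Moser/isotopy-extension argument is an unproved claim that a null-homotopy in $\mathrm{Symp}(M,\partial M,\omega)$ can be compressed into the subgroup supported away from a boundary collar, and you give no mechanism for it; the conjugation by $\tau_\phi$ is exactly how the paper avoids having to prove such a deformation statement. Your re-Morsification route for the containment is plausible in isolation, but note that the paper's own Lemma~\ref{lagrangianbasis} defers precisely that containment back to the present Lemma, for the reason you yourself anticipate: matching the symplectic forms across both the Morsification and a changing cutoff simultaneously is delicate, and the fibration $\hat Y$ is what makes the bookkeeping tractable. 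Recenter the argument on $\hat Y$ and the $\tau_\phi$-invariance of the Liouville coordinate.
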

\begin{proof}
Consider the exact sequence
\begin{equation}\label{seq3}
0 \rightarrow H^{n-1}(\partial F,\mathbb{C}) \xrightarrow {\delta}
H^n(M,\partial M,\mathbb{C})
\xrightarrow {b} H^{n}(M,\partial M, \mathbb{C})^* \xrightarrow {r} H^{n}
(\partial M,\mathbb{C})\rightarrow 0.
\end{equation}
where we identified $b$ with the map $i^*:H^n(M,\partial M,\mathbb{C}) \rightarrow H^{n}(M,\mathbb{C})$ induced by the inclusion $i:(M,\emptyset)\rightarrow (M, \partial M)$ using the non-degenerate intersection pairing $(\cdot, \cdot):H^n(M,\mathbb{C})\times H^n(M,\partial M,\mathbb{C})$ identifying $H^n(M;\mathbb{C})$ with $H^n(M,\partial M,\mathbb{C})^*$ via $[\alpha]\mapsto (\alpha,\cdot)$. So we see that for any element $\alpha$ in the image of $i^*$, i.e. for $\alpha=[s_x]$, there is an element $\beta\in H^n(M,\partial M,\mathbb{C})$ so that $(\alpha, \beta)=c\neq 0$. We claim that $\beta$ has a nontrivial image $i^*(\beta) \in H^n(M,\mathbb{C})$.
But this is clear from sequence (\ref{seq3}), since in the other case $b(\beta,\gamma)=0$ for all $\gamma \in H^n(M,\partial M,\mathbb{C})$ which is impossible by the $(-1)^n$-symmetry of the intersection pairing $(\cdot,\cdot)$. By Lemma \ref{lagrangianbasis} in Appendix A., there is a basis $\{\delta_j\}$ of embedded Lagrangian $n$-spheres of $H_n(M,\mathbb{Z})$, so there is at least one element $\delta_k$ with $([s_x],PD[\delta_k])=c\neq 0$, then $i_*(\delta_k)\in H_n(M,\partial M,\mathbb{C})$ will be nontrivial since $PD[\delta_k]\in {\rm im}(i^*)$ and so $Q_x:=\delta_k$ satisfies all requirements.\\
To prove the second assertion, asssume that with $\tilde A_x(m)\subset X_x=M$ for some fixed $x \in D_\delta^*$ as defined in (\ref{tildea}) $Q_x\subset M$ is not contained in  $\tilde A_x(m)$. Choose $s'>m, \ s\in \mathbb{R}$ and set as in the proof of Lemma \ref{symplfibres} $X_{x,s}=\{y \in B_1^{2n+2}|f(y) = \psi_s(|y|^2)x\}$. Then consider the set
\[
\hat Y:=\bigcup_{s \in I\subset \mathbb{R}} X_{\phi(s),s}\times \{\phi(s)\}
\]
where $\phi: I=[m,s']\rightarrow D_\delta^*$ is the path $\phi(s)=\tau(s)x$ where $\tau(s)\in (0,1],\tau(m)=1$ for any $s \in [m, s']$ so that $d_y(f(y)-\psi_s(|y|^2)\phi(s))\neq 0, y \in X_{\phi(s),s}$. Consider $\hat Y$ as an embedding $i:\hat Y\hookrightarrow \mathbb{C}^{n+1}\times \mathbb{C}$, equipped with a closed two-form $\Omega=i^*\Omega_{\mathbb{C}^{n+2}}\in \Omega^2(\hat Y,\mathbb{C})$. Then $(\hat Y,\Omega)$ is an exact symplectic fibration in the sense of Definition \ref{symplfibr} and we consider the embedding $j:\mathbb{R}_0^-\times \partial_h \hat Y\rightarrow \hat Y$ induced by the flow of the Liouville vector field $Z$ given by $i_Z\Omega|TX_{\phi(s),s}=\Theta|TX_{\phi(s),s}$, where $\Theta$ is $\Theta=i^*\Theta_{\mathbb{C}^{n+2}}$. Then, 
since $Q_z\cap\partial M=\emptyset$, we can find a $c<0$ s.t. $\hat Y_c:=j([c,0]\times \partial_h \hat Y)\cap Q_x=\emptyset$. Further, we can assume by (the proof of) Lemma \ref{milnorsymplectic}, applied to $\hat Y$, the radial coordinate of $\hat Y_c$ to be parametrized so that $j(\{c'\}\times \partial \hat Y)=\{x\in \hat Y:\tilde S(x)-1=c'\}, \ c'\in[c,0]$, where $\tilde S(x)=r(x)^2|\hat Y$. Then it is clear that in the definition of $\hat Y_c$ we can choose $s'>0$ so that $-\frac{1}{s'} > c$, then $X_{\phi(s'),s'}\setminus \overline {(\tilde A_{\phi(s')}(s'))}\subset \hat Y_c\cap X_{\phi(s'),s'}$. Finally, by (3.) of Ass. \ref{compatiblecplx} and 
(\ref{Striv}), we see that $Q_{\phi(s')}:=\mathcal{P}^\Omega_{x,\phi(s')}(Q_x)\subset X_{\phi(s'),s'}\cap (\hat Y \setminus \hat Y_c)$. So we see that $Q_{\phi(s')}\subset \tilde A_{\phi(s')}(s')$ and we arrive at the second assertion by setting $M=X_{\phi(s'),s'}$. To prove the last assertion, note that $\rho^k_t, t \in [0,1]$ can be chosen s.t. $\rho^k_t|\hat Y_c\cap X_{\phi(m),m}=Id_{X_x}, \ t \in [0,1]$. But the symplectic monodromy $\rho^k(s')$ of $Y_{s'}:=(f^k)^{-1}(S^1_{|\phi(s')|})$ can be isotoped to the identity by $\rho^k(s')_t:=\tau_{\phi}\circ \rho^k_t\circ \tau_{\phi^{-1}}, \ t\in [0,1]$, where $\tau_{\phi}:X_x=X_{\phi(m),m}\rightarrow X_{\phi(s'),s'}$ is symplectic parallel transport in $\hat Y$, which using (\ref{Striv}) again gives the assertion.
\end{proof}
In the following, by slightly extending the above, we will choose $s(y)\in\mathbb{R}, y \in S^1_\epsilon$ so that $Q_x$, as well as the images of $Q_x$ under symplectic parallel transport along $S^1_\epsilon$ are contained in $\bigcup_{y\in S^1_\epsilon}(\tilde A_y(s)\subset X_y)$. Note also that if we choose $x,s$ as described in the proof of the Lemma, then $\overline {\tilde A_x(s)}\subset M:=X_{(x,s)}$ is a deformation retract of $M$, though we will not use this fact explicitly in the following.\\
The topological condition in Lemma \ref{conditionsproof} can be expressed in terms of the quasihomogeneous weights of $f$:
\begin{lemma}\label{condition}
The condition $[s_x] \in i^*:H^n(M, \partial M, \mathbb{C})\rightarrow H^n(M,\mathbb{C})$ is equivalent to $\sum\beta_i- \beta \neq \mathbb{Z}$.
\end{lemma}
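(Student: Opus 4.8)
The plan is to translate the cohomological condition $[s_x]\in\operatorname{im}\bigl(i^*\colon H^n(M,\partial M,\mathbb{C})\to H^n(M,\mathbb{C})\bigr)$ into a statement about the section $s$ of the Brieskorn lattice, and then read off the weights. Recall from the introduction that $s$ corresponds to the monomial $1\in M(f)$, i.e.\ to the class $\phi(z^{\alpha(1)})=dz_0\wedge\dots\wedge dz_n$ in $\mathcal{H}''_0$ with $\alpha(1)=0$, and that under the Poincar\'e--Leray residue isomorphism $\mathcal{H}^n_{X/D}\,|\,D^*\iso\mathcal{H}''\,|\,D^*$ this becomes $\hat s$ with $dz_0\wedge\dots\wedge dz_n=df\wedge\hat s$. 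The spectral number attached to it is $\gamma_1=l(0)-1=\bigl(\sum_k w_k\bigr)-1=\bigl(\sum_i\beta_i-\beta\bigr)/\beta$. So the identity to be proven is: $[s_x]$ lifts to relative cohomology $\iff\gamma_1\notin\mathbb{Z}$.

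First I would set up the variation/monodromy side of the long exact sequence of the pair $(M,\partial M)$ used already in the proof of Lemma~\ref{conditionsproof}: the image of $i^*\colon H^n(M,\partial M,\mathbb{C})\to H^n(M,\mathbb{C})$ is exactly the kernel of the restriction $H^n(M,\mathbb{C})\to H^n(\partial M,\mathbb{C})$, equivalently (by the Wang sequence / the identification of $\partial M$ with the mapping torus boundary) the image of the variation map $\mathrm{var}\colon H^n(M,\partial M,\mathbb{C})\to H^n(M,\mathbb{C})$, whose cokernel is governed by the fixed part of the geometric monodromy $h^*$. Concretely, a class lies in $\operatorname{im} i^*$ iff it is ``variation-exact'', and for an isolated hypersurface singularity the relevant obstruction is whether the corresponding eigenvalue $\lambda_1=e^{-2\pi i\gamma_1}$ of $h^*$ equals $1$: the image of $i^*$ is the sum of the generalized eigenspaces of $h^*$ with eigenvalue $\neq 1$ together with the nilpotent part on the eigenvalue-$1$ space (this is standard, e.g.\ the discussion of the variation structure in Kulikov \cite{kulikov} or the classical references on the Milnor fibration). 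Since $s_x$ spans the one-dimensional $h^*$-eigenspace for $\lambda_1=e^{2\pi i\gamma_1}$ (quasihomogeneity makes $h^*$ semisimple, so there is no nilpotent contribution), it lies in $\operatorname{im} i^*$ precisely when $\lambda_1\neq 1$, i.e.\ when $\gamma_1\notin\mathbb{Z}$, i.e.\ when $\sum_i\beta_i-\beta\notin\beta\mathbb{Z}$; and since $[s_x]\neq 0$ in $H^n(M,\mathbb{C})$ one checks separately that $\gamma_1\in\mathbb{Z}$ forces $[s_x]$ into a complement of $\operatorname{im} i^*$.

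An alternative, more hands-on route I would keep in reserve: compute the period $\int_{M}[s_x]\wedge i^*[\eta]$ for suitable relative classes $[\eta]$ directly via the residue formula and the quasihomogeneous grading. Under the $\mathbb{C}^*$-action $\sigma_t$ the form $dz_0\wedge\dots\wedge dz_n$ has weight $\sum\beta_k$ while $f$ has weight $\beta$, so $\hat s$ is an eigenvector of the Lie derivative along the Euler field with eigenvalue $\sum\beta_k-\beta$; the induced $S^1$-action on $M$ (through $h=\sigma(e^{2\pi i/\beta})$) then multiplies $[s_x]$ by $e^{2\pi i(\sum\beta_k-\beta)/\beta}$. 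On the other hand $h$ fixes $\partial M$ only up to the natural monodromy on the link, and $[s_x]$ admits a relative lift iff this $S^1$-action on the boundary contribution is trivial, which is again the condition $(\sum\beta_i-\beta)/\beta\in\mathbb{Z}$ failing. Either way the arithmetic is immediate once the structural statement is in place.

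The main obstacle is the structural identification ``$[s_x]\in\operatorname{im} i^* \iff$ its monodromy eigenvalue is $\neq 1$'': one must be careful that the manifold $M$ here is the \emph{symplectically deformed} Milnor fibre $X_{(x,s)}$ of \eqref{milnorfibr}, not the classical one, so I would first invoke Lemma~\ref{th:classical-milnor-fibre} to transport the cohomological computation to $\hat X_x$, where the variation structure and the $\mathbb{C}^*$-equivariance are standard; and one must make sure the eigenvalue-$1$ generalized eigenspace of $h^*$ is genuinely semisimple here (true since $f$ is quasihomogeneous, so $h^*$ has finite order $\beta$), which is what rules out a nilpotent ``Jordan block'' exception. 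Once those two points are nailed down the equivalence with $\sum\beta_i-\beta\notin\beta\mathbb{Z}$ — equivalently $\sum\beta_i-\beta\neq\mathbb{Z}$ in the paper's shorthand — follows.
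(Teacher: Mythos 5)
Your proposal matches the paper's proof: both identify $\operatorname{im}(i^*)$ with the span of monodromy eigenspaces for eigenvalues $\neq 1$ (using the variation structure of the quasihomogeneous singularity, where semisimplicity of $h^*$ rules out nilpotent exceptions), so that $[s_x]\in\operatorname{im}(i^*)$ iff $e^{2\pi i l(0)}\neq 1$ iff $l(0)=(\sum_i\beta_i)/\beta\notin\mathbb{Z}$, which is what the paper derives from the decomposition (\ref{varstructuref}) and the identification of $\ker b$ with the monomials having $l(\alpha)\in\mathbb{Z}$. One small caveat: for an isolated hypersurface singularity the variation map itself is an isomorphism (so has trivial cokernel); the operator whose cokernel detects the $h^*$-fixed part is $I-h^*$, and it is this (via the Wang sequence resp.\ the variation-structure decomposition) that your argument really uses, not $\operatorname{var}$ directly.
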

\begin{proof}
Assuming $[s_x] \in {\rm im}\ i^*:H^n(M, \partial M, \mathbb{C})\rightarrow H^n(M,\mathbb{C})$ implies by the exact sequence (\ref{seq3}) above that $[s_x]$ lies in the image of $b:H^n(M, \partial M, \mathbb{C})\rightarrow H^n(M,\partial M, \mathbb{C})^*$. Now since the variation structure of $f$, compare for the notation \cite{nem2} (see also \cite{steenbrink}), is given by
\begin{equation}\label{varstructuref}
\mathcal{V}(f)=\bigoplus_{\alpha \in \Lambda}\mathcal{W}_{exp(2\pi il(\alpha))}((-1)^{[l(\alpha)]+n})
\end{equation}
and since the kernel of $b$ is represented by (Lemma \ref{poly}) monomials $z^{\alpha}$ s.t. $\{\alpha\in \Lambda \subset \mathbb{N}^{n+1}:l(\alpha)\in \mathbb{Z}\}$ the claim follows, since $[s_x]$ is by definition determined by $1\in M(f)$ corresponding to $\alpha=0\in \Lambda$ so $l(0)\notin \mathbb{Z}$.
\end{proof}
Note that concerning the question if the symplectic monodromy $\rho \in  \pi_0({\rm Symp}(M,\partial M,\omega))$ is of finite order in $\pi_0({\rm Symp}(M,\partial M,\omega))$, one can reduce to an examination of the powers $\rho^{\beta \cdot m}$, for $m \in \mathbb{N}, m>1$, since it is well known resp. follows from (\ref{varstructuref}) that $\rho_* \in Aut(H^n(M;\mathbb{C}))$ is of finite order $\beta$ for $f$ quasihomogeneous of weighted order $\beta$ (Steenbrink \cite{steenbrink}), while the condition in Lemma \ref{condition} ensures that $(\rho_*)^k([s_x])\neq [s_x]$ for any $k\neq m\cdot\beta, \ m\in \mathbb{N}^+$. Thus, given Assumptions \ref{ass2} and \ref{ass4} below (which is proven in Section \ref{app4}) we will prove the following
\begin{theorem}\label{theorem34}
Assume $n\geq 2$ and $[s_x]$ lies in the image of $i^*:H^n(M, \partial M, \mathbb{C})\rightarrow H^n(M,\mathbb{C})$. Let $k=\beta m$, $m>0, \ m\in \mathbb{N}$, then $\rho^k \neq Id$ in $\pi_0({\rm Symp}(M,\partial M,\omega))$. This implies that $\rho$ is an element of infinite order in $\pi_0({\rm Symp}(M,\partial M,\omega))$.
\end{theorem}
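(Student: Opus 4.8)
The plan is to argue by contradiction. The main content is the first assertion, that $\rho^{k}\neq Id$ in $\pi_{0}(\mathrm{Symp}(M,\partial M,\omega))$ whenever $k=\beta m$ with $m\in\mathbb{N}^{+}$; the infinite-order statement then follows at once, since if $\rho$ had finite order $N$ then $\rho^{\beta N}=Id$ would violate it. (One restricts to powers $X^{m\beta}$ because $\rho_{*}$ already has finite order $\beta$ on $H^{n}(M;\mathbb{C})$ by \eqref{varstructuref}, so for exponents not divisible by $\beta$ the map $\rho^{k}$ is nontrivial already on cohomology, as noted after Lemma \ref{condition}.) So from now on suppose $\rho^{k}=Id$ in $\pi_{0}(\mathrm{Symp}(M,\partial M,\omega))$ with $k=\beta m$.

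First I would fix the geometry. By Lemma \ref{conditionsproof}, choose an oriented Lagrangian sphere $Q_{x}\subset M$ with $Q_{x}\cap\partial M=\emptyset$, $H^{1}(Q_{x},\mathbb{C})=0$ and $\int_{M}[s_{x}]\wedge PD[Q_{x}]=c\neq0$, arranged so that $Q_{x}$ and all its parallel transports around $S^{1}_{\epsilon}$ lie in the holomorphic locus $\bigcup_{y}\tilde{A}_{y}(s)$ and so that the isotopy realizing $\rho^{k}\simeq Id$ preserves this locus. Pass to the $k$-fold cyclic cover $X^{k}\to X$, pull back the section $s$ of $\mathbf{H}^{n}$ determined by $1\in M(f)$ to $s_{0}^{k}$ and the bundle $\mathcal{L}^{v}$ of vertical Lagrangian subspaces. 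Since $\rho^{k}=Id$, the parallel transports $Q_{\tau}=\mathcal{P}_{\tau}(Q_{x})$ along $\tau\mapsto e^{2\pi i\tau}x$ assemble into a closed fibrewise Lagrangian $Q=\bigcup_{\tau\in[0,1]}Q_{\tau}\subset Y^{k}$, and a path in $Q$ together with the family of fibrewise nonvanishing $(n,0)$-forms supplied by $s_{0}^{k}$ provides the vertical Maslov data along which winding numbers are defined. Finally, by Lemma \ref{extension}, triviality of $\rho^{k}$ extends $X^{k}$ across the puncture to a symplectic fibration equal to the trivial one $\cong X_{0}$ near $0$.

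Next comes the winding-number identity. Following \eqref{blasection2}, $\tilde{\alpha}_{0}(\tau)=\mathrm{ev}(s_{0}^{k})(Q_{\tau})=(e^{i\theta}\,i_{X_{f}}\mathrm{vol}_{Q})(Q_{\tau})$ with $e^{i\theta}$ the phase of $Q$, so Lemma \ref{bla347} on the $k$-fold cover gives $\mathrm{wind}(\tilde{\alpha}_{0})=k\gamma_{1}$. One then deforms $Q$ through fibrewise Lagrangians $Q^{t}$ lying over shrinking circles $S^{1}_{\delta(t)}$, $\delta(t)\to0$ as $t\to1$, with $Q^{0}=Q$ and $Q^{1}\subset X_{0}$, attaching to each $Q^{t}$ a function $\alpha_{t}:S^{1}\to\mathbb{C}^{*}$ built from the associated vertical Maslov phase. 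The \emph{key step} — Lemma \ref{tildealpha} and Proposition \ref{keylemma}, resting on Assumption \ref{ass4} and hence on the stability theory for Lagrangian fold singularities of Section \ref{app4} — is that $\mathrm{wind}(\tilde{\alpha}_{0})=\mathrm{wind}(\alpha_{0})$, while Proposition \ref{alphatau2} gives $\mathrm{wind}(\alpha_{1})=0$ since $Q^{1}$ sits in the trivial fibration. As $t\mapsto\mathrm{wind}(\alpha_{t})$ is continuous and integer-valued, $k\gamma_{1}=\mathrm{wind}(\alpha_{0})=\mathrm{wind}(\alpha_{1})=0$.

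But by Lemma \ref{condition} the hypothesis $[s_{x}]\in\mathrm{im}\,i^{*}$ is equivalent to $\gamma_{1}=(\sum\beta_{i}-\beta)/\beta\notin\mathbb{Z}$, so $k\gamma_{1}=m\beta\gamma_{1}\neq0$ (indeed $\beta\gamma_{1}=\sum\beta_{i}-\beta\neq0$) — a contradiction. Hence $\rho^{k}\neq Id$ for every $k=\beta m$, and as explained $\rho$ has infinite order in $\pi_{0}(\mathrm{Symp}(M,\partial M,\omega))$. The genuine obstacle is the key step of the third paragraph: establishing $\mathrm{wind}(\tilde{\alpha}_{0})=\mathrm{wind}(\alpha_{0})$ requires controlling a fixed representative of the Poincar\'e dual of the Maslov class of $Q\cap F$ under a specific symplectic isotopy, which is precisely Assumption \ref{ass4}, and whose verification in Section \ref{app4} via generating families, stable Morse theory and stability of Lagrangian folds is the technical heart of the whole argument.
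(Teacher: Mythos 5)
Your overall strategy — contradiction, reduce to powers $k=m\beta$, show $\mathrm{wind}(\alpha)=m(\sum\beta_i-\beta)\neq 0$ from the cohomological side and $\mathrm{wind}(\alpha)=0$ from the symplectic-geometry side — is the paper's strategy, and you correctly identify Assumption \ref{ass4} and Section \ref{app4} as the genuine technical heart. But you are conflating the two distinct realizations of the key step that the paper keeps separate. The proof that actually establishes Theorem \ref{theorem34} is given in Section \ref{relativen0}: there one stays entirely over $S^1_\epsilon$, builds two families $\hat Q_\tau$ ($\tau\in[0,1]$) interpolating between the trivialization of $Y^k_\epsilon$ by the weighted circle action ($\tau=0$) and the trivialization by the chosen isotopy $\rho^k_t$ ($\tau=1$), and compares intersection numbers $N_\tau\cdot\gamma_\tau$ of the Maslov cycle with a generating $1$-cycle; no shrinking circles or bounding disks appear. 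The "deform $Q$ through fibrewise Lagrangians $Q^t$ over shrinking circles $S^1_{\delta(t)}$, associate $\alpha_t$, use $\mathrm{wind}(\alpha_1)=0$ over the trivial fibration" narrative you give is instead the alternative argument of Section \ref{boundingdisks} (Lemma \ref{lag3}, Proposition \ref{alphatau2}), which the introduction sketches but which the paper presents only as a second, stronger-hypothesis proof. The two approaches buy different things: Section \ref{relativen0} avoids any closure or horizontality hypothesis on $Q$, at the cost of the extra structure of Lemma \ref{relativen} (a global family of fibrewise nonvanishing $(n,0)$-forms); Section \ref{boundingdisks} is geometrically more transparent (one reads off the Maslov index from a horizontal bounding disk) but needs the additional Assumption \ref{ass5}.

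The one point where this conflation creates an actual gap: you write that since $\rho^k=Id$ in $\pi_0$, the parallel transports $Q_\tau=\mathcal{P}_\tau(Q_x)$ "assemble into a closed fibrewise Lagrangian $Q\subset Y^k$." That does not follow. Triviality of the isotopy class of $\rho^k$ does not make the actual parallel-transport diffeomorphism $\rho^k:M\to M$ fix $Q_x$ setwise, so $\mathcal{P}_1(Q_x)=\rho^k(Q_x)$ need not equal $Q_x$ and the union need not close up. The paper handles this in two ways: either impose Assumption \ref{ass5}(1.) ($\rho^k_t(Q_x)\subset Q_x$ together with a fixed point), which is what the Section \ref{boundingdisks} route requires, or — as in Section \ref{relativen0}, equation (\ref{hatq}) — modify the construction, gluing $Q_x\times[0,1-r]$ to $\bigcup_{\tau\in[1-r,1]}\rho^k_{\psi(\tau)}(Q_x)$ so that the resulting $Q\subset Y^k$ is well-defined (and then only its $[0,1-r]$-part is Lagrangian, which suffices). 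Since you are otherwise leaning on Proposition \ref{keylemma} and Lemma \ref{bla456}, the cleaner fix is to adopt the construction (\ref{hatq}) rather than bare parallel transport; then no extra hypothesis is needed and the rest of your argument goes through as stated.
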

The proof of the theorem will require a series of lemmata, as already remarked in the introduction we will give a proof of Theorem \ref{theorem34} in Section \ref{boundingdisks} which uses the additional Assumption \ref{ass5} but dispenses from the use of a family of fibrewise $(n,0)$-forms as introduced in Lemma \ref{relativen} below. We begin with
\begin{lemma}\label{mappingcylinder}
Let $\tilde X:=X|D_{[\epsilon,\delta]}$ and consider $M=X_x$ with symplectic form $\omega_x$ and $Z|X_x$ being the Liouville vector field of $M$ for some $x \in S^1_\delta\subset \partial D_{[\epsilon,\delta]}$. Then there is a family $\{\rho_t\}$ for $t \in [\epsilon,\delta]$ so that the $\rho_t \in {\rm Symp}(M,\partial M, \omega)$ define the same element in $\pi_0(Symp(M,\partial M, \omega))$ for different $t, t' \in [\epsilon, \delta]$ and so that there is an equivalence of symplectic fibrations
\begin{equation}\label{symplectictriv}
\begin{split}
\Theta:\tilde X &\simeq \hat X:=\left([\epsilon,\delta]\times [0,1]\times M\right)/\left((t,0,x)\sim (t,1,\rho_{t}(x)\right).
\end{split}
\end{equation}
Furthermore, under this identification, the symplectic form $\Omega$ with horizontal space $H_\Omega$ on $\tilde X$ corresponds to a symplectic form $\Omega_0$ on $[\epsilon,\delta]\times [0,1]\times M$, so that the induced horizontal distribution $H_0$ restricted to the (tangent space of the) hypersurfaces $Y_t:=\left(\{t\}\times [0,1]\times M\right)/\left((t,0,x)\sim (t,1,\rho_{t}(x)\right)$ is the one induced from the trivial horizontal distribution $H_0$ on $\pi_0:\{t\}\times [0,1]\times M \rightarrow \{t\}\times [0,1]$.
\end{lemma}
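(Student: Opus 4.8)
The plan is to manufacture $\Theta$ directly from the symplectic parallel transport of $(X,\Omega)$, which is defined over all of $D^*_\delta$ and tangent to $\partial_h X$ by Lemma~\ref{milnorsymplectic}; since $\epsilon>0$, the annulus $D_{[\epsilon,\delta]}$ stays away from the degenerate fibre over $0$, so this is unobstructed. I would use polar coordinates $(t,\phi)\in[\epsilon,\delta]\times(\R/\Z)$, $te^{2\pi i\phi}\in D_{[\epsilon,\delta]}$, fix the basepoint $x=\delta\in S^1_\delta$ and set $M=X_x$. For $t\in[\epsilon,\delta]$ let $P_t\colon M\to X_{(t,0)}$ be parallel transport along the radial segment from $\delta$ to $t$, and $\mu_t\colon X_{(t,0)}\to X_{(t,0)}$ parallel transport once around $S^1_t$ in the direction of increasing $\phi$. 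Both are symplectomorphisms, both depend smoothly on $t$, and both are the identity near the fibre boundary: in the cutoff region $\psi_m\equiv 0$ a neighbourhood of $\partial_h X$ is the $z$-independent complex product $(\{f=0,\ |x|\ \text{near}\ 1\})\times D_\delta$, and on such a product the horizontal distribution of $\Omega|X$ is the trivial connection (the $\Omega$-orthogonal of the vertical meets $TX$ only in the $D_\delta$-factor, using that the $\mathbb{C}^{n+1}$-factor is a symplectic submanifold). I then set $\rho_t:=P_t^{-1}\circ\mu_t^{-1}\circ P_t\in{\rm Symp}(M,\partial M,\omega)$, the inverse being forced by the gluing convention in (\ref{symplectictriv}); since $t\mapsto\rho_t$ is a smooth path in ${\rm Symp}(M,\partial M,\omega)$, all the $\rho_t$ represent one and the same class in $\pi_0$.

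Next I would define the inverse trivialization on the model by
\[
\Theta^{-1}\bigl([t,\phi,y]\bigr)\ :=\ \tau^{t}_{2\pi\phi\leftarrow 0}\bigl(P_t(y)\bigr)\ \in\ X_{(t,2\pi\phi)},
\]
where $\tau^{t}_{\beta\leftarrow\alpha}$ denotes parallel transport at radius $t$ from angle $\alpha$ to angle $\beta$. The one genuine check is that this descends across the seam $\phi\equiv 0$: with the sign chosen above one has $\mu_t\circ P_t\circ\rho_t=P_t$, so the two expressions for a point of the seam agree, and, using smoothness of parallel transport in all arguments together with smoothness of $t\mapsto\rho_t$, $\Theta^{-1}$ is a smooth diffeomorphism onto $\tilde X=X|D_{[\epsilon,\delta]}$ (its inverse carries $q\in X_{(t,\alpha)}$ to $[t,\tfrac{\alpha}{2\pi},P_t^{-1}\tau^{t}_{0\leftarrow\alpha}(q)]$). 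By construction $\Theta:=(\Theta^{-1})^{-1}$ covers the identity of $D_{[\epsilon,\delta]}$, restricts to a symplectomorphism on each fibre, and near $\partial_h X$ is the evident product identification; hence $\Omega_0:=(\Theta^{-1})^*\Omega$ is the corresponding closed $2$-form on $[\epsilon,\delta]\times[0,1]\times M$ and $\Theta$ is an equivalence of symplectic fibrations.

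It remains to check the statement about horizontal distributions, which is exactly what the circle-wise construction of $\Theta$ builds in. First, for the restricted fibration $X|S^1_t\to S^1_t$ the $\Omega$-horizontal line field is precisely $H_\Omega\cap T(X|S^1_t)$: a vector in $H_\Omega=(T^vX)^{\perp_\Omega}$ whose projection lies in $TS^1_t$ is still $\Omega$-orthogonal to $T^vX$, and both sides have rank $1$. By the very definition of $\Theta^{-1}$, the curve $\phi\mapsto[t,\phi,y]$ is carried to a parallel-transport curve at radius $t$, whose velocity spans that line field; equivalently $\Theta_*\bigl(H_\Omega\cap T(X|S^1_t)\bigr)=\R\,\partial_\phi$ along $Y_t$, i.e.\ $H_0\cap TY_t=\R\,\partial_\phi$. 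On the other hand, since $\rho_t$ does not depend on $\phi$, the field $\R\,\partial_\phi$ on $\{t\}\times[0,1]\times M$ passes to the quotient $Y_t$, and there it is precisely the horizontal distribution induced by the trivial connection of $\pi_0\colon\{t\}\times[0,1]\times M\to\{t\}\times[0,1]$. Comparing the two gives the claim.

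I do not expect a deep obstruction here: the lemma is essentially the standard fact that a symplectic fibration over an annulus is, away from the core, the mapping torus of its monodromy, together with the bookkeeping that the fibrewise parallel transport around each concentric circle reads off the trivial connection on the corresponding mapping-torus slice. The points that actually need care are pinning down the orientations and the order of composition so that the seam identity $\mu_t\circ P_t\circ\rho_t=P_t$ holds (this is what forces the minus sign in $\rho_t$), and verifying that every transport used is the identity near $\partial_h X$ so that $\rho_t$ and $\Theta$ are genuinely relative to the boundary; the remainder is a routine application of Lemma~\ref{milnorsymplectic}.
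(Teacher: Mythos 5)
Your proof is correct and follows essentially the same route as the paper: identify $\tilde X$ with a mapping torus by combining radial parallel transport (your $P_t$) with angular parallel transport at each radius (your $\tau^t_{\cdot\leftarrow\cdot}$ and $\mu_t$), and read off the horizontal distribution on each $Y_t$ as the pushforward of the $\Omega$-horizontal line field. The paper's one-sentence proof is considerably terser — it defines $\rho_t$ as the conjugate of the monodromy $\rho_\delta$ at radius $\delta$ by radial transport and leaves both the sign convention and the boundary behaviour implicit — so your version is a more careful rendering of the same idea, in particular supplying the verification that all transports are the identity near $\partial_h X$ (hence $\rho_t$ is genuinely relative to the boundary) and the seam identity $\mu_t\circ P_t\circ\rho_t=P_t$ which fixes the needed inverse.
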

\begin{proof}
Let for any $t \in [0,\delta-\epsilon]$ $\Theta_t:Y_\delta \rightarrow Y_{\delta-t}$ be the parallel transport with respect to $\Omega$ along radial rays, then $Y_{\delta_t}$ identifies with $M \times [0,1]/(x,0)\sim (\rho_{t_0}(x),1)$ setting $\rho_t:= \Theta^{-1}\circ \rho_\delta \circ \Theta$, where $\rho_\delta$ is the symplectic monodromy of $Y_\delta$, this also gives a differentiable structure on the union $\bigcup_{t \in[\epsilon,\delta]}Y_t$ and the assertion about $H_0$.
\end{proof}
Consider now for $k=m·\beta, m \in \mathbb{N}$ the following base extension of $\tilde X$:
\begin{equation}\label{betacov2}
\begin{CD}
\tilde X^k @>>\pi_k> \tilde X \\
 @VV f^k V    @VV f V \\
D_{[\epsilon,\delta]}  @>>\lambda_k > D_{[\epsilon,\delta]},
\end{CD}
\end{equation}
where $\lambda_k(z)=z^k$. Then $\pi_k:\tilde X^k \rightarrow \tilde X$ is the $k$-fold connected cyclic covering of $\tilde X$, that is $\mathbb{Z}_k$ acts transitively on the set $\pi_k^{-1}(x)$ for any $x \in  \tilde X$. The isomorphism from Lemma \ref{mappingcylinder} lifts to an isomorphism
\begin {equation}\label{mapcyk}
\Theta^k:\tilde X^k\simeq \hat X^k:=\left([\epsilon,\delta]\times [0,1]\times M\right)/\left((t,0,x)\sim (t,1,\rho^k_{t}(x)\right).
\end{equation}
Since $\pi_k:\tilde X^k \rightarrow \tilde X$ is a covering map, we can lift the symplectic form $\Omega$ on $\tilde X$ to a corresponding symplectic form $\Omega^k$ on $\tilde X^k$, analogously we can lift the complex structure $J$ on $\tilde X$ to a compatible complex structure $J^k$ on $\tilde X^k$ (compatible with $\Omega$) and inducing a family of vertical complex structures $J^{k,v}_x, \ x \in D_{[\epsilon,\delta]}$ on the fibres of $\tilde X^k$ (compatible with the fibrewise lifted symplectic forms $\omega^k_x$), we denote the connection associated to $\Omega^k$ by $H_{\Omega_k}$. From the description of $\tilde X^k$ in (\ref{mapcyk}) one sees that the monodromy induced by $H_{\Omega_k}$ around circles $S_t$, $t \in [\epsilon,\delta]$, equals $\rho^k_t$.\\
Denoting $Z^k\hookrightarrow \tilde X^k$ the union of the fibres of $\tilde X^k$ (note that $Z^k_u\simeq Z_u, \ u \in D_{[\epsilon,\delta]}$) let $s^k \in \Gamma({\bf H}^n(Z^k,\mathbb{C}))$ be the section determined by the lift of $s$ over $\tilde X$ to $\tilde X^k$ (lifting the local expressions of $s$ as differential forms). For the following, we need a certain extra-structure on $f^k:\tilde X^k\rightarrow D_{[\epsilon,\delta]}$, namely, the existence of a non-vanishing fibrewise $(n,0)$-form restricting on each $\tilde A_x(m)\subset \tilde X^k_x$ to $s^k$. To be more precise, recall that if $\pi_k:\tilde X^k \rightarrow \tilde X$ denotes the $k=m\cdot\beta, \ m\in \mathbb{N}$-fold covering of $f:\tilde X\rightarrow D_{[\epsilon,\delta]}$, then we have by (\ref{features}) and (\ref{tildea}) 
\begin{equation}\label{features2}
(s^k_0:=i_{X_{f^k}}(\pi^k)^*(dz_0\wedge\dots \wedge dz_n))|\tilde A_x(m)= s^k|\tilde A_x(m),\quad x \in D_{[s,\epsilon]},
\end{equation}
where the equality is understood on vertical tangent vectors and $X_{f^k}$ is defined as the projection to $T^{(1,0)}\tilde X^k\subset T_\mathbb{C}\tilde X^k$ of the vector field being horizontal w.r.t. $\Omega^k$ and satisfying $d(f\circ \pi_k)(X_{f^k})=1$, so $s^k|\tilde A_x(m)\in \Gamma(\Lambda^{n,0}T^*\tilde X^k_x), \ x \in D_{[\epsilon,\delta]}$ (recall $\tilde A_x(m)$ as defined in (\ref{tildea})). Note that by definition of $s$ (see \ref{features} and below), we have $s^k|\tilde X^k_x=\pi_k^*\Psi_x^*(s|X_x)$, where $\Psi_x:X_x\rightarrow \hat X_x$ is the diffeomorphism introduced in Lemma \ref{th:classical-milnor-fibre}, so $s^k \in \Gamma({\bf H}^n(Z^k,\mathbb{C}))$ but in general, $s^k|(\tilde X^k_x \setminus \tilde A_x(m))\notin \Gamma(\Lambda^{n,0}T^*(\tilde X^k_x\setminus \tilde A_x(m)))$. Nervertheless, we have the following Lemma, which is a family version of Lemma 4.12 in \cite{seidel}. For this note that setting for any $x \in S^1_\delta\subset D_{[\epsilon, \delta]}$
\[
B_x(m):=A_x(m)\setminus\tilde A_x(m)= \{z\in X_x|\psi_{m}(|z|^2)=0\}\subset A_x(m)\subset X_x,
\]
there is by Lemma \ref{milnorsymplectic} resp. by (\ref{milnorfibr}) a neighbourhood $N$ of $\partial Y$ where $Y=X\cap f^{-1}(S^1_\delta)$ in $Y$ and a diffeomorphism $\Theta:N \rightarrow S^1_\delta\times B_x(m)$ for some fixed $x \in S^1_\delta$ which is a fibrewise symplectomorphism preserving $\Omega$ and the fibrewise complex structures $J_x,\ x \in S^1_\delta$. Note that $\Theta$ gives rise to a corresponding trivialization $\Theta_k:N^k\rightarrow S^1_\delta\times B_x(m)$ for some neighbourhood $N^k\subset Y^k:=\tilde X^k\cap (f^k)^{-1}(S^1_\delta)$ of $\partial Y^k$ so that $\lambda_k\circ pr_1\circ \Theta_k=pr_1\circ \Theta\circ \pi_k$ (using notation as in \ref{betacov2}).
\begin{lemma}\label{relativen}
There is a family of fibrewise compatible almost complex structures $\tilde J_x, \ x \in S^1_\delta$ on $Y^k$ and a non-vanishing section $\mathbf{s}^k \in \Gamma(\Lambda^{n,0}(T^*Y^k)^v)$ (with respect to $\tilde J_x$) so that 
\[
\mathbf{s}^k|\tilde A_x(m)=s^k|\tilde A_x(m), \ J_x|A_x(m)=\tilde J_x|A_x(m) \ x \in S^1_\delta,
\]
and $(\mathbf{s}, \tilde J)$ is compatible with $\Theta_k$ on $N^k$ in the sense that $(\mathbf{s}^k, \tilde J) |\bigcup_{y\in S^1_\delta}B_y(m)=({\rm pr}_2\circ\Theta_k)^*((\mathbf{s}^k_x, \tilde J_x)|B_x(m))$ for a fixed $x \in S^1_\delta$.
\end{lemma}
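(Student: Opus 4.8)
The statement is the fibrewise version of Lemma~4.12 in \cite{seidel}, so the plan is to carry out Seidel's single‑fibre construction over each $x\in S^1_\delta$ and then assemble the outputs into a smooth family that is compatible with the collar trivialization $\Theta_k$. The key starting observation is that $s^k|\tilde X^k_x=\pi_k^*\Psi_x^*(\hat s|\hat X_x)$ is \emph{already} a globally defined, nowhere‑vanishing, closed complex $n$‑form on the fibre, since $\Psi_x$ is a diffeomorphism and the Leray residue form $\hat s$ (defined by $dz_0\wedge\dots\wedge dz_n=df\wedge\hat s$) is nowhere zero on the smooth fibre $\hat X_x$; its only defect is that it is of type $(n,0)$ for the vertical structure $J^k_x$ only along $\tilde A_x(m)$. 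Moreover $B_x(m)$ lies in the smooth part of the singular fibre $\{f=0\}$ --- it is a collar of the link $K_f$ --- and there $\hat s$ restricts to a nowhere‑zero holomorphic $(n,0)$‑form for $J^k_x$; on $\bigcup_{y\in S^1_\delta}B_y(m)$ we therefore simply \emph{prescribe} $(\mathbf{s}^k,\tilde J)$ to be the $S^1_\delta$‑equivariant pullback under ${\rm pr}_2\circ\Theta_k$ of this standard datum on $B_x(m)$, which is legitimate because $\Theta$, hence $\Theta_k$, preserves the fibrewise complex structures (Lemma~\ref{milnorsymplectic}). Thus on a single fibre the problem is purely local along the transition shell $X_x\setminus A_x(m)=\{0<\psi_m(|z|^2)<1\}$: on $\tilde A_x(m)$ put $\mathbf{s}^k_x=s^k$, $\tilde J_x=J^k_x$, on $B_x(m)$ use the prescribed model, and interpolate across the shell.

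For the interpolation I would first import the pointwise linear algebra from \cite{seidel}: for a symplectic form $\omega$ on a $2n$‑dimensional space and a nowhere‑zero complex $n$‑form lying in the open set of $\omega$‑\emph{compatible} complex volumes, the set of $\omega$‑compatible almost complex structures making that form of type $(n,0)$ is non‑empty and contractible. Both of our boundary data, $s^k$ on $\tilde A_x(m)$ and the model on $B_x(m)$, are honest $(n,0)$‑forms for compatible structures, so one may join them by a path of nowhere‑zero complex $n$‑forms over the shell that is compatible at every point, and then --- by an obstruction argument over the shell rel its two ends --- choose a compatible $\tilde J_x$ making this form of type $(n,0)$ and equal to $J^k_x$ on all of $A_x(m)$. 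The one global input this needs is that the canonical bundle $\Lambda^{n,0}_{J^k}T^*X_x$ be trivial; this holds because $s^k$ trivializes it over $\overline{\tilde A_x(m)}$ and the inclusion $\overline{\tilde A_x(m)}\hookrightarrow X_x$ is a homotopy equivalence (the deformed Milnor fibre retracts onto the locus $\psi_m\equiv1$; cf.\ the remark after Lemma~\ref{conditionsproof} and Milnor's fibration theorem), so that $c_1(TX_x,J^k)=0$.

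It remains to make this depend smoothly on $x\in S^1_\delta$. Near $\partial_h Y^k$ the fibration is a product via $\Theta_k$, so the prescribed model on $\bigcup_y B_y(m)$ is a consistent $S^1_\delta$‑equivariant boundary condition; over the complement one views the admissible pairs $(\tilde J_x,\mathbf{s}^k_x)$ --- compatible, equal to $(J^k_x,s^k)$ on $\tilde A_x(m)$ and to the model on $B_x(m)$, with $\mathbf{s}^k_x$ of type $(n,0)$ for $\tilde J_x$ --- as the fibre over $x$ of a fibre bundle over $S^1_\delta$ whose structure group is induced by the $k$‑th power of the symplectic monodromy. By the previous paragraph this fibre is non‑empty, and it is path‑connected: for fixed $\tilde J_x$ the admissible $\mathbf{s}^k_x$ form a torsor over the nowhere‑zero functions on $X_x$ equal to $1$ on $\tilde A_x(m)\cup B_x(m)$, and $H^1(X_x,\tilde A_x(m)\cup B_x(m);\mathbb{Z})=0$ since $\tilde A_x(m)\hookrightarrow X_x$ is a homotopy equivalence and $B_x(m)$, a collar of the connected link $K_f$, is connected; and the compatible $\tilde J_x$ rel $A_x(m)$ form a contractible family. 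A fibre bundle over $S^1$ with path‑connected fibre admits a section, which here automatically restricts to the prescribed equivariant datum on $\bigcup_y B_y(m)$ and to $(J^k,s^k)$ on $\bigcup_y\tilde A_y(m)$; this section is the desired $(\tilde J,\mathbf{s}^k)$.

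The pointwise straightening and the triviality of the canonical bundle are imported more or less verbatim from \cite{seidel}, so I expect the main obstacle to be the last step: organizing the single‑fibre choices into a smooth family that stays $S^1_\delta$‑equivariant near $\partial_h Y^k$. Concretely, one must track the $\mathbb{Z}$‑valued winding number of the trivializing function of $\mathbf{s}^k$ across the transition shell and match it between the core $\overline{\tilde A_x(m)}$ and the boundary collar $B_x(m)$; this $\pi_1(\mathbb{C}^*)$‑matching is exactly what forces the interpolation to be allowed to deform $J^k$ along the shell rather than keeping $\tilde J\equiv J^k$, and it goes through precisely because $K_f$, hence $B_x(m)$, is connected and $c_1(TX_x,J^k)$ vanishes.
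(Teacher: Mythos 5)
Your single-fibre straightening via Seidel's Lemma~4.12, followed by a ``bundle over $S^1$ with path-connected fibre has a section'' argument, is a genuinely different route from the paper's: the paper builds $\tilde J,\mathbf{s}^k$ explicitly by pulling back the constant complex structure and the form $i_{X_{f^k}}(\pi^k)^*(dz_0\wedge\dots\wedge dz_n)$ along the projection $P_K\colon T^vY^k\to K^k=\pi_k^*(\ker dp)$ (which is close to an isomorphism by the arguments of Lemma~\ref{th:milnor-fibre}), homotopes the resulting $\omega$-tame $J'$ to a compatible $\tilde J$ on one fibre, and then propagates by conjugating with the weighted circle action $\sigma^m(t)$, which preserves $\omega,J$ and $A_x(m)$. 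The paper's route never needs a path-connectedness statement; equivariance is built in.

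There is, however, a genuine gap in your proposal. The crucial claim
$H^1\bigl(X_x,\tilde A_x(m)\cup B_x(m);\mathbb{Z}\bigr)=0$
is false. Since $A_x(m)=\tilde A_x(m)\sqcup B_x(m)$ with \emph{both} pieces connected (the inner region $\tilde A_x(m)$ because it is a deformation retract of $X_x$, and $B_x(m)$ because it is a collar of the connected link), $H^0(A_x(m))\cong\mathbb{Z}^2$, while $H^0(X_x)\cong\mathbb{Z}$ and $H^1(X_x)=0$ for $n\geq 2$; the long exact sequence of the pair then gives $H^1(X_x,A_x(m);\mathbb{Z})\cong\mathbb{Z}$, generated precisely by the winding of a nowhere-zero function across the shell. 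Consequently the fibre of your bundle of admissible pairs over $S^1_\delta$ has $\pi_0\cong\mathbb{Z}$, not $\pi_0=0$, and ``a bundle over $S^1$ with path-connected fibre admits a section'' no longer applies. To salvage the argument you would have to show the monodromy of the bundle acts trivially on this $\mathbb{Z}$ of components (equivalently, that a winding number can be chosen consistently around $S^1_\delta$), which is exactly the $\pi_1(\mathbb{C}^*)$-matching you flag at the end as ``expected to go through'' --- but you do not establish it, and the connectedness of $K_f$ and $c_1=0$ by themselves do not give it. In fact your own final paragraph acknowledging a nontrivial $\mathbb{Z}$-matching contradicts the earlier $H^1=0$ claim used to get path-connectedness. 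The paper sidesteps all of this: the quasihomogeneous circle action provides the equivariant propagation for free, so no section argument over $S^1_\delta$ (and hence no control over the winding ambiguity) is needed.
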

{\it Remark.} Actually $\Theta$ resp. $\Theta_k$ are by definition of the Milnor-fibration in (\ref{milnorfibr}) simply the identity on $S^1_\delta\times B_x(m)$, so $N$ resp. $N^k$ are trivialized by the identity map by definition. Furthermore, it actually true that $\mathbf{s}$ can be chosen so that ${\mathbf{s}}^k|A_x(m) = s_0^k|A_x(m), \ x \in S^1_\delta$, where the equality is understood between elements of $\Gamma(\Lambda^{n,0}T^*A_x(m))$, since we will not need this stronger statement in the following, we restrict to the above statement.
\begin{proof}
For the proof, denote by $p:\mathbb{C}^{n+1}\rightarrow \mathbb{C}$ the actual quasihomogeneous polynomial as referred to in (\ref{milnor1}), whereas $f:X\rightarrow D_\delta^*$ denotes the projection in (\ref{milnorfibr}).
Then for any $x \in S^1_\delta$, define a complex vectorbundle $K(x)\rightarrow X_x$ of dimension $2n$ by $K(x)_z={\rm ker}\ dp_z$. The set $K:=\bigcup_{x \in S^1_\delta} K(x)\rightarrow Y$ defines a complex subbundle of $\mathbb{C}^{n+1}\times Y$ of the same dimension over $Y$, let $K^k=\pi_k^*(K)$ be its lift to $Y^k$. $K^k$, as well as $T^vY^k$, (the complexification of) the vertical tangent bundle to $Y^k$, are complex subbundles of $\mathbb{C}^{n+1}\times Y^k$, which by the arguments in Lemma \ref{th:milnor-fibre}, can be made arbitrarily close. This means that the projection $P_K:T^vY^k\rightarrow K^k$ is a bundle isomorphism and that the pullback $J'_x=P_K^*(J^0_x)$ of the canonical complex structure $J^0$ on $\mathbb{C}^{n+1}$, restricted to $K^k(x)$, defines an amooth family of almost complex structures on $Y^k_x, \ x\in S^1_\delta$ which is $\omega_x$-tame and restricts to $J_x$ on $A_x(m)$ for any $x \in S^1_\delta$. Furthermore, $\tilde {\mathbf{s}}^k:=P_K^*(s_0^k)$ restricts on each fibre $Y^k_x$ to a nonvanishing $(n,0)$-form w.r.t. $J'_x$. Now, fix one fibre $Y^k_x$ and observe that $J'_x$ can be homotoped to an almost complex structure $\tilde J_x$ on $Y_x^k$ which is compatible with $\omega_x$ and that this homotopy can be chosen to be constant on $A_x(m)$. Denote by $\phi: TY_x^k \rightarrow TY^k_x$ the corresponding bundle-map, that is, $\phi^*J'_x=J_x$. Observe that the weighted circle action $\sigma(t),\ t \in S^1$ associated to $p$, lifted to $Y^k$, is a fibrewise diffeomorphism preserving the unitary structures $\omega_x,\ J_x$ and that $\sigma^m(t)$ covers $t \mapsto t.\pi_k(x)$ in $Y^k$ if $k=m\beta, \ m\in \mathbb{N}$. Thus define $\mathbf{s}^k_x=\phi^*\tilde {\mathbf{s}}^k_x$ and $\mathbf{s}^k_y=(\sigma^{m}(-t))^* \circ \phi^*\circ (\sigma^m(t))^*\tilde {\mathbf{s}}^k_y$ and finally $J_y=(\sigma^{m}(-t))^* \circ \phi^*\circ (\sigma^m(t))^*J'_y$ if 
$\pi_k(y)=t.\pi_k(x)$. Since $\sigma(t)(A_x(m))=A_y(m)$, the family $\mathbf{s}^k_x, \ x\in S^1_\delta$ satisfies all requirements of the Lemma.
\end{proof}
Now recall the construction of the covering $\pi_k:\tilde X^k\rightarrow \tilde X$, where $f:\tilde X\rightarrow D_{[\epsilon,\delta]}$ is given by (\ref{milnorfibr}), as constructed in (\ref{betacov2}) and the isomorphism 
\begin{equation}\label{trivkbla}
\overline \Theta:\tilde X^k\simeq \frac{M\times [\epsilon,\delta]\times [0,1]}{(x,t,0)\sim (\rho^k(t,x),t,1)}\rightarrow D_{[\epsilon,\delta]},
\end{equation}
where the $\rho^k(t,\cdot), t \in [\epsilon,\delta]$ are conjugated in ${\rm Symp}(M,\partial M,\omega)$ (see Lemma \ref{mappingcylinder}) and $M=\tilde X_x$ for some fixed $x\in D_{[\epsilon,\delta]}$, denote $p^k$ the corresponding quotient map $p^k:M\times [\epsilon,\delta]\times [0,1]\rightarrow \tilde X^k$. 

\begin{ass} Assume from now on that the symplectic monodromy of the 'reference bundle' $f^k:X^k\cap (f^k)^{-1}(S^1_\delta)\rightarrow S^1_\epsilon$, namely $\rho^k=\rho^k_\epsilon \in \pi_0({\rm Symp}(M,\partial M,\omega))$, is trivial and $\sum\beta_i- \beta \neq \mathbb{Z}$.
\end{ass}

We will give a proof of Theorem \ref{theorem34} by leading this assumption to contradiction. Setting $Y^k=(f^k)^{-1}(S^1_\delta)$ as above set $\rho^k(\delta,\cdot)=\rho^k$ as the symplectic monodromy of $Y^k$ and choose an isotopy $\rho^k_{(\cdot)}:[0,1]\times M\rightarrow M$ connecting $\rho^k_1=\rho^k$ to the identity $\rho^k_0 =id$ in ${\rm Symp}(M,\partial M,\omega)$. We now construct a smooth $n+1$-dimensional submanifold $Q\subset Y^k$ by choosing a $0 <r<<1$ and a smooth function $\psi:[1-r,1]\rightarrow [0,1]$ that is zero in some neighbourhood of $1-r$ and equal to $1$ in a neighbourhood of $1$ and defining a subset of $M\times \{\delta\}\times [0,1]$ as \begin{equation}\label{hatq}
\hat Q= Q_x\times \{\delta\} \times [0,1-r]\cup \bigcup_{\tau \in [1-r,1]} (\rho^k_{\psi(\tau) })(Q_x)\times \{\delta\}\times \{\tau\}
\end{equation}
where here, $Q_x\subset \tilde A_x(m)\subset M$ is the Lagrangian cycle satisfying the first two conditions in (\ref{ass2}), i.e. $\int_M[s_x]\wedge PD[Q_x]=c \neq 0$. It is the clear that $\hat Q\subset M\times \{\delta\} \times [0,1]$ factorizes to a well-defined $n+1$-dimensional, closed submanifold $Q\subset Y^k\subset \tilde X^k$ whose intersection $Q^0$ with the image of $\{\delta\}\times M\times [0,1-r]$ is Lagrangian in $\tilde X^k$ (see Lemma \ref{bla456} below). With the notation used in Lemma \ref{bla456} we have, since the Lagrangians $\{\rho^k_{\psi(\tau)}(Q_x)\}_{\tau\in [1-r,1]}$ are mutually isotopic in $M$ have is proven in Lemma \ref{bla456} below (we will explain the modified definition of $\kappa_Q, \theta$ constituting $\alpha$ in a moment)
\begin{equation}\label{windingbla}
[s^k|_{y(\tau)\in S^1_\delta}]=\frac{1}{c}\int_{Q_{y(\tau)}}e^{i\theta}i_{X_{f^k}}\kappa_{Q}\cdot [s^k_x]_{||}(y(\tau))=:\alpha(\tau)\cdot [s^k_x]_{||}(y(\tau))
\end{equation}
for $y(\tau)=xe^{2\pi i\tau}$ and $\tau \in [0,1]$ and $Q_{y(\tau)}:=Q\cap Y^k_{y(\tau)}$. To explain the terms occuring in $\alpha$, we define $\kappa_{Q}\in \Gamma(\Lambda^{n+1,0}(T^*\tilde X^k)|Q)$ so that it coincides over the quotient image $z'$ of each $z \in Q_x\times \{\delta\}\times [0,1]\subset \hat Q$ in $Q \subset X_e^k$ with the element of $\Lambda^{n+1,0}(T^*_{z'}\tilde X^k)|Q$ induced by the Lagrangian subspace 
\begin{equation}\label{lagsubspaces}
T^h_{z'}Y^k \oplus T_{z'}Q_{y(\tau)}\subset T_{z'}\tilde X^k, \ y(\tau)=f^k_e(z'),\ \tau \in [0,1].
\end{equation}
Note that $T^h_{z'}Y^k$ denotes the $\Omega^k$-orthogonal complement of ${\rm ker}(df^k)$ in $TY^k$. Clearly, over the image $z'$ of $z \in Q_x\times \{\delta\}\times [0,1-r]\subset \hat Q$ in $Q$, this is simply the element of  $\Lambda^{n+1,0}(T^*_{z'}\tilde X^k)$ induced by the Lagrangian subspace $T_{z'}Q\subset T\tilde X^k$.
On the other hand, the phase $e^{i\theta}:Q \rightarrow S^1$ is defined by the requirement
\begin{equation}\label{bla987}
e^{i\theta}\kappa_{Q}=(\pi^k)^*(dz_0\wedge\dots \wedge dz_n)|Q.
\end{equation}
Then we have the following:
\begin{lemma}\label{bla456}
Let $Q\subset Y^k\subset \tilde X^k$ be constructed as above, then its intersection $Q^0$ with the image of the canonical projection of $\{\delta\}\times M\times [0,1-r]$ in $Y^k$ is Lagrangian, that is 
\[
\Omega^k|Q^0=0.
\]
Let $dz_0\wedge \dots\wedge dz_n$ be the canonical $(n+1,0)$-form on $\mathbb{C}^{n+1}$, restricted to $\tilde X$ and consider its pullback to $\tilde X^k$ by $\pi_k$. Let $\{e_i\}, i=1,\dots,n$ be an oriented orthonormal basis of $T^h_{z'}Y^k \oplus T_{z'}Q_{y(\tau)}\subset T_{z'}\tilde X^k$, let for each $i$, $u_i=1/2(e_i -iJe_i)$ and let $\{u_i^*\}$ be the associated dual basis. Then write locally 
\begin{equation}\label{bla347}
\pi_k^*(dz_0\wedge\dots\wedge dz_n)|Q=e^{i\theta} (u_0^*\wedge\dots\wedge u_n^*)=:e^{i\theta}\kappa_Q,
\end{equation}
for some (well-defined) function $e^{i\theta}:Q\rightarrow S^1$ (note that since $Q$ is oriented, $\kappa_Q$ is a well-defined $(n+1,0)$-form on $T\tilde X^k|Q$. Then since $H^1(Q_y,\mathbb{C})=0, \ y \in S^1_\epsilon$, $\theta$ lifts to a well-defined function $\theta_y:Q_y\rightarrow \mathbb{R}$, while on $Q$ one has a smooth function $\theta:Q\rightarrow \mathbb{R}/\mathbb{Z}$ satisfying (\ref{bla347}). Let $X_{f^k} \in \Gamma(T^{(1,0)}\tilde X^k)$ s.t. $df^k(X_{f^k})=1$, then one has for any $y=e^{2\pi it} \in S^1_\epsilon$
\begin{equation}\label{winding-s}
[s^k_{y(t)}]:= [s^k|_{y(t)\in S^1_\epsilon}]=\frac{1}{c}\int_{Q_{y(t)}}e^{i\theta}i_{X_{f^k}}\kappa_Q\cdot [s^k_x]_{||}(y(t))=:\frac{1}{c}\alpha(t) [s^k_x]_{||}(y(t))=e^{2\pi i \gamma t}\cdot[s^k_x]_{||}(y(t)).
\end{equation}
where $\alpha:[0,1]/\{0,1\}\rightarrow \mathbb{C}^*$, $[s^k_x]_{||}\in \Gamma({\bf H}^n(Z^k,\mathbb{C}))$ is the parallel section which coincides at $x \in S^1_\epsilon$ with $s^k|_x$, $c \neq 0$ is determined by 2. in Assumption \ref{ass2} and 
\begin{equation}\label{maslovindex36}
{\rm wind}(\alpha)=\gamma =m(\sum_i \beta_i-\beta)\in \mathbb{Z}.
\end{equation}
\end{lemma}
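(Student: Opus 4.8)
The plan is to establish the three claims of Lemma \ref{bla456} --- Lagrangianity of $Q^0$, the integral formula for the parallel transport of $[s^k]$, and the winding-number identity --- in that order, and I expect the third to be the serious point.

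First I would verify $\Omega^k|Q^0 = 0$. By construction $Q^0$ is the image under the quotient map $p^k$ of $Q_x \times \{\delta\} \times [0,1-r]$. Since $Q_x$ is Lagrangian in $(M,\omega)$ and, via the trivialization $\overline\Theta$ of (\ref{trivkbla}), the restriction of $\Omega^k$ to each slice $\{t\}\times M$ is $\omega$ while the $[0,1-r]$-direction is horizontal (this is exactly the statement in Lemma \ref{mappingcylinder} that $H_0$ restricted to $Y_t$ is the trivial horizontal distribution, lifted by $\pi_k$), the tangent space to $Q^0$ at the image of $(z,\delta,\tau)$ is $T_z Q_x \oplus \R\partial_\tau$, on which $\Omega^k$ vanishes: it vanishes on $T_zQ_x$ by Lagrangianity of $Q_x$, and $\partial_\tau$ is $\Omega^k$-orthogonal to the fibre directions because it spans the horizontal line over the path $\tau\mapsto xe^{2\pi i\tau}$ traversed with the monodromy isotopy being constant there. (One must check the cycle really closes up to a smooth submanifold, which is the content of the cutoff $\psi$ in (\ref{hatq}), and that $Q_x\subset \tilde A_x(m)$ so the whole family stays away from $\partial M$ by Lemma \ref{conditionsproof}.) The same local computation shows that $T^h_{z'}Y^k\oplus T_{z'}Q_{y(\tau)}$ is Lagrangian in $T_{z'}\tilde X^k$, so $\kappa_Q$ of (\ref{lagsubspaces}) is a well-defined nonvanishing $(n+1,0)$-form along $Q$ once an orientation is fixed, and the phase $e^{i\theta}$ is well-defined by (\ref{bla987})/(\ref{bla347}); since $H^1(Q_y,\C)=0$ by condition 1 of Assumption \ref{ass2}, $\theta$ lifts fibrewise.

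Next, the integral formula (\ref{winding-s}). The idea is: evaluate the cohomology class $[s^k|_{Y^k_{y}}]$ against the Lagrangian cycle $Q_y$. By condition 2 of Assumption \ref{ass2} the pairing of $[s^k_x]$ with $PD[Q_x]$ is $c\neq 0$, and $[s^k_x]_{||}$ is the parallel (flat, Gauss--Manin) section agreeing with $s^k$ at $x$; its pairing with the parallel-transported cycle $Q_y = \mathcal P(Q_x)$ is still $c$ since the Gauss--Manin connection is flat and parallel transport preserves the intersection pairing. Writing $s^k|_{Y^k_y}$ in terms of the fibrewise $(n,0)$-form: on $\tilde A_y(m)$ we have $s^k = s^k_0 = i_{X_{f^k}}(\pi^k)^*(dz_0\wedge\cdots\wedge dz_n)$ by (\ref{features2}); contracting (\ref{bla347}) with $X_{f^k}$ and restricting to the fibre gives $i_{X_{f^k}}\bigl(e^{i\theta}\kappa_Q\bigr)|_{Q_y} = e^{i\theta} i_{X_{f^k}}\kappa_Q|_{Q_y}$, an $n$-form on $Q_y$ representing $s^k|_{Y^k_y}$ there. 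Integrating over $Q_y\subset\tilde A_y(m)$ and dividing by $c$ expresses $[s^k_y]$ as a multiple $\frac{1}{c}\alpha(y)$ of $[s^k_x]_{||}(y)$, which is the first equality; that $\alpha$ takes values in $\C^*$ follows because $[s^k_y]$ is never zero in $H^n$ (it is a nonzero flat section up to the scalar, and the pairing is $c\neq 0$). Continuity/smoothness of $\alpha$ in $\tau$ is clear from smoothness of all the data.

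Finally the winding number (\ref{maslovindex36}) --- this is the main obstacle. The point is to identify $\mathrm{wind}(\alpha)$, i.e. the winding of $\tau\mapsto\alpha(\tau)\in\C^*$ over $\tau\in[0,1]$, with the monodromy multiplier of the Gauss--Manin flat section $[s^k_x]_{||}$ around $S^1_\epsilon$ relative to the actual section $s^k$. Concretely, going once around, $[s^k_{y(1)}] = (\rho^k)^*[s^k_x]$ on cohomology while $[s^k_x]_{||}(y(1)) = [s^k_x]$; so $e^{2\pi i\,\mathrm{wind}(\alpha)}$ is the eigenvalue by which $(\rho^k)^* = ((\rho^*)^{\beta})^m$ scales the class $[s_x]$ determined by $1\in M(f)$. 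By the variation-structure computation (\ref{varstructuref}) quoted from \cite{nem2,steenbrink} --- $\mathcal V(f) = \bigoplus_{\alpha\in\Lambda}\mathcal W_{\exp(2\pi i\, l(\alpha))}(\cdots)$ with $l(0) = \sum_k w_k = (\sum_i\beta_i)/\beta$ --- the monodromy eigenvalue on the $1$-summand is $\exp(2\pi i\, l(0))$, hence $(\rho^*)^{\beta}$ scales $[s_x]$ by $\exp(2\pi i(\sum_i\beta_i-\beta))$ (an integer power of unity as a self-map, but the \emph{lift} to $\C^*$ tracked continuously by $\alpha$ records the non-integer "phase velocity") and $(\rho^k)^* = (\rho^*)^{\beta m}$ scales it by $\exp(2\pi i\, m(\sum_i\beta_i-\beta))$. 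One then has to argue that $\alpha$ indeed realizes this continuous lift --- that no extra winding is introduced by the choices of orthonormal frame $\{e_i\}$ defining $\kappa_Q$, by the isotopy $\rho^k_{\psi(\tau)}$ on $[1-r,1]$ (it is null-homotopic rel endpoints in $\mathrm{Diff}(M,\partial M)$, hence contributes nothing, using $n\geq 2$ and the triviality hypothesis), and by the parallel transport of $Q_x$; this is where the fact that $s^k$ is genuinely holomorphic on $\tilde A_x(m)$, where $Q_x$ lives, and that $\kappa_Q$ there reduces to the intrinsic volume form of the Lagrangian, is used to pin the phase. Equivalently one can compute $\mathrm{wind}(\alpha)$ as $\frac{1}{2\pi}\int_0^1 \frac{d}{d\tau}\arg\alpha(\tau)\,d\tau$ and recognize the integrand, via $d\theta$ and the flat connection on ${\bf H}^n(Z^k,\C)$, as the derivative of the monodromy angle of the Brieskorn-lattice generator, giving $\beta\gamma_1 = \sum_i\beta_i-\beta$ per $\beta$-fold loop and $m$ times that for the $k=\beta m$ cover; the integrality $\gamma\in\Z$ is automatic since $\alpha$ is a genuine loop $S^1\to\C^*$.
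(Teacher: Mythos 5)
Your first two steps are essentially right: the Lagrangianity of $Q^0$ follows from $Q_x$ being Lagrangian plus the compatibility of the mapping-cylinder trivialization with the $\Omega^k$-horizontal distribution, and the pairing $\int_{(Y_\epsilon)_{y(t)}}[s^k_x]_{||}\wedge {\rm PD}[Q_{y(t)}]=c$ being preserved is what makes $\alpha(t)$ the multiplier relating $[s^k_{y(t)}]$ to the flat section. (A minor wrinkle: for $\tau\in[1-r,1]$ the fibre $Q_{y(\tau)}$ is $\rho^k_{\psi(\tau)}(Q_x)$, not literally $\mathcal P(Q_x)$, but since $\rho^k_{\psi(\tau)}$ is a symplectomorphism fixing $\partial M$ it represents the same homology class, so the pairing argument goes through; your appeal to null-homotopy of the isotopy is not the relevant point.)

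The genuine gap is in the winding-number step. You try to extract ${\rm wind}(\alpha)$ from the monodromy eigenvalue of $(\rho^k)^*=(\rho^*)^{m\beta}$ acting on the class of $1\in M(f)$. But for $k=m\beta$ this eigenvalue is exactly $\exp(2\pi i\,m(\sum_i\beta_i-\beta))=1$, which only tells you that $\alpha$ closes up (${\rm wind}(\alpha)\in\mathbb{Z}$); it does \emph{not} determine the winding. Your ``phase velocity'' gloss — that the continuous lift of the constant eigenvalue $1$ records the non-integer rate — is exactly the statement that needs proof, and the later reformulation that one ``recognizes'' $\frac{d}{d\tau}\arg\alpha$ as the derivative of a monodromy angle is a restatement of the conclusion, not an argument. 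The paper instead establishes the sharper identity (\ref{eulermulti}), namely $\Phi_{X_K}(t)^*s^k=e^{2\pi i\gamma t}s^k$ with $\gamma=m(\sum_i\beta_i-\beta)$, where $\Phi_{X_K}$ is the parallel transport generated by the Euler vector field $K=2\pi i\sum_i w_i z_i\partial_{z_i}$: this is the precise quantitative consequence of quasihomogeneity ($\sigma(t)^*(dz_0\wedge\cdots\wedge dz_n)=t^{\sum\beta_i}dz_0\wedge\cdots\wedge dz_n$, $\sigma(t)^*f=t^\beta f$), lifted to the $k$-fold cover, and it \emph{directly} yields $\alpha(t)=c\,e^{2\pi i\gamma t}$ once the flat section is taken to be $[s^k_x]_{||}(y(t))=e^{-2\pi i\gamma t}[s^k_{y(t)}]$. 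The winding is then read off without any eigenvalue computation. You also worry about extra winding coming from the choice of orthonormal frame defining $\kappa_Q$; that worry is unnecessary, because by definition $e^{i\theta}\kappa_Q=\pi_k^*(dz_0\wedge\cdots\wedge dz_n)|Q$ is canonical, so the integrand $e^{i\theta}i_{X_{f^k}}\kappa_Q=i_{X_{f^k}}\pi_k^*(dz_0\wedge\cdots\wedge dz_n)$ is frame-independent.
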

\begin{proof}
That $Q$ is Lagrangian is immediate from the fact $H_{\Omega^k}$ is defined as the annihilator of the vertical bundle and the fact that, by construction, $H_{\Omega^k}\cap TY_\epsilon \subset TQ$, that $Q$ is well-defined as a closed Lagrangian submanifold of $\tilde X^k$ is implied by (\ref{horizontality}) in Assumption \ref{ass2}. To prove equation \ref{winding-s} note that if $\Phi_{X_K}(t), t \in[0,1]$ denotes parallel transport in $\tilde X^k$ along $t \mapsto \epsilon e^{2\pi it}$ using the horizontal distribution defined by the Euler vector field $\pi_k^*(K)=\pi_k^*(2\pi i\sum_iw_iz_i\frac{\partial}{\partial z_i})$ on $\tilde X^k$ then by Lemma 3.1.11 in \cite{klein1}
\begin{equation}\label{eulermulti}
\Phi_{X_K}(t)^*s^k=e^{2\pi i\gamma t}s^k,
\end{equation}
where $\gamma =m(\sum_i \beta_i-\beta) \in \mathbb{Z}$ and $k=m\beta$ as above, so $\gamma \in \mathbb{Z}\setminus \{0\}$ by Assumption \ref{condition}. Note that this equation continues to hold on cohomology classes when replacing $\Phi_{X_K}$ by symplectic parallel transport along $t \mapsto e^{2\pi i t}$. But then for $y(t)=\epsilon e^{2\pi it}x$, the section $t \mapsto e^{-2\pi i \gamma t}[s^k_{y(t)}]=: [s^k_x]_{||}(y(t))$ is parallel. So one gets by the invariance of $b$ under parallel transport
\[
\begin{split}
c \cdot e^{2\pi i\gamma t}&=e^{2\pi i\gamma t}\int_{(Y_\epsilon)_{y(t)}}[s^k_x]_{||}(y(t))\wedge {\rm PD}[Q_{y(t)}]=\int _{(Y_\epsilon)_{y(t)}} s^k_{y(t)}\wedge {\rm PD}[Q_{y(t)}]\\
&=\int_{Q_{y(t)}}s^k(y(t))=\int_{Q_{y(t)}}i_{X_{f^k}}(\pi^k)^*(dz_0\wedge\dots\wedge dz_n)\\
&=\int_{Q_{y(t)}}e^{i\theta}i_{X_{f^k}}\kappa_Q.
\end{split}
\]
Finally inserting in $[s^k_{y(t)}]=e^{2\pi i\gamma t}[s^k_x]_{||}(y(t))$ the last equality one arrives at the assertion.
\end{proof}
{\it Remark.} Since the constant $c\neq 0$ will not be of importance in the following, we will set its value to $c=1$ in all subsequent calculations.\\
For later use, the following will prove useful. Note that the family of bundles $\pi^k_\epsilon:Y^k_\tau\rightarrow S^1, \ \tau \in [s,\epsilon]$ is defined in the proof of Lemma \ref{extension} in Section \ref{boundingdisks}, in this section we will be confined to the case $Y^k_\epsilon=Y^k$ only.
\begin{lemma}\label{metricsubspace}
The horizontal subspace $H^k_f\subset TY_\epsilon^k$ given by the lift of the Euler vector field $K= 2 \pi i\sum_i w_i z_i \frac{\partial}{\partial z_i}, \ x\in Y$ on $Y_\epsilon$ to $Y^k_\epsilon\subset X^k_e$ is mapped by $\Theta^k|Y_\epsilon^k$ (see (\ref{mapcyk})) to the subspace which is induced by 
\begin{equation}\label{circle}
H^k_f(x,t)={\rm span}\ \frac{d}{dt} (\Phi_H(t)(x), t) \subset T(M\times [0,1]), \ x \in M,
\end{equation}
on $(M\times [0,1])/(x,0) \sim (\rho^k(x), 1) \simeq Y_\epsilon^k$. Here, $\Phi_H(\cdot):[0,1] \times M \rightarrow M$ is the Hamiltonian flow associated to the Hamiltonian function $H(k)\in C^\infty([s,\epsilon]\times M\times [0,1],\mathbb{R})$
\begin{equation}\label{circleham}
(\Theta^k)^*H(k)=\pi k\sum_{i=0}^{n}w_i|z_i|^2 \in C^{\infty}(\mathbb{C}^{n+1}),
\end{equation}
considered as a family of fibrewise Hamiltonians on $M\times[0,1]$ and restricted to $\{\epsilon\} \times [0,1]\times M$. Furthermore, let $\Phi_{H,\tau}(t)$ be parallel transport along $t \mapsto e^{2\pi i t}$ in $Y^k_\tau, \ \tau \in [s,\epsilon]$ defined by the family of horizontal subspaces on $T(M\times [0,1])$ given for each $\tau \in [s,\epsilon]$ by 
\begin{equation}\label{horftau}
H^k_{f,\tau}(x,t)={\rm span}\ \frac{d}{dt} \left(((\rho^k)_{\epsilon-t(\epsilon-\tau)}\circ (\rho^k_\epsilon)^{-1}) \circ\Phi_H(t)(x), t \right) \subset T(M\times I), \ x \in M, t \in [0,1]. 
\end{equation}
Then $\Phi_{H,\tau}$ is a family of fibrewise isometries on $Y^k_\tau, \ \tau \in [s,\epsilon]$ with respect to the family of fibrewise metrics on $Y^k_\tau$ introduced in Lemma \ref{extension}.
\end{lemma}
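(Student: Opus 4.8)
The plan is to unwind the definitions in two stages: first identify the horizontal lift of the Euler vector field $K$ under the symplectic trivialization $\Theta^k$, and then check that the resulting parallel transport maps are fibrewise isometries. For the first part, recall that on $\mathbb{C}^{n+1}$ the Euler field $K = 2\pi i\sum_i w_i z_i\,\partial/\partial z_i$ is, up to the factor $d(f\circ\pi_k)(K)=2\pi i\,f\circ\pi_k$ along the circle $S^1_\epsilon$ (since $f$ is quasihomogeneous of weighted degree one in the normalization $w_k=\beta_k/\beta$ after passing to the $\beta$-fold cover and then the $m$-fold cover), the horizontal vector whose projection to the base generates rotation $t\mapsto e^{2\pi it}$. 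Its $\Omega_{\mathbb{C}^{n+1}}$-contraction is, by the direct computation $i_{K/2}\,\Omega_{\mathbb{C}^{n+1}} = d\big(\pi k\sum_i w_i|z_i|^2\big)$ already used in Lemma \ref{milnorsymplectic} (there with the radius function), exactly $d H(k)$ with $H(k)=\pi k\sum_i w_i|z_i|^2$. Hence the $\Omega^k$-horizontal lift of $K$, \emph{modulo} the vertical direction that is killed when we restrict to a fixed fibre, differs from a genuinely horizontal field only by the Hamiltonian vector field of $H(k)$ read off fibrewise. Transporting this statement through the identification $\Theta^k: Y^k_\epsilon \simeq (M\times[0,1])/(x,0)\sim(\rho^k(x),1)$ of Lemma \ref{mappingcylinder}, where the symplectic connection $H_{\Omega^k}$ corresponds to the flat connection twisted by the monodromy, gives precisely formula (\ref{circle}): the horizontal line is spanned by $\frac{d}{dt}(\Phi_H(t)(x),t)$, with $\Phi_H$ the fibrewise Hamiltonian flow of $H(k)$ as in (\ref{circleham}).

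For the second assertion I would argue as follows. The family $Y^k_\tau$, $\tau\in[s,\epsilon]$, together with its fibrewise metrics, is the one constructed in Lemma \ref{extension}; the point is that over the collar neighbourhood of $\partial_h$ these metrics are the standard ones pulled back by $\Theta_k$, and in the interior they are obtained by interpolation but still via diffeomorphisms that are conjugates of a fixed one by the isotopy $\rho^k_{(\cdot)}$. The parallel transport $\Phi_{H,\tau}(t)$ of (\ref{horftau}) is by definition the composite of the fibrewise Hamiltonian flow $\Phi_H(t)$ — which is a symplectomorphism of $(M,\omega)$, hence, since $H(k)$ restricted to $\tilde A_x(m)$ is the genuine moment-type function and the metric there is the induced one, \emph{also} an isometry on that region — with the correction term $(\rho^k)_{\epsilon - t(\epsilon-\tau)}\circ(\rho^k_\epsilon)^{-1}$, which belongs to $\mathrm{Symp}(M,\partial M,\omega)$ and, by the way the metrics on the $Y^k_\tau$ were set up in Lemma \ref{extension} (transported along exactly this isotopy), intertwines the metric of the fibre of $Y^k_\epsilon$ with that of the fibre of $Y^k_\tau$. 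Composing an isometry onto $Y^k_\epsilon$'s fibre with the metric-preserving identification to $Y^k_\tau$'s fibre yields the claimed fibrewise isometry; one checks the endpoint conditions $t=0,1$ are consistent with the gluing $(x,0)\sim(\rho^k(x),1)$ because at $t=1$ the correction term is $(\rho^k)_{\tau}\circ(\rho^k_\epsilon)^{-1}$ composed with $\Phi_H(1)$, which matches the monodromy of $Y^k_\tau$.

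The step I expect to be the main obstacle is the careful bookkeeping in the second part: one must verify that the \emph{specific} interpolating diffeomorphisms used to define the metrics on $Y^k_\tau$ in Lemma \ref{extension} are compatible with the \emph{specific} correction term $(\rho^k)_{\epsilon-t(\epsilon-\tau)}\circ(\rho^k_\epsilon)^{-1}$ appearing in (\ref{horftau}) — i.e. that the two uses of the isotopy $\rho^k_{(\cdot)}$ are literally the same reparametrization and not merely homotopic ones. This is where the precise normalization of Lemma \ref{extension} must be invoked; once the definitions are aligned, the isometry property is automatic from the fact that $\Phi_H(t)$ preserves $\omega$ and the induced metric on $\tilde A_x(m)$, and that parallel transport for the symplectic connection $H_{\Omega^k}$ preserves the fibrewise symplectic structure. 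A minor secondary point is verifying that $H(k)$ as in (\ref{circleham}) indeed generates $\Phi_H$ with $d(f\circ\pi_k)(X_{H(k)}+\text{vertical})=1$ along $S^1_\epsilon$, which follows from the quasihomogeneity identity $df\wedge\hat s = dz_0\wedge\cdots\wedge dz_n$ together with Euler's relation $\sum_i w_i z_i\,\partial f/\partial z_i = f$.
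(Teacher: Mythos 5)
Your approach matches the paper's: the crux in both is that the fibrewise restriction of $i_{X_K}\Omega^k$ equals $d\bigl(\pi_k^*H(k)\bigr)$, so the discrepancy $\eta(t)=\Phi_{X_K}(-t)\circ\Phi_{\Omega^k}(t)$ is exactly the Hamiltonian flow of $-\pi_k^*H(k)$, and the isometry claim is read off from the construction of the vertical complex structures in Lemma \ref{extension}. Two small points where the paper is more careful: it first observes that $\Phi_{X_K}$ preserves $\Omega$, hence $H_{\Omega^k}$, so the two flows commute and $\eta$ is generated by the \emph{vertical} field $Z=X_{\Omega^k}-X_K$ (without this the ``discrepancy is a fibrewise flow'' step is not immediate); and your scaling $i_{K/2}\Omega_{\mathbb{C}^{n+1}}=dH(k)$ has a spurious $k$ on the right — with $K=2\pi i\sum w_jz_j\partial_{z_j}$ one has $i_K\Omega=d\bigl(\pi\sum w_j|z_j|^2\bigr)$, and the $k$ enters only through $X_K=k\cdot K$. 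For the isometry claim it is also worth saying explicitly that $\Phi_H$, transported back through $\Theta^k$, is the weighted circle action, which preserves the ambient $(g_{\mathbb{C}^{n+1}},J_{\mathbb{C}^{n+1}})$ and hence the fibrewise metric constructed by restriction in Lemma \ref{milnorsymplectic}; the symplectomorphism property alone would not suffice, as you half-noticed. Apart from these, the proposal is sound and reproduces the paper's argument.
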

\begin{proof}
Let $\Phi_{\Omega^k}(t)$ resp. $\Phi_{X_K}(t)$ denote the flow of the horizontal lifts $X_{\Omega^k}$ resp. $X_K=k\cdot K$ w.r.t. $H_{\Omega^k}$ resp. $H^k_f$ of the vector field $X=2\pi i z$ on $S^1$. Define a flow $\eta(t)=\Phi_{X_K}(-t)\circ \Phi_{\Omega^k}(t)$. Then $\eta$ maps each fibre of $Y^k$  symplectically to itself and since $\Phi_{X_K}(t)$ preserves $\Omega$ hence $H_\Omega$, $\Phi_{X_K}(t)$ commutes with $\Phi_\Omega(t)$ for any $t$, $\eta(t)$ is the flow generated by $Z:=X_{\Omega^k}-X_K$. Now $(i_{X_K}\Omega^k)|F_{z_0}=d(\pi_k^*H(k)|{F_{z_0}})$, where $F_{z_0}$ is any fibre of $Y^k$. Since $i_{X_{\Omega^k}}\Omega=0$, we have 
\[
(i_{X_{\Omega^k}-X_K}\Omega)|F_{z_0}=d(-\pi_k^*H(k)|{F_{z}}), \ z \in S^1_\epsilon,
\]
so $\eta(t)$ is the Hamiltonian flow of $-\pi_k^*H(k)$ and since by definition $\Phi_{X_K}(t)=\Phi_{\Omega^k}(t)\circ \eta(-t)$, we arrive at the assertion. That parallel transport along the family of horizontal subspaces introduced in (\ref{horftau}) introduces fibrewise isometries, follows directly from the form of the vertical complex structures on $Y^k_\tau, \ \tau \in [s,\epsilon]$ as introduced in Lemma \ref{extension}.
\end{proof}
Before we can proceed we need a basic result about Maslov classes. For this, let $(M, \omega, J)$ be a symplectic manifold with a compatible almost complex structure of dimension $2n$. Let $N\subset M$ be a compact, connected and oriented submanifold of dimension $k$, let $i:N\rightarrow M$ be the inclusion and let $\pi_L:{\rm Lag}(M,\omega)\rightarrow M$ be the fibre bundle of Lagrangian subspaces of $(TM, \omega)$. In the following we will also consider the fibre bundle of {\it oriented} Lagrangian subspaces $\widetilde \pi_L:\tilde {\rm Lag}(M,\omega)\rightarrow M$ which is a $2$-fold covering of ${\rm Lag}(M,\omega)$, that is, $\pi_1(\widetilde {\rm Lag}(M,\omega))$ is an index $2$ subgroup of $\pi_1({\rm Lag}(M,\omega))$ (cf. \cite{audin}), that is, we have for any $x \in M$ a diagram
\begin{equation}\label{laggrcov}
\begin{CD}
\widetilde {\rm Lag}_x(M,\omega) @>>>  {\rm Lag}_x(M,\omega) \\
 @VV det V    @VV det^2 V \\
S^1  @>> z\mapsto z^2 > S^1,
\end{CD}
\end{equation}
where we identified ${\rm Lag}_x(M,\omega)$ with $U(n)/O(n)$ and $\widetilde {\rm Lag}_x(M,\omega)$ with $U(n)/SO(n)$. The vertical arrows above are fibrations with simply connected fibres $SU(n)/O(n)$ and $SU(n)/SO(n)$, respectively, thus $\pi_1(\widetilde {\rm Lag}_x(M,\omega))=2\mathbb{Z}$ as as subgroup of $\pi_1({\rm Lag}_x(M,\omega))=\mathbb{Z}$. For any $x \in M$, we will denote by $\tilde \gamma$ resp. $\gamma$ the associated generating cohomology classes in $H^1(\tilde {\rm Lag}_x(M,\omega), \mathbb{Z})$ resp. $H^1({\rm Lag}_x(M,\omega), \mathbb{Z})$. \\
Assume now that $M$ carries a non-vanishing (not necessarily closed) section $s$ of its canonical bundle, that is $s \in \Gamma(\Lambda^{(n,0)}T^*M)$, $s(x)\neq 0$ for any $x \in M$. Note that given a submanifold $i:N\hookrightarrow M$ and a section $\Lambda_0: N\rightarrow i^*{\rm Lag}(M,\omega)$ we have for any point $x \in N$ a trivialization $i^*{\rm Lag}(M,\omega)\simeq N\times {\rm Lag}_x(M,\omega)$ (analogously in the oriented case). Thus in this situation, $\gamma$ resp. $\tilde \gamma$ induce elements in $H^1(i^*{\rm Lag}(M,\omega), \mathbb{Z})$ resp. $H^1(i^*\widetilde {\rm Lag}(M,\omega), \mathbb{Z})$ which we will call the (oriented) Maslov class associated to $\Lambda_0$.
\begin{lemma}\label{lag5}
Given a section $\Lambda_0: N\rightarrow i^*\widetilde {\rm Lag}(M,\omega)$ (thus the associated subbundle $\Lambda_0\subset i^*TM$ is orientable) there is a unique non-vanishing element $\kappa_N\in \Gamma(i^*\Lambda^{(n,0)}T^*M)$ s.t. $|\kappa_N|_g=1$ and ${\rm ev}(\kappa_N(x))(\Lambda_0(x))={\rm vol}_{\Lambda_0(x)}$, where ${\rm vol}_{\Lambda_0(x)}$ denotes the volume form on $\Lambda_0(x)\subset T_xM$ induced by the orientation and metric. Furthermore, if $g:N\rightarrow \mathbb{C}^*$ is determined by
\begin{equation}\label{maslovgen}
i^*s(x)=g(x)\cdot\kappa_N(x), \ x \in N, 
\end{equation}
then we can associate to $s$ a section $\Lambda_s: N\rightarrow i^*\widetilde {\rm Lag}(M,\omega)$ so that $\Lambda_s^*\tilde \gamma =[g^*\beta] \in H^1(N,\mathbb{Z})$ if $\tilde gamma \in H^1(i^*\widetilde {\rm Lag}(M,\omega), \mathbb{Z})$ is the (oriented) Maslov class associated to $\Lambda_0$, where $\beta \in H^1(C^*, \mathbb{Z})$ is the generator and one has 
\begin{equation}\label{maslovdual3}
PD[g^*\beta]=[N_s] \in H_{k-1}(N,\mathbb{Z}), \  {\rm where} \ N_s=\{x \in N: {\rm Im}({\rm ev}(i^*s(x)(\Lambda_0(x))=0\}.
\end{equation}
We call $[g^*\beta]$ resp. $[N_s]$ the (oriented) Maslov class resp. the Maslov cycle associated to $s$ and $\Lambda_0$ on $N$.
\end{lemma}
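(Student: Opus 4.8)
The plan is to express everything through a single circle--valued function: the phase of $s$ relative to a moving Lagrangian frame, which will be identified with $g/|g|$ and, on the fibres, with the map $\det$ of diagram (\ref{laggrcov}). \emph{Step 1 (the form $\kappa_N$ and the function $g$).} Fix $x\in N$ and work in $(T_xM,\omega,J)$ with metric $g_x=\omega(\cdot,J\cdot)$ and Hermitian form $h_x=g_x+i\omega$. If $(f_1,\dots,f_n)$ is any $g_x$--orthonormal basis of the Lagrangian plane $\Lambda_0(x)$, then $h_x(f_i,f_j)=\delta_{ij}$ since $\omega(f_i,f_j)=0$, so $(f_1,\dots,f_n)$ is a unitary basis of $(T_xM,h_x)$; in the dual complex coordinates the $(n,0)$--form $dz_1\wedge\dots\wedge dz_n$ has unit $g_x$--norm, takes the value $1$ on $(f_1,\dots,f_n)$, hence restricts to ${\rm vol}_{\Lambda_0(x)}$ on $\Lambda_0(x)$. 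A change of \emph{oriented} orthonormal frame multiplies it by the determinant of an element of $SO(n)$, i.e. by $1$, so $\kappa_N(x):=dz_1\wedge\dots\wedge dz_n$ is well defined; since $\Lambda^{(n,0)}T^*_xM$ is one--dimensional over $\mathbb{C}$ it is the unique unit--norm element restricting to $+{\rm vol}_{\Lambda_0(x)}$ on $\Lambda_0(x)$, and a local oriented orthonormal frame of the smooth bundle $\Lambda_0$ shows $\kappa_N\in\Gamma(i^*\Lambda^{(n,0)}T^*M)$ is smooth. Then $g:N\to\mathbb{C}^*$, $i^*s=g\,\kappa_N$, is well defined and smooth, which is the first assertion.

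\emph{Step 2 (the phase map).} I would introduce $\phi_s:\widetilde{\rm Lag}(M,\omega)\to S^1$, $\phi_s(L)=s(x)(f_1,\dots,f_n)/|s(x)|_g$, where $x$ is the base point of $L$ and $(f_1,\dots,f_n)$ an oriented orthonormal basis of $L$; by Step 1 this is well defined and on each fibre $\widetilde{\rm Lag}_x(M,\omega)\cong U(n)/SO(n)$ it is the constant $s(x)/|s(x)|$ times the map $\det$ of diagram (\ref{laggrcov}), so $\det^*\beta$ is the fibre generator. Passing to a unitary frame of $i^*TM$ adapted to $\Lambda_0$ trivialises $i^*\widetilde{\rm Lag}(M,\omega)\cong N\times\widetilde{\rm Lag}_x(M,\omega)$, makes $\Lambda_0$ the constant section $\mathbb{R}^n$, identifies $\kappa_N$ with $dz_1\wedge\dots\wedge dz_n$ and $i^*s$ with $x\mapsto g(x)\,dz_1\wedge\dots\wedge dz_n$, whence $\phi_s=\pi^*(g/|g|)\cdot{\rm pr}_2^*\det$ with $\pi$ the projection to $N$. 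Since pulling back the generator $\beta$ is additive under pointwise products of $S^1$--valued maps, this gives in $H^1(i^*\widetilde{\rm Lag}(M,\omega),\mathbb{Z})$
\[
\phi_s^*\beta=\pi^*[g^*\beta]+\tilde\gamma ,
\]
where $\tilde\gamma={\rm pr}_2^*\det^*\beta$ is exactly the oriented Maslov class associated to $\Lambda_0$ and $\beta\in H^1(\mathbb{C}^*,\mathbb{Z})\cong H^1(S^1,\mathbb{Z})$ is the generator. (Note in passing $\phi_s\circ\Lambda_0=g/|g|$.)

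\emph{Step 3 (the section $\Lambda_s$ and the Maslov cycle).} I would take $\Lambda_s:N\to i^*\widetilde{\rm Lag}(M,\omega)$ obtained from $\Lambda_0$ by the unitary gauge transformation which in the adapted frame is $\mathrm{diag}(g/|g|,1,\dots,1)$, so $\det(\Lambda_s(x))=g(x)/|g(x)|$; such a section exists (elementary in the cases needed: if $N$ is a sphere then $H^1(N)=0$ and $\Lambda_s=\Lambda_0$ works, if $N$ is a circle the adapted frame is global). Then $\phi_s\circ\Lambda_s=(g/|g|)\cdot(g/|g|)=(g/|g|)^2$, hence $(\phi_s\circ\Lambda_s)^*\beta=2[g^*\beta]$, and pulling the identity of Step 2 back by $\Lambda_s$ (using $\pi\circ\Lambda_s={\rm id}_N$) yields $\Lambda_s^*\tilde\gamma=2[g^*\beta]-[g^*\beta]=[g^*\beta]$, the second assertion. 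For the last assertion, ${\rm ev}(i^*s(x))(\Lambda_0(x))=g(x)\,{\rm vol}_{\Lambda_0(x)}$ with ${\rm vol}_{\Lambda_0(x)}$ a positive real multiple, so $N_s$ is cut out by ${\rm Im}\,g=0$; after a $C^1$--small perturbation of $s$ (changing neither $[g^*\beta]$ nor the homology class at issue) $0$ is a regular value of ${\rm Im}\,g$ and $N_s$ is a cooriented closed codimension--one submanifold of the closed manifold $N$. Since $g^*\beta$ is represented by $\frac{1}{2\pi}d\arg g$, its Poincar\'e dual is $(g/|g|)^{-1}(p)$ for a regular value $p\in S^1$ cooriented by $d\arg g$; identifying this with $N_s$ (taken with the coorientation from $d\arg g$, resp. with its sub--locus where $g\in\mathbb{R}_{>0}$) gives $PD[g^*\beta]=[N_s]$.

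\emph{Main obstacle.} Steps 1 and 3 are essentially linear algebra and degree theory once the conventions are fixed. The real work is Step 2 and the bookkeeping it feeds into Step 3: making precise the trivialisation of $i^*\widetilde{\rm Lag}(M,\omega)$ provided by $\Lambda_0$ and hence the normalisation of ``the Maslov class associated to $\Lambda_0$'', and then carrying the factor $2$ between the oriented phase $\det$ and the squared phase $\det^2$ (equivalently $\pi_1(\widetilde{\rm Lag}_x)=2\mathbb{Z}\subset\mathbb{Z}=\pi_1({\rm Lag}_x)$) correctly through the computation, so that the cancellation $2[g^*\beta]-[g^*\beta]$ comes out as $[g^*\beta]$ on the nose and the coorientation of $N_s$ is the right one.
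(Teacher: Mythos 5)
Your argument follows the same overall strategy as the paper's: reduce everything to the phase of $s$ against the canonical unit $(n,0)$--form $\kappa_N$, and read off the Maslov class from that phase. Your Step~1 is identical to the paper's. In Steps~2--3 you are more explicit than the paper: you build the circle--valued function $\phi_s$ and derive the cocycle identity $\phi_s^*\beta=\pi^*[g^*\beta]+\tilde\gamma$, whereas the paper packages the same information into the function $\tilde g$ determined by $\det(\Lambda_s)=\tilde g(\Lambda)\det(\Lambda)$ and then invokes Arnol'd for $PD[\mathcal{M}]=[\tilde g^*\beta]$. The two computations are equivalent (the paper's $\Lambda_0^*\tilde g=g/|g|$, $\Lambda_s^*\tilde g=1$, and the fibrewise identification of $\tilde g^*\beta$ with $\mp\tilde\gamma$ give the same cancellation you obtain by evaluating $\phi_s$ on $\Lambda_0$ and $\Lambda_s$). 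What your version gains is transparency; what it loses is the explicit Maslov cycle $\mathcal{M}\subset i^*\widetilde{\rm Lag}(M,\omega)$, which the paper needs downstream --- the stratified representatives $N_s^{\mathcal{M}}$ of Definition~\ref{generic} and Assumption~\ref{ass4} are defined through $\mathcal{M}$, not through the zero locus of $\mathrm{Im}\,g$ alone.

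There is one genuine gap. Your construction of $\Lambda_s$ as $\mathrm{diag}(g/|g|,1,\dots,1)\cdot\Lambda_0$ depends on a choice of oriented orthonormal frame of $\Lambda_0$: that matrix is not invariant under $SO(n)$--conjugation, so the resulting oriented Lagrangian plane changes when the local frame changes, and you get a globally defined section only if $\Lambda_0$ admits a global frame. You acknowledge this and retreat to the cases $N=S^n$ ($H^1(N)=0$, take $\Lambda_s=\Lambda_0$) and $N=S^1$. The lemma, however, is stated for an arbitrary compact connected oriented $N$ carrying a section $\Lambda_0$, and it is in this generality that it is used (for instance on $N=Q_\tau$, an $(n+1)$--manifold). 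The paper avoids the issue by taking the conjugation--invariant lift $x\mapsto e^{-i\phi(x)}\,\mathrm{Id}_{\mathbb{C}^n}\in U(n)/SO(n)$, i.e.\ a section of the fibration $\det:i^*\widetilde{\rm Lag}(M,\omega)\to i^*\Delta(J)^*$ with simply connected fibre $SU(n)/SO(n)$; that section is frame--independent and works for all $N$. You should replace your $\mathrm{diag}$ choice by this one (or by any section of $\det$ over the phase of $i^*s$), after which the pullback computation goes through unchanged.

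Your worry about the factor of $2$ at the end is legitimate and, in fact, points to something the paper glosses over. As written, $N_s=\{\mathrm{Im}(\mathrm{ev}(i^*s)(\Lambda_0))=0\}$ is $(g/|g|)^{-1}(\{\pm1\})$, i.e.\ two regular level sets; cooriented by $d\arg g$ these add, giving $[N_s]=2\,PD[g^*\beta]$, while cooriented by $d\,\mathrm{Im}(g)$ they cancel. Your fix --- take only the $g\in\mathbb{R}_{>0}$ sheet --- is the right one. The paper's proof has the same issue hidden inside ``adopting arguments of Arnol'd'': in a single fibre $\mathcal{M}_x=\{\Lambda:\Lambda\cap\Lambda_s(x)\neq0\}$ consists of two antipodal points of $\widetilde{\rm Lag}_x(M,\omega)$ (the double cover of Arnol'd's train), while $\tilde g^*\beta$ restricted to the fibre is a generator, so $PD[\mathcal{M}]=2\tilde g^*\beta$ rather than $\tilde g^*\beta$. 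You are therefore not in worse shape than the paper here; you have simply made explicit a coorientation convention the paper leaves implicit.
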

\begin{proof}
Choose any oriented, unitary basis of $i^*TM$ locally over the open set $U\subset N$ of the form $(e_1,\dots, e_n, Je_1,\dots, Je_n)$ so that $(e_1,\dots, e_n)$ span $\Lambda_0(x),\ x \in U$. Then
\[
\eta_U:=\bigwedge_{i=1}^{n}(e_i-iJe_i)^*
\]
defines locally an element of $\Gamma(i^*\Lambda^{(n,0)}T^*M)|U$. Covering $N$ by open sets $U_i\subset N$, it is clear that the local forms define a section $\kappa_N\in \Gamma(i^*\Lambda^{(n,0)}T^*M)$ with the required property.
Applying the above construction to arbitrary elements of $\widetilde {\rm Lag}(M,\omega)$, then if $\Delta(J)^*=(\Lambda^{(n,0)}T^*M\setminus (M \times\{0\}))$ we get a fibration with simply connected fibres
\begin{equation}\label{detsection}
{\rm det }:i^*\widetilde {\rm Lag}(M,\omega)\rightarrow i^*\Delta(J)^*.
\end{equation}
Choose any section $u$ of ${\rm det}$ along the image of $i^*s$ in $i^*\Delta(J)^*$, such a section is determined for instance by noting that $i^*s$ defines a trivialization $j:i^*\Delta(J)^*\simeq S^1\times N$ over $N$ and $\Lambda_0$ defines as above a trivialization $i^*\widetilde {\rm Lag}(M,\omega)\simeq N\times U(n)/SO(n)$. Viewed through the trivializations, ${\rm det}$ becomes the map $A\mapsto e^{i\phi(x)}{\rm Det}(A), A \in U(n)/SO(n), x \in N$ for some function $e^{i\phi(x)}:N\rightarrow S^1$. Then, by the fact that the fibres of ${\rm det}$ are isomorphic to $SU(n)$, we can choose over any point $x \in N$ smoothly an element in $e^{-i\phi(x)}\cdot SU(n)\subset U(n)/SO(n)$ (take $x \mapsto e^{-i\phi(x)}Id_{\mathbb{C}^n}$) which is mapped under ${\rm det}$ to $1 \in S^1$, thus lying in the kernel of $j\circ {\rm det }$. This already gives a section of ${\rm det}$ over $i^*s$ in $i^*\Delta(J)^*$. Then $\Lambda_s:=u\circ (i^*s): N\rightarrow i^*\widetilde {\rm Lag}(M,\omega)$ defines the Maslov cycle
\[
\mathcal{M}=\bigcup_{x \in N}\mathcal{M}_x,\ \mathcal{M}_x=\{\Lambda(x) \in i^*\widetilde {\rm Lag}(M,\omega)_x: \Lambda(x)\cap \Lambda_s(x)\neq \{0\}\}.
\]
Now adopting arguments of Arnol'd (\cite{arnold}) one infers that $\mathcal{M}=PD[\tilde g^*\beta]$, where $[\tilde g^*\beta] \in H^1(i^*\widetilde {\rm Lag}(M,\omega),\mathbb{Z})$ is determined by $\tilde g:i^*\widetilde {\rm Lag}(M,\omega) \rightarrow \mathbb{C}^*$ and ${\rm det}(\Lambda_s(x))=\tilde g(\Lambda(x))\cdot{\rm det}(\Lambda(x))$ for any $\Lambda(x) \in i^*\tilde {\rm Lag}(M,\omega)_x, \ x \in N$. On the other hand if $m={\rm dim}(i^*\widetilde {\rm Lag}(M,\omega)_x)$ this implies
\begin{equation}\label{masslovclasseq}
[\mathcal{M}_x]=[\{\Lambda(x)\in i^*\widetilde {\rm Lag}(M,\omega)_x: {\rm Im}({\rm ev}(i^*s(x))(\Lambda(x)))=0\}] \in H_{m-1}(i^*\widetilde{\rm Lag}(M,\omega)_x,\mathbb{Z})
\end{equation}
and by definition (\ref{maslovgen}) we have $g=\Lambda_0^*\tilde g$. Using $\Lambda_0$ as trivializing $i^*\widetilde {\rm Lag}(M,\omega)$, it then follows that $\Lambda_s^*\tilde \gamma=[g^*\beta] \in H^1(N, \mathbb{Z})$ is the pullback by $\Lambda_s$ of the Maslov class on $i^*\widetilde {\rm Lag}(M,\omega)$ defined by $\Lambda_0$ as above this lemma. Finally by the above we have $[N_s]=[\Lambda_0^{-1}(\Lambda_0(N)\cap \mathcal{M})]$ and by the functoriality of the Poincare dual under the mapping $\Lambda_0^*$ we arrive at the assertion.
\end{proof}
{\it Remark.} Note that the above proof explicitly attaches a section $\Lambda_s: N\rightarrow i^*\widetilde {\rm Lag}(M,\omega)$ to a non-vanishing section $s \in \Gamma(\Lambda^{(n,0)}T^*M)$ and an embedding $i:N\rightarrow M$, which will be of some importance in subsequent constructions. Using the notation of the proof of this Lemma, for any $x \in N$, $\mathcal{M}_x\subset i^*\widetilde {\rm Lag}(M,\omega)_x$ is a canonically cooriented cycle of codimension one, to be more precise (\cite{arnold}), $\mathcal{M}_x$ is a real algebraic subvariety with singular set
\[
\overline {\mathcal{M}^2_x}=\bigcup_{k \geq 2}\mathcal{M}^k_x, \ \mathcal{M}^k_x=\{\Lambda(x) \in i^*\widetilde {\rm Lag}(M,\omega)_x: {\rm dim}(\Lambda(x)\cap \Lambda_s(x))=k\},
\]
where ${\rm codim}(\mathcal{M}^k_x)=\frac{1}{2}k(k+1)$ (cf. Arnold \cite{arnold}). Furthermore, for any $x \in N$, the sets $\{\mathcal{M}^k_x\}_{k \in \mathbb{N}^+}$ furnish $\mathcal{M}_x$ with the structure of a stratified space (see Mather \cite{mather}, Whitney \cite{whitney}) with smooth top-stratum $\mathcal{M}^1_x$ of codimension $1$, singular set $\overline {\mathcal{M}^2_x}$ of at least codimension $3$ in $i^*{\rm Lag}(M,\omega)_x$ and strata $\mathcal{M}^k_x$. Set $\mathcal{M}^k=\bigcup_{x\in N} \mathcal{M}^k_x, \ k\in \mathbb{N}^+$. These remarks suggest the following 
\begin{Def}\label{generic}
In the situation and notation of Lemma \ref{lag5}, let $N_s\subset N$ represent $[N_s] \in H_{k-1}(N,\mathbb{Z})$ as defined as in (\ref{maslovdual3}) so that $PD[g^*\beta]=[N_s]$. 
Let $N^{\mathcal{M}}_s:=\Lambda_0^{-1}(\Lambda_0(N)\cap \mathcal{M})$, so that also $PD[g^*\beta]=[N^{\mathcal{M}}_s]$. We will call $N^{\mathcal{M}}_s\subset N$ {\it generic} if it is a Whitney stratified space with smooth cooriented top-stratum of codimension $1$ in $N$ given by $N^{\mathcal{M},top}_s=\Lambda_0^{-1}(\Lambda_0(N)\cap \mathcal{M}^1)$ and with singular set $N^{\mathcal{M},sg}_s=\Lambda_0^{-1}(\Lambda_0(N)\cap \overline {\mathcal{M}^2})$ of at least codimension $3$ in $N$.
\end{Def}
Let now be $x \in S^1_\epsilon$ fixed and $Q_x \subset (Y_\epsilon^k)_x=:M \subset \tilde X^k$ as in Assumption \ref{ass2}. Let $\Phi_H(t), t \in[0,1]$ be the family of Hamiltonian flows on $M$ introduced in (\ref{circleham}) and $\rho^k_t\in {\rm Symp}(M,\partial M,\omega_x), \  t \in [0,1]$ an isotopy so that $\rho^k_0=Id,\rho^k_1=\rho^k$. Then, relative to the representation of $(Y^k_\epsilon, \Omega^k)$ as a symplectic mapping cylinder (see (\ref{mapcyk3}) below) which is induced by symplectic parallel transport $\Phi^{\Omega^k}_{(\cdot)}:Y^k_\epsilon\rightarrow Y^k_\epsilon$, $\Phi_H(\cdot)$ resp. $\rho^k_{(\cdot)}$ define by considering (\ref{triv46}) and (\ref{hatqt}) below a $1$-parameter-family of immersions $i_\tau: Q_x\times [0,1] \rightarrow Y^k_\epsilon$ whose images $Q_\tau:={\rm im }(i_\tau)$ factorize for $\tau=0,1$ into closed $n+1$-dimensional submanifolds $Q_0,\ Q_1\subset Y^k_\epsilon$ so that $Q=Q_1$ and so that any intersection $Q_\tau\cap (Y^k_\epsilon)_u,  \tau \in [0,1], u \in S^1_\epsilon$ is a Lagrangian sphere (resp. a union of Lagrangian spheres for $u=x$) in $(Y^k_\epsilon)_u$. Thus consider the family of Lagrangian spheres in $(Y^k_\epsilon)_{x(t)}$ defined for $\tau,t \in [0,1]$, $x(t)=xe^{2\pi it}$ and fixed $x\in S^1$ by
\begin{equation}\label{qfamily}
Q_{\tau,x(t)}=i_\tau(Q_x\times \{t\}),\ \tau\in [0,1],\ t \in [0,1],\ {\rm  s.t.}\  Q_{\tau,x(1)}=\Phi_{H(x)}(1-\tau)\circ\rho_{\tau}(Q_x) \subset (Y^k_\epsilon)_x,
\end{equation}
for the latter equality compare (\ref{triv46}). For any such $Q_{\tau,x(t)}$ we have a section $\Lambda_{Q_{\tau,x(t)}} \in \Gamma(i_{\tau,t}^*\widetilde {\rm Lag}(\tilde X^k,\Omega^k))$, where $i_{\tau,t}:Q_{\tau,x(t)}\hookrightarrow X_e^k$ is the inclusion, which is given for $z \in Q_{\tau,x(t)}$ by
\begin{equation}\label{lagsubspaces57}
\Lambda_{Q_{\tau,x(t)}}(z)=T^h_{z}Y^k_\epsilon \oplus T_{z}Q_{\tau,x}\subset T_{z}\tilde X^k,\ \tau\in\{0,1\},
\end{equation}
where $T^hY_\epsilon^k$ denotes the $\Omega^k$-orthogonal complement of $T^vY_\epsilon^k$ in $TY_\epsilon^k$. Then by Lemma \ref{lag5}, $\Lambda_{Q_{\tau,x(t)}}$ induces a non-vanishing section $\kappa_{Q_{\tau,x(t)}}\in \Gamma(i_{\tau,t}^*\Lambda^{(n+1,0)}T^*\tilde X^k)$ of unit length for any $\tau \in [0,1], t \in [0,1]$ and a family of functions $g_{\tau,x(t)}:Q_{\tau,x(t)}\rightarrow S^1$ by setting
\begin{equation}\label{maslovfunction}
g_{\tau,x(t)}\kappa_{Q_{\tau,x(t)}}=((\pi^k)^*dz_0\wedge\dots\wedge dz_n)|i_{\tau,t}^*T^*\tilde X^k=X_{f^k}^*\wedge s^k|i_{\tau,t}^*T^*\tilde X^k
\end{equation}
For $\tau,t \in [0,1]$, let $N^{\mathcal{M}}_{\tau,t}\subset Q_{\tau,x(t)}$ be associated to the triple $(Q_{\tau,x(t)}, \Lambda_{Q_{\tau,x(t)}}, X_{f^k}^*\wedge s^k|i_{\tau,t}^*T^*\tilde X^k)$ by Definition \ref{generic} resp. the proof of Lemma \ref{lag5}, more precisely, we assume the following:
\begin{ass}\label{ass4}
With the above notation and definition, one can choose $x=x(0) \in S^1_\epsilon$ and modify the families $Q_{\tau,x(t)}$ for $t \in [0,\delta]\cup [1-\delta,1], \tau\in [0,1]$ for some small $\delta> 0$ and $\Lambda_{Q_{\tau,x(t)}}$ for $t, \tau\in [0,1]$ by 'arbitrarily small amounts' (in a sense to be made precise in Section \ref{app4}) so that the first and at least one of the conditions (2.) and (3.) are satisfied:
\begin{enumerate}
\item For $\tau \in [0,1]$ the set $\hat N^{\mathcal{M}}_\tau:=\bigcup_{t \in [0,1]}\hat N^{\mathcal{M}}_{\tau,t}\subset Q_x \times [0,1]$ where $\hat N^{\mathcal{M}}_{\tau,t}:= i_\tau^{-1}(N^{\mathcal{M}}_{\tau,t})$ is generic outside of a discrete set. Further, each member of the family $N^{\mathcal{M}}_{\tau,t}\subset Q_{\tau,x(t)},\ t \in \{0,1\}, \tau\in [0,1]$ is {\it generic} outside of a discrete subset and non-empty for $\tau=\{0,1\}$.
\item For $t \in \{0,1\}$, the union of top strata $\hat N^{\mathcal{M}, top}_t:=\bigcup_{\tau \in [0,1]}\hat N^{\mathcal{M}, top}_{\tau,t}\subset Q_x \times [0,1]$, as well as each $\hat N^{\mathcal{M}}_{\tau,t}:= i_\tau^{-1}(N^{\mathcal{M},top}_{\tau,t})$ are canonically cooriented (by Definition \ref{generic}) outside of a discrete set. Furthermore, for $t \in \{0,1\}$, there exists an oriented path $\hat c: [0,1]\rightarrow Q_x \times [0,1]$ so that $\hat c(\tau) \in Q_x \times \{\tau\}, \hat c(0)=\hat c(1)$ so that $\hat c$ intersects $\hat N^{\mathcal{M}, top}_t$ transversally in generic points and such that $\hat c\cdot\hat N^{\mathcal{M}, top}_t=0$.
\item For $t \in \{0,1\}$, there is a {\it path-connected} subset $\mathcal{U}_t\subset \bigcup_{\tau \in [0,1]} Q_{\tau,x(t)}\times \{\tau\}\subset (Y^k_\epsilon)_{x(0)}\times [0,1]$ and a family of connected embedded, non-empty $n$-manifolds $\mathcal{U}_{\tau,t}\subset Q_{\tau,x(t)}, \tau \in [0,1]$ so that $\mathcal{U}_t=\bigcup_{\tau \in [0,1]}\mathcal{U}_{\tau,t}\times\{\tau\}$ and one has $\mathcal{U}_{\tau,t}\subset Q_{\tau,x(t)}\setminus  \overline {N_{\tau,t}^{\mathcal{M},top}}$. Further each $\mathcal{U}_{\tau,t}$ is open in $Q_{\tau,x(t)}$  and for any $t \in \{0,1\}, \tau \in [0,1]$ equals a connected component of $Q_{\tau,x(t)}\setminus \overline {N_{\tau,t}^{\mathcal{M},top}}$.
\end{enumerate}
\end{ass}
{\it Remark.} Note that to prove Proposition \ref{keylemma} below, thus Theorem \ref{symplecticmonodromy}, the first and the second assertion are actually sufficient and proven in the present work. The third assertion is proven under a further assumption on the vanishing of 'higher singularities'(cf. Section \ref{app4}, Proposition \ref{keysing}) but the alternative line of reasoning based on it in the proof of Proposition \ref{keylemma} is expected to play a key role in a proof of Conjecture \ref{conjspec}. We will discuss aspects of this in the end of Section \ref{generalspec} for the quasihomogeneous case.\\
By definition resp. Lemma \ref{lag5}, for $\tau\in \{0,1\}$, $N^{\mathcal{M}}_\tau:=i_\tau(\hat N^{\mathcal{M}}_\tau)\in H_n(Q_\tau,\mathbb{Z})$ and $N_\tau:=\bigcup_{t \in [0,1]} N_{\tau,t}\subset Q_\tau$ where $N_{\tau,t}=\{z \in Q_{\tau,x(t)}: g_{\tau,x(t)}(z)\in \mathbb{R}\}$ both represent the Poincare dual of the Maslov class $[g_\tau^*\beta]\in H^1(Q_\tau,\mathbb{Z})$, where $g_\tau:Q_\tau\rightarrow \mathbb{C}^*, \tau \in \{0,1\}$ assembles the family (\ref{maslovfunction}). Note that for $t \in \{0,1\}, \tau\in [0,1]$, the image of the sections $\Lambda_{s^k}: Q_{\tau,x(t)} \rightarrow i_{\tau,t}^*{\rm Lag}(\tilde X^k,\Omega^k)$ referred to in the proof of Lemma \ref{lag5} can be chosen to be $\mathbb{R}^n\times \{0\}\subset T_x\tilde X^k$ for any $x \in Q_{\tau,x(t)}$. Note further that the genericity Assumption \ref{ass4} is formulated here and will be used only for the representatives $N^{\mathcal{M}}_\tau, \hat N^{\mathcal{M}}_\tau$ resp. their intersection with $Q_{\tau,x(t)}$ for $\tau\in [0,1],\  t \in \{0,1\}$, $\hat N^{\mathcal{M}}_{\tau,t}$, and we will in the following drop the upper suffixes $\mathcal{M}$ frequently in the course of the arguments.\\
{\it Remark.} Note that the genericity part of (1.) in Assumption \ref{ass4} is satisfied if $\Lambda_{Q_{\tau,x(t)}} \in \Gamma(i_{\tau,t}^*\widetilde{\rm Lag}(\tilde X^k,\Omega^k))$ intersects the union of the sets $\mathcal{M}_x, \ x \in Q_{\tau,x(t)}$ for all $\tau,t \in [0,1]$ transversally outside of a discrete set of points, which can be achieved by a small perturbation of the sections $\Lambda_{Q_{\tau,x(t)}}$ for $t,\tau =\{0,1\}$ resp. the family $Q_{\tau,x(t)}, \ t\in \{0,1\}$ without affecting the Maslov class of $[N_\tau]\in H_n(Q_\tau,\mathbb{Z})$ for $\tau =\{0,1\}$, this will be proven in Section \ref{app4}. The non-emptyness assumption in (1.) above follows for all $\tau \in [0,1]$ and $t=0$ since the non-vanishing of the class $[N_0] \in H_n(Q_0,\mathbb{Z})$ follows from assuming the non-vanishing of ${\rm wind}(\alpha)= m(\sum_{i=1}^\mu \beta_i-\beta)$ (see Lemma \ref{bla456}) and formula (\ref{bla4567}). The validity of Assumption (3.) will be discussed in Section \ref{app4} by representing a neighbourhood of each $Q_{\tau,x(t)}$ in $Q_\tau$ using generating families and subsequently using stability theory (see Eliashberg and Gromov \cite{eliasgr} resp. Guillemin and Sternberg \cite{guille}) to show that certain connected components of the complement of the family of caustics $N^{\mathcal{M}}_{\tau,t}\subset Q_{\tau,x(t)},\ t \in \{0,1\}, \tau\in [0,1]$ do not vanish along the symplectic isotopy that is underlying the deformation in the parameter $\tau$ (given the non-occurrence of 'higher singularities'). Note that these considerations are connected with a question posed by Arnol'd concerning the persistence of caustics of wavefronts in families of Lagrangian embeddings (see Ferrand and Pushkar \cite{legendrian}, Entov \cite{entov}).\\
We finally define a loop $c:[0,1]/\{0,1\}\rightarrow Q$ by fixing point $z_0 \in Q_x$, $x \in S^1_\delta$ above, and defining a map
\begin{equation}\label{tildec}
\begin{split}
\tilde c&:  [0,1] \rightarrow \hat Q\\
\tilde c(\tau)&= \left\{\begin{matrix}(z_0,\delta,\tau), \ \tau \in [0,1-r]\\  \left((\rho^k_{\psi(\tau)})(z_0),\delta,\tau\right), \ \tau \in [1-r,1].\end{matrix}\right.
\end{split}
\end{equation}
this factorizes to a well-defined smooth map $c:[0,1]/\{0,1\} \rightarrow Q \subset \tilde X$. We then have the following:
\begin{prop}\label{keylemma}
Let $g_1=e^{i\theta}:Q \rightarrow S^1$ be determined as described in the formulation of Lemma \ref{bla456}. Then if ${\rm dim}(Q_x)\geq 2$ and with the above notations, i.e. (\ref{maslovindex36}) and (\ref{windingbla}) we have
\[
{\rm wind}(\alpha)={\rm wind}(e^{i\theta\circ c})-k,
\]
On the other hand, ${\rm wind}(e^{i\theta \circ c})=k$, which thus implies ${\rm wind}(\alpha)=0$.
\end{prop}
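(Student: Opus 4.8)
The proposition packages two equalities --- the transgression identity $\mathrm{wind}(\alpha)=\mathrm{wind}(e^{i\theta\circ c})-k$ and the value $\mathrm{wind}(e^{i\theta\circ c})=k$ --- whose combination gives $\mathrm{wind}(\alpha)=0$, and I would establish each in turn. Note first that, under the standing hypothesis, Lemma \ref{bla456} already yields $\mathrm{wind}(\alpha)=m(\sum_i\beta_i-\beta)$; by Lemma \ref{condition} this is $\neq0$, and one uses it only to know that the Maslov cycles appearing below are non-empty, as required in Assumption \ref{ass4}(1). What the proposition adds is the competing value $0$.

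For the transgression identity I would compare the fibre-integral $\alpha(\tau)=\int_{Q_{y(\tau)}}e^{i\theta}\,i_{X_{f^k}}\kappa_Q$ with the pointwise phase $e^{i\theta(c(\tau))}$. Since $H^1(Q_{y(\tau)};\mathbb{C})=0$ by Assumption \ref{ass2}(1), $\theta$ restricts over each fibre to a function $\theta_{y(\tau)}\colon Q_{y(\tau)}\to\mathbb{R}$; fixing its additive constant along $c$ so that $\theta_{y(\tau)}(c(\tau))$ is the continuous lift of $\theta\circ c$, one gets $\alpha(\tau)=e^{i\,\widetilde{\theta\circ c}(\tau)}\beta(\tau)$ with $\beta$ a loop in $\mathbb{C}^{*}$, hence $\mathrm{wind}(\alpha)=\mathrm{wind}(e^{i\theta\circ c})+\mathrm{wind}(\beta)$, and it suffices to see $\mathrm{wind}(\beta)=-k$. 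For this I would reorganise the integrand exactly as in the proof of Lemma \ref{bla456}, separating the intrinsic phase of $s^k$ --- whose winding along $c$ equals $\mathrm{wind}(\alpha)$, by the cohomological part of that lemma --- from the contribution of the horizontal complex line $(T^hY^k)^{(1,0)}\ni X_{f^k}$, which by (\ref{eulermulti}) together with the defining relation $dz_0\wedge\cdots\wedge dz_n=df\wedge\hat s$ winds by $k$ around $S^1_\epsilon$; this forces $\mathrm{wind}(\beta)=-k$. The mechanism that makes the bookkeeping go through is the Euler-flow normalisation of Lemma \ref{metricsubspace}: parallel transport along $t\mapsto e^{2\pi it}$ by the lift of $K$ is the time flow of the fibrewise circle Hamiltonian $\pi k\sum_iw_i|z_i|^2$, a family of fibrewise isometries with $\Phi_H(1)=\mathrm{id}$ since $kw_i=m\beta_i\in\mathbb{Z}$.

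For the value $\mathrm{wind}(e^{i\theta\circ c})=k$, I would first translate it into an intersection number. By Lemma \ref{lag5} applied to the triple $(Q,\Lambda_Q,X_{f^k}^{*}\wedge s^k)$, the function $g_1=e^{i\theta}$ has $PD[g_1^{*}\beta]=[N_1]$ with $N_1=\{z\in Q:g_1(z)\in\mathbb{R}\}$ the Maslov cycle, so $\mathrm{wind}(e^{i\theta\circ c})=\langle g_1^{*}\beta,[c]\rangle=c\cdot N_1$. I would then run the deformation $\tau\colon1\to0$ through the family $Q_\tau$ of (\ref{qfamily}) and show the intersection number is unchanged. At $\tau=0$ one has the Euler--Hamiltonian tube $Q_0=\bigcup_t\Phi_H(t)(Q_x)\times\{t\}$ over $S^1_\epsilon$, for which $c_0\cdot N_0=k$ by a direct computation: along the section loop $c_0$ the fibre-direction windings of $(\pi^k)^{*}(dz_0\wedge\cdots\wedge dz_n)$ and of $\kappa_{Q_0}$ cancel (both are generated by $\Phi_H$ and the $s^k$-compatible framing $\Lambda_{s^k}$ of Lemma \ref{lag5}), leaving only the winding $k$ of $df^k$. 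The invariance $c\cdot N_\tau=\mathrm{const}$ is precisely what Assumption \ref{ass4} supplies: part (1) makes the caustics $N^{\mathcal{M}}_{\tau,t}$ and their trace $\hat N^{\mathcal{M}}_\tau\subset Q_x\times[0,1]$ generic --- Whitney stratified with cooriented codimension-one top stratum and singular set of codimension $\ge3$ (where $n=\dim Q_x\ge2$ enters, via $\frac{1}{2}\ell(\ell+1)\ge3$ for $\ell\ge2$) --- so that the Maslov class $[g_\tau^{*}\beta]$ and the intersection number with a transverse loop are well defined and locally constant in $\tau$; and either part (2) (a loop $\hat c$ with $\hat c\cdot\hat N^{\mathcal{M},top}_t=0$ for $t\in\{0,1\}$) or part (3) (a path-connected family of complementary components $\mathcal{U}_{\tau,t}$ of the caustics) excludes the jump that could a priori occur at the values of $\tau$ where $Q_\tau$ fails to close up. Hence $\mathrm{wind}(e^{i\theta\circ c})=c\cdot N_1=c_0\cdot N_0=k$.

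The genuinely hard step is the invariance just invoked --- that $c\cdot N_\tau$ does not jump as $\tau$ runs from $1$ to $0$, equivalently that the relevant components of the complements of the caustics $N^{\mathcal{M}}_{\tau,t}$ persist, without wrapping around $Q_{\tau,x(t)}$, under the symplectic isotopy driving the $\tau$-deformation. This is exactly the content Assumption \ref{ass4} isolates, and it is established in Section \ref{app4} by describing a neighbourhood of $Q_{\tau,x(t)}$ in $Q_\tau$ through a generating family, applying stable Morse theory, and invoking the stability theory of Lagrangian fold singularities (cf. \cite{eliasgr}, \cite{guille}); this is also where the link with Arnol'd's question on persistence of caustics of wavefronts enters. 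Granting this, the two equalities combine to give $\mathrm{wind}(\alpha)=c\cdot N_1-k=k-k=0$, as asserted.
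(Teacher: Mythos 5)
There is a genuine gap, and it is circularity: your ``direct computation'' $c_0\cdot N_0=k$ at the Euler end $\tau=0$ is wrong, and what it would actually produce is $N_0\cdot\gamma_0=\mathrm{wind}(\alpha)+k$, i.e.\ $m\sum_i\beta_i$. This is exactly what the paper proves in Claim~2 via (\ref{bla4567}): pulling the integrand back along the Euler flow $\Phi_{X_K}$ produces the factor $e^{2\pi i\gamma t}$ from (\ref{eulermulti}) \emph{and} the factor $e^{-2\pi ikt}$ from $i_{X_{f^k}}\kappa_{Q_0}$ in (\ref{blawind2}), and these add up to $N_0\cdot\gamma_0-k=\mathrm{wind}(\alpha)=\gamma$; they do not cancel. (Concretely, $\kappa_{Q_0}$ is defined by the unitarily framed Lagrangian $T^hY^k\oplus TQ_{0,t}$, while $\pi_k^*(dz_0\wedge\cdots\wedge dz_n)$ scales under the weighted $S^1$-action by $e^{2\pi i k t\sum_iw_i}$, and $\Lambda_{s^k}=\pi_k^*(\mathbb{R}^{n+1}\times\{0\})$ is \emph{not} $\Phi_H$-invariant, so the ``cancellation'' you invoke does not occur.) With the correct value, your chain $\mathrm{wind}(e^{i\theta\circ c})=c\cdot N_1=c_0\cdot N_0$ combined with the transgression identity gives $\mathrm{wind}(\alpha)=\mathrm{wind}(\alpha)$, a tautology, and never produces the contradiction with $\gamma\neq0$. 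The same circularity is latent in your first paragraph: factoring $\alpha(\tau)=e^{i\widetilde{\theta\circ c}(\tau)}\beta(\tau)$ and claiming $\mathrm{wind}(\beta)=-k$ by ``reorganising the integrand'' conceals that, after extracting the $e^{-2\pi ik\tau}$ factor, the remaining fibre integral $\int_{Q_{y(\tau)}}e^{i(\theta(z)-\theta(c(\tau)))}|X_{f^k}|\,\mathrm{vol}$ has no a priori reason to have winding zero; showing that it does is the whole content of Claim~1 plus Assumption~\ref{ass4}.

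What is missing is the \emph{independent} input that forces $\mathrm{wind}(e^{i\theta\circ c})=k$, which is what pins the otherwise floating constant. The paper obtains it not from the $\tau$-deformation at all but from a transverse surface: one constructs (via (\ref{tildeu})) a map $u\colon[0,1]\times S^1\to Y^k$ whose image $F$ has boundary $\partial F=\mathrm{im}(c)\cup\mathrm{im}(\hat c)$ with $\hat c\subset\partial Y^k$, extends the phase over $F$ using the vertical Lagrangian section $\mathbf{Q}^v\in\Gamma(\mathcal{L}^v(Y^k)|F)$ of (\ref{lagpoli})--(\ref{lagsubspaces2}), observes that $\sigma=d\log(e^{i\vartheta})$ is a closed $1$-form on $F$ so that $\mathrm{wind}(e^{i\theta\circ c})=\mathrm{wind}(e^{i\vartheta\circ\hat c})$, and then computes $\mathrm{wind}(e^{i\vartheta\circ\hat c})=k$ by the triviality condition of the fibrewise $(n,0)$-form $\mathbf{s}^k$ along $\partial Y^k$ from Lemma~\ref{relativen} together with (\ref{insertk}), giving $e^{i\vartheta\circ\hat c(\tau)}e^{-2\pi ik\tau}=\text{const}$. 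Without this boundary normalisation the deformation invariance you correctly identify as the hard step (Assumption~\ref{ass4}, Section~\ref{app4}) gives only the equality $N_1\cdot\gamma_1=N_0\cdot\gamma_0$, which is necessary but not sufficient. Your outline of that invariance --- translation to intersection numbers, genericity of the caustics, link/intersection homotopy over $\tau$ --- does match the paper's Claims~1 and~2 in spirit, but both endpoints of the chain need the extra anchor from Lemma~\ref{relativen} that your write-up leaves out.
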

\begin{proof}
We will first prove that
\[
{\rm wind}(\alpha)={\rm wind}(g_1\circ \hat c)-k,
\]
where $\hat c:[0,1]\rightarrow Q$ is an arbitrary smooth path lifting $t \mapsto xe^{2\pi it}$ for a fixed $x\in S^1_\epsilon$ and $Q=Q_1$ is as defined in (\ref{hatq}) resp. (\ref{hatqt}) below. Fixing $M:=(Y_\epsilon^k)_x,\ x \in S^1_\epsilon$, choose an isotopy $\rho^k_{(\cdot)}:[0,1]\rightarrow {\rm Symp}(M,\partial M,\omega)$ s.t. $\rho^k_0=Id$, $\rho^k_1=\rho^k$, where $\rho^k$ is the symplectic monodromy of $Y^k_\epsilon$. Recall that symplectic parallel transport in $Y_\epsilon^k$ defines a symplectomorphism
\begin {equation}\label{mapcyk3}
\overline \Theta: Y^k_\epsilon \simeq \overline Y^k:=\left([0,1]\times M\right)/\left((0,z)\sim (1,\rho^k(z)\right),
\end{equation}
as in Lemma \ref{mappingcylinder}. Denote $\pi_0: [0,1]\times M\rightarrow \overline Y^k$ the canonical projection. Set $\overline \Omega^k:= (\overline \Theta^{-1})^*\Omega^k|Y_\epsilon^k$ and $\Phi_t^{\overline \Omega^k}:=\overline \Theta\circ\Phi_t^{\Omega^k}\circ (\Theta^{-1})$, where $\Phi_t^{\Omega^k}$ denotes symplectic parallel transport in $Y_\epsilon^k$ w.r.t. the curve $t \mapsto xe^{2\pi it}$. For any $\tau \in [0,1]$ define a family of diffeomorphisms $\tilde \Phi^{\Omega^k}(\cdot,t,\tau):(Y_\epsilon^k)_x \rightarrow (Y_\epsilon^k)_{xe^{2\pi it}}, t \in [0,1]$ which are w.r.t. (\ref{mapcyk3}) and for any $z \in  (Y_\epsilon^k)_{xe^{2\pi it}}$ given as
\begin{equation}\label{triv46}
\tilde \Phi^{\Omega^k}(z,t, \tau)=\left \{ \begin{matrix} \Phi_{H(t)}((1-\tau)t)\circ\Phi^{\overline \Omega^k}(z,t),\ t \in [0,1-r),\ \tau \in [0,1] \\
\Phi_{H(t)}((1-\tau)t)\circ\rho^k_{\tau\psi(t)}\circ \Phi^{\overline \Omega^k}(z,t),\ t \in [1-r,1]\ \tau \in [0,1]\end{matrix}\right.,
\end{equation}
where $\psi:[1-r,1]\rightarrow [0,1]$ is smooth s.t. $\psi(1-r)=0$, $\psi(1)=1$, $\psi'(1-r)=\psi'(1)=0$. Here we use the notation of Lemma \ref{metricsubspace}, that is $\Phi_H$ denotes the image of the flow of the lift of the Euler vector field $K= \sum_i w_i z_i \frac{\partial}{\partial z_i}, \ z\in Y_\epsilon$ to $Y^k_\epsilon$ under $\overline \Theta$. We see that for each $\tau\in \{0,1\}$ the family $\tilde \Phi^{\Omega^k}(\cdot,t,\tau)$ symplectically trivializes $Y^k_\epsilon$, $\tau=0$ corresponds to the trivialization given by the weighted circle action restricted to $Y^k_\epsilon$, $\tau=1$ to the trivialization induced by the chosen isotopy $\rho^k_t \in {\rm Symp}(M,\partial M,\omega), t \in [0,1]$. Then for $\tau \in [0,1]$ define $\hat Q_\tau\subset M\times [0,1]$ fibering into Lagrangian submanifolds $\hat Q_{\tau, t}:=\hat Q_\tau \cap (M\times \{t\})$ over $[0,1]$ by
\begin{equation}\label{hatqt}
Q_\tau=\pi_0\circ\hat Q_\tau=\bigcup_{t \in [0,1]}\tilde  \Phi^{\Omega^k}(Q_x,t,\tau), \ f^k_e(\pi_0(\hat Q_{\tau,t}))=xe^{2\pi it}, \ t \in [0,1],
\end{equation}
where $Q_x\subset M$ is as in Ass. \ref{ass2}. For $\tau\in \{0,1\}$, $\hat Q_\tau$ factorizes to a well-defined closed submanifold $Q_\tau\subset \overline Y^k\simeq Y^k_\epsilon$ (fibering into Lagrangians $Q_{\tau,t}=\pi_0(\hat Q_{\tau, t})$). Let $i_\tau: \hat Q_\tau\rightarrow \tilde X^k$ be the immersion onto the image of $\pi_0(\hat Q_\tau)$. By Lemma \ref{lag5}, we can associate to any point $z'\in \hat Q_\tau, \ \tau \in [0,1]$ an element $\kappa_{\hat Q_\tau}(z')$ of $i_\tau^*(\Lambda^{n+1,0}(T^*_{z'}\tilde X^k))$ induced by the Lagrangian subspace 
\begin{equation}\label{lagsubspaces56}
T^h_{z'}Y^k_\epsilon \oplus T_{z'}Q_{\tau,t}\subset T_{z'}\tilde X^k, \ y(t)=f^k_e(z'),\ t \in [0,1],
\end{equation}
(recall $Y^k_\epsilon \subset \tilde X^k$). Define a family of functions $g_\tau:=e^{i\theta_\tau}:\hat Q_\tau\rightarrow S^1$ for $\tau\in [0,1]$ by setting
\begin{equation}\label{bla25}
e^{i\theta_\tau}i_\tau^*\kappa_{\hat Q_\tau}=\left(i_\tau^*(\pi^k)^*(dz_0\wedge\dots\wedge dz_n)\right),
\end{equation}
where here as above, we use Lemma \ref{mappingcylinder} and the immersions $i_\tau: \hat Q_\tau\rightarrow \tilde X^k$. Of course, for $\tau=0,1$, $g_\tau$ factorize to functions $g_\tau:Q_\tau\rightarrow S^1$ (using the same symbols).   \\
For the following, fix $x \in S^1_\epsilon$ in (\ref{mapcyk3}) and (\ref{triv46}) so that (1.) and (2.) in Assumption \ref{ass2} and Assumption \ref{ass4} (1.) are satisfied. Then following Lemma \ref{lag5} and the discussion above, we have relative $n$-cycles $\hat N^\mathcal{M}_{\tau}\in H_{n}(\hat Q_\tau,\partial \hat Q_\tau,\mathbb{Z})$ for $\tau \in [0,1]$ so that for $\tau=0,1$ we have after factorizing $PD[g_\tau^*\beta]=[N^\mathcal{M}_{\tau}] \in H_{n}(Q_\tau,\mathbb{Z})$. By Assumption \ref{ass4} (from now on dropping the upper suffix $\mathcal{M}$), $\hat N_\tau \subset \hat Q_\tau, \tau \in [0,1]$, resp. $N_\tau \subset Q_\tau, \tau \in \{0,1\}$ are in fact Whitney stratified spaces with cooriented smooth top-strata $\hat N^{top}_\tau, N^{top}_\tau$ of codimension one and singular sets of at least codimension $3$ in $\hat Q_\tau$ resp. $Q_\tau$. Let $\gamma_\tau:S^1 \rightarrow Q_\tau,\ \tau \in \{0,1\}$ be chosen so that it intersects $N^{top}_\tau$ transversally, generates $H_1(Q_\tau,\mathbb{Z})/{\rm Tor}$ respectively and so that $\gamma_\tau \cap N_{\tau,x}=\emptyset,\ \tau \in \{0,1\}$, where $N_{\tau,x}=N_\tau\cap Q_x$. We then have the following claims:
\begin{enumerate}
\item $N_{\tau}\cdot\gamma_\tau \in \mathbb{Z}$ coincide for $\tau \in \{0,1\}$, where $\cdot$ denotes the geometric intersection number of cycles.
\item We have $N_{0}\cdot\gamma_0-k={\rm wind}(\alpha)$, $N_1\cdot\gamma_1={\rm wind}(e^{i\theta_1\circ c})$ where $c: [0,1]\rightarrow Q_1$ is any closed smooth path generating $H_1(Q_1,\mathbb{Z})/{\rm Tor}$.
\end{enumerate}

{\it Proof of the Claim (1.) using Assumption \ref{ass4} (3.)}\\
To prove the first claim using in addition Assumption \ref{ass4} (3.), note that there is for each $\tau \in [0,1]$ a diffeomorphism $\hat \Psi_\tau:\hat Q_1\rightarrow \hat Q_\tau,$ by assembling the family of fibrewise symplectomorphisms given by 
\begin{equation}\label{triv45}
\hat \Psi_\tau(z,t)=\left \{ \begin{matrix} \Phi_{H(t)}(z,(1-\tau)t), \ z\in \overline Y^k_t,\ t \in [0,1-r),\ \tau \in [0,1]\\
\Phi_{H(t)}((1-\tau)t)\circ \rho_{\tau\psi(t)}\circ (\rho_{\psi(t)})^{-1}(z,t),\ z \in \overline Y^k_t,\ t \in [1-r,1]\ \tau \in [0,1],\end{matrix}\right.
\end{equation}
which by (\ref{triv46}) restrict to mappings $\hat \Psi_\tau(z,t):\hat Q_{1,t}\rightarrow \hat Q_{\tau,t}$. These factorize for $\tau=0,1$ to diffeomorphisms $\Psi_\tau:Q_1\subset \overline Y^k \rightarrow Q_\tau\subset \overline Y^k$, which restrict to the identity on $\overline Y^k_0= (Y^k_\epsilon)_x$. Consider $[\Psi_0^{-1}(N_0)] \in H_{n}(Q_1,\mathbb{Z})$ and $[\Psi_0^{-1}(\gamma_0)] \in H_1(Q_1, \mathbb{Z})$, then by functoriality of the intersection number we have $N_0^1\cdot \tilde \gamma_0 :=\Psi_0^{-1}(N_{0})\cdot\Psi_0^{-1}(\gamma_0) =N_{0}\cdot\gamma_0$. Consider further $\hat N^1_\tau:=\hat \Psi_\tau^{-1}(\hat N_\tau)\subset \hat Q_1,\ \tau \in [0,1]$. Then if we denote for $\tau\in  [0,1]$ $\hat N^{1,top}_\tau\subset Q_1$ the (closure of) smooth topstratum of $\hat N^{1}_\tau$, we have
\[
\partial\hat N^{1,top}_\tau= \hat N^{1,top}_{\tau,0} \sqcup \hat N^{1,top}_{\tau,1} \subset \hat Q_{1,0}\sqcup \hat Q_{1,1}\subset \hat Q_1,
\]
where $\partial$ here means the geometric boundary and for $\tau \in [0,1]$ we set
\[
\hat N^{1,top}_{\tau,t}:=\hat N^{1,top}_\tau\cap (M\times \{t\})\subset \hat Q_1 \cap (M\times \{t\}), \ t \in \{0,1\}.
\]
Note that since $\Psi_0|Q_x=Id_{Q_x}$, we have 
\begin{equation}\label{blaeq}
\hat N^{1,top}_{0,0}=-\hat N^{1,top}_{0,1}=\hat N^{1,top}_{1,0}=-\hat N^{1,top}_{1,1}\subset Q_x.
\end{equation}
Now factorize $N^{1,top}_\tau=\pi_0(\hat N^{1,top}_\tau)\subset Q_1, \tau \in [0,1]$ and $N^{1,top}_{\tau,t}=\pi_0(\hat N^{1,top}_{\tau,t})\subset Q_x, \tau \in [0,1], t \in \{0,1\}$ an consider the union
\[
\mathcal{N}^{1,top} =\bigcup_{\tau \in [0,1]} N^{1,top}_\tau \subset Q_1 \times [0,1]=:\mathcal{Q}_1.
\]
Note that this is an {\it oriented} $n+1$-chain in $\mathcal{Q}_1$, where the orientation is inherited by a relative version of Lemma \ref{lag5} resp. Definition \ref{generic} outside of a discrete set of isolated points (compare Lemma \ref{genericity}). Then if $\mathcal{N}^{1,top}_{t}=\bigcup_{\tau \in [0,1]}N^{1,top}_{\tau,t} \subset \mathcal{Q}_1$ we have that $0=[\mathcal{N}^{1,top}_{0}\cup \mathcal{N}^{1,top}_{1}] \in H_{n+1}(\mathcal{Q}_1, \partial \mathcal{Q}_1, \mathbb{Z})$ with relative primitive $\mathcal{N}^{1,top}$, that is, $\partial_{rel}\mathcal{N}^{1,top}=\mathcal{N}^{1,top}_{0}\cup \mathcal{N}^{1,top}_{1}$, where the boundary is taken relative $\partial \mathcal{Q}_1$. So we write $\partial \mathcal{N}^{1,top}:= \mathcal{N}^{1,top}_{0}\cup \mathcal{N}^{1,top}_{1}\subset \mathcal{Q}_1$. Recall $\tilde \gamma_0=\Psi_0^{-1}(\gamma_0)$ and consider the union $\gamma^\Delta=\tilde \gamma_0 -\gamma_1 \subset Q_1\times \{0\}\cup Q_1\times \{1\} \subset \mathcal{Q}_1$. Since the following arguments will only depend on the homology classes of $\tilde \gamma_0$ and $\gamma_1$ in $Q_1$, since $H_1(Q_x, \mathbb{Z})=0$ and by (\ref{blaeq}) we can assume $\gamma_0:=\tilde \gamma_0=\gamma_1\subset Q_1$. 
Furthermore, using the notation of (3.) in Assumption \ref{ass4}, we can assume that $\gamma_{0,1}\cap Q_x \subset {\rm im} (j_{\tau,t}), \ t\in \{0,1\},  \tau \in \{0,1\}$, i.e. $\gamma_{0,1}\cap N^{1,top}_{\tau,t}=\emptyset$. Thus $[\gamma^\Delta]=0 \in H_1(\mathcal{Q}_1,\mathbb{Z})/{\rm Tor}$ and we have a chain of equalities:
\[
N_{0}\cdot\gamma_0-N_{1}\cdot\gamma_1=N^{1,top}_0\cdot \gamma_0-N^{1,top}_1\cdot\gamma_1=\mathcal{N}^{1,top}\cdot \gamma^\Delta= \partial \mathcal{N}^{1,top} \circledcirc \gamma^\Delta,
\]
where $\circledcirc$ symbolizes the linking pairing. Now let $Q_x^\epsilon:=Q_x\times [-\epsilon, \epsilon]$ for some small positive $\epsilon$, cut $Q_1$ along the separating hypersurface $Q_x$ to obtain $\hat Q_1$ and glue $\partial Q_x^\epsilon$ along $\partial \hat Q_1$ and denote the result by $Q_1^\epsilon$, in the following we will parametrize the 'neck' in $Q_1^\epsilon$ by $Q_x\times [-\epsilon, \epsilon]$. Set $Q^\epsilon_1 \times [0,1]=:\mathcal{Q}^\epsilon_1$. Then $\mathcal{Q}^\epsilon_x:=Q_x^\epsilon\times [0,1]\subset \mathcal{Q}_1^\epsilon$ and $\mathcal{N}^{1,top}_{0}\sqcup \mathcal{N}^{1,top}_{1} \subset \partial Q_x\times \{-\epsilon\} \times [0,1]\sqcup Q_x\times \{\epsilon\}\times [0,1]\subset \mathcal{Q}_1^\epsilon$. By extending $\gamma^\Delta$ constantly along $Q_x^\epsilon$ we can calculate $\partial \mathcal{N}^{1,top} \circledcirc \gamma^\Delta$ in  $\mathcal{Q}_1^\epsilon$ with equal result as in $\mathcal{Q}_1$. As a consequence of 2. in Assumption \ref{ass4}, we can find for each $t\in \{-\epsilon,\epsilon\}$ smooth paths $c_t:[0,1]\rightarrow Q_x\times \{t\} \times [0,1]$ s.t. $c_t(\tau)\in Q_x\times \{t\} \times \{\tau\}, \ \tau \in [0,1]$ so that $c_t(0)=\gamma_0\cap Q_x\times \{t\} \times \{0\}$, $c_t(1)=\gamma_1\cap Q_x\times \{t\}\times \{1\}$ and 
\[
{\rm im} (c_t)\cap (Q_x\times \{t\} \times \{\tau\})\subset \mathcal{U}_{\tau,f(t)},\ t\in \{-\epsilon,\epsilon\}, \ \tau \in [0,1],
\]
where $f(t)=0$ if $t=-\epsilon$ and $1$ otherwise, i.e. $\mathcal{N}^{1,top}_{f(t)}\cap {\rm im} (c_t)=\emptyset$ for $t\in \{-\epsilon, \epsilon\}$. Define smooth paths $\hat \gamma_\tau:[-\epsilon,\epsilon]\rightarrow Q_x^\epsilon\times \{\tau\}$ so that $\hat \gamma_\tau(t)=c_t(\tau)$ for $t \in \{-\epsilon,\epsilon\}$ and $\tau\in [0,1]$ and extend $\hat \gamma_\tau$ for each $\tau \in [0,1]$ smoothly to paths $\gamma_\tau:[0,1]\rightarrow Q_1\times \{\tau\}$ being generators of $H_1(Q_1\times \{\tau\}, \mathbb{Z})/{\rm Tor}$ and coinciding with $\gamma_1=\gamma_0$ for $\tau=0,1$ outside the neck $\mathcal{Q}^\epsilon_x\subset \mathcal{Q}_1$. Thus we have constructed a family $\gamma_\tau\subset Q_1\times\{\tau\}\subset \mathcal{Q}_1, \tau\in [0,1]$ of generators for $H_1(Q_1\times\{\tau\},\mathbb{Z})/{\rm Tor}$ so that $\gamma_{0,1}$ coincide with their previous definitions and further $(\gamma^\Delta_\tau=\gamma_\tau-\gamma_1)\cap \partial \mathcal{N}^{1,top}=\emptyset$ for any $\tau \in [0,1]$. Thus the family of pairs $(\partial \mathcal{N}^{1,top},\gamma^\Delta_\tau)$ defines a link homotopy (see \cite{linking}) so that
\[
\partial \mathcal{N}^{1,top} \circledcirc \gamma^\Delta_0=\partial \mathcal{N}^{1,top} \circledcirc \gamma^\Delta,
\]
and $\partial \mathcal{N}^{1,top} \circledcirc \gamma^\Delta_1=0$, so we arrive at the assertion of Claim (1.)\\

{\it Proof of the Claim (1.) using Assumption \ref{ass4} (2.)}\\
We will explain how to modify the above if (3.) in Assumption \ref{ass4} is replaced by Assumption \ref{ass4} (2.) (in fact we need a little stronger result with regard to coorientation by the kernels of 'vertical Hessians', cf. Proposition \ref{keysing} (3.) and the remark below). Here, we make essential use of the fact that $H^1(Q_x, \mathbb{Z})=0$ for $n\geq 2$. Above we defined a family of submanifolds $\hat Q_\tau\subset M\times [0,1], \ \tau \in [0,1]$, where $M:=(Y^k_\epsilon)_x$ for some fixed $x\in S^1_\epsilon$ so that for $\tau \in [0,1]$ we have immersions $i_\tau: \hat Q_\tau \rightarrow Y^k_\epsilon\subset \tilde X^k$ whose images $Q_\tau:={\rm im }(i_\tau)$ define for $\tau=0,1$ closed $n+1$-dimensional submanifolds $Q_1,\ Q_2\subset Y^k_\epsilon\subset \tilde X^k$. Recall that the $Q_\tau$ are just $\pi_0(\hat Q_\tau)$ where $\pi_0:M\times [0,1]\rightarrow \overline Y^k\simeq Y^k_\epsilon$ is the canonical projection onto $Y^k_\epsilon$, represented as a symplectic mapping cylinder. Following Definition \ref{generic} and using the family of Lagrangian subspaces (\ref{lagsubspaces56}) over $\hat Q_\tau, \tau \in [0,1]$ we defined for $\tau \in [0,1]$ representants $\hat N^\mathcal{M}_{\tau}\subset \hat Q_\tau$ of classes $[\hat N^\mathcal{M}_{\tau}] \in H_{n}(\hat Q_\tau,\partial \hat Q_\tau,\mathbb{Z})$ which factor to representants $N^\mathcal{M}_{\tau}$ of classes $[N^\mathcal{M}_{\tau}] \in H_{n}(Q_\tau,\mathbb{Z})$ for $\tau=0,\ 1$. We choose $\gamma_\tau:S^1 \rightarrow Q_\tau,\ \tau \in \{0,1\}$ so that they intersect the oriented top-strata $N^{top}_\tau$ transversally, generate $H_1(Q_\tau,\mathbb{Z})/{\rm Tor}$ respectively and so that $\gamma_\tau \cap N^\mathcal{M}_{\tau,x}=\emptyset,\ \tau \in \{0,1\}$, where $N^\mathcal{M}_{\tau,x}=N^\mathcal{M}_\tau\cap Q_x$ (note $Q_x=Q_{\tau,x(0)}$ in the above). Now to prove claim (1.), that is that $N^\mathcal{M}_{\tau}\cdot\gamma_\tau \in \mathbb{Z}$ coincide for $\tau \in \{0,1\}$, where $\cdot$ denotes the geometric intersection number of cycles, let 
\[
\mathcal{Q}^0:=\bigcup_{\tau\in [0,1]}\hat Q_\tau\times \{\tau\}\subset (M\times [0,1])\times [0,1],
\]
and let $\hat \gamma_\tau:[0,1] \rightarrow \hat Q_\tau,\ \tau \in \{0,1\}$ be chosen so that $\pi_0(\hat \gamma_\tau(t))=\gamma_\tau(t)$ for all $t \in [0,1]$. Then set
\[
\mathcal{N}^0=\bigcup_{\tau \in [0,1]} \hat N^\mathcal{M}_{\tau}\times \{\tau\}\subset \mathcal{Q}^0,
\]
and note that, by Assumption \ref{ass4} and (3.) of Proposition \ref{keysing}, the top-stratum of $\mathcal{N}^0$ is cooriented outside of a discrete subset and this coorientation coincides outside of a subset $\mathcal{C}^1$ of codimension $1$ in $\bigcup_{\tau \in[0,1]}Q_{\tau,x(1)}\cap \pi_0(\mathcal{N}^0)$ with the coorientations of the individual $\hat N^{1,top}_{\tau,t},\ t = 1,\tau \in [0,1]$ induced by the kernels of the vertical Hessians of the functions generating the $L_\tau, \ \tau \in [0,1]$ with the notation of Section Proposition \ref{keysing}. Note also that we can always assume that the family of immersions $i_\tau: Q_x\times [0,1] \rightarrow Y^k_\epsilon$ defined by (\ref{hatqt}) above is constant in a neighbourhood of $\tau=0,1$. Then the coorientation of the top-stratum of $\mathcal{N}^0$ will induce the given coorientations on the top-strata of $N^\mathcal{M}_{\tau}, \tau =0,1$ (outside of discrete subset). Thus $\mathcal{N}^0$ defines a class $[\mathcal{N}^0] \in H_{n+1}(\mathcal{Q}^0, \partial \mathcal{Q}^0,\mathbb{Z})$. Now assume that we have chosen $\gamma_0, \gamma_1$ so that $\hat \gamma_0(0)=\hat \gamma_1(0)\notin \mathcal{N}^0$ (which is always possible). Then let $\hat \gamma_2:[0,1]\rightarrow (\mathcal{Q}^0 \cap M\times \{0\}\times [0,1])= Q_x\times \{0\}\times [0,1]$ be the path $\hat \gamma_2(\tau)=\hat \gamma_0(0)\times \{0\}\times \{\tau\} \in (\hat Q_\tau\cap M\times \{0\})\times \{\tau\},\ \tau \in [0,1]$. Then we have that $\hat \gamma_0(\tau)\cap \mathcal{N}^0=\emptyset$ for all $\tau \in [0,1]$. On the other hand, let $\hat \gamma_3:[0,1]\rightarrow (\mathcal{Q}^0 \cap M\times \{1\}\times [0,1])$ be the path whose existence is guaranteed by (2.) of Proposition \ref{keysing} above so that $\hat \gamma_3(0)=\hat \gamma_0(1), \hat \gamma_3(1)=\hat \gamma_1(1)$ and $\hat \gamma_3(\tau)\in (\hat Q_\tau\cap M\times \{1\}) \times \{\tau\}$ for all $\tau \in [0,1]$. We can assume that $\pi_0(\hat \gamma_3)$ does not intersect $\mathcal{C}^1\subset \bigcup_{\tau \in[0,1]}Q_{\tau,x(1)}\cap \pi_0(\mathcal{N}^0)$) (thus intersects the latter only in 'fold points'). Then $\hat \gamma_3\cdot \mathcal{N}^0=0$ by (2.) of Proposition \ref{keysing}. Finally note that
\[
\hat \gamma: S^1\rightarrow \mathcal{Q}_0, \ \hat \gamma=(-\hat \gamma_2)\star(-\hat \gamma_1)\star \hat \gamma_3 \star \hat \gamma_0, 
\]
where $\star$ means concatenation and the $-$-sign reverse of orientation, defines a contractible loop in $\mathcal{Q}^0$, hence $0=[{\rm im }\ \hat \gamma] \in H_1(\mathcal{Q}^0, \mathbb{Z})$, hence counting intersection indices along $\hat \gamma$ we arrive at 
\[
0= \hat \gamma \cdot \mathcal{N}^0= -\hat \gamma_1\cdot\mathcal{N}^0 +\hat \gamma_0\cdot\mathcal{N}^0= -\hat N^\mathcal{M}_{1}\cdot\hat \gamma_1+ \hat N^\mathcal{M}_{0}\cdot\hat \gamma_0,
\]
where the latter intersection numbers are determined in $\hat Q_0,\ \hat Q_1$, respectively, which, after factorizing by $\pi_0$, gives the assertion.\\

{\it Proof of the Claim (2.)}\\
To prove the second claim, recall that by (\ref{eulermulti}) we have $\Phi_{X_K}(t)^*s^k=e^{2\pi i\gamma t}s^k$ for $t \in [0,1]$, where $\Phi_{X_K}$ is parallel transport along $t \mapsto xe^{2\pi it}$ induced by the horizontal distribution $H^k_f\subset \tilde X^k$ spanned by the Euler vector field $K$ as defined in the proof of Lemma \ref{metricsubspace}, $\gamma =m(\sum_i \beta_i-\beta) \in \mathbb{Z}$ and $k=m\beta,\ m \in \mathbb{N}^+$. On the other hand $\alpha: S^1\rightarrow \mathbb{C}^*$ is given by (fixing $M=(Y^k_\epsilon)_u,\ u  \in S^1_\epsilon$ as above and setting $Q_{x}=Q_0\cap M$, $Q_0$ as in (\ref{hatqt}))
\begin{equation}\label{blawind}
\alpha(t):={\rm ev}(Q_{x})(\Phi_{X_K}(t)^*s^k)={\rm ev}(Q_{x})(e^{2\pi i\gamma t}s^k)=\int_{Q_{0,x}}e^{i\hat \theta_0(t)}i_{X_{f^k}}\kappa_{Q_{0}}(t),\ t \in [0,1].
\end{equation}
Here, $e^{i\hat \theta_0(t)}i_{X_{f^k}}\kappa_{Q_0}(t)=\Phi_{X_K}(t)^*(e^{i\theta_0(t)}i_{X_{f^k}}\kappa_{Q_{0,t}})$, where $e^{i\theta_0(t)}=e^{i\theta_0}|Q_{0,t}$. Denoting $|X_{f^k}|$ for the norm of $X_{f^k}$ with respect to the metric $g$ on $Q_{0,x}\times [0,1]$ induced by the metric $\Omega^k(\cdot,J\cdot)$ on $Q_0$ we have
\begin{equation}\label{blawind2}
i_{X_{f^k}}\kappa_{Q_{0,t}}=e^{-2\pi i kt}|X_{f^k}| {\rm vol}_{Q_{0,t}} \ \in \Omega^n(Q_{0,t},\mathbb{C}),\ t \in [0,1] 
\end{equation}
where here, ${\rm vol}_{Q_{0,t}}$ denotes the volume-form on $Q_{0,t}=\pi_0(\hat Q_{0,t})$ induced by the restricted metric. To see this, note that for $u \in Y^k_\epsilon$ with $f^k(u)=xe^{2\pi it}$,
\[
i_{X_{f^k}}(e_0^*-iJe_0^*)(u)=e^{-2\pi i kt}|X_{f^k}|(u),
\]
where $e_0$ is a local horizontal (w.r.t. $\Omega^k$) unit vector field of $TQ_0$, and $2\pi kt, \ t\in [0,1]$ measures the angle between $X_{f^k}$ and $e_i$ (which is fibrewise constant since $J|H_{\Omega^k}=(f^k)^*(j)$). Comparing (\ref{blawind}) and (\ref{blawind2}) and using that $\Phi_{X_K}$ acts by fibrewise isometries, we see that ($\gamma \in \mathbb{Z}$ as in (\ref{maslovindex36})) $e^{i\hat \theta_0(t)}=e^{2\pi i\gamma t}e^{i\theta_0(0)}$ and $|X_{f^k}| {\rm vol}_{Q_{0,t}}={\rm const.}$, so we infer
\begin{equation}\label{bla4567}
\begin{split}
{\rm wind}(\alpha)&=\frac{1}{2\pi i}\int_{[0,1]}\alpha^{-1}(t)\partial_t \alpha(t)dt\\
&=\frac{1}{2\pi}\int_{[0,1]}\alpha^{-1}(0)\int_{Q_{x}}\hat \theta_0'(t)dt\wedge e^{i\hat \theta_0(0)}|X_{f^k}|(0) {\rm vol}_{Q_{0,x}}-k\\
&=\frac{1}{2\pi}{\rm ev}(Q_{x}\times [0,1])(d\hat \theta_0\wedge\pi_1^*\delta)-k\\
&= N_{0}\cdot\gamma_0 - k.
\end{split}
\end{equation}
where $\delta \in H^n(Q_{x},\mathbb{C}),\ {\rm ev}(\delta)(Q_{x})=1$ and $\pi_1:Q_{x}\times [0,1]\rightarrow Q_{x}$ is the obvious projection. In the last line, we have used that $PD[\pi_1^*\delta]=\gamma_0$ and $PD[g_0^*\beta]=N_0$, $\beta \in H^1(C^*, \mathbb{Z})$ the generator. The second assertion of Claim 2. is essentially true by the definition of $N_1$ as the Poincare dual of $PD[g_1^*\beta]$.\\

We will now show the second assertion of Proposition \ref{keylemma}, that is that we also have ${\rm wind}(e^{i\theta \circ c})=k$, where $c$ is as in (\ref{tildec}) (note that by definition, $e^{i\theta_1}=e^{i\theta}$ on $Q_1=Q$). For this, choose any smooth path $v:[0,1]\rightarrow M=Y_x^k$ so that $v(0)=c(0)=z_0$ and $v(1) \in \partial Y_x^k$, where $z_0\in M$ is as in (\ref{tildec}) and define the map (recall $Y^k=(f^k)^{-1}(S^1_\delta)$)
\begin{equation}\label{tildeu}
\begin{split}
\tilde u&:  [0,1]\times  [0,1]\rightarrow M\times \{\delta\}\times [0,1]\\
\tilde u(t,\tau)&= \left\{\begin{matrix}(v(t),\delta,\tau), \ \tau \in [0,1-r]\\  \left((\rho^k_{\psi(\tau)})(v(t)),\delta,\tau\right), \ \tau \in [1-r,1].\end{matrix}\right.
\end{split}
\end{equation}
This factorizes to a well-defined map $u:[0,1]\times [0,1]/\{0,1\}\rightarrow Y^k\subset \tilde X^k$, so that with $F:={\rm im}(u)$ we have $\partial F={\rm im}(c) \cup{\rm im}(\hat c)$, where $\hat c(\tau):=u(1,\tau), \ \tau \in [0,1]/\{0,1\}$ is a closed smooth path s.t. ${\rm im}\ \hat c\subset \partial Y^k$. Let now $\mathcal{L}(M)\rightarrow M$ be the fibrebundle whose fibre at $z\in M$ is the Lagrangian Grassmannian $\mathcal{L}(M)_z=\mathcal{L}(T_zM, \omega_z)$, where $\omega$ is the symplectic form on $M$. Then, if $P\rightarrow M$ is the $Sp(2n,\mathbb{R})$ principal bundle of symplectic frames associated to $(TM,\omega)$, one knows that $\mathcal{L}(M)=P\times_{Sp(2n)}\mathcal{L}(2n)$, where $\mathcal{L}(2n)$ is the Lagrangian Grassmannian of $(\mathbb{R}^{2n},\omega_0)$ w.r.t. the standard symplectic structure $\omega_0$. Using this, we see that that any symplectic connection $\nabla^\omega$ on $M$, that is $\nabla^\omega\omega=0$, defines the notion of a parallel transport in $\mathcal{L}(M)$ along paths in $M$ (alternatively, we can take the Levi-Civita-connection on $M$ and the $U(n)$-reduction of $P$ w.r.t. $\tilde J_x$ on $M$). Fix the element $\mathbf{Q}(M)_0\in \mathcal{L}(M)_{z_0}$ which is given by the Lagrangian subspace $TQ_{z_0} \subset T_xM$ and define the path
\[
\mathbf{Q}(M):[0,1]\rightarrow \mathcal{L}(M), \ \mathbf{Q}(M)(t)=\mathcal{P}_{z_0,v(t)}\mathbf{Q}(M)_0, \ t\in [0,1],
\]
where $\mathcal{P}_{z_0,v(t)}:[0,1]\times \mathcal{L}(M)_{z_0}\rightarrow \mathcal{L}(M)_{v(t)}$ is parallel transport in $\mathcal{L}(M)$ along $v$. Let now $\mathcal{L}^v(Y^k)\rightarrow Y^k$ be the fibrebundle whose fibre at one point $z \in Y^k$ with $f^k(z)=y$ is $\mathcal{L}(Y^k_y)=\mathcal{L}(T_zY^k_y, (\omega_y)_z)$, where $\omega_y, y\in S^1_\delta$ denotes the symplectic form given on $Y^k_y$. We define a section $\mathbf{Q}^v$ of $\mathcal{L}^v(Y^k)|F$ by factorizing
\begin{equation}\label{lagpoli}
\begin{split}
\tilde {\mathbf{Q}}^v&:  [0,1]\times  [0,1]\rightarrow \mathcal{L}(M)\times \{\delta\}\times [0,1]\\
\tilde {\mathbf{Q}}^v(t,\tau)&= \left\{\begin{matrix}(\mathbf{Q}(M)(t),\delta,\tau), \ \tau \in [0,1-r]\\  \left((\rho^k_{\psi(\tau)})_*(\mathbf{Q}(M)(t)),\delta,\tau\right), \ \tau \in [1-r,1]\end{matrix}\right.,
\end{split}
\end{equation}
where $(\rho^k_{\psi(\tau)})_*$ denotes the natural action of the symplectomorphism $\rho^k_{\psi(\tau)}$ on $\mathcal{L}(M)$. Now let $\kappa_{\mathbf{Q}}\in \Gamma(\Lambda^{n+1,0}(T^*\tilde X^k)|F)$ be the $(n+1,0)$-form along the image of $u$ associated to the Lagrangian distribution $\mathbf{Q}\in \Gamma(\mathcal{L}(\tilde X^k,\Omega^k)|F)$ along ${\rm im}\ u$ defined for each $z= u(t,\tau)$ by
\begin{equation}\label{lagsubspaces2}
\mathbf{Q}(z)=T^h_{z}Y^k \oplus \mathbf{Q}^v(t,\tau) \subset T_{z}\tilde X^k, \ t,\tau \in [0,1].
\end{equation}
Then we have by construction of $\eta_Q$ and $Q$ above $\eta_Q|{\rm im}\ u =\eta_{\mathbf{Q}}|{\rm im}\ u$, note that $u$ was defined so that $u(0,\tau)=c(\tau), \ \tau \in [0,1]$. Understanding this, using Lemma \ref{relativen} and interior multiplication by $X_{f^k}$ in (\ref{bla987}) it is clear that the function $e^{i\vartheta}:F={\rm im}\ u \rightarrow S^1$ defined by
\begin{equation}\label{bla9876}
e^{i\vartheta}i_{X_{f^k}}\kappa_{\mathbf{Q}}=\mathbf{s}^k|F,
\end{equation}
coincides over ${\rm im}\ c\subset F$ with $e^{i\theta}:{\rm im}\ c\rightarrow S^1$. The logarithmic derivative $\sigma:=d ({\rm log}(e^{i\vartheta}))$ then defines a closed $1$-form $\sigma \in \Omega^1({\rm im}\ u)$ which coincides over $c([0,1-r])\subset Q$ with the mean curvature form, $\sigma_Q$ of $Q^0$, the 'Lagrangian part' of $Q$. Now it is clear since $\sigma$ is closed that 
\[
{\rm wind}(e^{i\theta\circ c})={\rm ev} (\sigma)(c)={\rm ev} (\sigma)(\hat c)={\rm wind}(e^{i\vartheta\circ \hat c})
\] 
since $\partial F={\rm im}\ c \cup{\rm im}\ \hat c$. Now by the triviality condition of $\mathbf{s}^k$ along $\partial Y^k$ and the construction of $\kappa_{\mathbf{Q}}$ it follows for $\tau \in [0,1]$ by using (\ref{insertk})
\[
e^{i\vartheta\circ \hat c}\hat c^*(i_{X_{f^k}}\kappa_{\mathbf{Q}})(\tau)=e^{i\vartheta\circ \hat c}e^{-2\pi ik\tau}|X_{f^k}\circ \hat c|\hat c^*(\kappa_{\mathbf{Q}^v})(\tau)=\hat c^*(\mathbf{s}^k)(\tau),
\]
where $\kappa_{\mathbf{Q}^v}$ is the $(n,0)$-form acting on $(TY^k)^v|F$ associated along $u$ to the section $\mathbf{Q}^v$ of $\mathcal{L}^v(Y^k)|F$ which is by Lemma \ref{relativen} constant along $\hat c$ w.r.t. the trivialization $\Theta^k$, that is, $e^{i\vartheta\circ \hat c(\tau)}e^{-2\pi ik\tau}={\rm const}$ which implies ${\rm wind}(e^{i\theta\circ c})=k$. Thus to summarize, using the fact (Claim 2.) that ${\rm wind}(e^{i\theta\circ c})=N_{1}\cdot\gamma_1$ and using (\ref{bla4567}) in conjunction with the first claim above, which says that $N_{1}\cdot\gamma_1=N_{0}\cdot\gamma_0$, gives ${\rm wind}(\alpha)=0$.
\end{proof}
It is clear that in view of (\ref{maslovindex36}) that Proposition \ref{keylemma} proves Theorem \ref{theorem34}.

\subsection{Proof of the key assumption}\label{app4}
The objective in the following will be to give a proof of Assumption \ref{ass4}. The first part (Lemma \ref{genericity}) deals with (finite dimensional) transversality results for smooth mappings applied to our situation and is fairly standard, except probably for results on 'transversality of families of mappings' that were used. The second part (Prop. \ref{keysing}) is a bit less standard and adopts certain stability concepts for 'fold'-singularities of subgraphical Lagrangian varieties inspired by Eliashberg/Gromov \cite{eliasgr} and Guillemin/Sternberg \cite{guille}.\\
To begin with, recall that in the proof of Proposition \ref{keylemma} we defined for $\tau \in [0,1]$ immersions $i_\tau: Q_x\times [0,1] \rightarrow Y^k_\epsilon\subset \tilde X^k$ whose images $Q_\tau:={\rm im }(i_\tau)$ define for $\tau=0,1$ closed $n+1$-dimensional submanifolds $Q_1,\ Q_2\subset Y^k_\epsilon$ so that any intersection $Q_\tau\cap (Y^k_\epsilon)_u,  \tau \in [0,1], u \in S^1_\epsilon$ is a Lagrangian sphere (resp. a union of two Lagrangian spheres for $u=x$) in the Milnor fibre $(Y^k_\epsilon)_u$, where here $x \in S^1_\epsilon$ is a fixed point. The family $Q_{\tau,x(t)}=Q_{\tau,x(t)}=i_\tau(Q_x\times \{t\}),\ \tau\in [0,1],\ t \in [0,1]$, where $x(t)=xe^{2\pi it}$ comes equipped with two families of sections: $\Lambda_{Q_{\tau,x(t)}} \in \Gamma(i_{\tau,t}^*{\rm Lag}(\tilde X^k,\Omega^k))$ as defined in (\ref{lagsubspaces57}) and $\Lambda_{s^k, \tau,t}:=\pi_k^*(\mathbb{R}^{n+1}\times\{0\})\in \Gamma(i_{\tau,t}^*\tilde {\rm Lag}(\tilde X^k,\Omega^k))$, where $\pi_k:\tilde X^k\rightarrow X^k$ is the covering map and $\pi_k^*(\cdot)$ here means the local lift to $T\tilde X^k$ of the family of (oriented, we will suppress orientation suffixes in the following) Lagrangian planes 
\[
\Lambda^0=\mathbb{R}^{n+1}\times\{0\}\subset T\tilde X\subset T\mathbb{C}^{n+1}
\] 
which span the kernel of the differential of $\pi_0:\mathbb{C}^{n+1} \rightarrow \{0\}\times \mathbb{R}^{n+1}$, the latter being the projection along $\mathbb{R}^{n+1}\times\{0\}$. Recall that $i_{\tau,t}:Q_{\tau,x(t)}\hookrightarrow (Y^k_\epsilon)_{x(t)}\subset X_e^k,\ \tau,t \in [0,1]$ is the inclusion of the respective Lagrangian spheres in the Milnor fibres $(Y^k_\epsilon)_{x(t)}$ as defined in (\ref{qfamily}). Let $\Lambda_{Q_{\tau}}, \Lambda_{s^k, \tau}\in \Gamma(i_{\tau}^*{\rm Lag}(\tilde X^k,\Omega^k))$ be defined by 
\[
\begin{split}
\Lambda_{Q_{\tau}}\circ i_\tau^{-1}|Q_{\tau,x(t)}&=\Lambda_{Q_{\tau,x(t)}}, \ \tau, t \in [0,1],\\
\Lambda_{s^k, \tau}\circ i_\tau^{-1}|Q_{\tau,x(t)}&= \Lambda_{s^k, \tau,t}=\pi_k^*\Lambda^0\circ i_\tau |Q_{\tau,x(t)}, \tau, t \in [0,1].
\end{split}
\]
Now set $(M,\Omega)=(\tilde X^k,\Omega^k)$ and consider the following families of $3$-tuples of the form $\mathcal{G}(i_t:N\rightarrow M, \Lambda^t_0,\Lambda^t_1)_t, \ t \in D$, where $D$ is a compact indexing manifold, $\Lambda_0,\Lambda_1\in  i^*{\rm Lag}(M,\Omega)$ and $i:N\rightarrow M$ is either an immersion or an embedding (remark that $N, i, \Lambda_0, \Lambda_1$ depend (smoothly) on $t\in D$ in the following):
\begin{equation}
\begin{split}
\mathcal{G}_1(i:N\rightarrow M,\Lambda_0,\Lambda_1)_{\tau}&= (i_\tau: Q_x\times[0,1]\rightarrow Q_\tau,   \Lambda_{Q_\tau}, \Lambda_{s^k,\tau}),\ \tau \in D_1=[0,1]\\
\mathcal{G}_2(i:N \hookrightarrow M,\Lambda_0,\Lambda_1)_{\tau,t} &=(i_{\tau,t}:Q_{\tau,x(t)}\hookrightarrow M, \Lambda_{Q_{\tau,x(t)}}, \Lambda_{s^k,\tau,t}), \ (\tau,t) \in D_2=[0,1]\times\{0,1\}.
\end{split}
\end{equation}
Adopting to the above the notation and construction of the proof of Lemma \ref{lag5} recall that there, for any $x \in N$, we defined $\mathcal{M}_x\subset i^*{\rm Lag}(M,\omega)_x$, a canonically cooriented cycle of codimension one, that is, $\mathcal{M}_x$ as a real algebraic subvariety with singular set
\begin{equation}\label{stratm}
\overline {\mathcal{M}^2_x}=\bigcup_{k \geq 2}\mathcal{M}^k_x, \ \mathcal{M}^k_x=\{\Lambda(x) \in i^*{\rm Lag}(M,\omega)_x: {\rm dim}(\Lambda(x)\cap \Lambda_1(x))=k\},\ k \in \mathbb{N}^+,
\end{equation}
where ${\rm codim}(\mathcal{M}^k_x)=\frac{1}{2}k(k+1)$ and a stratification  $\{\mathcal{M}^k_x\}_{k \in \mathbb{N}^+}$ of $\mathcal{M}_x$ with smooth top-stratum $\mathcal{M}^1_x$ of codimension $1$ and singular set $\overline {\mathcal{M}^2_x}$ of  codimension $3$ in $i^*{\rm Lag}(M,\omega)_x$. Recall $\mathcal{M}^k=\bigcup_{x\in N} \mathcal{M}^k_x, \ k\in \mathbb{N}^+$. We then have the following:
\begin{lemma}\label{genericity}
Consider for $i=1,2$ $\mathcal{G}_i(i_{\mathfrak{t}}:N_{\mathfrak{t}}\rightarrow M,\Lambda^{\mathfrak{t}}_0,\Lambda^{\mathfrak{t}}_1)$ for $\mathfrak{t} \in D_i$ as defined above. Then we have
\begin{enumerate}
\item For $i=1$ and $\mathfrak{t}=\tau \in D_1$, there is a a section $\tilde \Lambda^\tau_0 \in \Gamma(i_\tau^*{\rm Lag}(M,\Omega))$ arbitrarily close to $\Lambda^\tau_0$ in the (Whitney)  $C^\infty$-topology so that $\tilde \Lambda^\tau_0$ is transversal to $\mathcal{M}^k \subset i_{\tau}^*{\rm Lag}(M,\Omega)$ as defined in (\ref{stratm}) for each $k \geq 1$ outside of a discrete set of points in $N_\tau$. 
\item For $i=2$ and $\mathfrak{t}=(\tau,t) \in D_2$ there is a family of Lagrangian embeddings $\hat i_{\mathfrak{t}}:N_{\mathfrak{t}}\hookrightarrow (Y^k_\epsilon)_{x(t)}, t \in\{0,1\}, \tau\in[0,1]$ whose images are Lagrangian spheres in $(Y^k_\epsilon)_{x(t)}$,  any $\hat i_{\mathfrak{t}}$ is arbitrarily close to $i_{\mathfrak{t}}$ in the (Whitney) $C^\infty(N, M)$-topology and the sections $\hat \Lambda_0^{\mathfrak{t}} \in \Gamma(\hat i_{\mathfrak{t}}^*{\rm Lag}(M,\Omega))$ being induced along the images of $\hat i_{\mathfrak{t}}$ for each $\mathfrak{t}\in D_{2}$ as in (\ref{lagsubspaces57}) (after eventually slightly rotating the horizontal direction) are transversal to $\mathcal{M}^k \subset \hat i_{\mathfrak{t}}^*{\rm Lag}(M,\Omega)$ outside of a discrete set of points in $N_\mathfrak{t}$. 
\item By modifying $i_\tau, \ \tau \in [0,1]$ and the family of horizontal subspaces given in (\ref{lagsubspaces57}) in a neighborhood of the boundary of $N_\tau$ by an arbitrarily small amount in the $C^\infty$-topology, we can arrange that $\tilde \Lambda^\tau_0, \tau\in [0,1]$ defined in (1.) and the pairs $(\hat i_{\mathfrak{t}}, \hat \Lambda_0^{\mathfrak{t}}), \mathfrak{t}\in D_2$ defined in (2.) are compatible in the sense that $\tilde \Lambda_0^{\tau}|N_{\mathfrak{t}}= \hat \Lambda_0^{\mathfrak{t}}$ for $\mathfrak{t}=(\tau,t) \in D_2$.
\end{enumerate}
\end {lemma}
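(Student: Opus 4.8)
The plan is to deduce all three statements from the Thom transversality theorem in its jet, parametric and relative (boundary--fixing) forms, applied to sections of the Lagrangian Grassmann bundle $i^*{\rm Lag}(M,\Omega)$ and to the Whitney stratified subvariety $\mathcal{M}=\bigcup_{k\geq 1}\mathcal{M}^k$. Recall from the proof of Lemma \ref{lag5} and from Arnol'd \cite{arnold} that over $N$ the set $\mathcal{M}^k\subset i^*{\rm Lag}(M,\Omega)$ is a locally closed submanifold of codimension $\tfrac12 k(k+1)$ (a fibrewise statement, hence equally valid in the total space), that $\mathcal{M}^1$ carries a canonical coorientation, and that the $\{\mathcal{M}^k\}$ stratify $\overline{\mathcal{M}^1}$ with singular set $\overline{\mathcal{M}^2}$ of codimension $\geq 3$. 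Since $\dim(\Lambda\cap\Lambda_1)\leq n+1$, only finitely many strata are non-empty, so a section transversal to each of them meets $\mathcal{M}^1$ in a cooriented codimension-one submanifold and $\overline{\mathcal{M}^2}$ in codimension $\geq 3$; by Definition \ref{generic} its preimage is then generic. As $N$ is compact (a sphere, possibly times $[0,1]$), the sections transversal to all $\mathcal{M}^k$ form not only a residual but a dense subset of $\Gamma(i^*{\rm Lag}(M,\Omega))$ in the Whitney $C^\infty$ topology, which yields the ``arbitrarily close'' clauses.

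For (1) the perturbation is unconstrained: the section $\Lambda_{Q_\tau}$ may be moved freely, and the Thom transversality theorem for stratified targets \cite{mather, whitney} produces $\tilde\Lambda_0^\tau$ arbitrarily $C^\infty$--close to it and transversal to every $\mathcal{M}^k$. The only caveat is that for $\tau\in(0,1)$ the map $i_\tau$ is an immersion and not an embedding: in the fibre over $x$ the image $Q_\tau$ contains both $Q_{\tau,x(0)}=Q_x$ and $Q_{\tau,x(1)}=\Phi_{H(x)}(1-\tau)\circ\rho^k_\tau(Q_x)$, and these two Lagrangian spheres meet, generically transversally, in a $0$--dimensional set; at the preimages of these self--intersection points the two local sheets of the section need not be in general position with one another, whence transversality is claimed only outside a discrete subset of $N_\tau$ (for $\tau=0,1$, where $i_\tau$ is an embedding, this exceptional set is empty).

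Statement (2) is the substantial one, because now the perturbation must stay inside the class of Lagrangian embeddings $S^n\hookrightarrow (Y^k_\epsilon)_{x(t)}$. By Weinstein's Lagrangian neighbourhood theorem \cite{audin} a tubular neighbourhood of $Q_{\tau,x(t)}$ in the Milnor fibre is symplectomorphic to a neighbourhood of the zero section of $T^*Q_{\tau,x(t)}\cong T^*S^n$, and since $n\geq 2$ forces $H^1(S^n;\mathbb{R})=0$, every $C^1$--small Lagrangian deformation is the graph of $d\varphi$ for some $\varphi\in C^\infty(S^n)$. At a point $p$ the induced plane $\hat\Lambda_0^{(\tau,t)}(p)=T^h Y^k_\epsilon\oplus T_p\,{\rm graph}(d\varphi)$ depends on $d\varphi(p)$ and on ${\rm Hess}_p\varphi$; as $\varphi$ runs over $C^\infty(S^n)$ the Hessian at $p$ realises every symmetric bilinear form, i.e. the whole open cell of Lagrangians transverse to the cotangent fibre, while an arbitrarily small rotation of the horizontal direction (allowed by the statement) pushes $\hat\Lambda_0^{(\tau,t)}$ off the complementary cell where needed. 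The evaluation map $(\varphi,\text{rotation})\mapsto\hat\Lambda_0^{(\tau,t)}$ therefore covers a neighbourhood of each of its values outside a subset of positive codimension of $N_{\mathfrak t}$, enough for the stratified (and parametric in $\tau$) Thom transversality theorem to apply at the cost of a discrete exceptional set; this produces, for every $\mathfrak t\in D_2$, embeddings $\hat i_{\mathfrak t}$ arbitrarily $C^\infty$--close to $i_{\mathfrak t}$ whose sections $\hat\Lambda_0^{\mathfrak t}$ are transversal to all $\mathcal{M}^k$ outside a discrete subset of $N_{\mathfrak t}$. Establishing that this restricted perturbation class is rich enough to force transversality to every stratum simultaneously is, I expect, the main obstacle.

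For (3) one performs (2) first, on the two boundary slices $N_{(\tau,0)}=Q_x\times\{0\}$ and $N_{(\tau,1)}=Q_x\times\{1\}$, which together are $\partial N_\tau$, fixing there the perturbed embeddings and sections $\hat\Lambda_0^{(\tau,t)}$; transversality being an open condition, it persists in a collar of $\partial N_\tau$. One then invokes the relative form of the Thom transversality theorem to carry out the interior perturbation of (1) while keeping the section equal to $\hat\Lambda_0^{(\tau,t)}$ on $\partial N_\tau$ --- this extension is precisely what forces the modification of $i_\tau$ and of the horizontal subspaces in a neighbourhood of $\partial N_\tau$ that the statement permits. Compactness of $N$ again makes all the modifications arbitrarily small in the Whitney $C^\infty$ topology, completing the argument.
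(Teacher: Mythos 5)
Your proposal takes a genuinely different route from the paper, and both the choice of machinery and the way you account for the discrete exceptional set diverge in ways worth flagging.

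The paper does not invoke stratified Thom transversality in the abstract Whitney $C^\infty$ topology. Instead it uses the trivialization ${\rm Lag}(M,\Omega)\simeq M\times {\rm Lag}(\mathbb{R}^{2n+2},\Omega_0)$ (available because ${\rm Lag}(M,\Omega)$ admits a global section $\pi_k^*\Lambda^0$), views $\Lambda_0^{\mathfrak t}$ as a map $N_{\mathfrak t}\to B={\rm Lag}(\mathbb{R}^{2n+2},\Omega_0)$, and perturbs it by left-composing with an element $g_{\mathfrak t}$ of $G=U(n+1)$ acting transitively on $B$. This is Arnol'd's Lemma 4.1.3: a.e. $g$ makes $g\cdot f$ transversal to a fixed submanifold $C(k)\subset B$. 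The discrete exceptional set is then not a by-product of self-intersections of the immersion $i_\tau$, as you argue; it comes from Bruce's parametric transversality theorem (\cite{bruce}, Theorem 1.1), which the paper needs because the perturbing elements $g_{\mathfrak t}$ vary over the parameter space $D_{1,2}$ and cannot be chosen independently in each slice. Your self-intersection heuristic does not actually bear on transversality of the section to $\mathcal{M}^k$ — that is a condition on $T\Lambda_0^\tau(TN_\tau)$ versus $T\mathcal{M}^k$ at a single point of the domain $N_\tau=Q_x\times[0,1]$, and the two sheets meeting in the target play no role in it — so your explanation of the ``discrete set'' clause in (1) is wrong, or at least not the mechanism the lemma is asserting.

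For (2) your strategy (Weinstein neighbourhood, graph of $d\varphi$, Hessian at a point realizing all symmetric forms, plus a small rotation of the horizontal direction) is plausible but, as you yourself concede, you have not established that this restricted perturbation class is jointly surjective onto a complement of $\mathcal{M}^k$ in a way sufficient for stratified transversality. This is exactly the point the paper avoids having to confront: it extends $N_{\mathfrak t}=Q_{\tau,x(t)}$ to a Lagrangian-with-boundary $N^e_{\mathfrak t}$ by symplectic parallel transport along short arcs in $S^1_\epsilon$, perturbs $N^e_{\mathfrak t}$ by $\hat g_{\mathfrak t}\in U(n+1)$ close to the identity (which automatically stays in the class of Lagrangians and acts transitively on the Grassmannian), and then intersects $\hat g_{\mathfrak t}N^e_{\mathfrak t}$ with the Milnor fibre $(Y^k_\epsilon)_{x(t)}$ to recover a Lagrangian sphere and the section $\hat\Lambda_0^{\mathfrak t}$. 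Transversality is again by Arnol'd/Bruce, and the perturbation is by construction Lagrangian. This extension trick is the substantive idea in the paper's proof of (2) and it is missing from your argument.

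Your outline for (3) — do (2) on $\partial N_\tau$, propagate by openness into a collar, then run (1) relatively — is reasonable in spirit but again differs in mechanism from the paper, which instead builds explicit paths $\tilde g_0(\tau,t),\ \tilde g_1(\tau,t)$ in $U(n+1)$ supported near the identity that interpolate between the perturbations of (1) and (2) over $t\in[0,\epsilon]\cup[1-\epsilon,1]$, and then replaces $N^e_{\tau,t}$ by $\tilde g_1(\tau,t)N^e_{\tau,t}\cap(Y^k_\epsilon)_{x(t)}$. So in summary: your proposal follows the same high-level goal via a general-position argument, but misattributes the discrete exceptional set, leaves the core difficulty in (2) open, and misses the $U(n+1)$-orbit technique that is what makes the paper's proof both constructive and compatible with the Lagrangian constraint.
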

\begin{proof}
We adopt the proof of Theorem 2.1 in Arnol'd  (\cite{arnold}) to our situation and use a result of Bruce \cite{bruce} to extend the result to the situation of families of mappings.
Let $\mathcal{G}_i,\ i=1,2$ as above. First note that ${\rm Lag}(M,\Omega)\simeq M\times {\rm Lag}(\mathbb{R}^{2n+2},\Omega_0)$, where $\Omega_0$ is the symplectic standard structure since ${\rm Lag}(M,\Omega)$ has a global section (simply take $\pi_k^*(\Lambda^0)$). So for $\mathfrak{t}\in D_{1,2}$, we have that $i_{\mathfrak{t}}^*{\rm Lag}(M,\Omega)$ is diffeomorphic to $N_{\mathfrak{t}}\times {\rm Lag}(\mathbb{R}^{2n+2},\Omega_0)=:N_\mathfrak{t}\times B$. We will denote the projection of the image of $\mathcal{M}^k$ in $N_\mathfrak{t}\times B$ to the second factor under that diffeomorphism by $C(k)$. Note that the group $U(n+1)$ is acting transitively on $B$. By the above identifications we can understand $\Lambda_0^{\mathfrak{t}} \in \Gamma(\tilde i_{\mathfrak{t}}^*{\rm Lag}(M,\Omega))$ as a map $\Lambda_0^{\mathfrak{t}}: N_{\mathfrak{t}} \rightarrow B$, or shortly $f(\mathfrak{t}):N_{\mathfrak{t}}\rightarrow B$, where the group $G=U(n+1)$ is acting smoothly and transitively on the manifold $B$ and we are given a smooth compact submanifold $C(k)\subset B$ for each $k\geq 1$. Note further that as remarked in the proof of Proposition \ref{keylemma}, there is a family of diffeomorphisms (in fact, fibrewise symplectomorphisms) $\Phi_{\mathfrak{t}, \mathfrak{t}'}: N_{\mathfrak{t}}\rightarrow N_{\mathfrak{t}'}$ for all $\mathfrak{t}, \mathfrak{t}' \in D_{1,2}$, so since $i_{\mathfrak{t}'}=i_{\mathfrak{t}}\circ \Phi_{\mathfrak{t}, \mathfrak{t}'}^{-1}$ we will assume in the following that $N$ does not depend on $\mathfrak{t}$. Ignoring for a moment the parameter $\mathfrak{t}$, we are thus in the situation of Lemma 4.1.3 in \cite{arnold}. Extending the result in loc. cit. we claim that, for each $\mathfrak{t}\in D_{1,2}$ and a discrete set $S(\mathfrak{t})\subset N_{\mathfrak{t}}$, the measure of the points $g_{\mathfrak{t}}\in G$ where the mapping
\[
f_{g,\mathfrak{t}}:N_{\mathfrak{t}}\rightarrow B, \  f_{g,\mathfrak{t}}(x)=g_{\mathfrak{t}}f(\mathfrak{t})(x),
\]
restricted to $N_{\mathfrak{t}}\setminus S(\mathfrak{t})$, is not transversal to $C(k)$, is zero. But this follows easily from an inspection of Arnold's proof and Theorem 1.1 in \cite{bruce} which shows that for a residual set of smooth mappings $F:A\times U\rightarrow B$ one has that $F_u:A\rightarrow B, \  u \in U$ is transversal to a given submanifold $C\subset B$ except on a discrete set of points which appear as isolated singular points in $F_u^{-1}(C)$, i.e. $F_u^{-1}(C)$ is smooth outside of isolated points for each $u\in U$. \\
Summarizing, we can find for each $\mathfrak{t}$ an element $g_{\mathfrak{t}}\in G=U(n+1)$ arbitrarily close to the identity so that $\tilde\Lambda_0^{\mathfrak{t}} :=g_{\mathfrak{t}}\Lambda_0^{\mathfrak{t}}$ is transversal to $\mathcal{M}^k\subset i_{\mathfrak{t}}^*{\rm Lag}(M,\Omega)$ outside a discrete set, which already gives the Claim (1.) for the case $i=1$ and  $\mathfrak{t}\in D_1$.\\
To prove Claim (2.), note that we can extend each $N_{\mathfrak{t}}:=i_{\mathfrak{t}}(N), \ \mathfrak{t}\in D_2$ to an $n+1$-dimensional Lagrangian manifold with boundary $N^e_{\mathfrak{t}}$ by using symplectic parallel transport along small arcs $t\mapsto xe^{2\pi it}, t \in (t_0 -\epsilon,t_0 +\epsilon), t_0 \in \{0,1\}$, $\epsilon > 0$ small. Then each $i^e_{\mathfrak{t}}: N^e_{\mathfrak{t}}\hookrightarrow  M$ carries a section $\Lambda_0^{\mathfrak{t}} \in \Gamma((i^e_{\mathfrak{t}})^*{\rm Lag}(M,\Omega))$ as constructed in (\ref{lagsubspaces57}) and by the above, there is for each $\mathfrak{t}\in D_2$ an element $\hat g_{\mathfrak{t}}\in G=U(n+1)$ arbitrarily close to the identity so that $\hat \Lambda_0^{\mathfrak{t}}:= \hat g_{\mathfrak{t}}\Lambda_0^{\mathfrak{t}}\circ i_{\mathfrak{t}}$ is transversal to $\mathcal{M}^k\subset i_{\mathfrak{t}}^*{\rm Lag}(M,\Omega)$ outside a discrete set. But $\hat \Lambda_0^{\mathfrak{t}}$ is just the restriction of the section of $(i^e_{\mathfrak{t}})^*{\rm Lag}(M,\Omega))$ induced by the tangent mapping of $\hat g_{\mathfrak{t}}N^e_{\mathfrak{t}}$ to $N_{\mathfrak{t}}$ and so intersecting $\hat g_{\mathfrak{t}}N^e_{\mathfrak{t}}$ with $(Y^k_\epsilon)_{x(0)}$ giving a family $\tilde N_{\mathfrak{t}}$ we arrive at Claim (2.).\\
Finally Claim (3.) is proven by first extending as in Claim (2.) each $i_{(\tau,t)}:N_{\tau,t}:=Q_{\tau,x(t)}\hookrightarrow (Y^k_\epsilon)_{x(t)}$ for $\tau \in [0,1]$ and $t \in I_\epsilon:=[0,\epsilon]\cup [1-\epsilon, 1]$ for some small $\epsilon>0$ to a Lagrangian submanifold (with boundary) $N^e_{(\tau,t)}$ in $M$ and as in the proof of (2.) we get sections $\Lambda_0^{\mathfrak{t}} \in \Gamma((i^e_{\mathfrak{t}})^*{\rm Lag}(M,\Omega))$ induced by (\ref{lagsubspaces57}) along each $i^e_{\mathfrak{t}}:N^e_{(\tau,t)}\hookrightarrow M$. Then note that we can find smooth paths $\tilde g_0(\tau,t), \tilde g_1(\tau,t), \ t \in I_\epsilon= [0,\epsilon]\cup [1,1-\epsilon], \tau \in [0,1]$ supported in a small neighbourhood of $1 \in U(n+1)$ so that if $g_\tau,\tau \in [0,1]$ is constructed in the proof of (1.) and $\hat g_{\tau,t}, (\tau,t)\in [0,1]\times \{0,1\}$ is as constructed in the proof of (2.) we have
\[
\begin{split}
&\tilde g_0(\tau,1)=\tilde g_0(\tau,0)=Id_{U(n)},\quad \tilde g_0(\tau,\epsilon)=\tilde g_0(\tau,1-\epsilon)=g_\tau,\\
&\tilde g_1(\tau,1)=\hat g_{\tau,1}, \quad \tilde g_1(\tau,0)=\hat g_{\tau,0},\quad \tilde g_1(\tau,\epsilon)=\tilde g_0(\tau,1-\epsilon)=Id_{U(n)} 
\end{split}
\]
and so that $\tilde g_{0}\circ \tilde g_{1}(\tau,t)(\Lambda_0^{(\tau,t)})\circ i_{(\tau,t)}$ are transversal to $\mathcal{M}^k\subset i_{(\tau,t)}^*{\rm Lag}(M,\Omega), k\geq 1$ outside a discrete set in $N_{(\tau,t)}$ for any $(\tau,t)\in [0,1]\times I_\epsilon$. Then replacing $N^e_{\tau,t}$ by $\tilde N^e_{\tau,t}=\tilde g_1(\tau,t) N^e_{\tau,t} \cap(Y^k_\epsilon)_{x(t)}$ for $(\tau,t)\in [0,1]\times I_\epsilon$ and $\tilde\Lambda_0^{\mathfrak{t}}$ constructed in the proof of (1.) by $\tilde g_0(\tau,t)\Lambda_0^{\mathfrak{t}}\circ \tilde g^{-1}_1(\tau,t)$ along the family $\tilde N^e_{\tau,t}\cap (Y^k_\epsilon)_{x(t)}=: \tilde N_{\tau,t}$ and substituting the latter for $N_{\tau,t},\ \tau \in [0,1],  t \in I_\epsilon$, we arrive at Claim (3.).
\end{proof}
Note that by Definition \ref{generic}, this proves the genericity part of Assumption \ref{ass4} (1.) modulo the fact that in a small collar neighbourhood of the (immersed) boundary of each $N_\tau=Q_\tau$ the above proof allows for the presence of a 'non transversality'-set which is discrete in each of the fibres $\tilde N_{\tau}\cap (Y^k_\epsilon)_{x(t)}, t \in I_\epsilon$, using the notation of the proof, while being discrete in $Q_\tau$ outside the collar nghbd. Since that will cause no trouble for our purposes, we will ignore in the following the fact only having proven a slightly weaker result than stated. Note further that the existence part of Assumption \ref{ass4} (1.) will follow for all $\tau \in [0,1]$ by the considerations below, while for $\tau=0,1$ it follows from the remark given above Proposition \ref{keylemma}.\\

The following considerations will be devoted to a proof of part (2.) and a discussion of (3.) of Assumption \ref{ass4}. The latter part will pe proven given the absence of certain 'higher singularities' and its relevance for a proof of Conjecture \ref{conjspec} will be discussed briefly in Section \ref{generalspec}.\\
Let now $N_{\mathfrak{t}}=Q_{\tau,x(t)}=Q_\tau\cap (Y^k_\epsilon)_{x(t)}\subset \tilde X^k, \mathfrak{t}=(\tau, t) \in D_2=[0,1]\times\{0,1\}$ be as above and recall the Lagrangian submanifold with boundary $N^e_{\mathfrak{t}}\subset \tilde X^k$ constructed in the proof of (2.) in the above Lemma. By arguments similar to the above, we can assume that the tangent section of $N^e_{\mathfrak{t}}$, restricted to $N_{(\tau,t)}, t \in \{0,1\}$, induces a section of $(i_{(\tau,t)})^*{\rm Lag}(M, \Omega))$, where $i_{(\tau,t)}:N_{\mathfrak{t}}\hookrightarrow (Y^k_\epsilon)_{x(t)}\subset \tilde X^k=M$ is the inclusion, that is transversal to all submanifolds $\mathcal{M}^k\subset i_{(\tau,t)}^*{\rm Lag}(M,\Omega)$ for all $t \in \{0,1\}$. By choosing $\epsilon >0$ small enough in the definition of $N^e_{(\tau,t)}, t \in \{0,1\}$, we can further assume that the tangent section to $N^e_{(\tau,t)}$ induces sections $\hat \Lambda_0^{(\tau,t)} \in i_{(\tau,t)}^*{\rm Lag}(M,\Omega)$ which have the same transversality property outside of a discrete set in each $N_{(\tau,t)}$ for all $(\tau,t) \in [0,1]\times I_\epsilon$ where here, $i_{(\tau,t)}:N_{(\tau,t)}=Q_{\tau,x(t)}\hookrightarrow (Y^k_\epsilon)_{x(t)},\ (\tau,t) \in [0,1]\times I_\epsilon$. In the following we will set $L_{\tau}:=N^e_{(\tau,t)} \subset M,  t \in \{0,1\}$ for $\tau \in [0,1]$ and $\epsilon >0$ small enough in the above sense (the case $t=1$ will be of course the interesting case). Since the family $L_{\tau}, \tau \in [0,1]$ is contained in the segment $Y^k_{[-\epsilon, \epsilon]}:=\bigcup_{t \in I_\epsilon}(Y^k_\epsilon)_{x(t)}\subset X_e^k$ we can find a $\delta>0$ and an open ball $B^{2n+2}_\delta\subset \mathbb{C}^{n+1}$ centered at the origin so that $\bigcup_{\tau\in [0,1]}L_{\tau}\subset B^{2n+2}_\delta$. Let now $\pi_0:\mathbb{C}^n \rightarrow \{0\}\times \mathbb{R}^{n+1}$ be the projection introduced above and consider $B^{n+1}_\delta=\pi_0(B^{2n+2}_\delta)\subset \mathbb{R}^{n+1}$, then we have a submersion $\pi_0: B^{2n+2}_\delta\rightarrow B^{n+1}_\delta$, for which we will write shortly $\pi_0: B_2\rightarrow B_1$ in the following. Note that $\pi_0$ is a fibration with symplectic total space $(B_2, \Omega=\Omega^k|B_2)$ and Lagrangian leaves integral to ${\rm ker}(\pi_0)$. For each $\tau \in [0,1]$, consider now the smooth map
\[
\alpha_\tau: L_{\tau}\rightarrow B_1, \ \alpha_\tau=\pi_0\circ j_\tau,
\]
where $j_\tau: L_{\tau}\hookrightarrow B_2$ is the inclusion. In the following, we will consider the closures of $L_{\tau} \subset B_2, \ \tau \in [0,1]$ as a family of Lagrangian submanifolds with isotropic boundary in the symplectic manifold $(B_2, \Omega)$ whose intersection $L_{\tau,t}=Q_{\tau,x(t)}$ with each Milnor fibre $(Y^k_\epsilon)_{x(t)},\ t \in I_\epsilon$ is a Lagrangian sphere w.r.t. the restricted form $\Omega|Y^k_\epsilon)_{x(t)}$. We are now interested in representing a suitable modification of each $L_{\tau}\subset B_2$ by means of 'generating families' (see \cite{eliasgr}). For this consider $\beta \in \Omega^1(B_2)$ given by $\beta=pdq$, where $(p,q)$ are the usual canonical coordinates on $B_2 \subset \mathbb{R}^{2n+2}$. Then $d(j_\tau^*\beta)=0$ for each $\tau \in [0,1]$ since the $L_{\tau}$ are Lagrangian. Thus since $H^1(L_\tau, \mathbb{C})=0$, the functions $f_\tau:L_{\tau}\rightarrow \mathbb{R}$ given by $f_\tau(z)=\int_{\gamma_\tau:z_0, z}j_\tau^*\beta$ are well defined for each $\tau \in [0,1]$ up to a constant where we integrate along paths $\gamma_\tau \subset L_{\tau}$ connecting a fixed $z_0\in \mathcal{L}_{\tau}$ to $z \in L_{\tau}$ for each $\tau \in [0,1]$ (and we assume that $z_0$ varies smoothly in $\tau$). Then we have the following definition, which is essentially the construction of subgraphical varieties (re)introduced by Gromov and Eliashberg (\cite{eliasgr}) to symplectic topology.
\begin{Def}\label{subgraphical}
With $f_\tau:L_{\tau}\rightarrow \mathbb{R}$ given as above, define the Lagrangian graph $\tilde L_{\tau}=df_\tau \in \Gamma(T^*L_{\tau})$ for each $\tau \in [0,1]$. Let $(\alpha_{\tau})_*:TL_{\tau}\rightarrow TB_1$ be the induced map and consider the set
\[
\mathcal{L}^C_{\tau}:=({\rm ker} (\alpha_{\tau})_*)^\perp \cap \tilde L_{\tau} \subset T^*L_{\tau},\ \tau \in [0,1]
\]
where $({\rm ker} (\alpha_{\tau})_*)^\perp\subset T^*L_{\tau}$ refers to the set $\{\omega\in T^*L_{\tau}\ |\ \omega({\rm ker} (\alpha_{\tau})_*)=0\}$. Consider the set  
$\mathcal{A}_{\alpha_t}\subset T^*(L_\tau)\oplus \alpha_\tau^*(T^*B_1)$ consisting of pairs $(\kappa_y, \eta_x)\in ({\rm ker} (\alpha_{\tau})_*)_y^\perp\oplus \alpha_\tau^*(T_x^*B_1)$ with $y \in L_{\tau},\ x \in B_1$ so that $\alpha_\tau(y)=x$ and $\eta_x\circ D_y\alpha_\tau=\kappa_y \in T_y^*(L_{\tau})$. Finally define $\underline L_\tau \subset T^*B_1$ by setting 
\[
\underline L_\tau:= {\rm pr}_2\left((\mathcal{L}^C_{\tau}\oplus \alpha_\tau^*(T^*B_1))\cap \mathcal{A}_{\alpha_t}\right),
\]
where ${\rm pr}_2:\mathcal{A}_{\alpha_t}\rightarrow T^*B_1$ is the projection onto the second factor. Understanding this, we will call $\underline L_\tau\subset T^*B_1$ the {\it subgraphical image} of $\tilde L_{\tau}\subset T^*L_{\tau}$.
\end{Def}
Of course, since $B_2\subset \mathbb{R}^{2n+2}$ can be considered as a subset $B_2 \subset T^*B_1$, we can compare our original $L_{\tau}\subset B_2$ and $\underline L_\tau\subset T^*B_1$ constructed above, that is, we have the following.
\begin{lemma}
Assume that $L_{\tau}\subset B_2$ is generic in the sense of Definition \ref{generic} resp. Lemma \ref{genericity}. Consider the set $\underline L^{reg}_{\tau}\subset  \underline L_\tau\subset T^*B_1$ being defined by the restriction of $\alpha_\tau:L_\tau \rightarrow B_1$ and $f_\tau: L_\tau \rightarrow \mathbb{R}$ to the set of points of $L_\tau$ where $(\alpha_\tau)_*$ has maximal rank and subsequent application of the procedure in Definition \ref{subgraphical}. Considering the inclusion $B_2\subset T^*B_1$ we have that the closure of $\underline L^{reg}_{\tau}\subset T^*B_1$ equals $L_\tau\subset B_2$ for all $\tau \in [0,1]$.
\end{lemma}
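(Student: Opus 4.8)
The plan is to check the identity locally, at each point of $L_\tau$ where $(\alpha_\tau)_*$ has maximal rank, by matching the construction of Definition~\ref{subgraphical} with the elementary fact that an exact Lagrangian which is a graph over $B_1$ is the graph of the differential of a generating function; the statement about the closure then follows because the genericity hypothesis makes the complement of the maximal-rank locus a proper stratified subset.

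First I would set $L_\tau^{reg}\subset L_\tau$ equal to the open set of points $y$ at which $D_y\alpha_\tau\colon T_yL_\tau\to T_{\alpha_\tau(y)}B_1$ is an isomorphism (recall $\dim L_\tau=n+1=\dim B_1$); equivalently $L_\tau^{reg}=\{y:T_yL_\tau\cap\Lambda^0=\{0\}\}$, where $\Lambda^0=\ker D\pi_0$. Its complement is exactly the pullback, via the tangent section $y\mapsto T_yL_\tau$, of the Maslov cycle relative to $\Lambda^0$ in the notation of Lemma~\ref{genericity}; since $L_\tau$ is generic in the sense of Definition~\ref{generic}, resp.\ by the transversality furnished by Lemma~\ref{genericity}, this complement is a Whitney-stratified subset of $L_\tau$ of codimension at least one (with a smooth codimension-one top ``fold'' stratum and singular part of codimension at least three). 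Hence $L_\tau^{reg}$ is open and dense in $L_\tau$. By construction $\underline L^{reg}_\tau$ is, tautologically, the subgraphical image built from $\alpha_\tau|_{L_\tau^{reg}}$ and $f_\tau|_{L_\tau^{reg}}$, so it suffices to prove that, under $B_2\subset T^*B_1$, one has $\underline L^{reg}_\tau=L_\tau^{reg}$; taking closures in $T^*B_1$ and using density then gives $\overline{\underline L^{reg}_\tau}=L_\tau$.

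For the local model, fix $y_0\in L_\tau^{reg}$ and a neighbourhood $V\ni y_0$ on which $\alpha_\tau$ restricts to a diffeomorphism onto an open $U\subset B_1$, and put $S:=f_\tau\circ(\alpha_\tau|_V)^{-1}\colon U\to\mathbb{R}$ (well defined up to the additive constant coming from the base point $z_0$, which does not affect $dS$, using $H^1(L_\tau,\mathbb{C})=0$). In the Darboux coordinates $(p,q)$ on $B_2$, using $q$ as a chart on $V$ via $\alpha_\tau$, one has $j_\tau\circ(\alpha_\tau|_V)^{-1}\colon q\mapsto(P(q),q)$ for a smooth $P\colon U\to\mathbb{R}^{n+1}$, and the defining equation $df_\tau=j_\tau^*\beta=j_\tau^*(p\,dq)$ reads $\partial S/\partial q_i=P_i$, i.e.\ $dS=P$; thus $j_\tau(V)$, regarded in $T^*B_1$, is precisely the Lagrangian section $q\mapsto(q,dS(q))$ over $U$. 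On the other hand, over $V$ one has $\ker(\alpha_\tau)_*=\{0\}$, hence $(\ker(\alpha_\tau)_*)^\perp=T^*L_\tau$ and $\mathcal{L}^C_\tau=\tilde L_\tau=\{df_\tau(y):y\in V\}$; in Definition~\ref{subgraphical} the relation $\eta_x\circ D_y\alpha_\tau=\kappa_y$ with $\kappa_y=df_\tau(y)$ then forces $\eta_x=df_\tau(y)\circ(D_y\alpha_\tau)^{-1}=dS(x)$, so $\mathrm{pr}_2$ sends the corresponding point to $(x,dS(x))\in T^*B_1$. Comparing, the part of $\underline L^{reg}_\tau$ lying over $U$ equals $j_\tau(V)$; ranging $y_0$ over $L_\tau^{reg}$ yields $\underline L^{reg}_\tau=L_\tau^{reg}$, and the lemma follows.

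I expect the work to be almost entirely bookkeeping: keeping the three spaces $T^*L_\tau$, $\alpha_\tau^*T^*B_1$ and $T^*B_1$ and the two projections of Definition~\ref{subgraphical} apart, and checking that at a regular point the fibre product $\mathcal{A}_{\alpha_\tau}$ degenerates so that $\mathrm{pr}_2$ merely pushes the Lagrangian graph $df_\tau$ forward along the local diffeomorphism $\alpha_\tau$, producing the graph of $dS$. The only non-formal input is the density of $L_\tau^{reg}$ in $L_\tau$, i.e.\ that the rank-drop (fold) locus has positive codimension, which is exactly the content of the genericity hypothesis (Definition~\ref{generic}, Lemma~\ref{genericity}) and which is what prevents spurious components of the closure from appearing outside $B_2$.
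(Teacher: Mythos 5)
Your proof is correct and follows the same approach as the paper: at a maximal-rank point $(\ker(\alpha_\tau)_*)^\perp=T^*L_\tau$, so the relation in $\mathcal{A}_{\alpha_\tau}$ forces $\eta_x=\kappa_y\circ(D_y\alpha_\tau)^{-1}$ and recovers $L_\tau$ locally, and genericity makes the rank-drop locus have codimension $\geq 1$ so that taking closures finishes the argument. Your local coordinate verification via the generating function $S=f_\tau\circ(\alpha_\tau|_V)^{-1}$ just spells out the same computation more explicitly than the paper does.
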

\begin{proof}
The assertion follows from the fact that if $D_y\alpha$ has maximal rank for some $y \in L_\tau$, then $({\rm ker} (\alpha_{\tau})_*)_y^\perp=T_y^*L_\tau$, hence the set of pairs $(\kappa_y, \eta_x) \in (\mathcal{A}_{\alpha_t})_y$ is isomorphic to $T_y^*L_\tau$ and by the definition of $\tilde L_\tau$, $\eta_x=\kappa_x\circ (D_y\alpha)^{-1} \in L_t\cap T^*_xB_1$.
Then since $L_{\tau}$ was assumed to be generic, its singular set is of codimension $\geq 1$ in $L_\tau$, hence the closure of $\underline L^{reg}_{\tau}$ equals $L_\tau$.
\end{proof}
As announced above, we can now represent $\underline L_\tau$ for each $\tau \in [0,1]$ by 'generating families' in the following sense.
\begin{lemma}\label{genfunction}
For each $\tau \in [0,1]$ and with the notation introduced above, define a function
$\tilde f_\tau: L_\tau\times B_1\times \mathbb{R}^{n+1}\rightarrow \mathbb{R}$ by setting
\begin{equation}\label{genfamily}
\tilde f_\tau(u,y,z)=f_\tau(u)+(y-\alpha_\tau(u), z), \ (u,y,z) \in L_\tau\times B_1\times \mathbb{R}^{n+1}.
\end{equation}
Then we have that $\underline L_\tau\subset T^*B_1, \ \tau \in [0,1]$ is given by
\[
\underline L_\tau=\{(y,y^*) \in T^*B_1\ |\ {\rm there\ exist\ }(u,y,z) \in C_{\tilde f_\tau},  y^*=D_y \tilde f(u,y,z) \in T^*_yB_1\},
\]
where the {\it fibre critical set} $C_{\tilde f_\tau}\subset L_\tau\times B_1\times \mathbb{R}^{n+1}$ is defined as
\[
C_{\tilde f_\tau}=\{(u,y, z) \in L_\tau \times B_1 \times \mathbb{R}^{n+1}\ {\rm s.t.}\ D_{u,z}\tilde f_\tau(u,y,z)=0\}
\]
Furthermore, when considering the smooth fibration $\tilde \alpha_\tau: L_\tau\times B_1\times \mathbb{R}^{n+1}\rightarrow B_1$ given by $(u,y,z) \mapsto y$ the fibre-Hessian $D_{(u,y,z)}^2\tilde f_\tau$ of $\tilde f_\tau$, when restricted to a given intersection point $(u,y,z) \in \tilde \alpha_\tau^{-1}(y) \cap C_{\tilde f_\tau}$ is non-degenerate if and only if the kernel of $(\alpha_\tau)_*$ at $u \in L_\tau$ is zero.
\end{lemma}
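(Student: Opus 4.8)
The plan is to reduce Lemma \ref{genfunction} to an explicit computation of the first and second partial differentials of the auxiliary function $\tilde f_\tau(u,y,z)=f_\tau(u)+\langle y-\alpha_\tau(u),z\rangle$, and then to match the resulting description of its critical set with the abstract definition of the subgraphical image $\underline L_\tau$ from Definition \ref{subgraphical}. First I would simply read off
\[
D_z\tilde f_\tau = y-\alpha_\tau(u),\qquad D_u\tilde f_\tau = df_\tau(u)-z\circ D_u\alpha_\tau,\qquad D_y\tilde f_\tau = z ,
\]
so that the fibre critical set is $C_{\tilde f_\tau}=\{(u,y,z):\ y=\alpha_\tau(u)\ \text{and}\ df_\tau(u)=z\circ D_u\alpha_\tau\}$, and along it $D_y\tilde f_\tau=z$. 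Thus the right hand side of the asserted formula for $\underline L_\tau$ is exactly the set of pairs $(\alpha_\tau(u),z)\in T^*B_1$ with $u\in L_\tau$ and $z\circ D_u\alpha_\tau=df_\tau(u)$.

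Next I would check that this coincides with $\underline L_\tau$ as constructed in Definition \ref{subgraphical}. Unwinding that definition: $\tilde L_\tau$ is the graph of $df_\tau$, and a point of $\mathcal L^C_\tau=(\ker(\alpha_\tau)_*)^\perp\cap\tilde L_\tau$ is a pair $(u,df_\tau(u))$ with $df_\tau(u)$ vanishing on $\ker D_u\alpha_\tau$; since $\alpha_\tau=\pi_0\circ j_\tau$ and $df_\tau=j_\tau^*(p\,dq)$, a vector in $\ker D_u\alpha_\tau$ is $\pi_0$-vertical, hence annihilated by $p\,dq$, so in fact $\mathcal L^C_\tau=\tilde L_\tau$ (and in any case this condition is automatically implied by the critical point equation $df_\tau(u)=z\circ D_u\alpha_\tau$). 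Intersecting $\mathcal L^C_\tau\oplus\alpha_\tau^*(T^*B_1)$ with the correspondence $\mathcal A_{\alpha_t}$ and projecting by $\mathrm{pr}_2$ then produces precisely those $\eta\in T^*_{\alpha_\tau(u)}B_1$ with $\eta\circ D_u\alpha_\tau=df_\tau(u)$, i.e.\ the set described above with $\eta=z=D_y\tilde f_\tau$. This proves the first displayed identity, the one mild point being to notice that over the locus where $(\alpha_\tau)_*$ drops rank both constructions leave $z$ (resp.\ $\eta$) undetermined exactly modulo the annihilator of $\ker D_u\alpha_\tau$, so the two ``fattenings'' agree.

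For the statement on the fibre Hessian I would fix $y$ and differentiate once more. The fibre $\tilde\alpha_\tau^{-1}(y)$ carries the coordinates $(u,z)$, and since $\tilde f_\tau$ is affine in $z$ its Hessian there has the block form
\[
D^2_{(u,z)}\tilde f_\tau = \begin{pmatrix} A & -J^{\,t}\\ -J & 0\end{pmatrix},
\]
where $A=D^2_u f_\tau(u)-\sum_i z_i\,D^2_u\alpha^i_\tau(u)$ is a symmetric block that plays no role, and $J$ is the matrix of $(\alpha_\tau)_*=D_u\alpha_\tau$, a \emph{square} $(n+1)\times(n+1)$ matrix because $\dim L_\tau=\dim B_1=n+1$. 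If $\ker(\alpha_\tau)_*=\ker J=0$ then $J$ and $J^{\,t}$ are invertible; a kernel vector $(v,w)$ of the Hessian satisfies $-Jv=0$, hence $v=0$, and then $Av-J^{\,t}w=0$ forces $w=0$, so the Hessian is nondegenerate. Conversely, if $\ker J\ne 0$ then, $J$ being square, $\ker J^{\,t}\ne 0$ as well, so there is $w_0\ne 0$ with $J^{\,t}w_0=0$, and $(0,w_0)$ lies in the kernel of the Hessian. This yields the claimed equivalence, and together with the first part completes the proof.

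I do not expect a genuine obstacle here: the whole lemma is a coordinate computation plus elementary block linear algebra, and the only thing that needs care is the bookkeeping in the second paragraph, i.e.\ recognising that the abstract subgraphical image of Definition \ref{subgraphical}, assembled from $\mathcal L^C_\tau$ and $\mathcal A_{\alpha_t}$, is literally the image of $C_{\tilde f_\tau}$ under $(u,y,z)\mapsto(y,D_y\tilde f_\tau)$. Once Lemma \ref{genfunction} is available, the genericity conditions on $L_\tau$ from Definition \ref{generic} and Lemma \ref{genericity} translate into transversality/Morse-type conditions on the generating families $\tilde f_\tau$, which is the form in which the subsequent stability arguments for fold singularities can be applied.
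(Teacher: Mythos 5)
Your proposal is correct and follows essentially the same route as the paper: both proofs proceed by directly computing $D_u\tilde f_\tau$, $D_z\tilde f_\tau$, $D_y\tilde f_\tau$, identifying $C_{\tilde f_\tau}$ with the conditions $y=\alpha_\tau(u)$ and $df_\tau(u)=z^*\circ D_u\alpha_\tau$, reading off $y^*=D_y\tilde f_\tau$ as the linear form dual to $z$ (which the paper writes $z^\perp$), and then examining the block form of the fibre Hessian. Your version is in fact slightly cleaner on two points the paper leaves implicit: you observe that $\mathcal L^C_\tau=\tilde L_\tau$ automatically because $df_\tau=j_\tau^*(p\,dq)$ annihilates $\ker D_u\alpha_\tau$, and you make the Hessian nondegeneracy argument precise by noting that $D_u\alpha_\tau$ is a square $(n+1)\times(n+1)$ block (so $\ker J=0\iff\ker J^t=0$), where the paper only remarks loosely that the rank is maximal on subspaces where $D\alpha_\tau\circ(\mathrm{pr}_1)_*$ has maximal rank.
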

{\it Remark.} In especially we have that $\tilde f_\tau, \ \tau \in [0,1]$, restricted to a fibre $\tilde \alpha_\tau^{-1}(y)$ is a Morse function iff $(\alpha_\tau)_*$ is non-singular at all points $u \in L_\tau$ which project to $y \in B_1$ under $\alpha_\tau$. Furthermore, if $L_\tau$ is generic in the sense of Lemma \ref{genericity}, the points $(u,y,z)$ where the fibre Hessian of $\tilde f_\tau$ is degenerate occur with codimension $\geq 1$ in the set of fibre-critical points $C_{\tilde f_\tau}$ of $\tilde f_\tau$ in $L_\tau\times B_1\times \mathbb{R}^{n+1}=:Z_\tau$. Note further that a more invariant definition of $\underline L_\tau$ of course would be to set $\hat L_\tau=d \tilde f_\tau \subset T^*(L_\tau\times B_1\times \mathbb{R}^{n+1})=T^*Z_\tau$ and consider the set $\hat L_\tau \cap H_\tau$, where $H= ({\rm ker} (\tilde \alpha_\tau)_*)^\perp\subset T^*Z_\tau$. Then if $\pi_{Z_\tau}:T^*Z_\tau \rightarrow Z_\tau$ is the canonical projection, we have $C_{\tilde f_\tau}=\pi_{Z_\tau}(\hat L_\tau \cap H_\tau)$ and there is an immersion $\hat L_\tau \cap H_\tau\rightarrow \underline L_\tau$ (see e.g. \cite{guille}), note that of course $\underline L^{reg}_{\tau}$ is a submanifold of $T^*B_1$. We summarize the discussion by the commutative diagram
\begin{equation}\label{generatingfam}
\begin{CD}
C_{\tilde f_\tau}\subset Z_\tau  @<<\pi_{Z_\tau}< \hat L_\tau \cap H_\tau \subset T^*Z_\tau \\
 @VV\tilde \alpha_\tau V @VVV \\
B_1 @<< \pi_{B_1}< \underline L_\tau\subset T^*B_1.\\
\end{CD}
\end{equation}
Note that from the diagram we have (cf. \cite{guille}) that if $\tilde w \in \hat L_\tau \cap H_\tau$, then ${\rm dim}\ {\rm ker}(\tilde \alpha_\tau)_*\cap TC_{\tilde f_\tau}|_{\pi_{Z_\tau}(\tilde w)}={\rm dim}\ {\rm ker}(\pi_{B_1})_*\cap T\underline L_\tau|_w$ if $w$ is the image of $\tilde w$ under the immersion $\hat L_\tau \cap H_\tau\rightarrow \underline L_\tau$. On the other hand, in analogy to the proof below, we have that both dimensions coincide with the dimension of the kernel of the fibre-hessian of $\tilde f_\tau$ at $\pi_{Z_\tau}(\tilde w)$. Note that by its implicit definition in Lemma \ref{genfunction}, the map $\hat L_\tau \cap H_\tau\rightarrow \underline L_\tau$ is in fact an {\it injective} immersion on the set of points $\tilde w \in \hat L_\tau$ so that $(\tilde \alpha_\tau)_*\cap TC_{\tilde f_\tau}|_{\pi_{Z_\tau}(\tilde w)}$ has maximal rank, thus regular points of $\tilde \alpha_\tau$ in $C_{\tilde f_\tau}$ are in smooth bijection to regular points of $\pi_{B_1}$ in $\underline L_\tau$.
\begin{proof}
Let $y \in B_1$ and let $(u,y,z)$ be an element of the fibre-critical set $C_{\tilde f_\tau}$ of $\tilde f_\tau$ over $y$. Then we have for all $\tau \in [0,1]$
\begin{equation}\label{familyeq}
\begin{split}
D_u\tilde f_\tau(u,y,z)&=0= Df_\tau(u)- z^\perp\circ D\alpha_\tau(u)\\
D_z\tilde f_\tau(u,y,z)&=0= y -\alpha(u),
\end{split}
\end{equation}
where here, $z^\perp \in T^*B_1$ denotes the linear form on $\mathbb{R}^{n+1}$ vanishing on all vectors perpendicular to $z$. Then by definition we have
\[
y^* =D_y\tilde f_\tau(u,y,z)=z^\perp.
\]
Now by (\ref{familyeq}), $z^\perp$ satisfies exactly the conditions on the points of $\underline L_\tau\subset T^*B_1$ formulated in Definition \ref{subgraphical}. To prove the second assertion of the Lemma just consider the fibre Hessian of $\tilde f_\tau$ at some fibre critical point $(u,y,z)$ of $\tilde f_\tau$ over $y \in T^*B_1$:
\[
D^2_{(u,y,z)} \tilde f_\tau= \begin{pmatrix}  D^2f(u)-(z^\perp, D^2\alpha_\tau(u))& -D\alpha_\tau(u)\\
-D\alpha_\tau(u) & 0 \end{pmatrix}
\]
Using this we see that for a fibre critical point $(u,y,z)$ the rank of $D^2_{(u,y,z)} \tilde f_\tau$ is maximal on each subspace $V \subset T_{(u,z)}(L_\tau\times \{y\}\times \mathbb{R}^{n+1})$ on which $D\alpha_\tau(u)\circ ({\rm pr}_1)_*$ is maximal, where ${\rm pr}_1:L_\tau\times \{y\}\times \mathbb{R}^{n+1}\rightarrow L_\tau$ is the projection, so we arrive at the second assertion.
\end{proof}
To proceed, we claim that there is a family of diffeomorphisms $\Psi_\tau:L_0\rightarrow L_\tau, \ \tau \in [0,1]$. To define these, recall that $L_{\tau,0}=Q_{\tau,x(0)}= \Phi_\tau(Q_x):=\Phi_{H(x)}(1-\tau)\circ\rho_{\tau}(Q_x) \subset (Y^k_\epsilon)_{x(0)}$ for $\tau \in [0,1], t \in I_\epsilon$. On the other hand we have for each $\tau \in [0,1]$ $L_{\tau,t}=\mathcal{P}^{\Omega}_{c(t)}(L_{\tau,0}), \ t \in I_\epsilon$, where $c(t)=x(0)e^{2 \pi i t}$ and $\mathcal{P}^{\Omega}_{c(\cdot)}$ denotes symplectic parallel transport along $c$. Hence we set for each $(\tau, t) \in [0,1]\times I_\epsilon$
\[
\Psi_\tau|L_{0,t}=\mathcal{P}^{\Omega}_{c(t)}\circ \Phi_\tau \circ (\mathcal{P}^{\Omega}_{c(t)})^{-1}|L_{0,t},
\]
and observe that these maps assemble to a diffeomorphism $\Psi_\tau:L_0\rightarrow L_\tau,\  \tau \in [0,1]$. Using this family , we observe that we can replace the generating family $\tilde f_\tau, \ \tau \in [0,1]$ for $\underline L_\tau\subset T^*B_1$ given by Lemma \ref{genfunction} by a family of generating functions $\hat f_\tau: L_0\times B_1\times \mathbb{R}^{n+1}\rightarrow \mathbb{R}$ defined on the {\it same} manifold by setting
\begin{equation}\label{genfamily2}
\hat f_\tau(u,y,z)=(f_\tau\circ \Psi_\tau)(u)+(y-(\alpha_\tau\circ \Psi_\tau)(u), z), \ (u,y,z) \in L_0\times B_1\times \mathbb{R}^{n+1}.
\end{equation}
Then all the assertions of Lemma \ref{genfunction} and the remark below that Lemma remain valid when replacing $\tilde f_\tau$ by $\hat f_\tau$, $\alpha_\tau$ by $\hat \alpha_\tau:=\alpha_\tau\circ \Psi_\tau:L_0\rightarrow B_1$ and $\tilde \alpha_\tau$ by $\tilde \alpha_0=\tilde \alpha_\tau(\Psi_\tau(\cdot), \cdot, \cdot):Z_0=L_0\times B_1\times \mathbb{R}^{n+1}\rightarrow B_1$. Of course, the fibre critical set $C_{\hat f_\tau}$ corresponding to $\hat f_\tau$ then is a subset of $Z_0$ for each $t \in [0,1]$. Thus we see that the family $\underline L_\tau\subset T^*B_1$ is given by the generating family $(\hat f_\tau, \tilde \alpha_0)_{\tau \in [0,1]}$ on $Z_0$. We will in the following (for the proof of Proposition \ref{keysing} 1.-3. below) assume that the family $\hat f_\tau$ is modified so that $\hat f_\tau$ is constant in $\tau$ in a neighbourhood $W$ of the set $\partial L_0\times B_1\times \mathbb{R}^{n+1}\subset Z_0$ and coincides with the original $\hat f_\tau$ in a nghbd $V$ of $L_{0,0}\times B_1\times \mathbb{R}^{n+1}$. This can be achieved for instance by defining a smooth function $\rho:Z_0\rightarrow [0,1]$ by being identically to $1$ on $W$ and being identically zero on $V$ and constant on the slices $L_{0,t}\times B_1\times \mathbb{R}^{n+1}\subset Z_0$, for $t \in I_\epsilon$. Then set
\begin{equation}\label{modify}
\hat f^0_\tau= (1-\rho)\hat f_\tau+\rho\hat f_0,
\end{equation}
and mildy modify $\rho$ to assure genericity in the sense of Lemma \ref{genericity}. We will in the following denote $\hat f^0_\tau$ again by $\hat f_\tau$.
\\
We are now in a position to prove the second part of Assumption \ref{ass4}. From now on we assume that $L_{\tau}\subset B_2$ is generic in the sense of Definition \ref{generic} resp. Lemma \ref{genericity} outside of a finite set of points for each $\tau \in [0,1]$. Of course the top-strata ${N_{\tau,t}^{\mathcal{M},top}}\subset L_{\tau,0}=L_\tau\cap (Y^k_\epsilon)_{x(0)}, \  \tau \in [0,1]$ appearing in Assumption \ref{ass4} correspond in our setting to the points $S_1(L_\tau,0)$ in $L_{\tau,0}$ where $(\alpha_\tau)_*:TL_\tau \rightarrow TB_1$ has corank $1$ which are by the remark above in one-to-one correspondence to those points $(u,y,z) \in Z_{\tau,0}:=Z_\tau\cap L_{\tau,0}\times B_1\times \mathbb{R}^{n+1}$ where the fibre Hessian of $\tilde f_\tau$ has a kernel of dimension $1$ in the set of fibre-critical points $C_{\tilde f_\tau}$ of $\tilde f_\tau$ in $Z_\tau$. Then, shifting the problem to the family $(\hat f_\tau, \tilde \alpha_0)_{\tau \in [0,1]}$ on $Z_0$ using the family of diffeomorphisms $\Psi_\tau:L_0\rightarrow L_\tau$ as described above, it will suffice to prove the following proposition. For this, let $S_1(C_{\hat f_\tau})\subset C_{\hat f_\tau} \subset Z_0$ be defined for each $\tau \in [0,1]$ by the set of points in $C_{\hat f_\tau}$ where the corank of the fibre Hessian of $\hat f_\tau$ on $Z_0$ is equal to $1$. Then by the genericity of the family $L_\tau \subset B_2$, $S_1(C_{\hat f_\tau})\subset C_{\hat f_\tau}$ is a submanifold of codimension $1$. Furthermore, the set $S_1(C_{\hat f_\tau},0):=Z_{0,0}\cap S_1(C_{\hat f_\tau})\subset Z_{0,0}\cap C_{\hat f_\tau}=:C_{\hat f_\tau,0}$ is a submanifold of codimension $1$ in $C_{\hat f_\tau,0}$ for any $\tau \in [0,1]$ outside of at most finitely many points. Consider the complement $\mathcal{C}_\tau:=C_{\hat f_\tau,0}\setminus \overline {S_1(C_{\hat f_\tau},0)}\subset Z_{0,0}$ for each $\tau \in [0,1]$. Let $x \in S^1_\epsilon$ be the fixed base point chosen in the proof of Proposition \ref{keylemma}. Then since $(Y^k_\epsilon)_x\subset \mathbb{C}^{n+1}$, we can find a $a > 0$ so that if $\mathbf{1}_p=(0,\dots,0,1,\dots, 1)\in \mathbb{R}^{2n+2}$ we have that
\[
(Y^k_\epsilon)^a_x:=\{z+a\cdot\mathbf{1}_p\in \mathbb{C}^{n+1}|\ z \in (Y^k_\epsilon)_x\subset \mathbb{C}^{n+1}\}
\]
does not intersect the set $\mathbb{O}_q:=\mathbb{R}^{n+1}\times \{0\}$, that is the $q$- plane if $(q,p)$ are coordinates in $\mathbb{C}^{n+1}\simeq \mathbb{R}^{2n+2}$. It is clear we can choose $a$ so that if we define $(Y^k_\epsilon)^a_u$ for $u\in S^1_\epsilon$ near $x$ analogously to the above, then $(Y^k_\epsilon)^a_u$ will also be disjoint from $\mathbb{O}_q$. Also it follows that all of the above constructions that concern a small neighbourhood of the fibre $(Y^k_\epsilon)_x$, i.e. those of Definition \ref{subgraphical} and below are invariant under the translation above so that we will assume in the following that $(Y^k_\epsilon)^a_w$ is disjoint from $\mathbb{O}_q$ for $w$ near $x \in S^1_\epsilon$.
\begin{prop}\label{keysing}
Assume $(Y^k_\epsilon)^a_w$, $w\in S^1_\epsilon$ near $x$ satisfies the above. Then with the notation introduced above, there is a smooth family of Morse functions $\mathfrak{f}_\tau, \ \tau\in [0,1]$ on generic fibres $(Z_0)_{y(\tau)}, \ \in B_1$ of $(\tilde \alpha_0, Z_0)$ so that $\mathcal{C}_\tau$ has at least as many connected components as $\mathfrak{f}_\tau$ has critical points for each $\tau \in [0,1]$ and the latter is greater or equal to the number ${\rm stabMor}(\mathfrak{f}_\tau)_{dbd}$ (using the notation of \cite{eliasgr}, Chapter 1.4). Furthermore one has:
\begin{enumerate}
\item Assume first that the set of points on $C_{\hat f_\tau,0}$ where ${\rm ker}(\tilde \alpha_\tau)_*|C_{\hat f_\tau,0}\neq 0$ solely consists of 'fold'-points (see the proof below), for all $\tau \in [0,1]$. Then there is a {\it path-connected} subset $\mathcal{U}\subset \mathcal{C}:=\bigcup_{\tau\in [0,1]}\mathcal{C}_\tau\times \{\tau\}\subset Z_{0,0}\times[0,1]$ so that $\mathcal{U}\cap \mathcal{C}_\tau\times \{\tau\}=:\mathcal{U}_\tau\subset C_{\hat f_\tau,0}\times \{\tau\}$ is a smooth non-empty $n$-manifold and equals exactly one connected component of $\mathcal{C}_\tau$. 
\item In the general case, there is for any given $w_0\in \mathcal{C}_0$ a smooth embedded path $c:[0,1]\rightarrow \mathcal{W}:=\bigcup_{\tau \in[0,1]} C_{\hat f_\tau,0}\times\{\tau\}$ so that $c(0)=w_0=c(1) \in \mathcal{C}_0=\mathcal{C}_1$, $c(\tau)\in C_{\hat f_\tau,0}\times\{\tau\},\ \tau \in [0,1]$, $c$ intersects $S_1:=\bigcup_{\tau \in[0,1]}{S_1(C_{\hat f_\tau},0)}\times  \{\tau\}\subset Z_{0,0}\times[0,1]$ transversally and we have ${\rm im\ c}\cdot S_1=0$, where $\cdot$ means oriented intersection number and $S_1$ carries the orientation induced by the family of fibre-hessians $D_{(\cdot)}^2\tilde f_\tau$ along $S_1(C_{\hat f_\tau},0)\subset C_{\hat f_\tau,0}$.
\item The orientation induced by the family of fibre-hessians $D_{(\cdot)}^2\tilde f_\tau$ along $S_1(C_{\hat f_\tau},0)\subset C_{\hat f_\tau,0}$ on $S_1$ by (\ref{morseorient}) below coincides with the opposite of the Maslov coorientation on $S_1$ as defined by Definition \ref{generic} (cf. \cite{arnold}), (\ref{generatingfam}) and the triple $(\bigcup_{\tau \in [0,1]}L_\tau\times \{\tau\}, \bigcup_{\tau \in [0,1]}\Lambda_{Q_\tau}|L_\tau\oplus \mathbb{R},\bigcup_{\tau \in [0,1]}\Lambda_{s^k, \tau}|L_\tau\oplus i\mathbb{R})$ considered in $(M\times \mathbb{C}, \Omega^k\oplus\omega_{\mathbb{C}})$, where $\omega_{\mathbb{C}}$ is the canonical symplectic form on $\mathbb{R}^2\simeq \mathbb{C}$.
\end{enumerate}
\end{prop}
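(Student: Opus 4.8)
The plan is to reduce all of Proposition \ref{keysing} to a local study of the generating family $(\hat f_\tau,\tilde\alpha_0)$ on $Z_0=L_0\times B_1\times\mathbb{R}^{n+1}$ near its fibre singularities, together with one global topological observation about $\mathcal W$. I begin with the first assertion, which is the soft part: for each $\tau$ choose a regular value $y(\tau)\in B_1$ of the Lagrangian projection of $\underline L_\tau$ — equivalently a value over which $(\alpha_\tau)_\ast$ is everywhere nonsingular, cf.\ the remark after Lemma \ref{genfunction} — depending smoothly on $\tau$, and set $\mathfrak f_\tau:=\hat f_\tau|_{\tilde\alpha_0^{-1}(y(\tau))}$. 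By that remark $\mathfrak f_\tau$ is a Morse function, and it is a generating family (in the sense of \cite{eliasgr}, Chapter 1.4) for the bounded piece of the Lagrangian sphere $L_\tau$, so the number of its critical points is at least $\mathrm{stabMor}(\mathfrak f_\tau)_{dbd}$; a count of sheets of the regular locus of $C_{\hat f_\tau,0}$ over a neighbourhood of $y(\tau)$ then shows that $\mathcal C_\tau=C_{\hat f_\tau,0}\setminus\overline{S_1(C_{\hat f_\tau},0)}$ has at least that many connected components.

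For assertion (3) I would pass to a fibre-preserving normal form. Genericity (Lemma \ref{genericity}) reduces the coorientation comparison to the generic, i.e.\ fold, point of $S_1(C_{\hat f_\tau},0)$, at which $\hat f_\tau$ is, after a fibre-preserving stabilisation and change of coordinates, the suspension of $z^3+Q(w)$ with $Q$ a nondegenerate quadratic form; the sign of the fibre-Hessian $D^2_{(u,y,z)}\hat f_\tau$ changes exactly once across $S_1$, which fixes an orientation of $S_1$ — the content of the equation I record below. On the other hand, under the correspondence \eqref{generatingfam} the fold locus of $\tilde\alpha_0$ on $C_{\hat f_\tau,0}$ corresponds to the locus of $\underline L_\tau$ where $\dim\ker(\pi_{B_1})_\ast=1$, i.e.\ to the Maslov cycle of the triple $(L_\tau,\Lambda_{Q_\tau},\Lambda_{s^k,\tau})$; suspending to $(M\times\mathbb{C},\Omega^k\oplus\omega_{\mathbb{C}})$ aligns the extra $\mathbb{R}^{n+1}$-direction of the generating family with Arnol'd's Lagrangian-Grassmannian picture (\cite{arnold}, cf.\ the proof of Lemma \ref{lag5}), and matching the two local models shows the Hessian orientation to be the opposite of the Maslov coorientation of Definition \ref{generic}. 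Globalising along $S_1$, using that the complement of its top stratum has codimension $\ge 3$, gives (3).

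For assertion (2) the point is that for generic $L_\tau$ the fibre-critical set $C_{\hat f_\tau,0}$ is diffeomorphic to the Lagrangian sphere $Q_{\tau,x(0)}\cong S^n$ (the regular locus maps diffeomorphically onto $Q_{\tau,x(0)}$ via \eqref{generatingfam}, the singular locus having positive codimension), and the diffeomorphisms $\Psi_\tau:L_0\to L_\tau$ identify these slices fibrewise, so $\mathcal W=\bigcup_\tau C_{\hat f_\tau,0}\times\{\tau\}$ is diffeomorphic to $S^n\times[0,1]$. Since $n=\dim Q_x\ge 2$, $\mathcal W$ is simply connected, hence $H^1(\mathcal W;\mathbb{Z})=0$; as $S_1$ is a cooriented (by (3)) codimension-one cycle modulo its boundary $S_{\ge 2}:=\overline{S_1}\setminus S_1$ of codimension $\ge 3$, the algebraic intersection number of any loop in $\mathcal W$ avoiding $S_{\ge 2}$ with $S_1$ is the pairing of the loop's class with the Poincaré dual of $S_1$ in $H^1(\mathcal W\setminus S_{\ge 2};\mathbb{Z})\cong H^1(\mathcal W;\mathbb{Z})=0$, hence zero. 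It therefore suffices to take any smooth loop in $\mathcal W$ based at $w_0\in\mathcal C_0=\mathcal C_1$, perturb it relative to its endpoints to be transverse to $S_1$ and disjoint from $S_{\ge 2}$ — generic since $\dim S_{\ge 2}\le n-2$ — to obtain the required path $c$ with $\mathrm{im}(c)\cdot S_1=0$.

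Finally, for assertion (1), under the standing hypothesis that the only fibre singularities on $C_{\hat f_\tau,0}$ are folds, the family of caustics $S_1(C_{\hat f_\tau},0)$ undergoes only fold--fold interactions, and one tracks the connected component of $\mathcal C_\tau$ carrying a critical point of $\mathfrak f_\tau$ that realises the stable Morse bound, using the stability theory for Lagrangian folds of Eliashberg--Gromov \cite{eliasgr} and Guillemin--Sternberg \cite{guille}: because $\mathrm{stabMor}(\mathfrak f_\tau)_{dbd}$ is, up to controlled jumps, $\tau$-independent, this distinguished region cannot be absorbed along the isotopy, so the regions assemble into the path-connected $\mathcal U\subset\mathcal C$. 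I expect this last step to be the main obstacle — it is precisely Arnol'd's persistence-of-caustics question (compare Ferrand--Pushkar \cite{legendrian}, Entov \cite{entov}), and only ruling out higher (swallowtail-type) singularities makes the tracking possible; the second most delicate point is the coherence of the Hessian coorientation of (3) along the whole family $\{L_\tau\times\{\tau\}\}$, which is what feeds into (2).
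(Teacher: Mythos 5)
Your overall reading of the proposition — stable Morse theory for the counting bound, fold stability for~(1), Arnol'd's local normal form for~(3), and a topological global argument feeding~(3) into~(2) — does line up with the paper's strategy, and your treatments of~(3) and~(1) match the paper's (the paper's~(1) is a covering-by-stable-fold-tubes argument, which is a more explicit form of your ``tracking''). Two steps, however, are genuinely incomplete.

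\textbf{The ${\rm stabMor}$ bound.} You propose simply setting $\mathfrak f_\tau:=\hat f_\tau|_{\tilde\alpha_0^{-1}(y(\tau))}$ and invoking Eliashberg--Gromov. But on the fibre $L_0\times\{y\}\times\mathbb{R}^{n+1}$ the function $\hat f_\tau(u,y,z)=f^0_\tau(u)+\langle y-\hat\alpha_\tau(u),z\rangle$ is affine in $z$, so it has no lower bound and is certainly not $d$-bounded, nor a ``fibration at infinity'' in the sense of \cite{eliasgr}, Ch.\ 0.2.1 and 1.4; the theorem you want to apply (Thm.\ 1.4.1 there) simply does not see this function. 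The paper's proof spends most of the first paragraph on exactly this: it multiplies $f^0_\tau$ by a cutoff $g$, adds an expanding function $h$ near $\partial L_0$, and replaces $z$ by the truncated $z_{a_1}$, producing $\hat f^{c,c_1,a_1}_\tau=\mathfrak g_\tau(u)+\mathfrak e_\tau(u,z)$ with $\mathfrak e_\tau$ $d$-bounded and $\mathfrak g_\tau$ a fibration at infinity, \emph{and} such that its critical points are in bijection with those of $\hat f_\tau$ lying over $Z_{0,0}$. Without that modification (or something doing the same work) the stabMor bound is not justified.

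\textbf{Assertion (2).} Your argument is that $\mathcal W\cong S^n\times[0,1]$ is simply connected, hence $H^1(\mathcal W;\mathbb Z)=0$, hence any ``loop'' has intersection number zero with $S_1$. But the required path $c$ runs from $(w_0,0)\in C_{\hat f_0,0}\times\{0\}$ to $(w_0,1)\in C_{\hat f_1,0}\times\{1\}$; its endpoints sit on the two different boundary components of the cylinder, so $c$ is a \emph{relative} $1$-cycle, not a loop, and the vanishing of $H^1(\mathcal W)$ does not control its intersection number with the relative $n$-cycle $S_1$ (whose boundary also meets $\partial\mathcal W$). If you instead pass to the natural quotient in which $c(0)$ and $c(1)$ become the same point, that quotient is $S^n\times S^1$, which has $H^1\cong\mathbb Z$, and the vanishing of the class of $S_1$ there is exactly what you would have to prove, not a free input. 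What simple connectivity of $\mathcal W$ \emph{does} give is that the intersection number is the same for any two transversal paths with the given endpoints; to pin down the value you need the Morse-theoretic ingredient, which is the paper's actual argument: for a transversal $c$ one has the running identity relating ${\rm im}\,c|_{[0,\tau]}\cdot S_1$ to the jump in $\mathrm{index}\bigl(D^2_{c(\tau)}\hat f_\tau\bigr)$ across fold crossings (this is where the Hessian coorientation of~(3) enters), and since $\hat f_0=\hat f_1$ and $c(0),c(1)$ are the same point of $Z_{0,0}$, the net index jump is zero. Your proposal replaces this Morse index bookkeeping by a cohomological vanishing that, in the form stated, is circular.
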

{\it Remark.} While the first instance above proves the third part of Assumption \ref{ass4} in the absence of 'higher singularities', it was indicated in the proof of Proposition \ref{keylemma} that the second and third part of the above prove  actually a slightly stronger version than Assumption \ref{ass4} (2.). Note further that in 3., to define a Maslov coorientation as in Definition \ref{generic}, strictly speaking a relative version of that definition resp. parts of the preceeding Lemma would be necessary, but since non-transversality occurs at most at a discrete set by Lemma \ref{genericity}, we refrain from formulating this. We will indicate in Section \ref{generalspec} how a modification of (1.) can be used as a key step in proving the quasihomogeneous part of Conjecture \ref{conjspec}.
\begin{proof}
Let $z \in \mathcal{C}_\tau$ for some $\tau \in [0,1]$ and $y=\tilde \alpha_0(z)$. To prove the very first claim, we will show that $\hat f_\tau|(Z_0)_y$, where $(Z_0)_y= \hat f_\tau^{-1}(y)=L_0\times \{y\}\times \mathbb{R}^{n+1}$ has at least ${\rm stabMor}(\mathfrak{f}_\tau)_{dbd}$ critical points lying on $(Z_{0,0})_y:= (Z_0)_y\cap Z_{0,0}$ if all elements of $(C_{\hat f_\tau})_y:=(Z_0)_y\cap C_{\hat f_\tau}$ are non-degenerate critical points (such $y\in B_1$ exist by Lemma \ref{genericity}). For this, we will modify the function $\hat f_\tau|(Z_0)_y$ outside of a small neighbourhood of $(C_{\hat f_\tau,0})_y:=(Z_{0,0})_y\cap C_{\hat f_\tau}$, so that results of \cite{eliasgr} on 'stable' Morse theory become applicable. Using the notation of (\ref{genfamily2}), set $f^0_\tau:=f_\tau\circ \Psi_\tau:L_0\rightarrow \mathbb{R}$ and note that since $(Y^k_\epsilon)_y\cap \mathbb{O}_q=\emptyset$, we deduce that $f^0_\tau$ has no critical points on $L_0$. Let now $(C_{\hat f_\tau})_y=\{z_i,\ i\in K\}$, $K$ and denote by $z_i^0,\ i\in K^0\subset K$ the elements of the subset $(C_{\hat f_\tau,0})_y$. Let $u_i, u_j^0, i\in K, j\in K^0$ be the (obvious) projections of the $z_i\in (Z_0)_y$ to $L_0$.
Let $B^0$ be the union of a set of small (geodesic w.r.t. the standard metric) balls $B_i^0(\epsilon_i)\subset L_0$ of radii $\epsilon_i$ for $i \in \in K^0$ containing the $u_i^0$ and analogously $B^-$ the set $\{B_i(\epsilon_i)\subset L_0\}_i$ of balls containing the $u_i$ for $i \in K^-=K\setminus K_0$ so that the closures of all $B_i^0(\epsilon_i), B_i(\epsilon_i),\ i \in K$ are mutually disjoint. Then choose a smooth function $g:L_0\rightarrow \mathbb{R}^+$ so that $g$ equals $1$ on $B^0$ and is equal to a (small) constant $1 >c>0$ on $B^-$ which will be fixed below. Assume ${\rm dist}(u_i,\partial L_0) >0$ for all $i \in K^-$ (this can be always achieved by genericity). Then we can choose $g$ to be zero on the closure of a small nghbd $U_0$ of $\partial L_0$ in $L_0$ which is disjoint from all $B_i(\epsilon_i), \ i\in K^-$ while we can also choose a smooth function $h$ on $U_0$ which is zero outside $U_0$ and {\it expanding} on $U_0$ in the sense that $h \rightarrow \infty$ and $|dh|(x)\rightarrow \infty$ for $x \rightarrow x_0$, $x_0$ an arbitrary point of $\partial L_0$. Then setting $f^0_\tau=f_\tau\circ \Psi_\tau:L_0\rightarrow \mathbb{R}$ we set
\[
f^{c, c_1}_\tau(u)=g\cdot f^0_\tau+h, \ {\rm for\ all}\ u\in L_0\setminus \partial L_0.
\]
Further, choose a constant $a_1>0$ (again, to be fixed below) and let $z_{a_1}:\mathbb{R}^{n+1}\rightarrow \mathbb{R}^{n+1}$ be the smooth function whose $i$-th component coincides with $z_i$ for $-a_1<z_i$ and with $e^{z_i+a_1}-a_1-1$ for $-a_1\geq z_i$. Choose a point $z_0 \in L_0$ and finally set for the fixed $y\in B_1$ and for the chosen $\tau \in [0,1]$
\[
\hat f^{c,c_1,a_1}_\tau(u,y,z)=f^{c, c_1}_\tau(u)+\left(y-\hat \alpha_\tau(u), z_{a_1}+z_0\right), \ (u,y,z) \in L_0\times \{y\}\times \mathbb{R}^{n+1}.
\]
Now choosing $a_1>{\rm max}_{z \in L_0}(|z_0-z|)>0$ and $a$ so that $a>>a_1$ we see that if we choose $c,c_1$ sufficiently small than the $u$-coordinates of all critical points of $\hat f^{c,c_1,a_1}_\tau$ lie in $L_{0,0}$ and consist exactly of the $u$-cordinates of the $z_i^0\in (Z_{0,0})_y,\ i \in K^0$ introduced above while the $z$-coordinates of the critical points of $\hat f^{c,c_1,a_1}_\tau$ are translated by a constant relative to those of the $z_i^0\in (Z_{0,0})_y,\ i \in K^0$. Note that $\mathfrak{f}_\tau:=f^{c,c_1,a_1}_\tau: (Z_0)_y\rightarrow \mathbb{R}$ is of the form $\mathfrak{f}_\tau(u,z)=\mathfrak{g}_\tau(u)+\mathfrak{e}_\tau(u,z),\ u \in L_0,\ z \in \mathbb{R}^{n+1}$ with $\mathfrak{e}_\tau(u,z)$ being $d$-bounded in the sense of Eliashberg-Gromov (\cite{eliasgr}, Chapter 1.4) and $\mathfrak{g}_\tau(u)$ being a 'fibration at infinity' near $\partial L_0$ in the sense of (\cite{eliasgr}, Chapter 0.2.1). Then by results on (stable) Morse theory (cf. \cite{eliasgr}, Theorem 1.4.1), the number of (non-degenerate) critical points of $f^{c,c_1,a_1}_\tau$ for fixed $y\in B_1$ (and hence of those critical points of $\hat f_\tau$ lying on $(Z_{0,0})_y$) is bounded below by ${\rm stabMor}(\mathfrak{f}_\tau)_{dbd}$, using the notation of \cite{eliasgr}. Now extending the above to any $y \in B_1$ lying in the image of $\mathcal{C}_\tau$ under $\tilde \alpha_0$ we observe that any connected component of $\mathcal{C}_\tau$ consists of a critical point of $\hat f_\tau$ on $Z_{0,0}$ of constant index and crossing $S_1(C_{\hat f_\tau},0)$ tranversally means shifting the index of the corresponding critical point by $\pm 1$. We deduce that $\mathcal{C}_\tau$ consists of at least ${\rm stabMor}(\mathfrak{f}_\tau)_{dbd}$ connected components, which was the assertion.\\
The key ingredient of the proof of (1.) is a stability result for generating functions in a neighbourhood of fold singularities, as it is discussed for instance in Guillemin and Sternberg (\cite{guille}, Chapter VII). Recall that a fold point of $\underline L_\tau\subset T^*B_1$ is a point where the restriction of $(\pi_{B_1})_*:T\underline L_\tau \rightarrow TB_1$ to the tangent space of the set of points $S_1(L_\tau) \subset \underline L_\tau$ where $(\pi_{B_1})_*|T\underline L_\tau$ has corank one, has zero kernel, we will denote this set henceforth by $S_{1,0}(\underline L_\tau) \subset \underline L_\tau$. Generically this is again a submanifold of codimension $1$ in $\underline L_\tau$. Note that at non-transversality points in the sense of Lemma \ref{genericity} (1.), the kernel of $(\pi_{B_1})_*|T\underline L_\tau$ is necessarily tangent to $S_1(L_\tau)$, which is why they do not appear if we assume $S_1(L_\tau)\setminus S_{1,0}(\underline L_\tau)=\emptyset$. Recall also that by the diagram (\ref{generatingfam}) the dimension of the kernel $(\tilde \alpha_\tau)_*:TC_{\tilde f_\tau}\rightarrow TB_1$ equals the dimension of the kernel of $(\pi_{B_1})_*:T\underline L_\tau \rightarrow TB_1$ on points corresponding under the immersion $\hat L_\tau \cap H_\tau\rightarrow \underline L_\tau$ (using the injective immersion $\hat L_\tau \cap H_\tau \rightarrow C_{\tilde f_\tau}$). Now it is proven in \cite{guille} (Ch. VII, Lemma 6.1), that near a point $z \in S_{1,0}(C_{\hat f_\tau})$ (where the latter is defined in analogy to $S_{1,0}(\underline L_\tau)$) in $C_{\hat f_\tau}$ (for a fixed $\tau \in [0,1]$) resp. a neighbourhood of the corresponding point $\lambda \in \underline L_\tau\subset T^*B_1$ we can parametrize the latter by a function of one 'auxilliary variable' $\theta$, namely there is a $n+2$-dimensional submanifold $U\subset Z_0$ which projects to an open ngbhd of $\tilde \alpha_\tau(\lambda)$ in $B_1$ (take the common zero set of $2n+1$ functions $\frac{\partial}{\partial \theta_i} \hat f_{\tau}=0, \ i=1,\dots,2n+1$, where the $\theta_i$ are the fibre variables in some coordinate neighbourhood of $z$ in $Z_0$) so that $z \in U$ and $\hat f_\tau: Z_0\rightarrow \mathbb{R}$ restricted to $U$ is given by 
\begin{equation}\label{fold}
\hat f_{\tau,U}(x, \theta)=\mu(x)+\rho(x)\theta-\frac{\theta^3}{3},\ d\rho\neq 0,
\end{equation}
where $\rho, \mu: V:=\tilde \alpha_0(U)\rightarrow \mathbb{R}$, then the fibre critical set $C_{\hat f_\tau}\cap U \subset Z_0$ is given by $\{(x,\theta): \rho(x)=\theta^2\}$, so $\frac{\partial}{\partial \theta} \hat f_{\tau,U}=0$, the caustic $S_{1,0}(C_{\hat f_\tau})$ corresponds to the set $\rho=0$ on $U$ (which is the set $\frac{\partial^2}{\partial\theta^2}\hat f_{\tau,U}=0$ and $\frac{\partial^2 }{\partial\theta \partial x_1} \hat f_{\tau,U}\neq 0$ for some coordinate function $x_1$ on $V$). Note that while $V\subset B_1$ is an open neighbourhood of the point $y=\tilde \alpha_0(z)=\alpha_\tau(\lambda)\in B_1$, we have $U\simeq V\times J\subset V\times \mathbb{R}$, where $J$ is some open interval centered at $0$, then $C_{\hat f_\tau}\cap U$ is relatively open in $C_{\hat f_\tau}$. Now we claim that we can find an open neighbourhood of $\tau$, called $I_\tau$ and open sets $U_{\tau'}\subset U$ s.t. $z \in U_{\tau'}$ for any $\tau' \in I_\tau$. Then with $V_{\tau'} =\tilde \alpha_0(U_{\tau'})$ the family $\{\hat f_{\tau'}|U_{\tau'}\}_{\tau' \in I_\tau}$ is given up to addition of a family of functions constant along the fibres of ${\rm pr}_1: V_{\tau'}\times J\rightarrow V_{\tau'}$ by composing a smooth family of diffeomorphisms $g_{\tau'}: U_{\tau} \rightarrow U_{\tau'},\ \tau' \in I_\tau, g_\tau=Id_{U_{\tau'}}$ with $\hat f_{\tau,U_{\tau'}}$, that is 
\begin{equation}\label{stability1}
(\hat f_{\tau'}|U_{\tau'}+\mu'_{\tau'})\circ g_{\tau'}= \hat f_{\tau,U_\tau},
\end{equation}
for some smooth family $\mu'_{\tau'}:V_{\tau'}\rightarrow \mathbb{R}, \ \tau' \in I_\tau$. Furthermore there is another smooth family $h_{\tau'}: V_\tau \rightarrow V_{\tau'}, h_\tau=Id_{V_{\tau}}$ so that for all $\tau'\in I_\tau$ the diagram
\begin{equation}\label{stability2}
\begin{CD}
V_\tau\times J @>>g_{\tau'}> V_{\tau'}\times J \\
 @VV{\rm pr}_1 V @VV{\rm pr}_1 V \\
V_\tau @>> h_{\tau'}> V_{\tau'}\\
\end{CD}
\end{equation}
commutes. Following \cite{guille} (Ch. VII, § 8), (see also Guillemin and Golubitsky \cite{guilgo}, Ch. V, Theorem 4.2), this already follows if we can show that $\hat f_{\tau,U}$ is 'infinitesimally stable' in the appropriate sense. We will sketch a proof of this below (Lemma \ref{infstab}) and assume for the moment that for certain families $g_{\tau'}, \ h_{\tau'}, \ \tau'\in I_\tau$, (\ref{stability1}) and (\ref{stability2}) holds. Now the latter equations imply that for any $\tau'\in I_\tau$ we have that $\hat f_{\tau'}|{U_\tau'}$ is of the form (\ref{fold}) with $\mu,\ \rho,\ \theta$ replaced by $\tilde \mu=\mu\circ g_{\tau'}^{-1}+\mu'_{\tau'},\ \tilde \rho=\rho\circ g_{\tau'}^{-1},\ \tilde \theta=\theta\circ g_{\tau'}^{-1}$. But this implies that if $z'$ is a fold point of $\hat f_{\tau}|U_\tau$, then $\hat f_{\tau'}|U_{\tau'}$ has a fold point at $g_{\tau'}(z')$ for all $\tau'\in I_\tau$. By genericity, this means that the fold locus $S_{\tau'}^U:=S_{1,0}(C_{\hat f_{\tau'}})\cap U_{\tau'}$ is a codimension one submanifold for all $\tau'\in I_\tau$. By shrinking $U_\tau$ if necessary, we can assume that $S_{\tau}^U$ is connected, then $S_{\tau'}^U=g_{\tau'}(S_{\tau}^U)$ is connected and will divide $U_{\tau'}$ into exactly two connected components for all $\tau'\in I_\tau$.\\
We can repeat the above procedure for all $z \in S_{1,0}(C_{\hat f_{\tau}},0):= S_{1,0}(C_{\hat f_{\tau}})\cap Z_ {0,0}$ and all $\tau  \in [0,1]$ which gives in analogy to $(I_\tau, U, U_{\tau'})$ above a family $(I_{z,\tau}, U_{z,\tau}, U_{z, \tau,\tau'}),\ \tau'\in I_{z,\tau}$. Consider the pair  $(I_{z,\tau}, \hat U_{z,\tau}:=U_{z,\tau}\cap Z_{0,0})\subset ([0,1],C_{\hat f_{\tau},0})$ and the codimension one submanifolds $\hat S_{\tau'}^{\hat U_{z,\tau}}:=S_{\tau'}^{U_{z,\tau}}\cap Z_{0,0}, \tau' \in I_{z,\tau}$ dividing each $U_{z,\tau,\tau'}\cap \cap Z_{0,0}$ into exactly two connected components. Consider the covering $\mathcal{U}=\bigcup_{\tau \in [0,1]}\bigcup_{\tau' \in I_\tau}\bigcup_{z \in  S_{1,0}(C_{\hat f_\tau},0)} I_{z,\tau} \times \hat U_{z,\tau,\tau'}$ of $\bigcup_{\tau \in [0,1]}\{\tau\}\times S_{1,0}(C_{\hat f_\tau},0)$ in $\bigcup_{\tau \in [0,1]}\{\tau\}\times C_{\hat f_\tau,0}\subset [0,1]\times Z_{0,0}$. By the assumption of (1.) each $S_{1,0}(C_{\hat f_\tau},0)$ coincides with its closure $\overline {S_{1,0}(C_{\hat f_\tau},0)}$ and hence is compact in $C_{\hat f_\tau,0}$. Since $\bigcup_{\tau \in [0,1]}\{\tau\}\times \overline {S_{1,0}(C_{\hat f_\tau},0)}$ is given by the zero set of the determinant of the vertical Hessian, ${\rm det}(D_{(u,y,z)}^2\hat f_\tau)$ on $\bigcup_{\tau \in [0,1]} Z_{\tau,0}$, it is also compact, thus we can chose a finite subcover $\mathcal{U}^0=\bigcup_{\tau \in I^0}\bigcup_{\tau' \in I_\tau}\bigcup_{z \in S^0_\tau} \hat U_{z,\tau,\tau'}$ of $\bigcup_{\tau \in [0,1]}\{\tau\}\times \overline {S_{1,0}(C_{\hat f_\tau},0)}$ where $I^0,\ S^0_\tau$ are some finite indexing subsets of $[0,1],\ S_{1,0}(C_{\hat f_\tau},0)$, respectively. \\
Then let $\mathcal{C}^0:=\bigcup_{\tau \in [0,1]} \mathcal{C}^0_\tau$ where $\mathcal{C}^0_\tau:=C_{\hat f_\tau,0}\setminus (\mathcal{U}^0\cap C_{\hat f_\tau,0}),\  \tau \in [0,1]$. Let $z_0 \in \mathcal{C}^0_0\subset Z_{0,0}$. We want to connect $z_0$ to a point $z_e$ in $\mathcal{C}_1$ by a smooth path whose image is entirely contained in $\mathcal{C}$. Assume that for some time $t_1 \geq 0$ and for any small $\epsilon >0$ we have $z_0\notin \mathcal{C}^0_{\tau}$ for $t_1+\epsilon> \tau  >t_1$ while for $0<\tau \leq t_1$ we have $z_0 \in \mathcal{C}^0_\tau$. If there is no such $t_1$ we are done, since we can set $c(t)=z_0, \ t\in [0,1]$. So $z_0 \in \hat U_{z,\tau_1, t_1}$ for some $\tau_1 \in I^0,\ t_1\in I_{z,\tau_1}$ and some $z \in S^0_{\tau_1}$. By slightly moving $z_0$, we can assume that $z_0 \notin \hat S_{t_1}^{\hat U_{z,\tau_1}}$. Thus since $I_{z,\tau_1}$ is open, there is a smooth path $c:(t_1,t_2)\subset I_{z,\tau_1} \rightarrow \bigcup_{\tau'\in I_{z,\tau_1}} U_{z,\tau_1, \tau'}$ for some $t_2>\tau_1>t_1$ so that $c(\tau')\notin \hat S_{\tau'}^{\hat U_{z,\tau_1}}$ for all $\tau' \in (t_1,t_2)$ and its limit point $z_1$ for $\tau'\rightarrow t_2$ is either an element of $\mathcal{C}^0_{t_2}$ or lies in some $\hat U_{\tilde z,\tau_2, t_2}$ for some $\tau_2> t_2$. In the first case we start our arguments from the beginning, in the second we continue our path $c$ through $I_{\tilde z,\tau_2}$ in $\bigcup_{\tau'\in I_{\tilde z,\tau_2}} (\hat U_{\tilde z,\tau_2,\tau'}\setminus \hat S_{\tau'}^{\hat U_{\tilde z,\tau_2}})$ as in the second part of the previous step. Proceeding in the above way we arrive after a finite number of steps at a point $z_e:=c(1)\in \mathcal{C}$. Certainly we can chose for each $\tau \in [0,1]$ the full connected component $\mathcal{U}_\tau$ of $\mathcal{C}_\tau$ for which $c(\tau)\in \mathcal{U}_\tau$ and we arrive at the second assertion of the Proposition.\\
Consider now the general case (2.), i.e. $\bigcup_{\tau \in [0,1]}\{\tau\}\times  {S_{1,0}(C_{\hat f_\tau},0)}$ is not compact. If we set $S_{1,0}(\bigcup_{\tau\in [0,1]} C_{\hat f_\tau},0):=\bigcup_{\tau \in [0,1]}\{\tau\}\times {S_{1,0}(C_{\hat f_\tau},0)}$ we certainly have a dense inclusion $(A\subset S_1=B \subset C):=S_{1,0}(\bigcup_{\tau\in [0,1]} C_{\hat f_\tau},0)\subset \bigcup_{\tau \in [0,1]}\{\tau\}\times {S_{1}(C_{\hat f_\tau},0)} \subset \overline {S_{1,0}(\bigcup_{\tau\in [0,1]} C_{\hat f_\tau},0)}$, where the latter is just the zero set of the determinant of the vertical Hessian, ${\rm det}(D_{(u,y,z)}^2\hat f_\tau)$ on $\bigcup_{\tau \in [0,1]} Z_{\tau,0}$. Now by the stability poperty (\ref{stability2}) the dense inclusion $A\subset B=S_1$ is also open, that is the union of fold points $A=S_{1,0}(\bigcup_{\tau\in [0,1]} C_{\hat f_\tau},0)$ in $\bigcup_{\tau\in [0,1]} C_{\hat f_\tau}\times \{\tau\}\subset Z_{0,0}\times[0,1]$ is open and dense in $S_1$ and furthermore locally over $[0,1]$ the graph of the family of mappings $g_\tau': U_\tau \cap S_{1,0}(C_{\hat f_\tau},0)\rightarrow U_{\tau'}\cap S_{1,0}(C_{\hat f_{\tau'}},0)$ for any $\tau\in [0,1],\ \tau' \in I_\tau$ as above. From this it already follows that along $A$, the kernel of the vertical hessians $D_{(u,y,z)}^2\hat f_\tau$ defines a smooth one-dimensional distribution transversal to $A$ which is for any $\tau \in [0,1]$ and at any point $(u,y,z) \in A$ oriented by the requirement that
\begin{equation}\label{morseorient}
\frac{d}{dt}_{t=0}(c'(t), D_{(c(t))}^2\hat f_\tau c'(t))>0,
\end{equation}
where $c:(-\delta, \delta)\rightarrow C_{\tilde f_\tau}$ is any path s.t. $c(0)=(u,y,z)$ and $c'(0)$ spans the kernel of $D_{(u,y,z)}^2\hat f_\tau$. To prove (3.), it remains to show that this defines a coorientation of $A$ that coincides with the opposite of the Maslov coorientation as defined by (\ref{stratm}), Lemma \ref{genericity} and \cite{arnold}. To see this, we choose as above for a fixed $\tau \in [0,1]$ and a point $z \in S_{1,0}(C_{\hat f_\tau},0)$ a $n+2$-dimensional submanifold $U\simeq V\times J \subset Z_0$ so that $\hat f_\tau|U: U\subset Z_0\rightarrow \mathbb{R}$ is of the form (\ref{fold}). On $C_{U,\tau}:=C_{\hat f_\tau}\cap U \subset Z_0$ and since $d\rho \neq 0$, $\rho(x)=\theta^2$, we can fix coordinates $(\theta, x_2, \dots, x_n)=:(\theta,x)$, so that we have (compare (\ref{fold}))
\begin{equation}\label{Crestr}
\varphi(x, \theta):=\hat f_{\tau,U}|C_{U,\tau}(x, \theta)=\mu(x, \theta^2)-\frac{2\theta^3}{3}.
\end{equation}
Then by (\ref{generatingfam}) $(d\hat f_{\tau,U})|C_{U,\tau}\subset T^*U$ is diffeomorphic to a ngbhd of a fold point $w \in \underline L_\tau$ so that ${\rm pr}_1(z)=\pi_{B_1}(w)$.
Note that the vertical Hessian, $D_{(u,y,z)}^2\hat f_\tau$, restricted to $C_{U,\tau}$, is just $2\theta$, so the derivative of $c:(-\delta, \delta)\rightarrow C_{U,\tau},\ c(\theta)=\theta$ orients ${\rm ker} D_{(u,y,z)}^2\hat f_\tau$ positively by (\ref{morseorient}). Note that since $1/2(\varphi(x, \theta)+\varphi(x, -\theta))= \mu(x, \theta^2)$ we can assume that in our coordinate system, $\frac{d}{d\theta}\mu(x, \theta^2)=0$. From this and (\ref{Crestr}) it follows that
\[
\frac{d}{d\theta}\left((d\hat f_{\tau,U})_{c(\theta)}(\frac{d}{d\theta})\right)=-4\theta.
\]
So if we set $\tilde c:(-\delta, \delta)\rightarrow U(n+1)/SO(n+1), \tilde c(0)=Id$ the path associated to the family of Lagrangians subspaces $t \mapsto T_c(t)\underline L_\tau \subset T^*B_1$, we see that ${\rm det^2}(\tilde c(t)) \in \mathbb{C}^*$ crosses $+1$ clockwise, so by \cite{arnold} we arrive at our assertion (3.).\\
Finally to show (2.) using the above, just note that (with the above notation) and by Lemma \ref{genericity}, outside of non-transversality points $C$ is a stratified set in $\mathcal{W}=\bigcup_{\tau \in[0,1]} C_{\hat f_\tau,0}\times\{\tau\}$ with smooth topstratum $S_1=\bigcup_{\tau \in [0,1]}\{\tau\}\times {S_{1}(C_{\hat f_\tau},0)}$ of codimension one while the lower $S_{i,0}(\underline L_\tau), i\geq 2$ are of codimension at least $\geq 3$ in $\mathcal{W}$ resp. $S_{1,1, \dots}(\underline L_\tau)$ are of codimension at least $\geq 2$ in $\mathcal{W}$ (using the usual notation for Thom-Boardman singularities). Thus the set of fold points $A\subset S_1$ is open and dense in $S_1$ and cooriented by the family of vertical Hessians as decribed above. Now, the set of non-transversality points are isolated in each $C\cap C_{\hat f_\tau,0}\times\{\tau\}$ by Lemma \ref{genericity} (and appear as isolated singular points in $S_1$) and thus at worst of dimension $1$ in $\mathcal{W}$. Hence we can find for any $w_0\in \mathcal{C}_0=\mathcal{C}_1$ a path $c:[0,1] \rightarrow \mathcal{W}$ s.t. $c(0)=c(1)$ that intersects the set of 'caustic' points $C$ at most in fold points $A\subset S_1 \subset C$ transversally and we have for any $\tau \in [0,1]$ so that for $c(\tau)\in \mathcal{C}_\tau\times \{\tau\}$ 
\[
{\rm im}\ c|_{[0,\tau]}\cdot S_1= [{\rm index}(D_{(c(0))}^2\hat f_0)-{\rm index}(D_{(c(\tau))}^2\hat f_\tau)]_2,
\]
where the left hand side denotes oriented intersection index, ${\rm index}(D_{z}^2\hat f_\tau)$ is the number of negative eigenvalues of the vertical Hessian of $\hat f_\tau$ at a non-caustic point $z \in \mathcal{C}_\tau\times \{\tau\}$ and $[\cdot]_2:\mathbb{N}\rightarrow \mathbb{N}$ is the function so that $[k]_2=\frac{k}{2}$ for $n$ even and $[k]_2=\frac{k-1}{2}$ for $k$ odd (we use the oriented Maslov-index). Since $c(0)=c(1)$, we arrive at the assertion.
\end{proof}
It remains to prove the following.
\begin{lemma}\label{infstab}
Let $\tau \in [0,1]$. Let $z \in S_{1,0}(C_{\hat f_\tau})$, that is, $z$ is a fold point of $C_{\hat f_\tau}\subset Z_0$. Then the function (\ref{fold}) defining $C_{\hat f_\tau}\cap U$ resp. $S_{1,0}(C_{\hat f_\tau})\cap U$ on some $n+2$-dimensional submanifold $U\subset Z_0$ so that $z\in U\simeq V\times J$ where $V \subset B_1$ is open and $J\subset \mathbb{R}$ is an open interval as described in the proof of Proposition \ref{keysing} is stable in the sense of (\ref{stability1}) and (\ref{stability2}).
\end{lemma}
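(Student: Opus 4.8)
The plan is to deduce this stability statement from \emph{infinitesimal} stability of the normal form (\ref{fold}), via the Thom--Mather theory of generating families, in the form presented in \cite{guille}, Ch.~VII, \S 8, and \cite{guilgo}, Ch.~V. First I would record that the equivalence expressed by (\ref{stability1}) and (\ref{stability2}) is fibered right--equivalence of $\hat f_{\tau,U}$ on $U\simeq V\times J$ by diffeomorphisms $g_{\tau'}$ covering base diffeomorphisms $h_{\tau'}$ of $V$, followed by the addition of a function $\mu'_{\tau'}\circ{\rm pr}_1$ pulled back from the base. Its infinitesimal counterpart is the requirement that every $\dot f\in C^\infty(V\times J)$ can be written as
\[
\dot f \;=\; a\,\frac{\partial \hat f_{\tau,U}}{\partial\theta}\;+\;\xi\,\hat f_{\tau,U}\;+\;b\circ{\rm pr}_1
\]
for some $a\in C^\infty(V\times J)$, some vector field $\xi$ on $V$ (lifted so as to act only through the base coordinates) and some $b\in C^\infty(V)$. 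Because the family $\hat f_{\tau'}|U_{\tau'}$ is a smooth deformation of $\hat f_{\tau,U}$, Mather's stability theorem in its parametrized form then produces $g_{\tau'},h_{\tau'},\mu'_{\tau'}$ satisfying (\ref{stability1})--(\ref{stability2}) and depending smoothly on $\tau'$ on a neighbourhood $I_\tau$ of $\tau$.

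The next step is to verify the infinitesimal statement for (\ref{fold}). Choosing local coordinates $x=(x_1,x')$ on $V$ with $d\rho=dx_1$, so $\rho(x)=x_1$, one has $\partial\hat f_{\tau,U}/\partial\theta=x_1-\theta^2$ and $\partial\hat f_{\tau,U}/\partial x_1=\theta+\partial\mu/\partial x_1$; the fiber critical set $C_{\hat f_\tau}\cap U=\{x_1=\theta^2\}$ is a smooth hypersurface on which ${\rm pr}_1$ restricts to a two--sheeted branched covering of $V$, equivalently $\partial^2\hat f_{\tau,U}/\partial\theta^2=-2\theta$ vanishes simply along the fold locus $\{\theta=0\}$. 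The decisive point is that $C^\infty(V\times J)/(x_1-\theta^2)$ is generated over $C^\infty(V)$ by $1$ and $\theta$: expanding $\dot f$ in $x_1$ about $x_1=\theta^2$ by Hadamard's lemma and splitting the remaining function $\dot f(\theta^2,x',\theta)$ into its even and odd parts in $\theta$ (Whitney's theorem on even and odd smooth functions, used with $x'$ and $\tau'$ as parameters) yields smooth $a,b_0,b_1$ with
\[
\dot f \;=\; a(x,\theta)\,(x_1-\theta^2)\;+\;b_0(x)\;+\;b_1(x)\,\theta ;
\]
the same conclusion is the content of the Malgrange preparation theorem applied to ${\rm pr}_1$ restricted to $C_{\hat f_\tau}\cap U$.

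To finish, I would note that the first term is $a\,\partial\hat f_{\tau,U}/\partial\theta$, that $b_0(x)$ has the form $b\circ{\rm pr}_1$, and that, since $\theta=\partial\hat f_{\tau,U}/\partial x_1-\partial\mu/\partial x_1$, the term $b_1(x)\theta$ equals $\xi\,\hat f_{\tau,U}+b_1'(x)$ with $\xi=b_1(x)\,\partial/\partial x_1$ and $b_1'\in C^\infty(V)$; absorbing all base contributions into a single $b\circ{\rm pr}_1$ gives the required identity, hence infinitesimal stability, and with it the Lemma. The main obstacle is exactly the finite--generation assertion for $C^\infty(V\times J)/(x_1-\theta^2)$ together with the smooth dependence of everything on $\tau'$: this is where the (parametrized) preparation theorem, or equivalently the parametrized version of Whitney's theorem, is indispensable, while the passage from infinitesimal to honest stability for generating families is standard and I would simply quote \cite{guille} and \cite{guilgo}.
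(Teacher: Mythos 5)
Your proposal is correct and takes essentially the same route as the paper: derive the infinitesimal-stability identity for the fold normal form (\ref{fold}), verify it using the Malgrange preparation theorem (in your case, via the equivalent Hadamard-plus-Whitney even/odd decomposition with $\rho=x_1$ chosen as a coordinate), and integrate to finite stability via the standard Thom--Mather machinery in \cite{guille}, \cite{guilgo}. The only noticeable difference is that you carry the preparation step out explicitly in coordinates, producing the generators $1$ and $\theta$ of $C^\infty(V\times J)/(\partial_\theta\hat f_{\tau,U})$ over $C^\infty(V)$ directly, whereas the paper first reduces via the abstract preparation theorem to a point check at $(x,\theta,\tau')=(0,\theta,0)$ and then verifies there using the unfolding structure of the one-variable cubic $\Psi$.
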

\begin{proof}
The sketched proof will be a slight modification of the arguments of \cite{guille} (Ch. VII, § 8, Thm. 8.2). We assume that $z=0 \in \mathbb{R}^{n+1}$, $\tau=0 \in \mathbb{R}$, then $U\simeq V \times J \subset \mathbb{R}^{n+1}$ will be some neighbourhood of the origin. Set $\hat f_{\tau', U}:=\hat f_{\tau'}|U$ for any $\tau' \in I_\tau$ (using the notation of the previous proof). Let $\mathfrak{e}(x,\theta,\tau')=\frac{d}{dt}|_{t=\tau'}(\hat f_{t,U}-\hat f_{\tau,U})$ for any $\tau' \in I_\tau$. Write $g_i(x,\theta,\tau')=h_i(x,\tau'),\ i=1,\dots, n+1, \ g_{\theta}(x,\theta,\tau')$ for the coordinate-functions of $g_{\cdot}, h_{\cdot}$ on $V\times J\times  I_\tau$ resp. $V\times I_\tau$. Then if $g_{\tau'}, h_{\tau'}$ satisfy (\ref{stability1}) for some $\tau'\in I_\tau$, we necessarily have
\begin{equation}\label{infstable}
-\mathfrak{e}(x,\theta,\tau')=a_0(x,\tau')+\sum_{i=1}^{n+1}\frac{\partial}{\partial x_i} \hat f_{\tau', U} a_i(x,\tau')+ \frac{\partial}{\partial \theta} \hat f_{\tau', U} b_\theta(x,\theta, \tau')
\end{equation}
where we have set
\begin{equation}\label{components}
\begin{split}
a_i(x,\tau')&=\frac{d}{d\tau'}h_i(h_{\tau'}^{-1}(x),\tau'), \quad i=1,\dots, n+1,\\
b_\theta(x,\theta, \tau')&=\frac{d}{d\tau'}g_\theta(g_{\tau'}^{-1}(x, \theta),\tau'),\\
a_0(x,\tau')&=\frac{d}{d\tau'}\mu_{\tau'}(x)+\sum_{i=1}^{n+1} \frac{\partial}{\partial x_i}a_i(x,\tau').
\end{split}
\end{equation}
Now if $\mathcal{E}_{n+2}$ is the ring of germs of smooth functions at the origin in  $\mathbb{R}^{n+2}$ and $\mathcal{R}=\mathcal{E}_{n+2}/I_{\hat f_{\tau', U}}$ its quotient by the ideal $I_{\hat f_{\tau', U}}$ generated by $\frac{\partial}{\partial \theta} \hat f_{\tau', U}$ in $\mathcal{E}_{n+2}$, then $\alpha_1,\dots, \alpha_k$ generate $\mathcal{R}$ as a module over $\mathcal{E}_{n+1}$, where the latter acts on $\mathcal{R}$ by means of pullback by the obvious projection $p: \mathbb{R}^{n+2}\rightarrow \mathbb{R}^{n+1}, \ (x, \theta, \tau')\mapsto (x,\tau')$, if and only if $\mathcal{R}/\mathcal{M}_{n+1}\mathcal{R}$, where $\mathcal{M}_{n+1}$ is the maximal ideal of $\mathcal{E}_{n+1}$ of functions vanishing at $0$, is generated by the images of the $\alpha_i$ under the canonical projection: this is the content of Malgrange's preparation theorem. Thus (\ref{infstable}) will be satisfied in a neighbourhood of the origin in $\mathbb{R}^{n+2}$ if and only if it will be satisfied if all functions in (\ref{infstable}) are evaluated at $(x,\theta,\tau')=(0,\theta, 0)$. Thus we can solve the latter equation for smooth functions $a_i, b_\theta$ for small $(x,\tau')$ if we can do it for $x=0, \tau'=0$. But the latter in turn follows from the proof of Ch. VII, Theorem 8.3 in \cite{guille}. To be precise let $\Psi(\theta)=\hat f_{\tau'=0, U}|\{0\}\times J$ and let $I_{\Psi}$ be the ideal generated by $\frac{\partial}{\partial \theta} \Psi(\theta)$ in $\mathcal{E}_{1}$ and consider a basis $\Psi_1,\dots, \Psi_j$ of $\mathcal{R}=
\mathcal{E}_{1}/I_\Psi$. Now the codimension of the ideal generated by the first differentials of $\Psi$ in $\mathcal{E}_{1}$ is $2$ (having $\rho(0)=0$ in (\ref{fold})), so we have $j=2$. Thus we have by the arguments in loc. cit.
\[
\hat f_{\tau'=0, U}(x, \theta)=f_0(x)+
f_1(x)\Psi_1(\theta)+\Psi(\theta)+\mathfrak{e}(x,\theta)
\]
where $\mathfrak{e}(x,\theta)$ vanishes to infinite order at $x=0$ and $d f_1|_{x=0}\neq 0$. Differentiation in the $x$-variables at $x=0$ then implies that $\Psi_1$ is a linear combination with $\mathbb{R}$-coefficients of the partial differentials $\frac{\partial}{\partial x_i}\hat f_{0, U}|\{0\}\times J$ for $i=1,\dots, n$. Thus we have shown that $\hat f_{\tau', U}$ satisfies (\ref{infstable}) at $(x,\theta,\tau')=(0,\theta, 0)$.\\
Now it remains to solve the equations (\ref{components}) having smooth left-hand sides for small $\tau'$. But that follows from standard methods of ordinary differential equations in the case of the first two equations and by linear Hamilton-Jacobi theory in the third case by eventually again slightly shrinking $U$ and noting that we have modified $\hat f_\tau$ in (\ref{modify}) so that all flows are tangential to $\partial Z_0$ on $\partial Z_0$ and thus do exist locally. \end{proof}

\subsection{General elements of the spectrum}\label{generalspec}
We will discuss in this section aspects of a possible proof of Conjecture \ref{conjspec} in the case of a quasihomogeneous polynomial $f$ with an isolated singularity at $0$. According to this conjecture, it should be possible to replace the 'exponent' of $f$, that is the element of the spectrum corresponding to the monomial $z^{\alpha(1)}=1$ in the Milnor algebra $M(f)$ (thus $\alpha(1)=0$) by any spectral number corresponding to $z^{\alpha(i)}$, where $\alpha(i) \in \Lambda,\ \Lambda \subset \mathbb{N}^{n+1}$ to establish (by its non-vanishing) an obstruction for $\rho$ being of finite order in ${\rm Symp}(F,\partial F,\omega)$. One step in this will be the identification of general elements of the spectrum and certain Maslov-type indizes if $\rho$ is of {\it finite} order in ${\rm Symp}(F,\omega)$. We will discuss a preliminary result in that direction under the assumption that certain higher singularities vanish (namely an analogue of the result (1.) of Proposition \ref{keysing}). We will discuss at the end of this section how the assumption on the vanishing of 'higher singularities' can be weakened on the basis of the discussion in the proof of Proposition \ref{keysing} above to achieve the same result.  \\
Let as above be $\phi$ the map sending $z^{\alpha(i)}$ to the class given by
\[
\phi(z^{\alpha(i)})=z^{\alpha(i)}dz_0\wedge\dots\wedge dz_n
\]
in $\mathcal{H}''_{0}$, thus defining a $\mathbb{C}$-isomorphism of vectorspaces $\phi:M(f)\simeq \mathcal{H}''_{0}/f\mathcal{H}''_{0}$, and let, for each $i \in \{1, \dots, \mu\}$ be $\hat s_i$ a global section of $\mathcal{H}^n(f_*\Omega^\cdot_{\hat X/D^*_{\delta}})$ defined by the polynomial $\alpha(i) \in M(f)$, that is, having the property that 
\begin{equation}\label{features3}
z^{\alpha(i)}dz_0\wedge\dots\wedge dz_n=d f\wedge \hat s_i \in \Omega^{n+1}(\hat X,\mathbb{C}).
\end{equation}
where we use here and in the following throughout the notation of Section \ref{relativen0}. Assume now that $\rho \in {\rm Symp}(F,\omega)$ is of finite order $k=m \cdot \beta, m \in \mathbb{Z}$ in ${\rm Symp}(F,\omega)$ and we have chosen an arbitrary fixed path $\rho^k_\tau , \tau \in [0,1]$ connecting $\rho^k$ to the identity in ${\rm Symp}(F,\omega)$. Analogously to Section \ref{relativen0} above, we denote by 
\[
s_i^k:=i_{X_{f^k}}(\pi^k)^*(z^{\alpha(i)}dz_0\wedge\dots \wedge dz_n))
\]
a representative  $s_i^k \in \Gamma({\bf H}^n(Z^k,\mathbb{C}))$ satisfying (\ref{features3}). Note that since in the constructions of this Section, we will only restrict ourselves to (the union of) the sets $\tilde A_u\subset \tilde X^k, \ u \in S^1_\delta$ of (\ref{tildea}) of Section \ref{relativen0}, we will not elaborate on how to modify $s_i^k$ in a neighbourhood of the boundary of the fibration $\tilde X_k$ to give actually a well-defined element of $\Gamma({\bf H}^n(Z^k,\mathbb{C}))$, so $s_i^k$ will be considered here more precisely as element of $\Gamma({\bf H}^n(Z^k\cap \bigcup_{u \in S^1_\delta}\tilde A_u ,\mathbb{C}))$. Repeating the constructions above Assumption \ref{ass4} resp. in the proof of Propostion \ref{keylemma} we construct a $1$-parameter-family of immersions $i_\tau: Q_x \times [0,1] \rightarrow Y^k_\epsilon$ whose images $Q_\tau:={\rm im }(i_\tau)$ factorize for $\tau=0,1$ into closed $n+1$-dimensional submanifolds $Q_0,\ Q_1\subset \tilde A_u\subset  Y^k_\epsilon$ so that $Q_1=Q$ with $Q$ as defined by (\ref{hatq}) and so that any intersection $Q_\tau\cap (Y^k_\epsilon)_u,  \tau \in [0,1], u \in S^1_\epsilon$ is a Lagrangian sphere (resp. a union of two Lagrangian spheres for $u=x$) in $(Y^k_\epsilon)_u$. Let now for any $i \in \{1, \dots, \mu\}, \ \tau \in [0,1]$ be $\mathcal{Z}_{i,\tau}\subset Q_\tau$ be the 'divisor' defined by the set $Q_\tau\cap \{z \in \mathbb{C}^{n+1}: z^{\alpha(i)}=0\}$ and set in the following $Q^i_\tau=Q_\tau\setminus \mathcal{Z}_{i,\tau}$ and $Q^i=Q^i_1$. Let $\hat Q^i_\tau=i_\tau^{-1}(Q_\tau^i)$ and set $\hat Q^i_{\tau,x(t)}=\hat Q^i_\tau\cap(Q_x\times \{t\})$ for any $t \in [0,1]$. Thus we have for any $i \in \{1, \dots, \mu\}$ a family of Lagrangian spheres minus 'divisors' in $(Y^k_\epsilon)_{x(t)}$ given for $\tau,t \in [0,1]$, $x(t)=xe^{2\pi it}$ and fixed $x\in S^1$ by
\begin{equation}\label{qfamily3}
Q^i_{\tau,x(t)}=i_\tau(\hat Q^i_{\tau,x(t)}),\ \tau\in [0,1],\ t \in [0,1],\ {\rm  s.t.}\  Q_{\tau,x(1)}=\Phi_{H(x)}(1-\tau)\circ\rho_{\tau}(Q_x) \subset (Y^k_\epsilon)_x,\ i \in \{1, \dots, \mu\},
\end{equation}
for the latter equality again compare (\ref{triv46}). For any such $Q^i_{\tau,x(t)}$ we have a section $\Lambda_{Q^i_{\tau,x(t)}} \in \Gamma(i_{\tau,t}^*\widetilde {\rm Lag}(\tilde X^k,\Omega^k))$, where $i_{\tau,t}:Q^i_{\tau,x(t)}\hookrightarrow X_e^k$ is the inclusion, which is given for $z \in Q^i_{\tau,x(t)}$ by
\begin{equation}\label{lagsubspaces576}
\Lambda_{Q^i_{\tau,x(t)}}(z)=T^h_{z}Y^k_\epsilon \oplus T_{z}Q^i_{\tau,x}\subset T_{z}\tilde X^k,\ \tau\in\{0,1\},
\end{equation}
where $T^hY_\epsilon^k$ denotes the $\Omega^k$-orthogonal complement of $T^vY_\epsilon^k$ in $TY_\epsilon^k$. Then by Lemma \ref{lag5}, $\Lambda_{Q^i_{\tau,x(t)}}$ induces a non-vanishing section $\kappa_{Q_{\tau,x(t)}}\in \Gamma(i_{\tau,t}^*\Lambda^{(n+1,0)}T^*\tilde X^k)$ of unit length for any $\tau \in [0,1], t \in [0,1]$ and a family of functions $g^i_{\tau,x(t)}:Q^i_{\tau,x(t)}\rightarrow \mathbb{C}^*$ by setting
\begin{equation}\label{maslovfunction3}
g^i_{\tau,x(t)}\kappa_{Q^i_{\tau,x(t)}}=((\pi^k)^*(z^{\alpha(i)}dz_0\wedge\dots\wedge dz_n))|i_{\tau,t}^*T^*\tilde X^k=X_{f^k}^*\wedge s_i^k|i_{\tau,t}^*T^*\tilde X^k
\end{equation}
For $\tau,t \in [0,1]$, let $N^{\mathcal{M},i}_{\tau,t}\subset Q_{\tau,x(t)}$ be associated to the triple $(Q^i_{\tau,x(t)}, \Lambda_{Q^i_{\tau,x(t)}}, X_{f^k}^*\wedge s^k_i|i_{\tau,t}^*T^*\tilde X^k)$ by Definition \ref{generic} resp. Lemma \ref{lag5}, then for $\tau\in \{0,1\},\ i=1, \dots, \mu$, $N^{\mathcal{M},i}_\tau:=i_\tau(\hat N^{i,\mathcal{M}}_\tau)\in H_n(Q^i_\tau,\mathbb{Z})$ and $N^i_\tau:=\bigcup_{t \in [0,1]} N^i_{\tau,t}\subset Q^i_\tau$ where $N^i_{\tau,t}=\{z \in Q^i_{\tau,x(t)}: g^i_{\tau,x(t)}(z)\in \mathbb{R}\}$ both represent the Poincare dual of the Maslov class $[(g^i_\tau)^*\beta]\in H^1(Q^i_\tau,\mathbb{Z})$, where $g^i_\tau:Q^i_\tau\rightarrow \mathbb{C}^*, \tau \in \{0,1\}$ assembles the family (\ref{maslovfunction3}).
We conjecture the following:
\begin{conj}\label{maslovspec}
Assume $\rho$ is of finite order $k =m\beta, m \in \mathbb{Z}$ in ${\rm Symp}(F,\omega)$ and that the family $N^{\mathcal{M},i}_{\tau,1}, \tau \in [0,1]$ consists only of fold-type singularities (in a sense to be made precise in the proof of Proposition \ref{foldstab} below). Then if $\gamma_i, i=1, \dots, \mu$ are the elements of the spectrum of $f$ and $c^i$ is any closed path $c^i:[0,1] \rightarrow Q^i$ that satisfies $\pi^k:(c(t))=\delta\cdot e^{it}$ we have
\begin{equation}\label{specmaslov}
k\cdot(\gamma_i+1)= [c^i]\cdot N^{\mathcal{M},i}_1= {\rm wind}(g^i_{1,x(\cdot)}(c(\cdot))).
\end{equation}
where $[c^i]$ is the element of $H_1(Q^i, \mathbb{Z})$ represented by $c^i$.
\end{conj}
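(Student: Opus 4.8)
The plan is to transport the mechanism of Proposition~\ref{keylemma} --- its Claims~(1.) and~(2.) together with the Euler--vector--field identity~(\ref{eulermulti}) --- from the section $s^k$ attached to $1\in M(f)$ to the section $s^k_i$ attached to $z^{\alpha(i)}$, while keeping the divisor $\mathcal{Z}_{i,\tau}$ out of the way. Throughout, everything is confined to $\bigcup_{u\in S^1_\delta}\tilde A_u\subset \tilde X^k$, so the weakening from ${\rm Symp}(F,\partial F,\omega)$ to ${\rm Symp}(F,\omega)$ (the isotopy $\rho^k_\tau$ need not fix $\partial F$) plays no role.

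\emph{Step 1 (the spectral input).} Building the $\tau$-family $i_\tau:Q_x\times[0,1]\to Y^k_\epsilon$ and $Q^i_\tau=Q_\tau\setminus\mathcal{Z}_{i,\tau}$ exactly as above~(\ref{qfamily3}), set
\[
\alpha^i(t):=\int_{Q_x}\Phi_{X_K}(t)^*s^k_i,
\]
with $\Phi_{X_K}(t)$, $t\in[0,1]$, parallel transport along $t\mapsto\delta e^{2\pi it}$ for the horizontal distribution spanned by the lift of $X_K=k\cdot K$, $K=2\pi i\sum_j w_jz_j\,\partial_{z_j}$ (as in the proof of Lemma~\ref{metricsubspace}). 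We assume $Q_x$ chosen so that $\int_{Q_x}s^k_i\neq0$; for $\gamma_i\neq0$ this can be arranged by the argument of Lemma~\ref{conditionsproof} under the appropriate analogue of the topological condition of Lemma~\ref{condition}, and the integral is insensitive to deleting the measure-zero set $\mathcal{Z}_{i,x}$ since $\hat s_i$ is a genuine bounded holomorphic $n$-form. Since $z^{\alpha(i)}dz_0\wedge\dots\wedge dz_n$ has weight $l(\alpha(i))$ under the weighted action while $df$ has weight $1$, its Leray residue $\hat s_i$ has weight $l(\alpha(i))-1=\gamma_i$, so the argument of Lemma 3.1.11 in \cite{klein1} applied to $z^{\alpha(i)}$ (i.e. the verbatim analogue of~(\ref{eulermulti})) gives $\Phi_{X_K}(t)^*s^k_i=e^{2\pi i\gamma_{(i)}t}s^k_i$ with $\gamma_{(i)}=k\gamma_i=m(\sum_j(\alpha(i)_j+1)\beta_j-\beta)$. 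Hence ${\rm wind}(\alpha^i)=k\gamma_i$.

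\emph{Step 2 (reduction to a Maslov intersection).} Reprising the computation~(\ref{blawind})--(\ref{bla4567}): from the Lagrangian subspaces~(\ref{lagsubspaces576}) one forms the unit $(n{+}1,0)$-form $\kappa_{Q^i_\tau}$, and $i_{X_{f^k}}\kappa_{Q^i_{0,t}}=e^{-2\pi ikt}\,|X_{f^k}|\,{\rm vol}_{Q^i_{0,t}}$ (the factor $e^{-2\pi ikt}$ being the fibrewise-constant angle between $X_{f^k}$ and a horizontal unit vector of $TQ^i_0$, since $J|H_{\Omega^k}=(f^k)^*j$), while $\Phi_{X_K}$ acts by fibrewise isometries (Lemma~\ref{metricsubspace}). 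Exactly as in~(\ref{bla4567}) this yields ${\rm wind}(\alpha^i)=N^i_0\cdot\gamma^i_0-k$, where $\gamma^i_0:S^1\to Q^i_0$ is a ``base'' generator of $H_1(Q^i_0,\mathbb{Z})/{\rm Tor}$ not linking $\mathcal{Z}_{i,0}$ and $[N^i_0]=PD[(g^i_0)^*\beta]$ (Lemma~\ref{lag5}). Combining with Step~1, $N^i_0\cdot\gamma^i_0=k(\gamma_i+1)$. The last equality of~(\ref{specmaslov}), $[c^i]\cdot N^{\mathcal{M},i}_1={\rm wind}(g^i_{1,x(\cdot)}(c(\cdot)))$, then holds by the very definition $[N^{\mathcal{M},i}_1]=PD[(g^i_1)^*\beta]$ once $c=c^i$ is taken transverse to $N^i_1$ in the ``base'' homology class (the one represented by $\tilde c$ in~(\ref{tildec}), not linking $\mathcal{Z}_{i,1}$).

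\emph{Step 3 (deformation invariance --- the main obstacle).} It remains to prove $N^i_0\cdot\gamma^i_0=N^i_1\cdot\gamma^i_1$, the analogue of Claim~(1.) in the proof of Proposition~\ref{keylemma}. This is where the hypothesis enters: under the assumption that the family $N^{\mathcal{M},i}_{\tau,1}$, $\tau\in[0,1]$, consists only of fold-type singularities, one argues as in Proposition~\ref{keysing}(1.) (and Proposition~\ref{foldstab} below): after a generic perturbation (Lemma~\ref{genericity}) one represents a neighbourhood of each $Q^i_{\tau,x(t)}$ in $Q^i_\tau$ by the generating families of Definition~\ref{subgraphical}--Lemma~\ref{genfunction}, uses the infinitesimal-stability Lemma~\ref{infstab} to propagate the fold locus across the $\tau$-family, so that $\bigcup_\tau N^{i,{\rm top}}_\tau$ furnishes an oriented $(n{+}1)$-chain whose relative boundary links $N^i_0$ to $N^i_1$, and a link-homotopy argument (exactly the ``Proof of the Claim~(1.) using Assumption~\ref{ass4}(3.)'' of Proposition~\ref{keylemma}) forces $N^i_\tau\cdot\gamma^i_\tau$ to be constant. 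The extra complication over Proposition~\ref{keysing} is the divisor: one must keep the generators $\gamma^i_\tau$, the path $c^i$ and the bounding chain disjoint from $\bigcup_\tau\mathcal{Z}_{i,\tau}$, which is possible because each $\mathcal{Z}_{i,\tau}$ has real codimension $\geq2$ in $Q_\tau$ and $H^1(Q_x;\mathbb{Z})=0$ for $n\geq2$, so these cycles and chains can be pushed off the divisor within their homology classes while the fold-stability analysis is carried out away from $\mathcal{Z}$. Chaining the identities, $k(\gamma_i+1)=N^i_0\cdot\gamma^i_0=N^i_1\cdot\gamma^i_1=[c^i]\cdot N^{\mathcal{M},i}_1={\rm wind}(g^i_{1,x(\cdot)}(c(\cdot)))$, which is~(\ref{specmaslov}). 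The hardest point is precisely this fold-stability/link-homotopy step combined with the divisor bookkeeping; the genericity and the Euler computation are, in comparison, routine adaptations of material already in the paper.
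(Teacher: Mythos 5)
This statement is labelled a \emph{Conjecture} in the paper, and the paper does not prove it; it explicitly states that ``we will only prove `one half' for the case of general elements of the spectrum here, namely, an extension of (1.) of Proposition \ref{keysing}'', i.e.\ Proposition \ref{foldstab}. Your Step 1 (the weight computation giving ${\rm wind}(\alpha^i)=k\gamma_i$) and Step 2 (the Maslov-count reformulation) are routine and essentially match what the paper would do, but Step 3 is where you declare the hard part done, and that is exactly where the paper says it cannot yet be done.

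The gap is the following. You write that the bounding chain and generators ``can be pushed off the divisor within their homology classes \ldots because each $\mathcal{Z}_{i,\tau}$ has real codimension $\geq 2$ in $Q_\tau$ and $H^1(Q_x;\mathbb{Z})=0$ for $n\geq 2$.'' But the cohomology that matters in Claim (1.) of Proposition \ref{keylemma} and in the subgraphical machinery is that of the manifolds you actually work on, namely $Q^i_\tau = Q_\tau\setminus\mathcal{Z}_{i,\tau}$ and $L^i_\tau = L_\tau\setminus\mathcal{Z}_{i,\tau}$, \emph{not} of $Q_x$. Removing a real-codimension-2 divisor from a sphere generically produces nontrivial $H^1$; this is precisely why $Q^i_\tau$ and $L^i_\tau$ fail to be simply connected for $i\neq 1$. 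Concretely: the function $f_\tau(z)=\int_{\gamma:z_0\to z}j_\tau^*\beta$ in Definition \ref{subgraphical} is only single-valued because $H^1(L_\tau,\mathbb{C})=0$; for $L^i_\tau$ it becomes multi-valued, so the generating family $\tilde f_\tau$ of Lemma \ref{genfunction} does not exist globally, and the oriented $(n{+}1)$-chain $\mathcal{N}^{1,top}$ and the link-homotopy argument of Claim (1.) cannot even be set up. Likewise, the alternative version of Claim (1.) via Assumption \ref{ass4} (2.) explicitly invokes $H^1(Q_x,\mathbb{Z})=0$, but it needs this for the manifold on which $g^i_\tau$ lives, which is $Q^i_\tau$. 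The paper is explicit on both points: it says the obstacle is that ``$L_\tau^i$ is not simply connected anymore for $n\geq 2$ for $i\neq 1$'', that one then has to deal with nontrivial $H^1(L_\tau^i,\mathbb{Z})$, and that ``the above Proposition \ref{foldstab} cannot be proven using global generating functions (for subgraphical varieties) as are used in the proof of Claim (1.)'' --- Proposition \ref{foldstab} itself (which you do invoke) is formulated precisely to avoid this, by working only locally around fold points, and it yields only the ``path-connected component'' half, \emph{not} the deformation-invariance identity $N^i_0\cdot\gamma^i_0 = N^i_1\cdot\gamma^i_1$ that your Step 3 needs. So your Step 3 silently assumes away exactly the open problem the paper leaves at the end of Section \ref{generalspec}.
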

{\it Remark.} Note that the hypothesis that $\rho$ is of finite order in ${\rm Symp}(F,\omega)$ is always satisfied since $f$ is quasihomogeneous (by use of the flow  $\Phi_{H(x)}$). Thus the conjecture says that (\ref{specmaslov}), which is valid for the Maslov index along orbits of the weighted circle action (see the comments below) is also valid in $Q_1$, the latter defined by symplectic parallel transport in $\tilde X^k$ and the choice of {\it any} fixed path joining $\rho^k$ to $Id$ in ${\rm Symp}(F,\omega)$ for any $k=m\beta$. We expect that $\rho$ being of finite order in ${\rm Symp}(F,\partial F,\omega)$ actually forces all Maslov indizes as above resp. elements of the spectrum of $f$ to be zero (as it is the case for $i=1$ by Theorem \ref{symplecticmonodromy}.)\\
The conjecture is actually the 'analogue' of Claim 1. and Claim 2. in the proof of Proposition \ref{keylemma} which is valid for the case $i=1$. Note that the first Claim is the actual hard part in the proof of Proposition \ref{keylemma} in the case $i=1$ and we will only prove 'one half' for the case of general elements of the spectrum here, namely, an extension of (1.) of Proposition \ref{keysing}.
\begin{prop}\label{foldstab}
Let $i \in \{1,\dots, \mu\}$. Each member of the family $N^{\mathcal{M},i}_{\tau,1}\subset Q^i_{\tau,x(1)}, \tau\in [0,1]$ is {\it generic} outside of a discrete subset. Assume further that the family $N^{\mathcal{M},i}_{\tau,1}, \tau \in [0,1]$ consists only of fold-type singularities (as precised in the proof below), then (3.) of Assumption \ref{ass4} is valid for the family of pairs $(Q^i_{\tau,x(1)}, N^{\mathcal{M},i}_{\tau,1}), \tau \in [0,1]$.
\end{prop}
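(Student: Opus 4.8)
The plan is to reduce Proposition \ref{foldstab} to the machinery already built for the case $i=1$, tracking the single point where the presence of the divisor $\mathcal{Z}_{i,\tau}$ changes the picture. First I would observe that the genericity claim is immediate: the family $Q^i_{\tau,x(1)} = Q_{\tau,x(1)}\setminus \mathcal{Z}_{i,\tau}$ is an open subset of the Lagrangian sphere $Q_{\tau,x(1)}$, and the section $\Lambda_{Q^i_{\tau,x(t)}}$ of (\ref{lagsubspaces576}) together with the $(n,0)$-form $X_{f^k}^*\wedge s_i^k$ is exactly of the type considered in Lemma \ref{lag5}, the only difference being that the meromorphic factor $z^{\alpha(i)}$ now contributes poles along $\mathcal{Z}_{i,\tau}$. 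Away from $\mathcal{Z}_{i,\tau}$, $s_i^k$ is a genuine non-vanishing $(n,0)$-form on $\tilde A_x(m)\subset \tilde X^k_x$ (compare (\ref{features2}) and (\ref{features3})), so Lemma \ref{genericity} applies verbatim on the open manifold $Q^i_{\tau,x(1)}$: after an arbitrarily small modification of $i_\tau$ and of the horizontal direction in (\ref{lagsubspaces576}), the section $\Lambda_{Q^i_{\tau,x(t)}}$ is transversal to the strata $\mathcal{M}^k$ outside a discrete set, which is precisely the genericity statement in the sense of Definition \ref{generic}.

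Next I would set up the generating-family description of Section \ref{app4} for the family $L^i_\tau$ obtained by the small extension of $Q^i_{\tau,x(1)}$ along short arcs $t\mapsto xe^{2\pi it}$, exactly as $L_\tau$ was constructed from $Q_{\tau,x(1)}$. Here one must be slightly careful: $L^i_\tau$ is Lagrangian with isotropic boundary coming both from the 'true' boundary as in Section \ref{app4} and from the deleted divisor; but since all of Proposition \ref{keysing}'s analysis is local near interior fold points of $C_{\hat f_\tau}$, and since the constructions of Definition \ref{subgraphical}, Lemma \ref{genfunction} and the modification (\ref{modify}) only use that $H^1(L_\tau,\mathbb{C})=0$ (which still holds, as $Q^i_\tau$ is $S^{n}$ minus a divisor and $n\geq 2$ forces $H^1=0$ — this is where the hypothesis $\mathrm{dim}\,Q_x\geq 2$ re-enters), the generating function $\hat f^i_\tau: L^i_0\times B_1\times\mathbb{R}^{n+1}\to\mathbb{R}$ of (\ref{genfamily2}) and its modification (\ref{modify}) are available. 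Under the fold-type hypothesis — i.e. the set of points of $C_{\hat f^i_\tau,0}$ where $(\tilde\alpha_\tau)_*$ has a kernel consists only of fold points in the sense of (\ref{fold}) — I then invoke part (1.) of Proposition \ref{keysing} applied to this generating family: the stability Lemma \ref{infstab} and the combinatorial argument producing the path $c$ through $\bigcup_\tau C_{\hat f^i_\tau,0}\times\{\tau\}$ avoiding the fold locus go through unchanged, yielding a path-connected subset $\mathcal{U}\subset \bigcup_\tau \mathcal{C}^i_\tau\times\{\tau\}$ meeting each $\mathcal{C}^i_\tau$ in a full connected component. Translating back through the diagram (\ref{generatingfam}) and the correspondence between fold points of $C_{\hat f^i_\tau}$ and corank-one points of $(\alpha^i_\tau)_*$ on $L^i_\tau$, this is exactly statement (3.) of Assumption \ref{ass4} for the family $(Q^i_{\tau,x(1)}, N^{\mathcal{M},i}_{\tau,1})$.

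The step I expect to be the main obstacle is verifying that the divisor $\mathcal{Z}_{i,\tau}$ does not interfere with the \emph{global} combinatorial argument in the proof of Proposition \ref{keysing}(1.): the path $c$ is built by patching local pieces across a finite subcover of the fold locus, and one needs the fold locus $\overline{S_{1,0}(C_{\hat f^i_\tau},0)}$ — now sitting inside the \emph{non-compact} manifold $Z^i_{0,0}=L^i_0\times B_1\times\mathbb{R}^{n+1}$ — still to be compact, which was used to extract the finite subcover $\mathcal{U}^0$. In the case $i=1$ this compactness came from $\overline{S_{1,0}(C_{\hat f_\tau},0)}$ being the zero set of $\det(D^2_{(u,y,z)}\hat f_\tau)$ on the compact $\bigcup_\tau Z_{\tau,0}$; once we delete $\mathcal{Z}_{i,\tau}$ the total space is no longer compact near the divisor, and near $\mathcal{Z}_{i,\tau}$ the meromorphic factor $z^{\alpha(i)}$ forces $|s_i^k|\to\infty$, so in principle fold points could escape to the divisor. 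The way I would handle this is to show, using that $X_{f^k}^*\wedge s_i^k$ and $\kappa_{Q^i_{\tau,x(1)}}$ differ by the nowhere-zero unimodular function on a collar of $\mathcal{Z}_{i,\tau}$ determined by the phase of $z^{\alpha(i)}$, that $N^{\mathcal{M},i}_{\tau,1}$ coincides near $\mathcal{Z}_{i,\tau}$ with the pullback of the zero set of $\mathrm{Im}(z^{\alpha(i)})$, hence approaches $\mathcal{Z}_{i,\tau}$ transversally and in a controlled (in fact, after a local diffeomorphism, $\mathbb{C}^*$-equivariant) manner; consequently the fold locus, being a codimension-one locus \emph{inside} $N^{\mathcal{M},i}$, also meets a fixed collar of $\mathcal{Z}_{i,\tau}$ in a set that is compact after removing that collar. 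This 'controlled behaviour at the divisor' is genuinely a new ingredient not present in the $i=1$ case and is, I expect, where the bulk of the technical work lies; once it is in place, compactness holds on the complement of an arbitrarily thin collar of $\mathcal{Z}_{i,\tau}$, the path $c$ can be kept in that complement, and the rest of Proposition \ref{keysing}(1.) applies without change, completing the proof of Proposition \ref{foldstab}.
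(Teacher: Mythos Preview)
Your reduction contains a genuine gap at the step where you claim $H^1(L^i_\tau,\mathbb{C})=0$ ``as $Q^i_\tau$ is $S^n$ minus a divisor and $n\geq 2$ forces $H^1=0$.'' This is false in general. The divisor $\mathcal{Z}_{i,\tau}=Q_{\tau,x(1)}\cap\{z^{\alpha(i)}=0\}$ is the intersection of the Lagrangian $n$-sphere with a union of complex hyperplanes, hence generically has real codimension $2$ in $Q_{\tau,x(1)}$. Removing a codimension-$2$ submanifold $Z$ from $S^n$ gives, by Alexander duality, $H^1(S^n\setminus Z)\cong H_{n-2}(Z)$, and the latter is the top homology of $Z$, which is typically nonzero. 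The paper states this explicitly just after the proof of Proposition~\ref{foldstab}: ``$L^i_\tau$ is not simply connected anymore for $n\geq 2$ for $i\neq 1$. Instead, one has to deal with cohomologies of the type $H^1(L^i_\tau,\mathbb{Z})$ which are in general non-trivial.'' Without $H^1=0$ the primitive $f_\tau$ of $j_\tau^*\beta$ in Definition~\ref{subgraphical} is not single-valued, so the global generating family $\hat f^i_\tau$ of (\ref{genfamily2}) does not exist, and the entire subgraphical-variety machinery you invoke from Section~\ref{app4} breaks down. The paper again says this directly: ``the above Proposition \ref{foldstab} cannot be proven using global generating functions (for subgraphical varieties).''

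The paper's proof avoids this obstacle by working \emph{locally}: it does not attempt a global subgraphical description of $L^i_\tau$. Instead, under the $\Sigma_2$-nonsingularity implied by the fold hypothesis, it invokes Entov's Lemma~2.5 to extend the section $\Lambda_{s^k_i}$ to a neighbourhood of each point of $L^i_\tau$ in $\tilde X^k$, then uses Morin's normal form to write $L^i_\tau$ locally as the image of a map $\mu_s:\mathbb{R}^{n+1}\to\mathbb{R}^{2(n+1)}$ with $s\in\{0,1\}$. In the fold case $s=1$ this yields a \emph{local} generating function $f(y_1,\dots,y_n,t)=y_1t-t^3/3$ of exactly the shape (\ref{fold}), and the stability argument of Lemma~\ref{infstab} and the covering-and-path construction from the proof of Proposition~\ref{keysing}(1.) then run chart by chart. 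No global primitive is needed, and the issue of compactness near $\mathcal{Z}_{i,\tau}$ that you spend your last paragraph on does not arise in the same form, since the covering is of the fold locus by local normal-form charts rather than by a single global fibre-critical set. Your proposed control of $N^{\mathcal{M},i}$ near the divisor via the phase of $z^{\alpha(i)}$ is therefore not the missing ingredient; the missing ingredient is replacing the global generating family by Entov/Morin local models.
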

\begin{proof}
We have to prove that, given genericity and the hypothesis, for each $i \in \{1,\dots, \mu\}$,  there is a {\it path-connected} subset $\mathcal{U}^i\subset \bigcup_{\tau \in [0,1]} Q^i_{\tau,x(1)}\times \{\tau\}\subset (Y^k_\epsilon)_{x(0)}\times [0,1]$ of the form $\mathcal{U}^i=\bigcup_{\tau \in [0,1]}\mathcal{U}^i_{\tau}\times\{\tau\}$ where $\mathcal{U}^i_{\tau}\subset Q_{\tau,x(1)}, \tau \in [0,1]$ is a family of connected embedded, non-empty $n$-manifolds and one has $\mathcal{U}^i_{\tau}\subset Q^i_{\tau,x(1)}\setminus  \overline {N_{\tau,1}^{\mathcal{M},i}}$. Further each $\mathcal{U}^i_{\tau}$ is open in $Q^i_{\tau,x(1)}$  and for any $\tau \in [0,1]$ equals a connected component of $Q^i_{\tau,x(1)}\setminus \overline {N_{\tau,1}^{\mathcal{M},i}}$.\\
Thus let $L_\tau$ be the $S^1$-family of Lagrangian submanifolds (with boundary) in $\tilde X^k$ defined in Section \ref{app4} (we only consider the case $t=1$ here, for $t=0$ this family is constant), so that $L_\tau \cap (Y^k_\epsilon)_x=Q_{\tau,x(1)}$. For each $\tau \in [0,1]$, we have a section $\Lambda_{s^k_i}$ of $\Gamma(i_{\tau}^*\widetilde {\rm Lag}(\tilde X^k,\Omega^k))$ along $L^i_\tau$, where $i_\tau:L^i_\tau \hookrightarrow \tilde X^k$ denotes the inclusion of $L_\tau \cap \{z \in \mathbb{C}^{n+1}: z^{\alpha(i)}=0\}$. Analogously as in Section \ref{genericity}, one sees that one can find families of mappings $\kappa_i:\bigcup_\tau L^i_\tau \times \{\tau\}\rightarrow U(n+1)$ whose image lies in a small nghbd of $Id$ so that $\{\kappa_i\cdot (\Lambda_{s^k_i}, \tau), \tau \in [0,1]\}$ satisfies the required transversality property as a mapping into the pair $(\bigcup_\tau (i_\tau^*(\widetilde {\rm Lag}(\tilde X^k,\Omega^k)), \tau), \bigcup_{\tau}{\rm im}(\bigcup_\tau \Lambda_{L^i_\tau}\times \{\tau\})$. Here $\Lambda_{L^i_\tau}:L_\tau \rightarrow i_\tau^*(\widetilde {\rm Lag}(\tilde X^k,\Omega^k))$ is the obvious mapping assembling the mappings $\Lambda_{Q^i_{\tau,x(1\pm t)}}$ as discussed above for some small $0<t<\epsilon$ for each $\tau \in [0,1]$ (it is just the tangential mapping of $L^i_\tau$).\\
For a given $x \in L^i_\tau$ and by our assumption on the vanishing of 'higher singularities', one can extend $\Lambda_{s^k_i}$ smoothly to a neighbourhood $U$ of $x$ in $\tilde X^k$ by Lemma 2.5 of Entov \cite{entov} (our assumption implies the property of $\Sigma_2$-nonsingularity in loc. cit.). Further by \cite{morin} (cf. also \cite{entov}), there is a diffeomorphism $h^i_\tau:U\rightarrow \mathbb{R}^{2(n+1)}$ and an integer-number $0\leq s\leq n+1$, so that $h^i_\tau(U\cap L^i_\tau)=\mu_{s}(\mathbb{R}^{n+1})$ where $\mu_s:\mathbb{R}^{n+1}\rightarrow \mathbb{R}^{2(n+1)}$ is given by
\[
\mu_s(y_1,\dots,y_{n},t)=(y_1,\dots, y_n, \sum_{i=1}^{s-1}y_it^i +t^{s+1},t ,0, \dots, 0).
\]
Then our assumption on the vanishing of 'higher singularities' corresponds precisely to the situation that for any $x \in L_\tau^i$, $s$ is either $0$ or $1$, the latter case corresponding to the set of fold points $S_{1,0}(L^i_\tau)$. Note that $s>1$ corresponds to the singularities of type $S_{1,1,\dots}(L^i_\tau)$ in the Thom-Boardman classification. Let $x \in S_{1,0}(L^i_\tau)$, then we can describe the image of $h^i_\tau(U\cap L^i_\tau)$ by the generating function $f:\mathbb{R}^{n+1+N}\rightarrow \mathbb{R}$ given by
\[
f(y_1, \dots, y_n,t)=y_1\cdot t-\frac{t^3}{3},
\]
in complete analogy to the setting in Lemma \ref{genfunction}. Note that the pair $(L_\tau^i, \Lambda_{s^k_i})$ is only, using the above image of $L_\tau^i$ in $\mathbb{R}^{2n+2}$, locally in $L^i_\tau$ around each $x \in L^i_\tau$ given by a generating function in the above sense, nevertheless, we can apply the same arguments using stability of fold-singularities $S_{1,0}(L^i_\tau)$ as in the proof of (1.) of Proposition \ref{keysing} to construct open sets $U_{x,\tau}, x \in S_{1,0}(L^i_\tau), \tau \in [0,1]$ as in the same proof whose union over $x, \tau$ covers $S_{1,0}(L^i_\tau)$ and so that $U(z, \tau)$ is diffeomorphic to $U(\Phi_{\tau,z}(z), \tau')$ for $|\tau-\tau'|<\epsilon'(\tau,x)$ with $0<\epsilon'(\tau,x)$ sufficiently small and an appropriate family of diffeomorphisms $\Phi_{\tau,z}:U(z, \tau)\rightarrow U(\Phi_{\tau,z}(z), \tau')$. Then one arrives at paths $c_i:[0,1]\rightarrow \bigcup_{\tau \in [0,1]}(L^i_\tau\setminus S_{1,0}(L^i_\tau))\cap Q^i_{\tau,x(1)}$, so that $c_i(\tau)\in L^i_\tau\setminus S_{1,0}(L^i_\tau)$ and $c(0)=c(1)\in L_0^i$, we leave the details to the reader.\end{proof}
Trying to use the above Proposition to prove the analogue of Claim (1.) in the proof of Proposition \ref{keylemma} for the situation of the family of pairs $(L_\tau^i, \Lambda_{s^k_i})$ instead of $(L_\tau^i, \Lambda_{s^k_i})$ (where we used (3.) of Assumption \ref{ass4} resp. (1.) of Proposition \ref{keysing}) one is confronted with the problem that $L_\tau^i$ is not simply connected anymore for $n\geq 2$ for $i\neq 1$. Instead, one has to deal with cohomologies of the type $H^1(L_\tau^i, \mathbb{Z})$ which are in general non-trivial. To remedy this situation, we conjecture that the latter can be expressed as a relative cohomology of an appropriate complex with $log$-type singularities along $\mathcal{Z}_{i,\tau}$, the problem here being of course that $L_\tau^i$ is no complex variety. In any case we point out to the reader that the above Proposition \ref{foldstab} cannot be proven using global generating functions (for subgraphical varieties) as are used in the proof of Claim (1.) in the proof of Propostion \ref{keylemma} using (2.) in Assumption \ref{ass4}, which is why we introduced already in the proof of Proposition \ref{keylemma} the alternative approach using the more restricted Assumption \ref{ass4}. In general, we expect to be able to eventually relax the condition on 'vanishing of higher singularities' in the situation for general spectral elements by sharpening the result of Proposition \ref{keysing} on the minimal number of connected components of $L^i_\tau\setminus S_{1,0}(L^i_\tau)$, for instance if there are for one $\tau\in [0,1]$ $3$ connected components of $L^i_\tau\setminus S_{1,0}(L^i_\tau)$ with consecutive 'self-indexing' Morse indizes, then it is sufficient to assume $\Sigma_2 $-nonsingularity in the sense of \cite{entov}.

\section{Mean curvature form and bounding disks}
\subsection{Lagrangians and bounding disks}\label{boundingdisks}
In this Section, we will introduce the additional Assumption \ref{ass5} which allows to define a Lagrangian $Q\subset X^k$, where $X^k$ is the $k$-fold cyclic covering of $X$ (see (\ref{betacov2})) by symplectically parallel transporting $Q_x$ in $X^k$ around $S^1_\epsilon$, furthermore it allows for the existence of a closed horizontal curve in $Q$ by the second condition in (\ref{horizontality}) below. This will allow for a proof of Theorem \ref{theorem34} without the introduction of the family of $(n,0)$-forms as in Lemma \ref{relativen} of the previous section. Furthermore the technique introduced here (using Assumption \ref{ass5}) will be used in the Sections \ref{generalpol}. We will also describe in the next Section \ref{app3} how a combination of these techniques and those of Section \ref{relativen0} can be used to prove Theorem \ref{theorem34} when condition \ref{ass5} does not hold {\it and} the triviality property on the family of $(n,0)$-forms introduced in Lemma \ref{relativen} is relaxed in a specific sense. In any case, we assume for the following while keeping the notation from Section \ref{relativen0}:
\begin{ass}\label{ass5}
There is an oriented closed Lagrangian submanifold $Q_x\subset M$, that is ${\rm dim} Q_x=n$ and $\omega|TQ_x=0$, so that $Q_x$ satisfies the conditions of Assumption \ref{ass2} and in addition
\begin{enumerate}
\item Let $\rho \in {\rm Symp}(M,\partial M,\omega)$ be the symplectic monodromy of $Y_\epsilon:=X|_{S^1_\epsilon}$. If $[\rho^k]=[Id_M] \in \pi_0({\rm Symp}(M,\partial M,\omega))$ for some $k >0, \ k \in \mathbb{N}$, then the isotopy $\rho^k_t \in {\rm Symp}(M,\partial M,\omega), \ \rho_1^k=\rho^k, \rho_0^k=Id_M$ and $Q$ can be chosen so that
\begin{equation}\label{horizontality}
\begin{split}
&{\rm (a)}\quad \rho_t^k(Q_x)\subset Q_x,\\
&{\rm (b)}\quad {\rm there \ is\ }z_0\in Q_x{\rm \ s.t.\ }\ \rho_t^k(z_0)=z_0,
\end{split}
\end{equation}
for any $t \in [0,1]$.
\end{enumerate}
\end{ass}
As in the previous section, assume from now on that the symplectic monodromy of the bundle $X^k|S^1_\epsilon$, namely $\rho^k=\rho^k_\epsilon \in \pi_0({\rm Symp}(M,\partial M,\omega))$, is trivial, then from Assumption \ref{ass5} one infers that there is a Lagrangian $Q_x\subset M$ so that $\rho^k(Q_x)\subset Q_x$. Assume furthermore that we have chosen the 'reference fibre' $M=\tilde X_x$ in $Y_\epsilon$, representing a fixed fibre $M$ in $\tilde X^k|S^1_\epsilon=:Y^k_\epsilon$. Then we have in analogy to Lemma \ref{bla456}
\begin{lemma}\label{bla4562}
Let $Q_y:=\mathcal{P}^{\Omega^k}_{x,y}(Q_x)$ be the Lagrangian submanifold of $\tilde X^k_y$ induced by parallel transport of $Q_x\subset M$ determined by Assumption \ref{ass2} resp. \ref{ass5} along a circle segment in $Y_\epsilon \rightarrow S^1_\epsilon$ connecting $x,y \in S^1_\epsilon$. Then the union $Q:=\bigcup_y Q_y,\ y\in S^1_\epsilon$ is a $n+1$-dimensional submanifold of $\tilde X^k_\epsilon$ and one has
\[
\Omega^k|TQ=0,
\]
i.e. $Q$ is Lagrangian. Let $dz_0\wedge \dots\wedge dz_n$ be the canonical $(n+1,0)$-form on $\mathbb{C}^{n+1}$, restricted to $\tilde X$ and consider its pullback to $\tilde X^k$ by $\pi_k$. Let $\{e_i\}, i=1,\dots,n$ be an oriented orthonormal bais of $Q$, let for each $i$, $u_i=1/2(e_i -iJe_i)$ and let $\{u_i^*\}$ be the associated dual basis. Write locally 
\begin{equation}\label{bla3472}
\pi_k^*(dz_0\wedge\dots\wedge dz_n)|Q=e^{i\theta} (u_0^*\wedge\dots\wedge u_n^*)=:e^{i\theta}\kappa_Q,
\end{equation}
for some function $e^{i\theta}:Q\rightarrow S^1$ (note that since $Q$ is oriented, $\kappa_Q$ is a well-defined $(n+1,0)$-form on $T\tilde X^k|Q$. Then since $H^1(Q_y,\mathbb{C})=0, \ y \in S^1_\epsilon$, $\theta$ lifts to a well-defined function $\theta_y:Q_y\rightarrow \mathbb{R}$, while on $Q$ one has a smooth function $\theta:Q\rightarrow \mathbb{R}/\mathbb{Z}$ satisfying (\ref{bla3472}). Let $X_{f^k} \in \Gamma(T^{(1,0)}\tilde X^k)$ s.t. $df^k(X_{f^k})=1$, then one has for any $y=e^{2\pi it} \in S^1_\epsilon$
\begin{equation}\label{winding-s2}
[s^k_{y(t)}]:= [s^k|_{y(t)\in S^1_\epsilon}]=\frac{1}{c}\int_{Q_{y(t)}}e^{i\theta}i_{X_{f^k}}\kappa_Q\cdot [s^k_x]_{||}(y(t))=\frac{1}{c}\alpha(t)\cdot [s^k_x]_{||}(y(t))=e^{2\pi i \gamma t}\cdot[s^k_x]_{||}(y(t)).
\end{equation}
where $[s^k_x]_{||}\in \Gamma({\bf H}^n(Z^k,\mathbb{C}))$ is the parallel section which coincides at $x \in S^1_\epsilon$ with $s^k|_x$, $c \neq 0$ is determined by 2. in Assumption \ref{ass2} and 
\begin{equation}\label{maslovindex37}
{\rm wind}(\alpha)=\gamma =m(\sum_i \beta_i-\beta)\in \mathbb{Z}.
\end{equation}
\end{lemma}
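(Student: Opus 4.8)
The plan is to reduce this lemma to Lemma \ref{bla456}, the difference being only the construction of $Q$: here $Q$ is genuinely obtained by symplectic parallel transport of $Q_x$ all the way around $S^1_\epsilon$ (which closes up by Assumption \ref{ass5}(1)(a)), whereas in Lemma \ref{bla456} one interpolated through the isotopy $\rho^k_{\psi(\tau)}$. First I would verify that $Q=\bigcup_{y\in S^1_\epsilon}Q_y$ is a smooth closed $(n+1)$-dimensional submanifold: since $\rho^k(Q_x)\subset Q_x$ by \eqref{horizontality}(a), parallel transport around the full circle returns $Q_x$ to itself, so the union over the circle factors to a well-defined closed submanifold; smoothness follows as in Lemma \ref{mappingcylinder} from the smoothness of the parallel transport flow of $\Omega^k$. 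That $\Omega^k|TQ=0$ is immediate: $TQ$ is spanned fibrewise by $TQ_y$ (on which $\omega_y=\Omega^k|TY^k_y$ vanishes since $Q_y$ is Lagrangian in the fibre) together with the horizontal lift of $\partial_t$, and $\Omega^k$ pairs the horizontal direction trivially with $T^vY^k$ by definition of $H_{\Omega^k}$ as the $\Omega^k$-annihilator of the vertical bundle; the horizontal direction is isotropic along the curve $t\mapsto xe^{2\pi it}$ because that curve has zero $2$-area.

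Next I would establish \eqref{winding-s2}. The section $s^k$ of ${\bf H}^n(Z^k,\mathbb C)$ and the Euler-vector-field parallel transport $\Phi_{X_K}(t)$ are exactly as in Section \ref{relativen0}; by \eqref{eulermulti} (Lemma 3.1.11 of \cite{klein1}) one has $\Phi_{X_K}(t)^*s^k=e^{2\pi i\gamma t}s^k$ with $\gamma=m(\sum_i\beta_i-\beta)\in\mathbb Z$, and this passes to cohomology classes when $\Phi_{X_K}$ is replaced by symplectic parallel transport along $t\mapsto xe^{2\pi it}$. Hence $[s^k_x]_{||}(y(t)):=e^{-2\pi i\gamma t}[s^k_{y(t)}]$ is a parallel section of ${\bf H}^n(Z^k,\mathbb C)$. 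Using the parallel-transport invariance of the intersection pairing $b$ together with Assumption \ref{ass2}(2), I would run the same chain of equalities as in the proof of Lemma \ref{bla456}:
\[
c\cdot e^{2\pi i\gamma t}=\int_{(Y_\epsilon)_{y(t)}}s^k_{y(t)}\wedge \mathrm{PD}[Q_{y(t)}]=\int_{Q_{y(t)}}s^k(y(t))=\int_{Q_{y(t)}}i_{X_{f^k}}(\pi^k)^*(dz_0\wedge\cdots\wedge dz_n)=\int_{Q_{y(t)}}e^{i\theta}i_{X_{f^k}}\kappa_Q,
\]
the last equality being the local definition \eqref{bla3472} of $e^{i\theta}$ and $\kappa_Q$. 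Substituting into $[s^k_{y(t)}]=e^{2\pi i\gamma t}[s^k_x]_{||}(y(t))$ gives \eqref{winding-s2} with $\alpha(t)=\int_{Q_{y(t)}}e^{i\theta}i_{X_{f^k}}\kappa_Q$ (after setting $c=1$ per the remark following Lemma \ref{bla456}), and reading off the argument of $\alpha$ over one loop yields $\mathrm{wind}(\alpha)=\gamma=m(\sum_i\beta_i-\beta)$, which is \eqref{maslovindex37}. The well-definedness of the lift $\theta_y:Q_y\to\mathbb R$ uses $H^1(Q_y,\mathbb C)=0$ from Assumption \ref{ass2}(1), exactly as before.

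The only genuinely new point — and the step I expect to need the most care — is checking that the orientation/well-definedness of $\kappa_Q$ as a global $(n+1,0)$-form along $Q$ is compatible with the closing-up identification $Q_{x}\simeq \rho^k(Q_x)$: one must confirm that the oriented orthonormal frame $\{e_i\}$ of $TQ$ used in \eqref{bla3472} can be chosen consistently around the loop, i.e. that the parallel transport of $\Omega^k$ composed with $(\rho^k)$ is orientation-preserving on $Q_x$, so that $\kappa_Q$ and hence $e^{i\theta}:Q\to S^1$ descend. This follows because $\rho^k\in\mathrm{Symp}(M,\partial M,\omega)$ is connected to the identity through the isotopy $\rho^k_t$ (which exists by hypothesis) and each $\rho^k_t$ preserves $Q_x$ by \eqref{horizontality}(a), so the induced map on $TQ_x$ is isotopic to the identity and in particular orientation-preserving; combined with the fact that horizontal transport is $J$-linear on $H_{\Omega^k}$ (since $J|H_{\Omega^k}=(f^k)^*j$), the frame and therefore $\kappa_Q$ glue. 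Everything else is a verbatim transcription of the proof of Lemma \ref{bla456}.
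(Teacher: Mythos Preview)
Your proposal is correct and follows essentially the same route as the paper: the paper's proof says in one sentence that $Q$ is Lagrangian because $H_{\Omega^k}$ is the $\Omega^k$-annihilator of the vertical bundle and $H_{\Omega^k}\cap TY_\epsilon\subset TQ$, that $Q$ closes up by (\ref{horizontality}) in Assumption~\ref{ass5}, and then declares the rest identical to the proof of Lemma~\ref{bla456}. Your additional paragraph on the orientation-compatibility of $\kappa_Q$ under the closing-up identification is more care than the paper itself takes, but it is correct and does no harm.
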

\begin{proof}
That $Q$ is Lagrangian is immediate from the fact $H_{\Omega^k}$ is defined as the annihilator of the vertical bundle and the fact that, by construction, $H_{\Omega^k}\cap TY_\epsilon \subset TQ$, that $Q$ is well-defined as a closed Lagrangian submanifold of $\tilde X^k$ is implied by (\ref{horizontality}) in Assumption \ref{ass5}. The rest of the proof is identical to that of Lemma \ref{bla456}.
\end{proof}
{\it Remark.} Since, again, the constant $c\neq 0$ will not be of importance in the following, we will set its value to $c=1$ in all subsequent calculations.
\begin{folg}\label{tildetheta}
Let $\sigma_Q=i_H\Omega^k$ be the mean curvature form of $Q$ ($H$ is the mean curvature vector field on $Q$). Fix $x \in S^1_\epsilon$ and choose any fixed $z' \in \tilde Q_x$. Then for any any $y \in S^1_\epsilon$ and $z \in Q_y$ choose a path $c(z',z)$ connecting $z'$ to $z$ in $Q$, thus representing an element $\tilde z$ of the universal covering $\tilde Q$ of $Q$ projecting to $z$. Then define
\begin{equation}\label{covtheta12}
\tilde \theta(\tilde z) = \theta(z') +\int_{c(z',z)}\sigma_Q,
\end{equation}
where $\theta(z')\in [0,2\pi)$. Then $\tilde \theta:\tilde Q \rightarrow \mathbb{R}$ lifts $\theta:Q \rightarrow \mathbb{R}/2\pi\mathbb{Z}$, that is one has a commuting diagram
\begin{equation}\label{betacov35}
\begin{CD}
\tilde Q @>>\pi> Q \\
 @VV \tilde \theta V    @VV \theta V \\
\mathbb{R}  @>>> \mathbb{R}/2\pi\mathbb{Z},
\end{CD}
\end{equation}
\end{folg}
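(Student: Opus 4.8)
\textbf{Proof proposal for Corollary \ref{tildetheta}.}

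The plan is to verify first that the right-hand side of \eqref{covtheta12} is well-defined on $\tilde Q$, i.e. depends only on the homotopy class of the path $c(z',z)$ rel endpoints, and then to check that the resulting function descends correctly to the stated commuting diagram \eqref{betacov35}. The first point reduces to showing that for any loop $\gamma$ based at $z'$ in $Q$, the integral $\int_\gamma \sigma_Q$ lies in $2\pi\mathbb{Z}$; equivalently, that $\frac{1}{2\pi}[\sigma_Q]\in H^1(Q,\mathbb{R})$ is an integral class. This is exactly the content of Lemma \ref{bla456}: by \eqref{bla347} the phase $e^{i\theta}\colon Q\to S^1$ is globally defined, and differentiating \eqref{bla347} (using that $\pi_k^*(dz_0\wedge\dots\wedge dz_n)$ is the pullback of a nowhere-vanishing holomorphic form and $\kappa_Q$ has unit length) identifies $d\theta$ with the logarithmic derivative of $e^{i\theta}$, which is precisely the mean curvature form $\sigma_Q$ of the Lagrangian $Q$ in $\tilde X^k$ — this is the Cieliebak--Goldstein-type identity $d\theta=\sigma_Q$ already invoked in the introduction (cf.\ \cite{cielie}). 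Hence $[\sigma_Q]=[d\theta]$ is the pullback under $e^{i\theta}$ of the fundamental class of $S^1$, so $\frac{1}{2\pi}\int_\gamma\sigma_Q = \deg(e^{i\theta}\circ\gamma)\in\mathbb{Z}$ for every loop $\gamma$, and $\tilde\theta$ is well-defined on $\tilde Q$.

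Next I would show that $\tilde\theta$ lifts $\theta$. Since $e^{i\theta}\colon Q\to S^1=\mathbb{R}/2\pi\mathbb{Z}$ and $\pi\colon\tilde Q\to Q$ is the universal cover, it suffices to check that $e^{i\tilde\theta}=e^{i\theta}\circ\pi$ as maps $\tilde Q\to\mathbb{R}/2\pi\mathbb{Z}$. By construction $\tilde\theta(\tilde z')=\theta(z')$ at the chosen basepoint $\tilde z'$ lying over $z'$ (the constant path contributes nothing to the integral), so the identity holds at one point. For a general $\tilde z$ with representing path $c(z',z)$, differentiating along $c$ and using $d\theta=\sigma_Q$ gives
\begin{equation}
\tilde\theta(\tilde z)-\theta(z') = \int_{c(z',z)}\sigma_Q = \int_{c(z',z)}\pi^*(d\theta) = \theta(z)-\theta(z')\pmod{2\pi\mathbb{Z}},
\end{equation}
where the last equality is the fundamental theorem of calculus for the (locally defined, but globally $\mathbb{R}/2\pi\mathbb{Z}$-valued) function $\theta$ along the path. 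Thus $\tilde\theta(\tilde z)\equiv\theta(z)\pmod{2\pi\mathbb{Z}}$, which is exactly commutativity of \eqref{betacov35}. Smoothness of $\tilde\theta$ is immediate since $\sigma_Q$ is a smooth $1$-form and the integral depends smoothly on the endpoint; and the lift is automatically $\pi_1(Q)$-equivariant with $\pi_1(Q)$ acting on $\mathbb{R}$ through the homomorphism $\gamma\mapsto\int_\gamma\sigma_Q\in 2\pi\mathbb{Z}$, consistent with the deck-transformation picture.

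The only genuine subtlety — and the step I expect to be the main obstacle to write cleanly — is the identification $d\theta=\sigma_Q$ in the precise normalization used here, i.e.\ keeping careful track of the factor of $2\pi$ and of the fact that $\theta$ is only $\mathbb{R}/2\pi\mathbb{Z}$-valued on $Q$ while $\tilde\theta$ should be genuinely real-valued. One must be careful that the mean curvature form $\sigma_Q$ appearing in \eqref{covtheta12} is normalized so that its periods are exactly $2\pi\mathbb{Z}$ (and not $\mathbb{Z}$ or $\pi\mathbb{Z}$); this is forced by the relation \eqref{bla347} defining $e^{i\theta}$ and by $H^1(Q_y,\mathbb{C})=0$ (condition (1.) of Assumption \ref{ass2}), which guarantees that $\theta$ lifts fibrewise to $\theta_y\colon Q_y\to\mathbb{R}$ so that the only monodromy of $\theta$ comes from the base $S^1_\epsilon$ direction, whose total contribution is $2\pi\gamma$ with $\gamma=m(\sum_i\beta_i-\beta)\in\mathbb{Z}$ by \eqref{maslovindex36}. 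Once this normalization is pinned down, the rest is the routine path-integral bookkeeping sketched above.
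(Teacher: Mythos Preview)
Your proposal is correct and follows essentially the same approach as the paper: the key input is the identity $\sigma_Q = d\theta$ on $Q$ (cited from \cite{thomas} and \cite{cielie}), from which the lifting property is immediate. The paper's proof consists of precisely this one line, while you have additionally spelled out the routine consequences (well-definedness on $\tilde Q$ via integrality of periods, and the commutativity of the diagram), which is fine but not strictly necessary once $\sigma_Q = d\theta$ is granted.
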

\begin{proof}
The proof follows from the fact that
\[
\sigma_Q=d\theta
\]
on $Q$, that is $H=J\tilde {d\theta}$, where $\tilde{\cdot}$ here denotes metric duality $T^*Q\rightarrow TQ$ and $H$ the mean curvature vector field and is proven in \cite{thomas}, see also \cite{cielie}. 
\end{proof}
\begin{lemma}\label{extension}
There is a smooth {\it extension} $X^k_e$ of $\tilde X^k$ to the unpunctered disk
\[
f^k_e:X_e^k\rightarrow D_\delta 
\]
and an extension $\Omega^k_e$ of $\Omega^k$ to $X^k_e$  so that $(X^k_e,f_e^k, \Omega^k_e)$ defines an exact symplectic fibration which coincides with $(\tilde X^k,f^k, \Omega^k)$ over $D_{[\epsilon,\delta]}$ and such that in a neighbourhood $U_0$ of $0 \in \mathbb{C}$ one has for some $\epsilon>r>0$
\[
X_e^k|_{U_0}=D_r\times M,\quad \Omega^k_e|_{U_0}=\pi_0^*(\beta)+\omega,
\]
where $\pi_0$ is the trivial projection onto $D_\delta$ and $\beta$ is the canonical symplectic form on $D_\kappa$, and  $f^k_e| D_{[0,\epsilon]}$ can be chosen to be flat near the vertical boundary $X_e^k|S^1_\epsilon$. Furthermore, one can extend the complex structure on $\tilde X^k$ to a nearly complex structure on $X_e^k$ compatible  with $f_e^k$ in the sense of (\ref{compatible-j}), such that it is the product complex structure $\pi_0^*(j) \times J$ over $U_0$, for some complex structure $J$ on $M$, such that $X_e^k|_{U_0}$ is Kaehler.
\end {lemma}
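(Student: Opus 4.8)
The plan is to construct the extension by a gluing argument that is essentially a parametrized, fibrewise version of the symplectic trivialization already available near the vertical boundary (Lemma~\ref{mappingcylinder}, equations (\ref{symplectictriv})--(\ref{mapcyk})) combined with a Moser-type normalization at the central fibre. First I would recall that over $D_{[\epsilon,\delta]}$ the isomorphism $\Theta^k$ of (\ref{mapcyk}) presents $\tilde X^k$ as the mapping torus $\left([\epsilon,\delta]\times[0,1]\times M\right)/\left((t,0,x)\sim(t,1,\rho^k_t(x))\right)$, where the $\rho^k_t$ are mutually conjugate in $\mathrm{Symp}(M,\partial M,\omega)$; under the standing assumption that $\rho^k=\rho^k_\epsilon$ is trivial in $\pi_0(\mathrm{Symp}(M,\partial M,\omega))$, pick an isotopy $\rho^k_{\epsilon,s}$, $s\in[0,1]$, with $\rho^k_{\epsilon,1}=\rho^k_\epsilon$ and $\rho^k_{\epsilon,0}=\mathrm{id}_M$, relative to $\partial M$. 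I would then define, for radii $r'<r<\epsilon$, a family $\rho^k_{t}$ for $t\in[0,\delta]$ which for $t\in[\epsilon,\delta]$ is the given monodromy isotopy, for $t\in[r,\epsilon]$ interpolates (using the conjugations of Lemma~\ref{mappingcylinder} and the path $\rho^k_{\epsilon,s}$) down to $\mathrm{id}_M$, and is identically $\mathrm{id}_M$ for $t\le r$. Over $D_r$ the mapping torus is then literally $D_r\times M$, which is the desired local model once the symplectic form is put in normal form.

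Next I would build the extended two-form. Over $D_{[\epsilon,\delta]}$ we keep $\Omega^k$. Over $D_{[r,\epsilon]}$, the trivialization of the mapping torus by the chosen isotopy carries $\Omega^k$ to a closed two-form $\Omega_0$ on $[r,\epsilon]\times[0,1]\times M$ which is fibrewise symplectic and whose fibrewise cohomology class $[\omega_t,\alpha_t]\in H^2(M,\partial M;\mathbb R)$ is locally constant (exactness of the fibration, Definition~\ref{symplfibr}, together with $H^2(M,\partial M;\mathbb R)$ acting as in Lemma~\ref{symplfibres}); hence it agrees, after a fibrewise Moser isotopy relative to $\partial M$ supported away from $\partial_h X$, with $\pi_0^*(\beta)+\omega$ near $t=r$. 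Gluing $\pi_0^*(\beta)+\omega$ on $D_r\times M$ to this normalized form over the overlap $D_{[r',\epsilon]}$ via a partition of unity in the radial variable produces a global closed two-form $\Omega^k_e$; one checks it stays nondegenerate along the fibres (this is an open condition and the interpolation is through fibrewise-symplectic forms) and that adding a large multiple of $(f^k_e)^*\beta$ makes it genuinely symplectic, as remarked after Definition~\ref{symplfibr}. Exactness of $(X^k_e,f^k_e,\Omega^k_e)$ follows because the fibrewise relative class $[\omega_z,\alpha_z]$ vanishes throughout (it already does for $\tilde X^k$ by Lemma~\ref{milnorsymplectic}, and the interpolation and the model $\pi_0^*(\beta)+\omega$ preserve this). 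The Liouville field $Z$ and the boundary trivialization $\Theta$ of Definition~\ref{symplfibr} extend automatically, since near $\partial_h X$ nothing was changed, so one arranges $f^k_e|D_{[0,\epsilon]}$ flat near the vertical boundary $X^k_e|S^1_\epsilon$ by an additional radial reparametrization that is the identity near $t=\epsilon$.

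Finally the almost complex structure: over $D_{[\epsilon,\delta]}$ keep $J$; over $D_r\times M$ take $\pi_0^*(j)\times J_M$ for a fixed compatible $J_M$ on $(M,\omega)$ (for instance the restriction of the one from Lemma~\ref{milnorsymplectic} at a reference fibre), which is integrable and makes $\pi_0^*(\beta)+\omega$ Kähler on $D_r\times M$; over the collar $D_{[r,\epsilon]}$ interpolate through the contractible space of $\Omega^k_e$-compatible almost complex structures of the split type (\ref{compatible-j}), which is nonempty and convex-like, so such an interpolation exists and can be taken to respect the boundary trivialization. This yields a nearly complex structure on $X^k_e$ compatible with $f^k_e$ in the sense of (\ref{compatible-j}) and equal to the product structure over $U_0:=D_r$, giving the Kähler statement there.

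The main obstacle is the gluing of the symplectic forms across the collar $D_{[r,\epsilon]}$: one must simultaneously (i) keep the form closed, (ii) keep it fibrewise nondegenerate, and (iii) match it to the product model $\pi_0^*(\beta)+\omega$ on the nose near $t=r$ while leaving it equal to $\Omega^k$ near $t=\epsilon$. Closedness and the boundary matching are handled by carrying everything through the mapping-torus trivialization and patching primitives rather than the forms themselves where possible; fibrewise nondegeneracy is the delicate point, and it is secured by performing the isotopy $\rho^k_{\epsilon,s}$ and the Moser normalization through fibrewise-symplectic data only, so that the radial interpolation never leaves the open set of fibrewise-nondegenerate closed two-forms. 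Everything else — extending $Z$, $\Theta$, the flatness of $f^k_e$, and the complex structure — is routine given the earlier lemmata.
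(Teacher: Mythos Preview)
Your proposal is correct and follows essentially the same strategy as the paper: build the extension as a mapping-torus interpolation of the monodromy isotopy $\rho^k_t$ down to the identity, glue in the trivial piece $D_r\times M$ near the center, and extend the almost complex structure using contractibility of the space of compatible structures. The paper organizes this into three explicit pieces $X_0\cup X_1\cup X_2^k$ (with an intermediate trivial mapping torus $X_1$ over $D_{[r,s]}$) and, for the global closed two-form, invokes the theorem of Gotay et al.\ \cite{gotay} (using $H^2(M,\mathbb{C})=0$ and simple connectivity of the base) rather than your hands-on Moser normalization, but this is only a cosmetic difference since both arguments rest on the same cohomological vanishing.
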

\begin{proof}
We define $\tilde X^k_e$ as the union glued along their common (vertical) boundaries
\[
\tilde X^k_e=X_0 \cup X_1 \cup X^k_2\cup \tilde X^k
\]
where for $0<r<s<\epsilon<\delta$
\begin{equation}\label{extension3}
\begin{split}
f_0:X_0&= D_r \times M\rightarrow D_r ,\quad f_1:X_1=\frac{M\times [r,s]\times [0,1]}{(x,t,0)\sim (x,t,1)}\rightarrow D_{[r,s]}\\
f_2^k:X^k_2&=\frac{M\times [s,\epsilon]\times [0,1]}{(x,t,0)\sim (\rho^k_t(x),t,1)}\rightarrow D_{[s,\epsilon]},
\end{split}
\end{equation}
where here, $\rho_{(\cdot)}^k:[s,\epsilon]\times M\rightarrow M$ is an isotopy as in Assumption \ref{ass2} (modulo parametrization), i.e. $\rho^k_s=id_M$, $\rho^k_\epsilon=\rho^k$, $\rho^k_t \in {\rm Symp}(M,\partial M;\omega)$ for any $t \in [s,\epsilon]$. Note that since $M$ is equipped with an exact symplectic form $\omega$ which is exact, i.e. $[\omega,\alpha]=0$ and since $\rho^k_t \in {\rm Symp}(M;\partial M;\omega)$ fixes a neighbourhood of the boundary, $X_2^k$ is equipped with a family of fibrewise symplectic forms $\omega_x$ and contact forms over the fibrewise boundary $\alpha_x$ so that $[\omega_x,\alpha_x]=0$ and so that there is a trivialization of a neighbourhood of the boundary as in (\ref{boundtriv}). This implies (see Gotay et al. \cite{gotay}) there is at least locally on the base a smooth family of forms $\theta_z \in \Lambda^1(T^vX_2^k)_z,  z \in U\subset  D_{[\delta,\epsilon]}$ ($U$ open) so that $d\theta_z|(X_2^k)_z=\omega_z$, for any $x \in D_{[s,\epsilon]}$.Then defining locally
\[
\Theta_U(x)=(f_2^k)^*(\beta)+\theta_z, \ x \in (X_2^k)_z,\ z \in U,
\]
for some $1$-form $\beta$ on the base and setting $\Omega_U=d\Theta_U$ trivializes $X_2^k$ over $U$ by using parallel transport along the annihilator of $T^vX_e^k$. Then for different $U$, the transition mappings are fibrewise symplectomorphisms which fix the boundary, since  $(d\Theta_U)_z=\omega_z, z \in U$. This implies that $X_2^k\rightarrow D_{[r,s]}$ is in fact locally trivial with structure group ${\rm Symp}(M,\partial M,\omega)$. Now we have to show that $X_e^k$ actually carries a closed non-degenerate $2$-form that restricts firewise to the family given by $\omega_z, \ z \in D_\delta$. To prove this assume for the moment, that $X_e^k$ is actually well-defined as a symplectic fibration by glueing the above parts along their common boundaries. That this is the case will be proven below. Assuming this note that $M$ is $(n-1)$-connected i.e. simply connected and that this holds also for the base, $D_\delta$. Then, by Gotay et al. (\cite{gotay}, Theorem 2) since $H^2(M,\mathbb{C})=0$, there is a cohomology class in $H^2(X_e^k,\mathbb{C})$ extending $[\omega_z]$ for any $z \in D_\delta$. By repeating the above procedure over a suitable open covering $\mathcal{U}$ of $D_\delta$ and using a partition of unity one can construct a smooth closed two-form $\Omega$ on $X_e^k$ realizing this class (see again \cite{gotay}). Since the family $\theta_z, z \in D_\delta$ is constant in a neighbourhood of the 'horizontal' boundary of $X_e^k$ w.r.t. its natural trivialization, the symplectic parallel transport is well-defined (see Def. \ref{symplfibr}). So there is a globally defined closed $2$-form $\Omega$ defining a horizontal distribution $H_{\Omega}$ in the sense of Lemma \ref{symplfibr} and restricting fibrewise to $\omega$. Defining
\[
\Omega_2^k=c(f_2^k)^*(\alpha)+\Omega
\]
where $c>0$ big enough and $\alpha$ is the canonical Kaehler form on the base, $X_2^k$ carries the structure of an exact symplectic fibration in our sense.\\
Now what remains to be shown is that the above glueing operations are well-defined, i.e. the symplectic structure on the objects are preserved. Assume first that $X^k_2$ (analogously $\tilde X^k$) is 'flat' along their boundary, that is the boundary of $X_2^k$ is locally symplectomorphic to $Y_s\times [0,\kappa_1] \cup [\epsilon-\kappa_2,\epsilon]\times Y_\epsilon$ for some $\kappa_{1,2}>0$ equipped with the product structures ($Y_r:=X_2^k|S^1_r$). Then the well-definedness of the glueing follows since the monodromies along the boundaries $Y_s$ resp. $Y_\epsilon$ of $X_2^k$ coincide with those of the (corresponding) boundaries of $X_1$ resp. $\tilde X^k$ and the identification is smooth. Now it is easy to see that one can choose a smooth monotone bijection $\tau:[0,1]\rightarrow [0,1]$ s.t. $\tau([0,\epsilon_1))=0,\ \tau((1-\epsilon_2,1])=1$ for some small $\epsilon_1,\epsilon_2 >0$ and that replacing $\rho^k_t, t \in [0,1]$ by $\rho^k_\tau(t), t\in [0,1]$ in  the definiton of $X_2^k$ means to extend a neighbourhood of the boundaries of $X_2^k$ to be 'flat' in the above sense so that the glueing is well-defined. Taking the product symplectic structures on $X_0$ resp. $X_1$ one arrives at the 
assertion. Note that we only showed that the fibrewise symplectic structure defined by $\Omega_2^k$ on $X_2^k$ and the family of horizontal subspaces associated to $\Omega_2^k$ glue with the corresponding strcuture on $\tilde X^k$, which is all that we will need for the subsequent discussion.\\
Concerning the assertion about the almost complex structure on $X_e^k$, first note that one has the diagram
\begin{equation} \label{quotdiag2}
\begin{CD}
 \tilde X^k_2  @>>\simeq> \frac{[s,\epsilon]\times [0,1]\times M}{(t,0,x)\sim (t,1,\rho^k_t(x))} @<<p< 
 [s,\epsilon]\times [0,1]\times M\\
@VV f_2^k V           @VV\pi V                                       @VV\pi_0V  \\
 D_{[s,\epsilon]} @>>id > D_{[s,\epsilon]} @<<\overline p<  [s,\epsilon]\times [0,1],
\end{CD}
\end{equation}
where $p$ and $\overline p$ are the obvious quotient mappings, $\pi_0: [s,\epsilon]\times [0,1]\times M\rightarrow [s,\epsilon]\times [0,1]$ is the trivial projection and $\pi$ is {\it defined} so that the diagram gets commutative. We first define a vertical nearly complex structure on $X_2^k$ that matches the family of complex structures on $Y_\epsilon$ by defining for $(t,\tau,x) \in [s,\epsilon]\times [0,1]\times M$ $J^v\in {\rm End}(T^vX_2^k)$ as 
\begin{equation}\label{vertcomplex}
(J_2^v)_{(t,\tau,x)}:= ((\rho^k)_{\epsilon-\tau(\epsilon-t)}\circ (\rho^k_\epsilon)^{-1})_*(J_{\tau,x})	
\end{equation}
Here $J_{\tau,x} \in {\rm End}(TM)$ is the family of vertical almost complex structures on $[0,1]\times M$ induced by the given family on $Y_\epsilon \simeq p(\{\epsilon\}\times [0,1]\times M)$ which satisfies $J_{0,x}=\rho^k_*(J_{1,x})$. Furthermore we define $J^h\in {\rm End}(T^hX^k_2)$ so that  $(f_2^k)_*(J^h)=j$,  where $j$ is the canonical complex structure of $D_{[s,\epsilon]}$. Then we set as a candiate for $J_{X_2^k}$
\begin{equation}\label{complex}
J_2:=J|_{X_2^k}=J_2^v\oplus J_2^h.
\end{equation}
by construction, this extends to a smooth almost complex structure on $X_2^k\cup \tilde X^k$. To extend this to the whole $X_e^k$, note first that $X_e^k|D_s$ is a trivial fibre bundle, so the above defined family $J^v$, restricted to $Y_s$ is given by a family of almost complex structures $J^v_t \in {\rm End}(TM),\ t \in S^1$ being compatible in the sense of Ass. \ref{compatiblecplx} (this follows from the definition since $\rho_t \in {\rm Symp}(M,\partial M,\omega)$). Now it is well-known (\cite{duff}) that the space of such structures $\mathcal{J}(M,\omega)$ is contractible since it is isomorphic to the space of sections of a bundle over $M$ with contractible fibres $Sp(2n,\mathbb{R})/U(n)$ (i.e. it is simply connected). Note that this fact is a priori formulated for a closed symplectic manifold, but since in a neighbourhood of $\partial M$, $J$ is compatible with $j$ in the sense of Ass. \ref{compatiblecplx}, one can argue as in Seidel \cite{Seidel3} to deduce that the space of in that sense compatible complex structures $\mathcal{J}(M,\omega,j)$ is contractible. So we can choose a path of loops $\tau \mapsto (J_1)_{\tau,t}, t \in S^1,\tau \in [r,s] $ of $j$-compatible complex structures on $M$ so that
\[
(J_1^v)_{r,t}=J_{t_0},\quad (J_1^v)_{s,t}=(J_2^v|_{Y_s})_t,
\]
where $J_2^v$ is as constructed above and $t_0 \in S^1$ is any fixed value. Then one defines $J$ restricted to $X_1$ as $J_1=J_1^v\oplus j$, where $j$ is the complex structure on $D_{[r,s]}$. It is now clear how to extend $J$ to $D_r$ constantly so that the resulting alomost complex structure $J$ is smooth on $X^k_e$ and induces a Kaehler metric over $D_r$ relative to the product symplectic structure $\omega + (f_e^k)^*(\beta)$ (this last claim follows since $(J_{t_0},\omega)$ is Kaehler on $M$).
\end{proof}
{\it Remark.} Since in the following, it will be sufficient to assume that the $2$- form $\Omega_2^k$ on $X_2^k$ as defined above is closed on any $Y_t, \ t\in [s,\epsilon]$, we can give an alternative construction by defining a family of horizontal subspaces $H_{\Omega_2^k}\subset T(M\times [s,\epsilon]\times[0,1])$ by
\begin{equation}\label{hzweik}
H^k_{2}(x,t,\tau)={\rm span}_{X\in H_t,\ Y\in H_\tau}\{(\tau\frac{d}{dt}(\rho^k_t)(x)(X), X,Y)\},
\end{equation}
where for any $(x,t,\tau)$, $H_t,\ H_\tau\subset T(M\times [s,\epsilon]\times [0,1])$ are the subspaces spanned by $((0,x),(1,t),(0,\tau))$ and $((0,x),(0,t),(1,\tau))$, respectively. It is easy to see that $H_2^k$ factorizes to a horizontal subspace on $T(X_2^k)$, again denoted by $H_2^k$ and we can define a fibrewise closed two-form $\tilde \Omega_2^k$ on $X_2^k$ restricting fibrewise to the family $\{\omega_x\}, x\in D_{[s,\epsilon]}$ by pulling back $\alpha$ as defined above to $H_2^k$:
\[
\tilde \Omega_2^k:=c(f_2^k)^*(\alpha)\oplus_{H_2^k}\tilde \omega,
\]
where $c>0$, $\tilde \omega$ is the $2$-form on $T^v(X_2^k)$ induced by the family $\{\omega_x\}$ and the splitting is defined so that $H_2^k$ becomes the symplectic complement of $T^v(X_2^k)$ in $T(X_2^k)$. Note that $\tilde \Omega_2^k$ will in general not glue smoothly with $\Omega^k$ along the vertical boundary of $\tilde X_k$, while using the construction of the last proof, one sees that $H_2^k$ and the family of vertical symplectic forms $\omega_x$ defined by $\Omega_2^k$ glue smoothly along the common boundary of $X_2^k$ and $\tilde X_k$, which will be sufficient for the subsequent construction, in fact we will assume to have chosen $H^k_2$, the associated symplectic form and the compatible nearly complex structure induecd by (\ref{complex}) in the way described here, leading to a subemrsion metric $g= \tilde \Omega_2^k(\cdot,J\cdot)$ on $X_2^k$.\\
Consider the function $\alpha:S^1_\epsilon\rightarrow \mathbb{C}^*$ determined in (\ref{winding-s2}):
\begin{equation}\label{windfunc}
\alpha(y)=\int_{Q_y}e^{i\theta}i_{X_{f^k}}\kappa_Q, \ y\in S^1_\epsilon.
\end{equation}
Since in the following, we will be only interested in the {\it winding number} of this function, we can reduce $\alpha$ to a more simple form as long as this winding number is preserved. For this note at first that the condition (\ref{horizontality}) in Assumption \ref{ass2} implies the following:
\begin{lemma}\label{horsection}
There is a section $u:D_\epsilon\rightarrow X_e^k$ so that $F:={\rm im}(u)$ has boundary in $Q$, and is horizontal in a neighbourhood of its boundary, that means that $\partial F=u(\partial D_\epsilon)\subset Q$ and 
\begin{equation}\label{horizontality2}
Du_z(TD\epsilon)=(T(X_e^k)^h)_u(z), \quad z \in (-r,0]\times \partial  D_\epsilon,
\end{equation}
for some $r>0$. Furthermore, if we choose the horizontal distribution $H_2^k$ as constructed in (\ref{hzweik}), then (\ref{horizontality}) holds for any $z \in D_\epsilon$, that is, $u$ is horizontal.
\end{lemma}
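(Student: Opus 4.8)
The idea is to take for $F$ the ``constant section at $z_0$'', where $z_0\in Q_x$ is the point supplied by condition (b) of (\ref{horizontality}) in Assumption \ref{ass5}, and to exploit the fact that $z_0$ is fixed by the \emph{entire} isotopy $\rho^k_t$, not just by the monodromy $\rho^k=\rho^k_1$. First I would work in the explicit local models of $X^k_e$ from the proof of Lemma \ref{extension}, namely $X^k_e=X_0\cup X_1\cup X^k_2\cup\tilde X^k$ with $X_0=D_r\times M$, $X_1=(M\times[r,s]\times[0,1])/(x,t,0)\sim(x,t,1)$ and $X^k_2=(M\times[s,\epsilon]\times[0,1])/(x,t,0)\sim(\rho^k_t(x),t,1)$. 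On $X_0$ put $u(w)=(w,z_0)$; on $X_1$ the constant assignment $(t,\tau)\mapsto[(z_0,t,\tau)]$ descends to the quotient because the monodromy is trivial; and on $X^k_2$ it descends as well, since $(z_0,t,0)\sim(\rho^k_t(z_0),t,1)=(z_0,t,1)$ by (\ref{horizontality})(b). These three pieces all restrict, over the gluing circles $S^1_r$ and $S^1_s$, to the same ``constant $z_0$ loop'' (recall $\rho^k_s=\mathrm{id}_M$), so by the smoothness of the gluing established in Lemma \ref{extension} they assemble to a smooth section $u:D_\epsilon\to X^k_e$, and since $z_0\in Q_x$ lies in the interior of $M$ (as $Q_x\cap\partial M=\emptyset$), $F:=\mathrm{im}(u)$ avoids $\partial_h X^k_e$.

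Next I would identify $\partial F=u(\partial D_\epsilon)=u(S^1_\epsilon)$, which in the $X^k_2$–model is the image of $\{z_0\}\times\{\epsilon\}\times[0,1]$. Using the horizontal distribution $H^k_2$ of (\ref{hzweik}), whose $\tau$–component is the trivial one and whose $t$–correction term $\tau\,\tfrac{d}{dt}(\rho^k_t)(z_0)$ vanishes at $z_0$ because $t\mapsto\rho^k_t(z_0)$ is constant, this loop is $H^k_2$–horizontal along $S^1_\epsilon$; as $H^k_2$ glues smoothly to the $\Omega^k$–connection $H_{\Omega^k}$ along the common boundary of $X^k_2$ and $\tilde X^k$ (remark after Lemma \ref{extension}), $u(S^1_\epsilon)$ is precisely the $\Omega^k$–parallel transport of $z_0$ around $S^1_\epsilon$. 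This loop closes up because $\rho^k(z_0)=z_0$, and it is contained in $Q=\bigcup_y\mathcal{P}^{\Omega^k}_{x,y}(Q_x)$ because $z_0\in Q_x$ and $\rho^k(Q_x)\subset Q_x$ (cf. Lemma \ref{bla4562}). Hence $\partial F\subset Q$, and since $u$ is $H^k_2$–horizontal over the collar $D_{[s,\epsilon]}\supset(-r,0]\times\partial D_\epsilon$ and $H^k_2$ agrees there with the ambient horizontal distribution, (\ref{horizontality2}) holds in a neighbourhood of $\partial F$.

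For the last assertion I would note that, with the distribution $H^k_2$ in force over $D_{[s,\epsilon]}$, the vanishing of the correction term $\tau\,\tfrac{d}{dt}(\rho^k_t)(z_0)$ already observed means the constant $z_0$–section is $H^k_2$–horizontal over \emph{all} of $D_{[s,\epsilon]}$, not merely near $S^1_\epsilon$; over $D_s$ one uses the product (Kähler) structure on $X_0$, for which the constant section is horizontal, and over $D_{[r,s]}$ the analogous product distribution, i.e. (\ref{hzweik}) with $\rho^k_t\equiv\mathrm{id}$. These patch to a horizontal distribution on $X^k_e|_{D_\epsilon}$ agreeing with $H_{\Omega^k}$ near $S^1_\epsilon$, and with respect to it $u$ is horizontal at every $z\in D_\epsilon$, which is (\ref{horizontality2}) in full.

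I expect the main obstacle to be the bookkeeping for the gluings: checking that the three local models together with the chosen horizontal distributions fit together smoothly across $S^1_r$ and $S^1_s$ (this is where the flattening reparametrisation of Lemma \ref{extension} is used), and — the one genuinely substantive point — that near $S^1_\epsilon$ the distribution $H^k_2$ is compatible with the symplectic connection $H_{\Omega^k}$ that was used to \emph{define} $Q$, so that $\partial F$ really lands in $Q$ and not merely in the fibrewise-Lagrangian-looking but possibly different loop obtained from the mapping-cylinder trivialisation. Everything else is an application of Assumption \ref{ass5}(b) to make the relevant correction term disappear.
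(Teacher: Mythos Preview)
Your proposal is correct and follows essentially the same approach as the paper: define $u$ as the constant section at the fixed point $z_0$ from Assumption \ref{ass5}(b), observe that this descends through the mapping-cylinder quotients because $\rho^k_t(z_0)=z_0$ for all $t$, and use flatness near $S^1_\epsilon$ together with the vanishing of $\tfrac{d}{dt}\rho^k_t(z_0)$ for horizontality. The paper's proof is terser and does not spell out the three-piece gluing explicitly, but the construction and the key observation for the last assertion are identical to yours.
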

\begin{proof}
Go back to the proof of Lemma \ref{extension} and recall that $X_e^k|D_{[s,\epsilon]}$ for some $s >0$ was defined by 
\[
X^k_2=\frac{M\times [s,\epsilon]\times [0,1]}{(x,t,0)\sim (\rho^k_t(x),1)}\rightarrow D_{[s,\epsilon]},
\] 
where we assumed the structure being 'flat' near the boundary over $S^1_\epsilon$, which is satisfied by assuming that $\rho_t^k=\rho^k$ for $t \in [1-r,\epsilon]$ for some small $s>r>0$. Let now $z_0 \in Q \subset M$ be as in  Assumption \ref{horizontality}, being fixed by $\rho^k_t$ for any $t \in [s,\epsilon]$ (note the change of parametrization). Then defining 
\[
\tilde u: D_\epsilon \hookrightarrow \tilde D= D_{[0,\epsilon]}\times \{z_0\}\subset D_{[0,\epsilon]}\times M
\]
this factorizes to a well-defined map $u:D_\epsilon \rightarrow X_e^k$ with image $ D\subset X_0\cup X_1\cup X_2^k =X_e^k|D_{[0,\epsilon]}$ (using notations from the proof of Lemma \ref{extension}). From the 'flatness' of $X_e^k|D_\epsilon$ near its vertical boundary one concludes that $u((-r,0]\times \partial D_\epsilon)$ is horizontal as required. Furthermore $u(\partial D_\epsilon)\subset Q$ by construction of $Q$ and $u$. The last assertion follows from the obvious fact that if $z_0$ is as in Assumption (\ref{horizontality}), then $\frac{d}{dt}(\rho^k_t)(z_0)=0$.
\end{proof}
We now observe that the pointwise 'phase' $\theta$ on $Q$, lifted to a real-valued function on $\tilde Q$, splits into a sum of a fibrewise constant part $e^{i\vartheta}:S_\epsilon^1\rightarrow S^1$, defined along the disk constructed in the preceeding Lemma having boundary in $Q$ and a 'vertical' part which is determined by the fibrewise restriction of the mean curvature form of $Q$, $\sigma_Q$.
\begin{lemma}\label{tildetheta3}
Let $\sigma_Q$ be the mean curvature form of $Q$. Fix $x \in S^1_\epsilon$ and define $z' \in \tilde Q_x$ by $z'=u(x)$, where $u:D_\epsilon \rightarrow X_e^k$ is as in Lemma \ref{horsection}. Then for any $y \in S^1_\epsilon$ and $z \in Q_y$ choose a path $c(z',z)$ connecting $z'$ to $z$ in $Q$ in the way that 
\begin{equation}\label{pathdecomp}
c(z',z)(t)= c_h(z', z'')*c_v(z'', z),
\end{equation}
where $c_h$ is the path connecting $z'$ to $z'':=u(y)$ in $u(\partial D_\epsilon)=\partial F$ which projects to a path $\tau \mapsto e^{2\pi i\tau}x, \tau \in [0,t] ,0\leq t<1$ s.t. $y=e^{2\pi i t}x$ and $c_v(z'', z)$ is any path in $Q_y$ connecting $z''$ to $z$. Then $c(z', z)$ represents an element $\tilde z$ over $z$ in $\tilde Q$ and one has
\begin{equation}\label{covtheta}
\tilde \theta(\tilde z) = \theta(z')+\int_{c_h(z',z'')} d\vartheta/2 +\int_{c_v(z'',z)}\sigma_Q,
\end{equation}
where $\theta(z') \in [0,2\pi)$, $\tilde \theta$ denotes the lift $\tilde \theta:\tilde Q\rightarrow \mathbb{R}$ defined in (\ref{covtheta12}) and $\tilde Q$ denotes the universal covering of $Q$. Here, $\vartheta:u(S^1_\epsilon) \hookrightarrow \partial F \rightarrow \mathbb{R}/2\pi\mathbb{Z}$ is determined by the horizontal section
\[
u:D_\epsilon\rightarrow X_e^k,\quad u(\partial D_\epsilon)\subset Q,
\]
where $u$ is as constructed in (the proof of) Lemma \ref{horsection} and by a trivializing section $\kappa_F$ of unit length of $\Lambda^{n,0}(T^*X_e^k)|F$ over $u(D_\epsilon)=:F$ by the requirement that along $\partial F$:
\[
\kappa_Q^2=e^{i\vartheta}\kappa_F^2.
\]
\end{lemma}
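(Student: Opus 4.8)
The plan is to derive the formula (\ref{covtheta}) by combining Corollary \ref{tildetheta} with a computation of the mean curvature form $\sigma_Q$ along the horizontal boundary curve $\partial F=u(S^1_\epsilon)$. Recall that Corollary \ref{tildetheta} already gives, for \emph{any} path $c(z',z)\subset Q$ from $z'$ to $z$, the identity $\tilde\theta(\tilde z)=\theta(z')+\int_{c(z',z)}\sigma_Q$, the right-hand side being well defined on $\tilde Q$ precisely because $\sigma_Q=d\theta$ on $Q$ (so $\sigma_Q$ is closed and its periods are killed on the universal cover). Thus I would substitute the prescribed path $c(z',z)=c_h(z',z'')\ast c_v(z'',z)$ of (\ref{pathdecomp}) and split the line integral,
\[
\tilde\theta(\tilde z)=\theta(z')+\int_{c_h(z',z'')}\sigma_Q+\int_{c_v(z'',z)}\sigma_Q .
\]
The second term is already the vertical contribution displayed in (\ref{covtheta}); the whole point is therefore to prove that along the horizontal arc $c_h\subset\partial F$ one has $\sigma_Q=\frac12\,d\vartheta$.

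For this I would first recall, following \cite{cielie}, the decomposition of the mean curvature form in the presence of a bounding disk: along $\partial F$ the two unit sections $\kappa_Q$ and $\kappa_F$ of $\Lambda^{(n+1,0)}T^*X_e^k$ are both defined, and, after squaring to remove the $\widetilde{\rm Lag}\to{\rm Lag}$ ambiguity, $\kappa_Q^2=e^{i\vartheta}\kappa_F^2$ by the definition of $\vartheta$. Since $d\theta=\sigma_Q$ on $Q$, differentiating the identity $\kappa_Q^2=e^{i\vartheta}\kappa_F^2$ along $\partial F$ and comparing with the Chern connection of $\Lambda^{(n+1,0)}T^*X_e^k$ in the trivialization $\kappa_F$ yields
\[
2i\,\sigma_Q\big|_{\partial F}=\eta_F+i\,d\vartheta ,
\]
where $\eta_F$ denotes that connection $1$-form restricted to $\partial F$.

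Next I would show $\eta_F|_{\partial F}=0$. By Lemma \ref{horsection}, taken with the horizontal distribution $H_2^k$ of (\ref{hzweik}), the section $u$ is horizontal, so $TF=Du(TD_\epsilon)\subset T(X_e^k)^h$ and, because $J|H_{\Omega^k}=(f^k_e)^*(j)$, the complex structure on $TF$ is the pullback of the standard one on $D_\epsilon$; moreover, by Lemma \ref{extension}, near $0$ and near $S^1_\epsilon$ the fibration is a metric (indeed Kähler) product, so along $F$ the canonical bundle splits isometrically and holomorphically into a pullback from the base times the vertical canonical bundle, and horizontal parallel transport preserves this splitting together with the two factors' unitary structures. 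Choosing $\kappa_F$ adapted to this splitting — flat in the base factor and parallel in the vertical one along $\partial F$ — its Chern connection form vanishes on the horizontal loop $\partial F$, so $2\sigma_Q|_{\partial F}=d\vartheta$, i.e. $\sigma_Q|_{\partial F}=\frac12\,d\vartheta$. Inserting this into the split integral above gives (\ref{covtheta}); the hypotheses that $\partial F$ is a closed horizontal loop and $H^1(Q_y,\mathbb{C})=0$ ensure that $\vartheta$ lifts consistently to the $\mathbb{R}$-valued quantity used on the right, compatibly with the lift $\tilde\theta$ of Corollary \ref{tildetheta}.

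The main obstacle is the vanishing $\eta_F|_{\partial F}=0$. It is not a formal consequence of the disk being horizontal but requires matching the Chern connection of $\Lambda^{(n+1,0)}T^*X_e^k$ against the explicit extension $X_e^k$ built in Lemma \ref{extension}: one must check that $H_2^k$ really renders the canonical-bundle splitting parallel along horizontal curves and that $\kappa_F$ can be chosen simultaneously smooth over the whole disk $F$ and parallel along its boundary. A secondary technical point is reconciling the $\mathbb{R}/2\pi\mathbb{Z}$-valued functions $\theta$ and $\vartheta$ with their $\mathbb{R}$-valued lifts so that the additive identity (\ref{covtheta}) holds exactly rather than merely modulo $2\pi\mathbb{Z}$ — this is where the horizontality of $\partial F$ and the vanishing of $H^1(Q_y,\mathbb{C})$ are used.
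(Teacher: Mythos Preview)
Your proposal is correct and follows essentially the same route as the paper: start from Corollary \ref{tildetheta}, split the path integral along $c_h*c_v$, invoke the Cieliebak decomposition $2i\sigma_Q=\eta_F+id\vartheta$ along $\partial F$, and reduce everything to the vanishing $\eta_F|_{\partial F}=0$. The paper carries out exactly this strategy; the only substantive addition is that it makes your ``main obstacle'' precise by constructing $\kappa_F$ via parallel transport along \emph{radial} rays with respect to an auxiliary connection $\nabla^u=\nabla^h\oplus_{H_2^k}\nabla^v$ (so smoothness over all of $F$ is automatic), and then verifies $\nabla^L_{JR}u_i=0$ along $\partial F$ by an explicit computation using $\mathcal{L}_XJ=0$ on horizontal vectors and the Kähler/flatness property of $X_e^k$ near the boundary --- which is the concrete version of your ``canonical-bundle splitting is parallel along horizontal curves'' claim.
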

{\it Remark.} As $F$ is connected and has non-empty boundary, one has $H^2(F,\mathbb{Z})=0$, hence $\Lambda^{n,0}(T^*X_e^k)|F$ is trivial and allows the choice of a trivializing section $\kappa_F$. That the Maslov class
\[
\mu_F=-\frac{1}{2\pi}\int_{\partial F}d\vartheta
\]
is actually independent of the choice $\kappa_F$ follows from Stokes' Theorem since one has for $\kappa_Q^2=e^{i\vartheta'}{\kappa'_F}^2$ that $e^{i\vartheta'}=e^{i\phi} e^{i\vartheta}$ for some function $e^{i\phi}:F \rightarrow S^1$, so $\int_{\partial F}d\vartheta=\int_{\partial F}d\vartheta'$, hence we have a map $\mu_F:H_2(X_e^k, Q)\rightarrow \mathbb{Z}$.
\begin{proof}
We know from Cieliebak et al. \cite{cielie} (eq. (3)) that on $\partial F$
\begin{equation}\label{cieliebla}
2i\sigma_Q=\eta_F+i d\vartheta,
\end{equation}
where $\eta_F$ is a the $1$-form over $u(D_\epsilon)$ defined by $\nabla \kappa^2_F=\eta_F\otimes\kappa^2_F$ along $u$, where $\nabla$ is the Levi-Civita-connection of $X_e^k$. Now we claim that since $u$ is horizontal (using $H_2^k$ as in (\ref{hzweik})) and since in an open neighbourhood $U\cap u(D_\epsilon)$, for some open $U\subset X_e^k$, of its boundary, $X_e^k|U$ is Kaehler, that
\begin{equation}\label{etafvanishes}
\eta_F(v)=0, \ v\in T(\partial F).
\end{equation}
To see this, we define a connection (compare Appendix B.) on $TX_e^k|F$
\[
\nabla^u=\nabla^h \oplus_{H_2^k} \nabla^v,\ {\rm where} \ \nabla_X^vU=[X,U],\ \nabla^v_U V =P^v(\nabla_V^L U),
\]
where $X\in \Gamma(T^hX_e^k|F),\ U,V \in \Gamma(T^vX_e^k|F)$ and $\nabla^h=(f_e^k)^*(\nabla^D)$, is the Levi-Civita connection of $D_\epsilon$ induced by the standard metric, lifted to the horizontal bundle $H_2^k\subset TX_2^k$ defined in (\ref{hzweik}). We use $\nabla^u$ to construct a trivialization $\kappa_F$ of $\Lambda^{n,0}(T^*X_e^k)|F$ by parallel transport along radial rays in $F$ w.r.t. $\nabla^u$. Note that since this connection preserves the splitting $TX_e^k|F=(T^hX_e^k\oplus T^vX_e^k)|F=TF\oplus T^vX_e^k|F$ and since by construction of $H_2^k$, $\mathcal{L}_X\omega=0$ for horizontal vector fields $X$, where $\omega$ denotes the symplectic form on $T^vX_e^k$, we can use parallel transport induced by $\nabla^u$ of a Lagrangian basis $e_0 \in T_zF$, $e_1,\dots,e _n \in T^v_zX_e^k|F$, where $z =(f_e^k)^{-1}(0)\cap F$, that is $\omega(e_i,e_j)=0$, along radial rays, then $u_i=1/\sqrt{2}(I-iJ)e_i, i=0,\dots,n$ trivializes $T^{1,0}X_e^k|F$. Further note that by construction of the family $(J_2^h)_x,\  x\in D_{[s,\epsilon]}$ and by the horizontality of $F$ we have, using the notation from the proof of Lemma \ref{extension} over $U\cap F=u(D_{[\epsilon-\delta,\epsilon]})$ for some $\delta>0$ and for $H \in T(U\cap F)$:
\begin{equation}\label{complexinv}
(\mathcal{L}_{X} J)(H)=\mathcal{L}_X (J_2^v\oplus J_2^h)(H)= \mathcal{L}_X((f_e^k)^*(j))(H)=0,
\end{equation}
where either $X \in \Gamma(T^hX_e^k|U)$ or $X \in \Gamma(T^vX_e^k|U)$. Let now $R \in \Gamma(T(X_e^k)^h)|U)$ be radially outward pointing in $T(F\cap U)$, that is, $(f_e^k)_*(R)=\frac{\partial}{\partial r}$, where $r:D_\epsilon\rightarrow \mathbb{R}^+$ is the radius function. Then, since $X_e^k|D_{[\epsilon-\delta,\epsilon]} \cap \bigcup_{z\in D_{[\epsilon-\delta,\epsilon]}} A^m_z$ ($A_z^m\subset (X_e^k)_z$ as defined in the proof of Lemma \ref{lagrangianbasis}) is Kaehler and writing locally $\kappa_F=u_0^*\wedge\dots\wedge u_n^*$ for some basis $u_i \in T^{1,0}X_e^k|U \cap F$, where we can assume by the horizontality of $F$ that $u_0=1/\sqrt{2}(e_0-iJe_0)$ for some $e_0 \in T(F\cap U)=T^hX_e^k|U$ and $u_i=1/\sqrt{2}(e_0-iJe_i), \ i=1,\dots,n$ for $e_i \in TX_e^k|U$ which satisfy (by construction of $\kappa_F$) $\nabla^u_Re_i=0, \ i=0,\dots,n$. We can then deduce for $i=0,\dots,n$:
\begin{equation}\label{radialvanishing}
\begin{split}
\nabla^L_{JR}u_i&=\nabla^L_{u_i}(JR)+[JR,u_i]=J\nabla^L_{u_i}R-J\mathcal{L}_{u_i}(R)\\
&=J(\nabla^L_Re_i-iJ\nabla^L_Re_i)=J(\nabla^ue_i-iJ\nabla^u_Re_i)=0.
\end{split}
\end{equation}
By the definition of $\kappa_F$, this implies (\ref{etafvanishes}). Here we used that by the horizontality of $F$ and the 'flatness' of $X_2^k$ in a neighbourhood of its common boundary with $\tilde X^k$, we have $\nabla_R^u=\nabla^L_R$ using Proposition 14 of \cite{mazzeo} (the second fundamental form of the fibre in the direction of $R$ and the curvature of $T^hX_e^k|U$ vanishes, here we use the submersion metric on $X_e^k$ constructed in the remark below the proof of Lemma \ref{extension}). Substituting (\ref{etafvanishes}) into (\ref{cieliebla}), noting that
\[
\tilde \theta(\tilde z) = \theta(z')+ \int_{c_h(z',z'')} \sigma_Q +\int_{c_v(z'',z)}\sigma_Q
\]
and substituting (\ref{cieliebla}) into the first integral, we arrive at the assertion.
\end{proof}
Assuming the Assumptions \ref{ass4} resp. \ref{ass5} above, we are then able to prove the following:
\begin{lemma}\label{tildealpha}
Let $F:={\rm im}(u), \ u:D_\epsilon \rightarrow X_e^k$ be the disk with boundary in $Q$ as constructed in Lemma \ref{horsection} and $\tilde \alpha:S^1_\epsilon\rightarrow \mathbb{C}^*$ be defined by
\begin{equation}\label{tildetheta2}
\tilde \alpha(y)=e^{i\vartheta/2}\int_{Q_y}i_{X_{f^k}}\kappa_Q, \ y\in S^1_\epsilon,
\end{equation}
where here, $e^{i\vartheta}:S^1_\epsilon \hookrightarrow \partial F \rightarrow S^1$ is determined as described in the formulation of Lemma \ref{tildetheta3}. Then, with the above notations,  ${\rm wind}(\alpha)={\rm wind}(\tilde \alpha)$, i.e. we have
\begin{equation}\label{diskmaslov}
{\rm wind}(\alpha)=\mu_F/2-k,
\end{equation}
where $\mu_F$ is the Maslov index along $\partial F$ in $Q$ and ${\rm wind}$ is the usual winding number of a mapping $f:S^1\rightarrow \mathbb{C}^*$.
\end{lemma}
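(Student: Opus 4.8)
The plan is to reduce both assertions of the lemma --- that ${\rm wind}(\alpha)={\rm wind}(\tilde\alpha)$ and that this common value equals $\mu_F/2-k$ --- to the vanishing of the winding number of an auxiliary $\mathbb{C}^*$-valued function $\Xi$ on $S^1_\epsilon$ which records only the \emph{fibrewise} part of the phase of $Q$, and then to kill ${\rm wind}(\Xi)$ by deforming $Q$ into the reference family swept out by the weighted circle action.

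First I would carry out the reduction. Fix the base point $z'=u(x)\in\partial F$. By Corollary \ref{tildetheta} together with Lemma \ref{tildetheta3}, for $z\in Q_y$ the lift $\tilde\theta$ of $\theta$ along the concatenation of the horizontal arc in $\partial F$ from $z'$ to $u(y)$ with a vertical arc in $Q_y$ from $u(y)$ to $z$ equals $\theta(z')+\frac12\!\int d\vartheta+\int\sigma_Q$; since $F$ is horizontal one has $\eta_F|\partial F=0$, hence $\sigma_Q|\partial F=\frac12 d\vartheta$, while $\sigma_Q|Q_y$ is exact because $H^1(Q_y,\mathbb{C})=0$. Writing $\psi_y(z):=\int_{u(y)}^{z}\sigma_Q|_{Q_y}$, so that $\psi_y(u(y))=0$, this gives $e^{i\theta}|_{Q_y}=e^{ic_0}e^{i\vartheta(y)/2}e^{i\psi_y(z)}$ with $c_0$ a constant, and by \eqref{blawind2} (which applies since $TQ$ contains a horizontal direction) $i_{X_{f^k}}\kappa_Q|_{Q_{y(t)}}=e^{-2\pi i kt}|X_{f^k}|\,{\rm vol}_{Q_{y(t)}}$. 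Hence, with $\Xi(y):=\int_{Q_y}e^{i\psi_y}|X_{f^k}|\,{\rm vol}_{Q_y}$,
\[
\alpha(y)=e^{ic_0}e^{i\vartheta(y)/2}e^{-2\pi i kt}\,\Xi(y),\qquad \tilde\alpha(y)=e^{i\vartheta(y)/2}e^{-2\pi i kt}\!\int_{Q_y}|X_{f^k}|\,{\rm vol}_{Q_y}.
\]
Since the last integral is a positive function and $\alpha,\tilde\alpha$ are nowhere zero, $\Xi$ takes values in $\mathbb{C}^*$ and ${\rm wind}(\alpha)-{\rm wind}(\tilde\alpha)={\rm wind}(\Xi)$. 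Moreover $e^{i\vartheta/2}$ is, along $\partial F$, exactly the phase of $\kappa_Q$ relative to the trivialisation $\kappa_F$ of $\Lambda^{n,0}(T^*X_e^k)|F$, whose total variation around $\partial F$ is $2\pi\mu_F$ by the normalisation of $\mu_F$; together with ${\rm wind}(e^{-2\pi ikt})=-k$ this gives ${\rm wind}(\tilde\alpha)=\mu_F/2-k$. So the whole lemma reduces to the claim ${\rm wind}(\Xi)=0$.

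To prove ${\rm wind}(\Xi)=0$ I would deform $Q=Q_1$ through the family $Q_\tau$, $\tau\in[0,1]$, constructed in the proof of Proposition \ref{keylemma}, using Assumption \ref{ass5}(b) --- the $\rho^k_t$-fixed point $z_0$ --- to obtain a base section over $S^1_\epsilon$ for every member of the family, and Assumption \ref{ass4} and the genericity results of Section \ref{app4} to keep all the auxiliary data well behaved along the deformation, just as in the proof of Proposition \ref{keylemma}. For each $\tau$ the function $\alpha_\tau(y)=\int_{Q_{\tau,y}}s^k=\langle[s^k_y],[Q_{\tau,y}]\rangle$ pairs a fixed cohomology class with a homology class that does not depend on $\tau$ (the $Q_{\tau,y}$ are mutually isotopic Lagrangian spheres contained in $\tilde A_y(m)$, where $s^k$ is closed), so $\alpha_\tau\equiv\alpha_0=e^{2\pi i\gamma t}$ is independent of $\tau$ and in particular nowhere zero; the phase decomposition of the previous paragraph then forces the analogously defined $\Xi_\tau$ to lie in $\mathbb{C}^*$ for all $\tau$, and as $\Xi_\tau(y)$ varies continuously with $\tau$ the integer ${\rm wind}(\Xi_\tau)$ is constant. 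At $\tau=0$ the reference family $Q_0$ is the one swept out by orbits of the weighted circle action, and by the computation in the proof of Proposition \ref{keylemma} (equations \eqref{blawind}--\eqref{bla4567}, using that $\Phi_{X_K}$ acts by fibrewise isometries) the phase $\theta_0$ is \emph{constant} on each fibre $Q_{0,y}$; hence $\psi_{0,y}\equiv0$, $\Xi_0$ is a positive function, and ${\rm wind}(\Xi_0)=0$. Therefore ${\rm wind}(\Xi)={\rm wind}(\Xi_1)={\rm wind}(\Xi_0)=0$, which yields ${\rm wind}(\alpha)={\rm wind}(\tilde\alpha)=\mu_F/2-k$.

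The hard part is the middle step: one has to make the $\tau$-deformation precise --- in particular to define the horizontal phase consistently around $S^1_\epsilon$ for the intermediate members $Q_\tau$, which are only immersed and, in general, not horizontal with respect to $\Omega^k$, and to verify that $\Xi_\tau$ genuinely varies continuously and never vanishes. This is exactly the point at which Assumption \ref{ass4} and the stability/genericity arguments of Section \ref{app4} are used, in the same way as in the proof of Proposition \ref{keylemma}; the remaining ingredients --- the phase decomposition above, formula \eqref{blawind2}, and the fibrewise constancy of the phase of the circle-action family --- are already available from earlier in the paper.
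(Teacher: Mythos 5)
Your reduction is essentially correct and parallels the paper's: decomposing the phase of $\alpha$ into a horizontal part governed by $\partial F$ (where $\eta_F|\partial F=0$ by Lemma \ref{tildetheta3} makes $\sigma_Q|\partial F=\tfrac12 d\vartheta$) and a vertical part $\psi_y$, identifying ${\rm wind}(\tilde\alpha)=\mu_F/2-k$, and reducing everything to ${\rm wind}(\Xi)=0$. This last claim is, up to bookkeeping, exactly the paper's step ${\rm wind}(\alpha)={\rm wind}(e^{i\theta\circ c})-k$ with $c=\partial F$.

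The gap is in your proof of ${\rm wind}(\Xi)=0$. You argue by deforming through the family $Q_\tau$ and invoking homotopy invariance of the winding number: ``as $\Xi_\tau(y)$ varies continuously with $\tau$ the integer ${\rm wind}(\Xi_\tau)$ is constant.'' But for $\tau\in(0,1)$ the immersion $i_\tau:Q_x\times[0,1]\to Y^k_\epsilon$ does \emph{not} factorize to a closed submanifold --- $Q_{\tau,x(0)}\ne Q_{\tau,x(1)}$ in general --- and there is no globally defined horizontal phase $\vartheta_\tau$ (no disk $F_\tau$ with $\partial F_\tau\subset Q_\tau$). Consequently $\Xi_\tau$ is a continuous map $[0,1]\to\mathbb{C}^*$ in $t$, not a loop on $S^1_\epsilon$, so its winding number is not even defined for intermediate $\tau$, and direct homotopy invariance is unavailable. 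If you insist on the Stokes picture on the square $(\tau,t)\in[0,1]^2$, the $t=0$ edge contributes nothing (the Lagrangian subspace $T^hY^k\oplus T_{z'}Q_x$ in (\ref{lagsubspaces56}) is $\tau$-independent at $t=0$), but the $t=1$ edge contributes the winding of $\tau\mapsto\Xi_\tau(1)$, and killing \emph{that} is precisely the nontrivial content of Claim 1 in the proof of Proposition \ref{keylemma}. The paper handles it not by a direct homotopy of $\mathbb{C}^*$-valued loops, but by re-encoding the phase variation as the oriented intersection number of a relative Maslov cycle $\mathcal{N}^0$ (resp. $\mathcal{N}^{1,top}$) with a path in the parameter space $Q_x\times[0,1]\times[0,1]$, and then using Assumption \ref{ass4} (coorientation, fold-stability, Proposition \ref{keysing}) to compute or bound that intersection number. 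This relative/stratified-cycle framework is exactly what makes the argument survive the non-closure of the intermediate $Q_\tau$; a bare continuity argument does not. So ``in the same way as in the proof of Proposition \ref{keylemma}'' is not right --- you would need to switch to the intersection-number formulation, at which point you are simply reproducing the paper's proof and should instead invoke Claims 1 and 2 of Proposition \ref{keylemma} directly, as the paper does, before specializing $c=\partial F$ and applying Lemma \ref{tildetheta3}.
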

{\it Remark.} Since $Q$ is assumed to be oriented, we could have done without squaring, the squares are to meet the convention of Cieliebak's paper (\cite{cielie}).
\begin{proof}
We can proceed exactly as in the proof of Proposition \ref{keylemma} and prove at first
\[
{\rm wind}(\alpha)={\rm wind}(e^{i\theta\circ c})-k,
\]
where $c:[0,1]\rightarrow Q_1$ is an arbitrary smooth path lifting $t \mapsto xe^{2\pi it}$ for a fixed $x\in S^1_\epsilon$ and $Q_1$ is as defined in (\ref{hatq}) resp. (\ref{hatqt}) below. To prove (\ref{diskmaslov}) resp. ${\rm wind}(\alpha)={\rm wind}(\tilde \alpha)$ if (\ref{horizontality}) of Assumption \ref{ass5} is satisfied note that in this case $Q_1$ in the proof of Proposition \ref{keylemma} is trivialized by symplectic parallel transport, coincides with $Q$ as defined in Lemma \ref{bla4562} and we can choose $\gamma_1$ in the former proof to be represented by letting $c:[0,1]\rightarrow Q$ parametrize $\partial F\subset Q$, where $F\subset X_e^k$ is as defined in Lemma \ref{horsection}. Then, by Lemma \ref{tildetheta3} ${\rm wind}(e^{i\theta\circ c})=\mu_F/2$ which concludes the proof.
\end{proof}
We can consequently deduce by the preceding Lemmata the following:
\begin{folg}\label{tildealpha2}
The winding number $\rm{wind}(\alpha)$ of $\alpha:S^1_\epsilon\rightarrow \mathbb{C}^*$ as defined above is given by the winding number of
\[
\tilde \alpha(y)=e^{i\vartheta/2}\int_{Q_y}i_{X_{f^k}}\kappa_Q, \ y\in S^1_\epsilon,
\]
where here, $e^{i\vartheta}: \partial F \rightarrow S^1$ is determined by $F$ as described in Lemma \ref{tildetheta3}. and therefore ${\rm wind}(\alpha)$ solely depends on the Maslov index associated to $F$ in $Q$ and the degree $k$ of the covering $X^k\rightarrow X$.
\end{folg}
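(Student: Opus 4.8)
The corollary is a direct repackaging of Lemma~\ref{tildealpha} together with the explicit formulae already established in the proofs of Lemma~\ref{bla456} and Lemma~\ref{tildetheta3}, so the plan is short. First I would simply quote Lemma~\ref{tildealpha}: under the standing Assumptions~\ref{ass4} and \ref{ass5} it asserts ${\rm wind}(\alpha)={\rm wind}(\tilde\alpha)$ for $\tilde\alpha(y)=e^{i\vartheta/2}\int_{Q_y}i_{X_{f^k}}\kappa_Q$, and moreover that this common value equals $\mu_F/2-k$, where $F={\rm im}(u)$ is the horizontal bounding disk of Lemma~\ref{horsection}, $\mu_F$ the Maslov index along $\partial F$ in $Q$, and $k$ the degree of the covering $X^k\to X$. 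This gives the first assertion of the corollary verbatim; it then remains only to record why the right-hand side, hence ${\rm wind}(\alpha)$, is a function of $F$ and $k$ alone.

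For that, I would fix the reference fibre $M=\tilde X^k_x$ and write $y(t)=xe^{2\pi it}$. By the computation $(\ref{blawind2})$ in the proof of Lemma~\ref{bla456}, the restriction $i_{X_{f^k}}\kappa_Q|Q_{y(t)}$ equals $e^{-2\pi ikt}\,|X_{f^k}|\,{\rm vol}_{Q_{y(t)}}$, the factor $e^{-2\pi ikt}$ being forced by the $k$-fold turning of $X_{f^k}$ relative to a horizontal unit frame of $TQ$. By Lemma~\ref{metricsubspace} (cf. the argument producing $(\ref{bla4567})$) the remaining factor $|X_{f^k}|\,{\rm vol}_{Q_{y(t)}}$ is constant along $t$, since the relevant parallel transports act by fibrewise isometries; hence $\int_{Q_{y(t)}}|X_{f^k}|\,{\rm vol}_{Q_{y(t)}}=c_0$ is a nonzero constant. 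Therefore $\tilde\alpha(y(t))=c_0\,e^{i\vartheta(t)/2}\,e^{-2\pi ikt}$ up to a fixed unimodular factor, and ${\rm wind}(\tilde\alpha)={\rm wind}(e^{i\vartheta/2})-k=\mu_F/2-k$. Since by Lemma~\ref{tildetheta3} the phase $e^{i\vartheta}:\partial F\to S^1$ is produced purely from $F$ and a trivialising section $\kappa_F$ of $\Lambda^{n,0}(T^*X_e^k)|F$ via $\kappa_Q^2=e^{i\vartheta}\kappa_F^2$, the function $\tilde\alpha$ --- and a fortiori ${\rm wind}(\alpha)$ --- depends only on $F$, through $\mu_F$, and on $k$.

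The single point that warrants care, and which I expect to be the only mildly delicate step, is the independence of $\mu_F$ from the auxiliary choices entering Lemma~\ref{tildetheta3}: the trivialising section $\kappa_F$ and the particular horizontal representative $u$ of the relative class in $H_2(X_e^k,Q)$. This is exactly the content of the remark following Lemma~\ref{tildetheta3}: replacing $\kappa_F$ by $e^{i\phi}\kappa_F$ changes $\vartheta$ by $\phi|\partial F$, and Stokes' theorem on $F$ gives $\int_{\partial F}d\vartheta=\int_{\partial F}d\vartheta'$, so $\mu_F=-\frac{1}{2\pi}\int_{\partial F}d\vartheta$ descends to a well-defined map $H_2(X_e^k,Q)\to\Z$. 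Assembling this with Lemma~\ref{tildealpha} and the computation above completes the proof; no analytic input beyond that already set up in Section~\ref{boundingdisks} is required.
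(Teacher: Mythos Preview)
Your proposal is correct and follows the same route as the paper, which treats the corollary as an immediate consequence of Lemma~\ref{tildealpha} and the remark after Lemma~\ref{tildetheta3}; the paper gives no separate proof beyond the phrase ``We can consequently deduce by the preceding Lemmata.'' One small bookkeeping point: the formulae you cite as $(\ref{blawind2})$ and $(\ref{bla4567})$ sit in the proof of Proposition~\ref{keylemma}, not in the proof of Lemma~\ref{bla456}, and your re-derivation of the $-k$ contribution is actually superfluous since the identity ${\rm wind}(\alpha)=\mu_F/2-k$ is already part of the statement of Lemma~\ref{tildealpha}.
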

Note that from (\ref{winding-s2}) we know that the winding number of the function $\alpha:S^1_\epsilon\rightarrow \mathbb{C}^*$ equals $m(\sum_i\beta_i-\beta)$. We will now find an isotopy of mappings $\tilde \alpha_\tau:S^1 \rightarrow \mathbb{C}^*$, $\tau\in [0,1]$ so that $\alpha_1=\alpha$ and $\alpha_0$ has winding number $-m\beta$, for this need the following family of Lagrangians in $X_e^k$.
\begin{lemma}\label{lag3}
Let $\kappa >0$ be s.t. $X_e^k|D_\kappa\simeq M \times D_\kappa$ is symplectically trivial, where $M$ is a fixed fibre as above. Then with $Q_x$ as in Assumption \ref{ass2} let $Q_\kappa=Q_x\times S^1_\kappa\subset Y_\kappa:=X_e^k|S^1_\kappa$ be the Lagrangian submanifold of $X_e^k$ induced by symplectic parallel transport along $S^1_\kappa$. Let $Q$ be as above. Then there is a smooth $n+2$-dimensional submanifold with boundary $\tilde Q\subset X_e^k$ being a cobordism from $Q_\kappa$ to $Q$ in the sense that $\tilde Q$ fibres into Lagrangian submanifolds of $X_e^k$ such that
\[
\tilde Q=\bigcup_{t\in [\kappa,\epsilon]}Q_t\subset X_e^k\ {\rm and}\ Q_t=\tilde Q \cap Y_t\ {\rm is \  Lagrangian}
\] 
for all $t \in [0,1]$ as a submanifold of $X_e^k$. Furthermore, $Q_\kappa$ is as above and $Q_\epsilon=Q$, i.e.
\[
\partial \tilde Q=Q_\kappa \cup Q_\epsilon.
\]
Finally, each $Q_t$ fibres to a family of Lagrangian submanifolds $Q_{t,y}:=Q_t\cap (X_e^k)_y$ over $S^1_t,\ t \in [\kappa,\epsilon]$.
\end{lemma}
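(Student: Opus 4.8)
The plan is to construct $\tilde Q$ by interpolating between the trivialization of $X_e^k$ near the origin and its presentation as a symplectic mapping cylinder over $S^1_\epsilon$, exactly mirroring the structure of the extension $X_e^k = X_0\cup X_1\cup X_2^k\cup\tilde X^k$ from Lemma \ref{extension}. First I would work over the innermost piece $X_e^k|D_\kappa\simeq M\times D_\kappa$, where $Q_\kappa = Q_x\times S^1_\kappa$ is obviously Lagrangian (the symplectic form is $\pi_0^*\beta+\omega$ and $\omega|Q_x=0$, $\beta|S^1_\kappa=0$), and each fibre slice $Q_{\kappa,y}=Q_x\times\{y\}$ is Lagrangian. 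Moving outward through $X_1$ and $X_2^k$, I would define $Q_t$ for $t\in[\kappa,\epsilon]$ by symplectic parallel transport of $Q_x$ around $S^1_t$ using the horizontal distribution $H_2^k$ of (\ref{hzweik}); by the same argument as in Lemma \ref{bla4562}, since $H_{\Omega^k}$ (resp. $H_2^k$) is the $\Omega^k$-orthogonal complement of the vertical bundle and $H_2^k\cap TY_t\subset TQ_t$, the union $Q_t=\bigcup_y Q_{t,y}$ is Lagrangian in each $Y_t$ and in $X_e^k$. The key point making this globally well-defined is Assumption \ref{ass5}(a): over $S^1_\epsilon$ the monodromy $\rho^k$ preserves $Q_x$, so $Q_\epsilon$ closes up to a genuine submanifold, and this is precisely the $Q$ of Lemma \ref{bla4562}; over the inner circles the monodromy is trivial (in $X_1$) or an isotopy fixing $Q_x$ setwise (in $X_2^k$, using the chosen $\rho^k_t$), so $Q_t$ is a closed submanifold for every $t$.

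The next step is to verify smoothness of $\tilde Q:=\bigcup_{t\in[\kappa,\epsilon]}Q_t$ as an $(n+2)$-dimensional manifold with boundary. Here I would invoke the flatness arrangements already built into Lemma \ref{extension}: near the gluing circles the fibration is a product, the monodromies match on the two sides, and $\rho^k_t$ has been reparametrized (via the bijection $\tau:[0,1]\to[0,1]$) to be constant near the endpoints, so the families $Q_t$ patch together with no corner. The boundary is then exactly $\partial\tilde Q = Q_\kappa\cup Q_\epsilon = Q_\kappa\cup Q$, and the fibration $\tilde Q=\bigcup_t Q_t\to[\kappa,\epsilon]$ with Lagrangian fibres $Q_{t,y}$ over $S^1_t$ is immediate from the construction. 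One should also check $Q_t\cap\partial_h X_e^k=\emptyset$, which follows from $Q_x\cap\partial M=\emptyset$ (Assumption \ref{ass2}) together with the fact that parallel transport along $H_2^k$ preserves the radial coordinate $\hat S$ (cf. (\ref{Striv})), so $Q_t$ stays in the interior of each fibre.

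The main obstacle I anticipate is not the Lagrangian condition — that is forced by the choice of horizontal distribution — but rather producing the interpolating family $Q_t$ for $t$ in the transition region $[\kappa,s]$ (the part of $X_e^k$ over $X_1$, where the bundle is trivial but one must connect the "product" picture near $0$ to the "parallel transport by $H_2^k$" picture) in such a way that the total space $\tilde Q$ is smooth across all the gluing loci. The cleanest route is to observe that over $X_1$ the horizontal lift defined by $H_2^k$ (with $\rho^k_t\equiv\mathrm{id}$ there) gives $Q_t = Q_x\times S^1_t$, matching $Q_\kappa$ on the inside, while the reparametrization already arranged in Lemma \ref{extension} makes this agree to infinite order with the $X_2^k$-construction at $t=s$; so the smoothness is inherited directly from the smoothness of $X_e^k$ itself. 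I would therefore phrase the proof as: define $Q_t$ fibrewise by parallel transport of $Q_x$ using $H_2^k$ on all of $X_e^k|D_{[\kappa,\epsilon]}$, note that Assumption \ref{ass5}(a) guarantees each $Q_t$ closes up, note that the flatness and reparametrization from Lemma \ref{extension} guarantee the union is smooth with the asserted boundary, and finally record that $Q_\epsilon=Q$ by construction and $Q_\kappa = Q_x\times S^1_\kappa$ by triviality over $D_\kappa$ — completing the cobordism.
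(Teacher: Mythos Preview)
Your proposal is correct and follows essentially the same approach as the paper: the paper simply writes $\hat Q = Q_x\times[\kappa,\epsilon]\times[0,1]$ in the mapping-cylinder coordinates of Lemma~\ref{extension} and observes that it factorizes to a submanifold $\tilde Q\subset X_e^k$ because $\rho^k_t(Q_x)\subset Q_x$, which is exactly your parallel-transport construction viewed through Lemma~\ref{mappingcylinder} (the horizontal distribution on each $Y_t$ is the trivial one in those coordinates). Your more detailed treatment of the transition regions and smoothness is fine but not strictly needed, since the single mapping-cylinder formula already covers all $t\in[\kappa,\epsilon]$ uniformly.
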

\begin{proof}
Recall again that following the proof of Lemma \ref{extension} $X_e^k|D_{[s,\epsilon]}$ is of the form
\[
X^k_2=\frac{M\times [s,\epsilon]\times [0,1]}{(x,t,0)\sim (\rho^k_t(x),t,1)}\rightarrow D_{[s,\epsilon]},
\] 
where here again we parametrize $\rho^k_{(\cdot)}:[\kappa,\epsilon]\times M\rightarrow M$, $\rho^k_\kappa=Id,\ \rho^k_\epsilon=\rho^k$. Let now $t  \in [\kappa,\epsilon]$ (here $0<\kappa< s$) and with $x\ \in S^1_\epsilon$ being the distinguished point of Assumption \ref{ass2}:
\[
\hat Q= Q_x\times [\kappa,\epsilon]\times [0,1].
\]
This again factorizes due to Assumption \ref{ass2} to a well-defined submanifold $\tilde Q\subset X_e^k$ which, since on any $Y_t, \ t\in [\kappa, \epsilon]$, the horizontal distribution is induced from the trivial one on $M\times [0,1]$ (by Lemma \ref{mappingcylinder}) is invariant under symplectic parallel transport along $S^1_t$, for any $t \in [\kappa,\epsilon]$, which implies its restriction to $Y_t$ is Lagrangian.
\end{proof}
So we will assume in the following that we have a smooth family of Lagrangian submanifolds $Q_t, \ t \in [\kappa,\epsilon]$, so that $Q_\epsilon=Q$ and $Q_\kappa=Q_x \times S^1_\kappa$, where $Q_x$ is the Lagrangian from Ass. \ref{ass2} in $M$.
\begin{prop}\label{alphatau2}
Let $u:D_\epsilon\rightarrow X_e^k, \ {\rm im}\ u=F$ be the horizontal section constructed in Lemma \ref{horsection}. Then $F_t:=F\cap(f_e^k)^{-1}(D_t),\ t \in [\kappa,\epsilon]$ has well defined boundary $\partial F_t=Q_t\cap F$ in $Q_t$. Thus we can associate to each Lagrangian $Q_t, \ t \in[\kappa,\epsilon]$ a function 
\[
e^{i\vartheta_t}:u(S^1_t)=\partial F_t\rightarrow S^1,\quad{\rm by}\quad \eta_{F_t}^2=e^{i\vartheta_t}\eta_{Q_t}^2
\]
where $\eta_{F_t}$ is the trivializing section of $u^*(\Lambda^{(n.0)}(T^*X_e^k))$ chosen above, but restricted to $F_t$, $\eta_{Q_t}$ is the element of $\Omega^{(n+1,0)}(X_e^k)$ over $Q_t$ determined by $Q_t$. Then define 
\[
\tilde \alpha_{(\cdot)}:[\kappa,\epsilon]\times S^1\rightarrow \mathbb{C}^*
\]
by parametrizing each boundary $\partial F_t=u(D_t)\cap Q_t\subset Y_t$ by some function $\tilde y_t:S^1 \rightarrow \partial F_t$ s.t. $f_e^k(\tilde y(s))=y_t(s):= te^{2\pi is}, \ t \in [0,1]$ and writing
\[
\tilde \alpha_t(s)=e^{i\vartheta_t/2\circ \tilde y_t(s)}\int_{Q_{t,y_t(s)}} i_{X_{f^k}}\kappa_Q, \ s \in S^1,
\]
for any $t \in [\kappa,\epsilon]$. Then $\tilde \alpha_{(\cdot)}$ is a smooth isotopy of maps $\alpha_{t}:S^1\rightarrow \mathbb{C}^*$ and one has
\begin{equation}\label{windingn}
{\rm wind}(\tilde \alpha_\kappa)=0,\quad {\rm wind}(\tilde \alpha_\epsilon)=m(\sum_{i=0}^{n}\beta_i-\beta),
\end{equation}
i.e. we infer (see Lemma \ref{tildealpha}) that $\mu_F=\mu_{F_\epsilon}=m\beta$.
\end{prop}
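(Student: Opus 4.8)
The plan is to assemble the family $Q_t$, $t\in[\kappa,\epsilon]$, from Lemma \ref{lag3} (which fibres $\tilde Q=\bigcup_t Q_t$ over $[\kappa,\epsilon]$ into Lagrangians $Q_t\subset X^k_e$, with $Q_\epsilon=Q$ and $Q_\kappa=Q_x\times S^1_\kappa$) together with the horizontal section $u:D_\epsilon\to X^k_e$ of Lemma \ref{horsection}, $F={\rm im}\,u$. First I would verify that $\partial F_t=Q_t\cap F$ inside $Q_t$: in the mapping--torus model $M\times[s,\epsilon]\times[0,1]/((x,t,0)\sim(\rho^k_t(x),t,1))$ of $X^k_e|_{D_{[s,\epsilon]}}$ the section $u$ is the constant slice $[z_0,t,\tau]$, where $z_0\in Q_x$ is the point fixed by every $\rho^k_t$ (Assumption \ref{ass5}(b)), while $Q_t$ is the image of $Q_x\times[s,\epsilon]\times[0,1]$ (using $\rho^k_t(Q_x)\subset Q_x$, Assumption \ref{ass5}(a)); since $z_0\in Q_x$ this forces $u(S^1_t)\subset Q_t$ for every $t$, and $u(S^1_t)$ exhausts $Q_t\cap F$ by a dimension count. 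With $F_t$ and $Q_t$ in hand, $e^{i\vartheta_t}$ and $\tilde\alpha_t$ are defined fibrewise exactly as $\vartheta$ and $\tilde\alpha$ were defined from $F$ and $Q$ in Lemma \ref{tildetheta3} and Lemma \ref{tildealpha}.

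Next I would prove that $\tilde\alpha_{(\cdot)}\colon[\kappa,\epsilon]\times S^1\to\C^*$ is a smooth isotopy. Smoothness is immediate: $t\mapsto TQ_t$ is smooth (Lemma \ref{lag3}), hence so is the associated unit $(n+1,0)$--form $\kappa_{Q_t}$ by Lemma \ref{lag5}; $\eta_{F_t}$ is the restriction to $F_t$ of the single parallel--transport trivialisation $\kappa_F$ of Lemma \ref{tildetheta3}; and both $e^{i\vartheta_t}$ and the fibre integral then depend smoothly on $(t,s)$. The point that matters is non--vanishing: since $J$ preserves the horizontal/vertical splitting and $\Omega^k|_{Q_{t,y}}=0$, the restriction $i_{X_{f^k}}\kappa_{Q_t}|_{Q_{t,y}}$ equals, exactly as in the proof of Lemma \ref{bla456}, a fibrewise--constant unit phase times $|X_{f^k}|\,{\rm vol}_{Q_{t,y}}$; as $|X_{f^k}|>0$ the integral over the \emph{closed} Lagrangian $Q_{t,y}$ is non--zero, and $|e^{i\vartheta_t/2}|=1$, so $\tilde\alpha_t(s)\ne 0$ for all $t,s$. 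In particular ${\rm wind}(\tilde\alpha_t)$ is constant in $t$.

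It then remains to evaluate the two endpoints. For $t=\epsilon$ we have $F_\epsilon=F$, $Q_\epsilon=Q$, so $\tilde\alpha_\epsilon$ is the function $\tilde\alpha$ of Lemma \ref{tildealpha}; that lemma gives ${\rm wind}(\tilde\alpha_\epsilon)={\rm wind}(\alpha)$, and ${\rm wind}(\alpha)=m(\sum_{i=0}^n\beta_i-\beta)$ by (\ref{winding-s2})--(\ref{maslovindex37}). For $t=\kappa$ the fibration $(X^k_e,\Omega^k_e,J)$ is, over $D_\kappa$, the holomorphic symplectic product $M\times D_\kappa$ with $f^k_e$ the plain projection, $Q_\kappa=Q_x\times S^1_\kappa$, and $F_\kappa=\{z_0\}\times D_\kappa$ is a $J$--holomorphic disc; the Lagrangian planes $T^hY_\kappa\oplus TQ_{\kappa,y}$, the form $\kappa_{Q_\kappa}$ and the trivialisation $\kappa_F$ all split as products, so $\tilde\alpha_\kappa(s)$ reduces to the $D_\kappa$--factor, where a direct computation --- the one in the proof of Lemma \ref{bla456}, but with the phase of $X_{f^k}$ against the horizontal frame of $TQ_\kappa$ winding once rather than $k$ times, since over the product region there is no $k$--fold covering twist --- shows that the integral factor winds by $-1$ while $e^{i\vartheta_\kappa/2}$ winds by $+1$, whence ${\rm wind}(\tilde\alpha_\kappa)=0$. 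Equivalently, $F_\kappa$ being holomorphic fixes $\mu_{F_\kappa}$ and the $F_\kappa$--version of (\ref{diskmaslov}) gives ${\rm wind}(\tilde\alpha_\kappa)=0$; reading (\ref{diskmaslov}) for $F=F_\epsilon$ together with the constancy of ${\rm wind}(\tilde\alpha_t)$ from the previous paragraph then yields $\mu_F=\mu_{F_\epsilon}=m\beta$. It is this identity, compared with ${\rm wind}(\tilde\alpha_\epsilon)=m(\sum\beta_i-\beta)$, that gets fed into Theorem \ref{theorem34}, where Lemma \ref{condition} makes the two incompatible.

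The main obstacle is the $t=\kappa$ computation carried out in exact agreement with the normalisations fixed in Lemmas \ref{bla456}, \ref{tildetheta3} and \ref{tildealpha}: one must identify the product trivialisation of $X^k_e$ over $D_\kappa$ with the one in which $\kappa_F$ is constructed, and pin down simultaneously the phase of $X_{f^k}$ relative to the horizontal frame of $TQ_\kappa$ (the ``$-k$ becomes $-1$'' effect) and the phase of $\kappa_F^2$ against $\kappa_{Q_\kappa}^2$ along $\partial F_\kappa$. The other delicate point is the non--vanishing of $\tilde\alpha_t$ for \emph{all} intermediate $t$, not merely the endpoints --- this is precisely what converts smoothness of the family into constancy of the winding number --- and it rests on each $Q_{t,y}$ being a closed Lagrangian, which is the content of Lemma \ref{lag3}.
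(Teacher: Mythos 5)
Your proposal follows essentially the same route as the paper: establish that $\tilde\alpha_{(\cdot)}$ is a smooth family of maps into $\mathbb{C}^*$ (hence has constant winding number), then compute the two endpoint windings, with the $t=\epsilon$ value extracted from Lemma~\ref{bla456}/\ref{tildealpha}/(\ref{maslovindex37}) and the $t=\kappa$ value from the trivial product structure of $X^k_e$ over $D_\kappa$. Your explicit checks that $\partial F_t=Q_t\cap F$ via the mapping-torus model and that $\tilde\alpha_t$ is non-vanishing for all intermediate $t$ (so that the winding number really is constant) are both points the paper treats as ``more or less clear,'' and you spell them out correctly.

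The one place where you deviate from the paper is in the $t=\kappa$ computation. You assert that both the integral factor $\int_{Q_{\kappa,y}}i_{X_{f^k}}\kappa_{Q_\kappa}$ and the Maslov phase $e^{i\vartheta_\kappa/2}$ wind by $\mp 1$, on the grounds that ``over the product region there is no $k$-fold covering twist.'' The paper, however, computes both factors to wind by $\mp k$: the $-k$ in the integral factor is the same $-2\pi ik t$ phase that appears in (\ref{blawind2}) in the proof of Proposition~\ref{keylemma} (referenced explicitly at the end of the paper's proof: ``the function $x\mapsto u_0^*(X_{f^k_e})_x$ has clearly winding number $-k$''), and the corresponding $+k$ for $e^{i\vartheta_\kappa/2}$ comes from the mean-curvature computation with $X^h=2\pi k\kappa\cdot e_0$. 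The source of the $k$ is the normalization of $X_{f^k}$: above (\ref{features2}) it is fixed by $d(f\circ\pi_k)(X_{f^k})=1$, i.e.\ $k(f^k)^{k-1}\,df^k(X_{f^k})=1$, so the phase of $df^k(X_{f^k})$ along $S^1_\kappa$ picks up an additional $-(k-1)$ turns on top of the $-1$ from the rotation of $e_0$, even though the fibration itself has become trivial; the product structure kills the holonomy of the connection, not the twist built into the vector field. Your ``no covering twist'' argument conflates triviality of the fibration with constancy of $X_{f^k}$. The two discrepancies happen to cancel in the winding of $\tilde\alpha_\kappa$, so the announced conclusion ${\rm wind}(\tilde\alpha_\kappa)=0$ (and hence the contradiction with $m(\sum\beta_i-\beta)\neq 0$ and the resulting value of $\mu_F$) is unaffected; but if you were to track the two factors separately at intermediate $t$, your formulas would disagree with the ones used in the paper.
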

{\it Remark.} In view of Lemma \ref{condition}, the equalities (\ref{windingn}) prove Theorem \ref{theorem34}.
\begin{proof}
The assertions are actually (more or less) clear despite of the asserted values of the winding numbers.
For this let $t=\epsilon$. Then we showed that modulo isotopy $\tilde \alpha$ appears in the expression for $[s^k]$ in Lemma \ref{bla456}:
\begin{equation}
[s^k|_{y\in S^1_\epsilon}]=\int_{Q_{y}}e^{i\theta}i_{X_{f_e^k}}\kappa_{Q}\cdot [s^k_x]_{||}(y)=\alpha(y)\cdot [s^k_x]_{||}(y)
\end{equation}
By the definition of $s^k$ (recall $k=m\beta, m\in \mathbb{N}$) one has $\mathcal{P}_c^*s^k|y=e^{2\pi i m(\sum_{i=0}^{n}\beta_i-\beta)t}s^k|y$, where $c:[0,1]\rightarrow S^1_\epsilon,\ c(t)=\epsilon e^{2\pi i t}$. Since $[s^k_x]_{||}(y(t)), y\in S^1_\epsilon$ is invariant under parallel transport we conclude that
\[
\alpha(ye^{2\pi i t})=\alpha(y)e^{2\pi im(\sum_{i=0}^{n}\beta_i-\beta)t},
\]
which gives the assertion for $t=\epsilon$. For $t=\kappa$ note that since we haven chosen $\kappa>0$ so that $X_e^k$ is trivial as a symplectic fibration $X_e^k|D_\kappa=M\times D_\kappa$ with the constant family of complex structures compatible with $\omega$ it is Kaehler, i.e. one has by Cieliebak \cite{cielie}
\[
2i\sigma_{Q_\kappa}=\eta_{F_\kappa}+i d\vartheta_\kappa,
\]
but $\eta_{F_\kappa}|{\partial F}=0$ by the horizontality of $F|M \times D_\kappa$ (see the proof of Lemma \ref{tildetheta3}). Now let $\{e_i\}$ be a local orthonormal basis on $Q_\kappa$ which is invariant under symplectic parallel transport along $\partial F_\kappa$ and adopted to the splitting $TX_e^k=(TX_e^k)^h\oplus (TX_e^k)^v$ (we can achieve the former since the family of vertical complex structures over $X_e^k|D_\kappa$ was chosen constant in \ref{extension}). Since $\nabla^g_{e_0}(Je_0)=Je_0/\kappa$ for $e_0\in T^hX_e^k\cap T(\partial F_\kappa)$ of unit length (rotating counterclockwise) we get for $\sigma_{Q_\kappa}(X^h)$, where $X_h=2\pi k\kappa\cdot e_0$, that is $X_h=c'$, where $c(t)=ye^{2 \pi it}, \ y\in S^1_\kappa, t \in [0,1]$ (setting $g(\cdot,\cdot)=\Omega_e^k(\cdot,J\cdot)$)
\[
g(\sum_{i=0}^n\nabla^g_{e_i}e_i,JX^h)= k - \sum_i g([e_i, JX^h], e_i),
\]
where we assume that the $e_i, i=1,\dots,n$ are vertical. Now since $J|(TX_e^k)^h=(f_e^k)^*(j)$, one has $L_{e_i}J=0$ for vertical $e_i$, on the other hand since $L_{X^h}e_i=0$, we arrive at
\[
\frac{1}{2\pi}\int_{\partial F_\kappa}\sigma_{Q_\kappa} = \frac{1}{2\pi}\int_{\partial F_\kappa}d\vartheta_\kappa/2=k,
\]
It remains to evaluate the parametrized integral 
\[
\int_{Q_{t,{y_t(s)}}}i_{X_{f_e^k}}\kappa_{Q_\kappa}, \ s \in S^1.
\]
Let for any fixed fibre $(Y_\rho)_x,\ x\in S^1_\rho$ ($\rho$ chosen as above) the set $\{e_1,\dots, e_n\}$ be a local orthornomal frame of $T(Y_\rho)^v_x$ and let $e_0 \in TY_\rho^h$ be of length one. Then (the trivial) parallel transport around $S^1_\rho$ in $Y_\rho=S^1_\rho\times M$ gives a basis of $T^{n+1,0}(S^1_\rho\times U)$ for some open set $U\subset M$, $\{u_i=\frac{1}{\sqrt{2}}(e_i-iJe_i)\}$, so that locally
\begin{equation}\label{insertk}
i_{X_{f_e^k}}\kappa_{Q_\kappa}=i_{X_{f_e^k}}(u_0^*\wedge \dots \wedge u_n^*).
\end{equation}
But the function $x\mapsto u_0^*(X_{f_e^k})_x,\ x\in S^1_\kappa$ has clearly winding number $-k=-m\beta$ (see the proof of Lemma \ref{tildealpha}), which proves the Proposition.
\end{proof}

\subsection{Bounding disks and relative $(n,0)$-forms}\label{app3}
In the following, it will be described how the methods of Section \ref{boundingdisks} in Chapter \ref{chapter2sympl} can be modified to prove Theorem \ref{theorem34} {\it without} the Assumption \ref{ass5} (1.) by invoking a weaker form of Lemma \ref{relativen} in Section \ref{relativen0}. To be more precise we will assume in the following (using the notation of Section \ref{boundingdisks}) and setting $Y^k:=(f^k)^{-1}(S^1_\epsilon)$). Recall that by Lemma \ref{milnorsymplectic} $Y^k$ is equipped with a structure of an exact symplectic fibration $(Y^k,\Omega^k)$ with a compatible vertical almost complex structure $J^v$. 
\begin{ass}\label{relativenlocal}
For any $y \in S^1_\epsilon$ there is a neighbourhood $U\subset S^1_\epsilon$ of $y$, a family of vertical compatible almost complex structures $\tilde J^v_{U,x}, \ x \in U$ on $Y^k$ and a non-vanishing section $\mathbf{s}_U^k \in \Gamma(\Lambda^{n,0}(T^*Y^k|U)^v)$ (with respect to $\tilde J^v_{U}$) so that 
\begin{equation}\label{prop56}
\mathbf{s}_U^k|\tilde A_x(m)=s^k|\tilde A_x(m), \ J^v|A_x(m)=\tilde J^v_U|A_x(m) \ x \in U,
\end{equation}
and one has $\nabla_{X^{\Omega^k}}(\mathbf{s}_U^k)=-2\pi ik\mathbf{s}_U^k$ over $\bigcup_{y \in U}B_y(m))$ (compare (\ref{features2}). Furthermore there is a finite open covering $U_\lambda, \lambda \in \{1,\dots,m\}$ of $S^1_\epsilon$ so that the associated pairs $(\mathbf{s}_U^k,\tilde J^v_{U})$ are compatible over $U_{\lambda\lambda'}:=U_\lambda\cap U_{\lambda'}$ in the sense that 
\[
(\mathbf{s}_{U_{\lambda}}^k,\tilde J^v_{U_\lambda})=\Phi_{\lambda\lambda'}^*(\mathbf{s}_{U_{\lambda'}}^k, \tilde J^v_{U_\lambda'}),\
\] 
where $\Phi_{\lambda\lambda'}(z):T^v_zY^k\rightarrow T^v_zY^k,\  z \in (f^k)^{-1}(U_{\lambda\lambda'})$ is the vectorbundle automorphism so that $\tilde J^v_{U_\lambda}=\Phi_{\lambda\lambda'}^*\tilde J^v_{U_\lambda'}$.
\end{ass}
By Lemma \ref{relativen}, this is the case if $f$ is quasihomogeneous and if the Milnor fibration $Y^k$ is canonically trivialized in the sense of Definition \ref{milnorfibr} along its boundary. Note that here, we do not impose any triviality conditions in a boundary neighbourhood on the pairs $(\mathbf{s}_U^k,\tilde J^v_{U})$, as 
opposed to Lemma \ref{relativen}. Although by the conctruction of $J$ in Lemma \ref{milnorsymplectic}, $J$ is constant relative to $\Theta_K: \bigcup_{y \in S^1_\epsilon}B_y(m)\rightarrow B_x(m)\times S^1_\epsilon$, we will assume $J$ to be of more general form in the following, that is $\theta_k^*(J^v)=J_x^v$, where $J_x^v$ depends on $x \in S^1_\delta$. The combination of the weaker Assumption \ref{relativenlocal} and the technique of Section \ref{boundingdisks} is expected to have applications in more general cases than in the quasihomogeneous case resp. where $f$ is a smoothing of a single isolated singularity along the lines which we will describe now (still assuming $f$ quasihomogeneous and $Y^k\subset X_e^k$ to be a symplectic fibration in the sense of Lemma \ref{milnorsymplectic}, but reducing the existence of $(\mathbf{s}^k, \tilde J^v)$ which is proven in Lemma \ref{relativen} to Assumption \ref{relativenlocal}). To begin, analogously to Lemma \ref{horsection} we define an embedded disk $D\subset X_e^k$ with boundary in $Q=:Q_\epsilon$, where the closed $n+1$-manifold $Q\subset Y^k\subset X^k_e$ is defined by factoring $\hat Q$ as in (\ref{hatq}) in Section \ref{relativen0}. $D$ is defined by specifying for any fixed point $z_0 \in Q_x$ a map (the index '$0$' will become clear below)
\begin{equation}\label{tildeu2}
\begin{split}
\tilde u_0&: [0,\epsilon]\times [0,1] \rightarrow \tilde D\\
\tilde u_0(t,\tau)&= \left\{\begin{matrix}\{z_0\}\times \{t\}\times \{\tau\}, \ \tau \in [0,1-\delta ]\\  \{(\rho^k_{\psi(\tau) \frac{t-\kappa}{\epsilon-\kappa}})(z_0)\}\times \{t\}\times \{\tau\}, \ \tau \in [1-\delta,1]\end{matrix}\right.,
\end{split}
\end{equation}
which factorizes resp. extends to a well-defined map $u_0:D_\epsilon \rightarrow D\subset X_e^k$ (here, we parametrized s.t. $\rho^k_0=Id,\ \rho^k_1=\rho^k$ and extended smoothly $\rho^k_{s}=Id$ for $s \leq 0$). As before, we choose a trivialization $\kappa_D$ of $\Lambda^{n+1,0}(T^*X_e^k)|D$, where $D={\rm im }(u_0)$ and define a function $e^{i\vartheta}:\partial D \rightarrow S^1$ by
\begin{equation}\label{maslovequality2}
\kappa^2_{Q_\epsilon}|_{\partial D}=e^{i\vartheta}\kappa^2_{D}|_{\partial D}
\end{equation}
where, as described in Section \ref{relativen0}, $\kappa_{Q_\epsilon}$ coincides over each $z' \in u_0(\{\epsilon\} \times [0, 1])\subset Q_\epsilon$, with the element of $\Lambda^{n+1,0}T^*_{z'}X_e^k$ induced by the Lagrangian subspace 
\begin{equation}\label{lagsubspaces3}
T^h_{z'}Y^k \oplus T_{z'}Q_{y(\tau)}\subset T_{z'} X_e^k, \ y(\tau)=f^k_e(z'),\ \tau \in [0,1].
\end{equation}
We can then prove:
\begin{lemma}\label{blalemma}
The winding number of the function $\alpha:[0,1]/\{0,1\}\rightarrow \mathbb{C}^*$ as defined in (\ref{windingbla}) coincides with the winding number of the function
\begin{equation}\label{blawinding2}
\tilde \alpha(s)=e^{i\vartheta/2\circ \tilde y(s)}\int_{Q_{\epsilon,{y(s)}}} i_{X_{f^k}}\kappa_Q, \ s \in [0,1]/\{0,1\},
\end{equation}
where $\tilde y:S^1\rightarrow \partial D$ parametrizes $D$ so that $f_e^k(\tilde y(s))=y(s):=\epsilon e^{2\pi is}$.
\end{lemma}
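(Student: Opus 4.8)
The plan is to imitate the proof of Lemma \ref{tildealpha}, but with the horizontal disk $F$ used there replaced by the disk $D={\rm im}(u_0)$ of (\ref{tildeu2}), which is no longer horizontal. First I would observe that $\partial D\subset Q=Q_\epsilon$ is a smooth loop lifting $t\mapsto\epsilon e^{2\pi it}$, and that since $H_1(Q_x,\mathbb{Z})=0$ by Assumption \ref{ass2}, any two loops in $Q$ lifting $S^1_\epsilon$ are homologous in $Q$; hence $\partial D$ is admissible as the loop ``$c$'' in Proposition \ref{keylemma}. By the first equality established in the proof of that Proposition --- the part that does not involve a bounding disk and uses only Assumption \ref{ass4}, which is in force throughout this section --- we get ${\rm wind}(\alpha)={\rm wind}(e^{i\theta\circ\partial D})-k$, where $e^{i\theta}:Q\to S^1$ is the phase of (\ref{bla347}). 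On the other hand the factor $\int_{Q_{\epsilon,y(s)}}i_{X_{f^k}}\kappa_Q$ occurring in $\tilde\alpha$ has winding number $-k$: this is exactly the computation (\ref{insertk}) from the proof of Proposition \ref{alphatau2}, since along $\partial D$ the coefficient $u_0^*(X_{f^k})$ of the horizontal $(1,0)$-covector winds $-k$ times while the remaining factor is a positive real density. Thus the Lemma reduces to the identity ${\rm wind}(e^{i\theta\circ\partial D})={\rm wind}(e^{i\vartheta/2\circ\tilde y})$.

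For this last identity I would lift $\theta$ to $\tilde\theta:\tilde Q\to\mathbb{R}$ by means of $\sigma_Q=d\theta$ (Corollary \ref{tildetheta}), so that ${\rm wind}(e^{i\theta\circ\partial D})=\frac{1}{2\pi}\int_{\partial D}\sigma_Q$, and then invoke the Cieliebak--et--al.\ decomposition $2i\sigma_Q=\eta_D+i\,d\vartheta$ along $\partial D$ with respect to a unit trivialization $\kappa_D$ of $\Lambda^{n+1,0}(T^*X_e^k)|D$ obtained by parallel transport of a Lagrangian frame along radial rays, as in Lemma \ref{tildetheta3}. Since ${\rm wind}(e^{i\vartheta/2\circ\tilde y})=\frac{1}{4\pi}\int_{\partial D}d\vartheta$, the identity becomes $\int_{\partial D}\eta_D=0$. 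The map $u_0$ was constructed so that over the angular range $\tau\in[0,1-\delta]$ it is the constant section $\{z_0\}$, on which $\partial D$ is horizontal, and there the argument of Lemma \ref{tildetheta3} applies verbatim: radial parallelism of $\kappa_D$, $\mathcal{L}_XJ=0$ for horizontal $X$ by the construction of $H_2^k$, and the fact that near $\partial D\subset Q_x\subset\tilde A_x(m)$ the pair $(\mathbf{s}^k_U,\tilde J^v_{U})$ furnished by Assumption \ref{relativenlocal} makes the canonical bundle split compatibly with $\pi_0$, together force $\eta_D|\partial D=0$ on that arc. Over the ``twisting'' arc $\tau\in[1-\delta,1]$, where $\partial D(\tau)=(\rho^k_{\psi(\tau)}(z_0),\epsilon,\tau)$ acquires a vertical component, I would split $\eta_D=\eta^h+\eta^v$ into the connection forms of the horizontal $(1,0)$-line and of the fibrewise canonical line (the latter built from $\mathbf{s}^k_U$): the horizontal contribution vanishes as before, while the vertical contribution should integrate to zero because $\mathbf{s}^k_U$ restricts on $\tilde A_x(m)\ni z_0$ to the holomorphic residue form $s^k$ and the isotopy $\rho^k_{\psi(\tau)}$ keeps $z_0$ inside this region; any residual ambiguity in the choice of $\kappa_D$ or of the twisting arc is then absorbed by the homotopy-invariance of the Maslov number of $(D,\partial D)$ in $(X_e^k,Q)$ and by the dependence of ${\rm wind}(e^{i\theta\circ\partial D})$ on $[\partial D]\in H_1(Q,\mathbb{Z})$ alone.

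The hard part will be precisely this control of $\eta_D$ along the non-horizontal arc of $\partial D$ and the accompanying bookkeeping. In Section \ref{boundingdisks} the analogous step was immediate, because Assumption \ref{ass5}(1)(b) produced a genuinely horizontal disk and hence $\eta_D|\partial D\equiv0$; here one must instead show that the curvature contribution picked up by the twisting of $D$ against the flat horizontal connection cancels the corresponding twist of the Maslov phase $\vartheta$, and this is exactly where the weakened hypothesis \ref{relativenlocal} --- a merely local fibrewise $(n,0)$-form with the overlap automorphisms $\Phi_{\lambda\lambda'}$ and no triviality imposed near $\partial Y^k$ --- has to be leveraged in place of the global $\mathbf{s}^k$ of Lemma \ref{relativen} (this is the ``pushing $D$ towards the boundary'' mechanism alluded to after Lemma \ref{relativen}). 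Once $\int_{\partial D}\eta_D=0$ is established, combining the three winding numbers above yields ${\rm wind}(\tilde\alpha)={\rm wind}(e^{i\vartheta/2\circ\tilde y})-k={\rm wind}(e^{i\theta\circ\partial D})-k={\rm wind}(\alpha)$, which is the assertion.
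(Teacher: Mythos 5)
Your reduction of the Lemma to the identity $\int_{\partial D}\eta_D=0$ is correct and is exactly where the paper's own proof arrives (equation (\ref{etavanish56})): the first step (comparing with the proof of Proposition \ref{keylemma} and using (\ref{insertk}) to account for the $-k$) and the appeal to (\ref{masloveq23}) are consistent with the paper. But the proof of the identity itself is where you stop short, and your proposed strategy for it does not match the paper's and is not self-contained.

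The gap: you split $\partial D$ into the horizontal arc $\tau\in[0,1-\delta]$ and the twisting arc $\tau\in[1-\delta,1]$, handle the horizontal arc by the argument of Lemma \ref{tildetheta3}, and then say of the twisting arc that the ``vertical contribution should integrate to zero'' because $\rho^k_{\psi(\tau)}$ keeps $z_0$ inside $\tilde A_x(m)$, with ``residual ambiguity absorbed by homotopy-invariance.'' That is not an argument: on the twisting arc the disk $D$ is genuinely non-horizontal, the Kaehler computation (\ref{radialvanishing}) that killed $\eta_F|\partial F$ in Lemma \ref{tildetheta3} relied precisely on horizontality of $F$, and there is no a priori reason for $\eta_D$ restricted to that arc to vanish, nor even to be a closed $1$-form whose integral is a topological invariant of $\partial D$. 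You explicitly flag this as ``the hard part'' and leave it open. The paper does not compute $\eta_D$ pointwise on the twisting arc at all. Instead it builds the three-manifold ${\rm im}(F)$ from (\ref{tildeF}), which interpolates between $D_0=D$ and a disk $D_1$ that \emph{is} horizontal and has boundary in $\partial Y^k$; lets $\hat F=\bigcup_s\partial D_s$ be the lateral boundary; constructs a section $\mathbf{Q}$ of the Lagrangian Grassmannian over $\hat F$ and, crucially, patches the local phases $2d\theta_\lambda-d\vartheta_\lambda$ from (\ref{thetadef5}) into a single well-defined closed $1$-form $\eta_{\hat F}$ on $\hat F$, using the overlap automorphisms $\Phi_{\lambda\lambda'}$ supplied by Assumption \ref{relativenlocal} to verify compatibility across the $U_\lambda$. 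Then Stokes on ${\rm im}(F)$ gives $\int_{\partial D_0}\eta_D=\int_{\partial D_1}\eta_{D_1}$, and the right-hand side vanishes because $D_1$ is horizontal (and $\Phi_\lambda=Id$ near $\partial Y^k$, so $\eta_{\hat F}|\partial D_1=\eta_{D_1}$). This cobordism-plus-Stokes mechanism is what the phrase ``pushing $D$ towards the boundary'' refers to; you allude to it but never invoke the family $D_s$, never construct $\eta_{\hat F}$, and never apply Stokes. Without those ingredients the crucial step $\int_{\partial D}\eta_D=0$ is unproved, and your direct decomposition $\eta_D=\eta^h+\eta^v$ on the twisting arc would need a separate, nontrivial justification that you do not supply.
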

\begin{proof}
At first, we can proof analogously as inthe proof of Proposition \ref{keylemma} (note that the intersection of $Q_\epsilon$ with the image of $M\times \{\delta\}\times [1-r,1]$ in $X_e^k$ is not Lagrangian in $X_e^k$), that the winding number of $\hat \alpha:S^1_\epsilon\rightarrow \mathbb{C}^*$ defined by
\begin{equation}\label{tildetheta5}
\hat \alpha(s)=e^{i\theta\circ \tilde y(s)}\int_{Q_{\epsilon,y(s)}} i_{X_{f^k}}\kappa_Q, \ s \in [0,1]/\{0,1\},
\end{equation}
coincides with that of $\alpha$ (namely (\ref{maslovindex36})). Here, $\alpha$ resp. $e^{i\theta}:Q_\epsilon\rightarrow S^1$ are as defined in resp. below (\ref{windingbla}). Now note that if we define a $1$-form $\eta_{Q_\epsilon}$ on $Q_\epsilon$ by $\nabla\kappa^2_{Q_\epsilon}=\eta_{Q_\epsilon}\kappa^2_{Q_\epsilon}$, then we still (as in Section \ref{boundingdisks}) have the equality $\eta_{Q_\epsilon}=2id\theta$ on $Q_\epsilon$, which is immediate by differentiating (\ref{bla987}), furthermore along $C:=u(\{\epsilon\}\times [1-\delta, 1])\subset X_e^k$ we still have $2i\sigma_{Q_\epsilon}=\eta_{Q_\epsilon}$. On the other hand, by differentiating (\ref{maslovequality2}) we have the equality
\begin{equation}\label{masloveq23}
2id\theta|\partial D=\eta_{Q_\epsilon}|\partial D=(id\vartheta+ \eta_D)|\partial D, 
\end{equation}
where $\eta_D \in H^1(D)$ is defined by $\nabla \kappa^2_D=\eta_D\kappa^2_D$, where $\nabla$ is again the Levi-Civita-connection on $X_e^k$. So the assertion of our Lemma reduces to show the equality that if considering $\eta_D$ as an element of $H^1(\partial D,\mathbb{C})$, then
\begin{equation}\label{etavanish56}
\eta_D(\partial D)=0.
\end{equation}
To show that, define in analogy to the proof of Proposition \ref{keylemma} a smooth path $v:[0,1]\rightarrow M$ where $M=Y^k_x$ for some fixed $x \in S^1\epsilon$ so that $v(0)=z_0$, where $z_0$ is as in (\ref{tildeu2}) and $v(1)\in \partial M$. Then define a mapping
\begin{equation}\label{tildeF}
\begin{split}
\tilde F&: [0,1]\times [0,\epsilon]\times [0,1] \rightarrow \tilde D\\
\tilde F(s,t,\tau)&= \left\{\begin{matrix}\{v(s)\}\times \{t\}\times \{\tau\}, \ \tau \in [0,1-\delta ]\\  \{(\rho^k_{\psi(\tau) \frac{t-\kappa}{\epsilon-\kappa}})(v(s))\}\times \{t\}\times \{\tau\}, \ \tau \in [1-\delta,1]\end{matrix}\right.,
\end{split}
\end{equation}
which factorizes to a well-defined map $F:[0,1]\times [0,\epsilon]\times [0,1]/\{0,1\} \rightarrow X_e^k$ whose image is a compact smooth embedded $3$-manifold in $X_e^k$ with corners of codimension two. Define, as in the proof of Proposition \ref{keylemma}, a map $\hat u:[0,1]\times [0,1]/\{0,1\}\rightarrow Y^k$ by setting $\hat u(s,\tau)=F(s,\epsilon,\tau)$ and define $u_s:[0,\epsilon]\times [0,1]/\{0,1\},\ s\in [0,1]$ by $u_s(t,\tau)=F(s,t,\tau)$. So we have ${\rm im}(u_0)=D=:D_0$, $D_s:= {\rm im}(u_s)\subset X_e^k,\  s \in [0,1]$ is a smooth family of embedded disks and we have $\partial ({\rm im} (F))=D_0\cup D_1\cup \hat F$, where $\hat F:=\bigcup_{s\in [0,1]} \partial D_s$. Note that by construction and by the fact that $\rho^k_t|\partial M=Id, t \in [0,1]$ we have $\partial D_1\subset \partial Y^k$ and $TD_1 \subset T^hY^k$, where $T^hY^k\subset TY^k$ is the subbundle given by the closed two-form $\Omega$ on $Y^k$ as defined in Lemma \ref{extension}, so $D_1$ is horizontal with boundary in $\partial Y^k$. Now along $\hat F\subset Y_\epsilon\subset X_e^k$ we can construct a section $\mathbf{Q}\in \Gamma(\mathcal{L}(X^k_e,\Omega^k)|\hat F)$ over $\hat F$, where $\mathcal{L}(X^k_e,\Omega^k)$ denotes the bundle of Lagrangian subspaces of $TX_e^k$ relative to $\Omega^k$, by repeating the definition given in Section \ref{relativen0} in (\ref{lagpoli}) resp. (\ref{lagsubspaces2}). This in turn gives rise to a trivialization $\kappa_{\mathbf{Q}}\in \Gamma(\Lambda^{n+1,0}(T^*Y^k)|\hat F)$ along $\hat F$ so that $\kappa_{\mathbf{Q}}|\partial D_0=\kappa_{Q_\epsilon}|\partial D_0$, where $\kappa_{Q_\epsilon}\in \Gamma(\Lambda^{n+1,0}(T^*Y^k))|Q_\epsilon$ was defined below (\ref{maslovequality2}). On the other hand we can choose a smooth family of trivializations $\kappa_{D_s} \in \Gamma(\Lambda^{n+1,0}(T^*X^k_e)|D_s)$ for any $s \in [0,1]$, so that $\kappa_{D_0}=\kappa_D$ with $\kappa_D$ as in (\ref{maslovequality2}). Note that in these construction, we used the nearly complex structure $J$ on $Y^k$ defined in Lemma \ref{milnorsymplectic}. Now choose an open covering $U_\lambda\subset S^1_\epsilon, \lambda \in G$, where $G$ is a finite set, of $S^1_\epsilon$, where the $U_\lambda$ satisfy the hypothesis of Lemma \ref{relativen} and consider for each $U_\lambda$ the pair $(J_\lambda, \mathbf{s}^k_\lambda)$ of vertical nearly complex structures resp. elements $\mathbf{s}^k_\lambda \in \Gamma(\Lambda^{n,0}(T^*Y^k)^v)|(f^k)^{-1}(U_\lambda)$ that satisfy the conditions in Lemma \ref{relativen}. Choose a family of bundle automorphisms $\Phi_\lambda(x):T_x^vY^k\rightarrow T_x^vY^k,\ x\in (f^k)^{-1}(U_\lambda)$ so that $\Phi_\lambda^*(J)=J_\lambda$ and define for any $s\in [0,1]$ functions $e^{i\vartheta_\lambda}, e^{i\theta_\lambda}: \partial D_s^\lambda:=\partial D_s\cap (f^k)^{-1}(U_\lambda)\rightarrow S^1$ by the requirement that over $\partial D_s^\lambda$ we have
\begin{equation}\label{thetadef5}
e^{i\theta_\lambda}\Phi_\lambda^*i_{X_{f^k}}\kappa_{\mathbf{Q}}|_{\partial D_s^\lambda}=\mathbf{s}^k_\lambda|_{\partial D_s^\lambda},\quad 
\Phi_\lambda^*\kappa^2_{\mathbf{Q}}|_{\partial D_s^\lambda}=e^{i\vartheta_\lambda}\Phi_\lambda^*\kappa^2_{D_s}|_{\partial D_s^\lambda}
\end{equation}
We now claim that the sets $\{2d\theta_\lambda-d\vartheta_\lambda:=2d {\rm log}(e^{i\theta_\lambda})-d{\rm log}(e^{i\vartheta_\lambda})\}_{\lambda \in G}$ give rise to a well-defined closed $1$-form $\eta_{\hat F}:=2id\theta-id\vartheta\in \Omega^1(\hat F)$, where $\hat F=\bigcup_{s\in [0,1]} \partial D_s$. To see this, let $\lambda,\lambda'\in G$ so that $U_{\lambda\lambda'}:=U_\lambda\cap U_{\lambda'}\neq \emptyset$ and define the bundle automorphism 
\[
\Phi_{\lambda\lambda'}(x)=\Phi_{\lambda'}\circ\Phi_\lambda^{-1}(x):T_x^vY^k\rightarrow T_x^vY^k,\ x \in (f^k)^{-1}(U_{\lambda\lambda'}).
\]
Then, by the definition of the $\mathbf{s}^k_\lambda$ in the proof of Lemma \ref{relativen} resp. its local version Ass. \ref{relativenlocal}, we have $\mathbf{s}^k_{\lambda'}=\Phi_{\lambda\lambda'}^*\mathbf{s}^k_{\lambda}$ over $U_{\lambda\lambda'}$. Then applying $\Phi_{\lambda\lambda'}^*$ for any $s \in [0,1]$ to both equations in (\ref{thetadef5}) substitutes $\lambda$ by $\lambda'$ in all involved elements of $\Gamma(\Lambda^{n,0}(T^*Y^k)^v)|D_s\cap (f^k)^{-1}(U_{\lambda\lambda'})$, which proves the claim. By (\ref{masloveq23}) we have $\eta_{\hat F}|\partial D_0=2id\theta-id\vartheta|\partial D_0=\eta_D$ while by the second equation in (\ref{relativenlocal}) we have $\Phi_\lambda(z)= Id$ if $z\in \bigcup_{x\in S^1_\epsilon}B_x(m)$ and $\lambda \in G$, i.e. we have $\Phi_\lambda(z)=Id$ for any $z \in \partial D_1\subset \partial Y^k$. Thus it follows by covariantly differentiating (\ref{thetadef5}) along $\partial D_1$ and substracting the results that 
\[
\eta_{\hat F}|\partial D_1= (2id\theta-id\vartheta)|\partial D_1=\eta_{D_1}|\partial D_1.
\]
Consequently we have $0=\int_{\hat F}d\eta_{\hat F}=\int_{\partial D_0}\eta_D-\int_{\partial D_1}\eta_{D_1}$. But since the disk $D_1\subset X_e^k$ is {\it horizontal}, we can conclude as in the proof of Lemma \ref{tildetheta3}, that $\eta_{D_1}\equiv 0$ as an element of $\Omega^1(\partial D_1)$, but this proves (\ref{etavanish56}), which proves the Lemma.
\end{proof}
To finish the proof of Theorem \ref{theorem34}, we proceed along the lines of Section \ref{boundingdisks} and construct as in the proof of Lemma \ref{lag3} a smooth $n+2$-dimensional submanifold of $X_e^k$ with boundary $\tilde Q\subset X_e^k$ being a cobordism from $Q_\kappa=Q_x\times S_\kappa^1$ to $Q_\epsilon$ (using the notation of Lemma (\ref{lag3}) by setting for $t \in [\kappa,\epsilon]$ (here $0<\kappa< s$ as in Lemma \ref{lag3}) and for the  function $\psi:[1-\delta,1]\rightarrow [0,1]$ as chosen in (\ref{tildeu2}) resp. in Section \ref{relativen0} that is zero in some neighbourhood of $1-\delta$ and equal to $1$ in a neighbourhood of $1$,
\begin{equation}\label{hatq2}
\hat Q= Q_x\times [\kappa,\epsilon]\times [0,1-\delta]\cup \bigcup_{\tau \in [1-\delta,1], t \in [\kappa,\epsilon]} (\rho^k_{\psi(\tau) (t-\kappa)/(\epsilon-\kappa))})(Q_x)\times \{t\}\times \{\tau\}
\end{equation}
where we assumed that $\rho^k_{(\cdot)}:[0,1]\times M\rightarrow M$ is parametrized so that $\rho^k_t=id$ for $t \in [0,s]$, $s$ small, and $\rho^k_t=\rho^k$ in some neighbourhood of $t=1$. Then $\hat Q\subset M\times [\kappa,\epsilon]\times [0,1]$ factorizes to a well-defined submanifold $\tilde Q\subset X_e^k$ whose intersection $Q_t$ with $Y^k_t=(f^k_e)^{-1}(S_t^1),  t \in [\kappa,\epsilon]$ is fibrewise Lagrangian, we denote $Q_t:=\tilde Q\cap Y_t$. As above we choose a trivialization $\kappa_D$ of $\Lambda^{n+1,0}(T^*X_e^k)|D$, where $D={\rm im }(u_0)$ and noting that the intersection $D\cap Q_t$ is transversal we define a function $e^{i\vartheta_t}: D\cap Q_t  \rightarrow S^1, \ t \in [\kappa,\epsilon]$ by
\begin{equation}\label{maslovequality3}
\kappa^2_{Q_t}|_{D \cap Q_t}=e^{i\vartheta_t}\kappa^2_{D}|_{D \cap Q_t}
\end{equation}
where, analogous to the above, $\kappa_{Q_t}$ coincides over each $z' \in u_0(\{t\} \times [0, 1])\subset Q_t$, with the element of $\Lambda^{n+1,0}T^*_{z'}X_e^k$ induced by the Lagrangian subspace 
\[
T^h_{z'}Y^k \oplus T_{z'}Q_{t,y(\tau)}\subset T_{z'} X_e^k, \ y(\tau)=f^k_e(z'),\ \tau \in [0,1].
\]
Then $e^{i\vartheta_1}: D\cap Q_t  \rightarrow S^1$ coincides with $e^{i\vartheta}:\partial D\rightarrow S^1$ in Lemma \ref{blalemma} and as in Section \ref{boundingdisks} resp. the proof of Proposition \ref{alphatau2} one shows that ${\rm wind}(e^{i\vartheta_0})=k$, which implies that ${\rm wind}(\tilde \alpha)=0$ which gives a contradiction to (\ref{maslovindex37}) as in the proof of the Propositions \ref{keylemma} resp. \ref{alphatau2}, proving the Theorem.\\
{\it Remark.} Note that if we add to Assumption \ref{relativenlocal} a triviality-condition along a boundary-neighbourhood, that is there is a diffeomorphism $\Theta: \bigcup_{y\in S^1_\delta}B_y(m)\rightarrow B_x(m)\times S^1_\epsilon$ for some fixed $x \in S^1_\delta$ so that $(\mathbf{s}^k_{U_\lambda}, \tilde J^v_{U_\lambda})=({\rm pr}_1\circ(\Theta))^*(\mathbf{s}_x^k, \tilde J^k_x)$ for some $\mathbf{s}_x^k \in  \Gamma(\Lambda^{n,0}(T^*B_x(m))^v)$ and some vertical complex structure $\tilde J^v_x$ on $TB_x(m)$, then the proof of Lemma \ref{blalemma} shows that the family $d\theta_\lambda,\ \lambda \in G$ (see the discussion below (\ref{thetadef5})) can be extended to a closed $1$-form on $\hat F$ and one can proceed along the lines of the proof of Proposition \ref{keylemma} in Section \ref{relativen0}, that is, we do not need 'bounding disks'. The triviality condition along the boundary of $Y^k$ is needed to show that this extension takes the precise value $-2\pi k$ along $\partial D_1$ (notation of this section), whereas by using bounding disks, it suffices to observe that there is a closed extension of $\eta_{\hat F}$ whose restriction to $\partial D_1$ is derived from a given trivialization of $\Lambda^{n,0}(T^*X_e^k)$ over $D_1$ and that $D_1$ is horizontal.

\subsection{General polynomials, Lagrangians and bounding disks}\label{generalpol}
In this subsection, we will briefly discuss how to use the 'bounding disk'-method of Section \ref{boundingdisks} culminating in Proposition \ref{alphatau2} to formulate necessary conditions for the existence of certain Lagrangian submanifolds in (symplectically trivial) coverings of one-parameter-smoothings of an isolated singularity at $0$ of a general polynomial, $f:\mathbb{C}^{n+1}\rightarrow \mathbb{C}$. For this, let $f:\hat X\rightarrow D^*_\delta$ be the one-parameter-smoothing of $f^{-1}(0)\cap B_\epsilon$ for some small $0 < \epsilon$ as constructed in (\ref{milnor1}) (replacing $B_1$ by $B_\epsilon$) and suppose we have chosen some basis $(s_1,\dots,s_\mu)$ spanning the sheaf of modules $\mathcal{H}^n(f_*\Omega^\cdot_{\hat X/D_\delta})$ over $\mathcal{O}_{D_\delta}$ so that the associated basis $g_1,\dots g_\mu \in \mathcal{O}_{\mathbb{C}^{n+1},0}/\hat M(f)$, where $\hat M(f)$ is a certain submodule (see \cite{loo}) of $\mathcal{O}_{\mathbb{C}^{n+1},0}$ so that $\mathcal{O}_{\mathbb{C}^{n+1},0}/\hat M(f)¸\simeq \omega_{f,0}/d\Omega^{n-1}_{X/D_\delta,0}$ (c.f. section \ref{relcohom}), is chosen so that $g_1=1$. As in the quasihomogeneous case, let $\hat s \in \Gamma(\mathcal{H}^n(f_*\Omega^\cdot_{\hat X/D_\delta}))$ be associated to $g_1$ and satisfying (\ref{features}) and consider the exact symplectic fibration $f:X\rightarrow D_\delta$ as introduced in (\ref{milnorfibr}). As before, let $s_x=\Psi_x^*\hat s_x, \ x\in D^*_\delta$ using the smooth family of fibrewise diffeomorphisms $\Psi_x:X_x\rightarrow \hat X_x$ as defined in (\ref{th:classical-milnor-fibre}). Then $s \in \Gamma({\bf H}^n(Z,\mathbb{C}))$ is a section of the $n$-th cohomology bundle of the fibres, using notation as introduced below (\ref{features}). We now assume that if $\rho \in {\rm Symp}(M,\partial M,\omega)$ is the symplectic monodromy of $X\cap f^{-1}(S^1_\delta)=:Y$ induced by the kernel of $\Omega|TY$ and $M:=Y_x$ for some fixed $x\in S^1_\delta$, that $[\rho^k]=Id$ in $\pi_0({\rm Symp}(M,\partial M,\omega))$ for some $k \in \mathbb{N}$. Then we can form, as in Lemma \ref{extension}, a symplectic extension $X^k_e$ of the symplectic covering $\pi_k:X^k\rightarrow X$ of $X$ to $D_\delta$ and define $s^k:=\pi_k^*(s)\in {\bf H}^n(Z^k,\mathbb{C})$ using notation as above. We now set $Y^k:=\pi_k^{-1}(Y)$ and make the following 
\begin{ass}\label{ass3}
For a fixed $x\in S^1_\delta$, let $Q_x\subset M$ be the Lagrangian submanifold so that $\int_M[s_x]\wedge PD[Q_x]=c \neq 0\in H^n(M,\mathbb{C})$. We assume that 
\begin{enumerate}
\item $[s_x] \in {\rm im}\ i:H^n(M,\partial M,\mathbb{C})\rightarrow H^n(M,\mathbb{C})$
\item There exists a (time-dependant) Hamiltonian flow $\Phi_H: [0,1]\times M\rightarrow M$ 
so that $\rho^k=\Phi_H(1)$ and a function $\alpha: [0,1]/\{0,1\}\rightarrow \mathbb{C}^*$
satisfying $\alpha(t) \cdot s^k_{x}=(\Phi_H(t)^{-1})^*\mathcal{P}_{x,y(t)}^*s^k_{y(t)}$, for all $t \in [0,1]$, where $\mathcal{P}_{x,y(t)}:Y^k_x\rightarrow Y^k_{y(t)}$ denotes symplectic parallel transport in $Y^k$ along  $y(t)=xe^{2\pi it}$.
\item There is a neighbourhood $\mathcal{U}$ of $Y^k$ in $X_e^k$ so that $\mathcal{U}\subset X^k_e$ is 'flat' in the sense of the proof of Lemma \ref{extension} and Kaehler.
\end{enumerate}
\end{ass}
Asssume now that $\rho^k(Q_x)\subset Q_x$, so that there is as in Lemma \ref{bla456} a closed Lagrangian submanifold $Q\subset Y^k$ in $X^k_e$ which fibres over $S^1_\delta$ to a family of fibrewise Lagrangians $Q_y\subset Y_y^k, y\in S^1_\delta$. Assume furthermore that there is a horizontal section $u:D_\delta \rightarrow X^k_e$ so that $u(D)=F$ and $\partial F\subset Q$ as constructed in Lemma \ref{horsection} (where we used (\ref{horizontality}) in Assumption \ref{ass2}, in the following we will simply assume the existence of a {\it horizontal} disk with boundary in $Q$). Then by the horizontality of $u$ and by the flatness and Kaehler property of $X^k_e$ in $\mathcal{U}$ we can deduce as in the proof of Lemma \ref{tildetheta3} that w.r.t. to some trivialization $\kappa_F\in \Lambda^{n,0}(T^*X^k_e)|F$, $\eta_F|\partial F=0$, where $\eta_F\in \Omega^1(F,\mathbb{C})$ is defined as below (\ref{cieliebla}). Furthermore it follows as in Lemma \ref{alphatau2}, that $e^{i\vartheta}:\partial F\rightarrow S^1$ defined by 
\[
\kappa_Q^2=e^{i\vartheta}\kappa_F^2.
\]
where $\kappa_Q\in \Lambda^{n,0}(T^*X^k_e)|Q$ is induced by $TQ\subset TX^k_e$ as before, satisfies
\[
\mu(F):={\rm wind}(e^{i\vartheta})=k.
\]
Finally, by combining this with Corollary \ref{tildealpha2} we can argue as in the proof of Lemma \ref{alphatau2}, that ${\rm wind}(\alpha)=\mu(F)-k=0$, so to summarize we arrive at the following Corollary:
\begin{folg}\label{generallag}
Assume that $[\rho^k]=Id$ in $\pi_0({\rm Symp}(M,\partial M,\omega))$ for some $k\in \mathbb{N}$, where $\rho$ is the symplectic monodromy of the Milnor bundle $Y\subset X$ of a polynomial $f:(X,Y)\rightarrow (D_\delta, S^1_\delta)$ with isolated singularity at $0$. Then, if there is a Lagrangian submanifold $Q\subset Y^k\subset X^k_e$ as described above so that with $Q_y=Q\cap Y^k_y$ the first condition in Assumption \ref{ass3} is satisfied for some (hence any) $y\in S^1_\delta$ and such that there is horizontal disk $F\subset X^k_e$ with boundary in $Q$, then the Maslov-Index $\mu(F)$ of $F$ with boundary in $Q$ equals $k$. If furthermore there is such a disk and in addition also 2. and 3. of Assumption \ref{ass3} are satisfied, then the winding number of $s^k$ w.r.t. to $Q$ is zero, that is
\begin{equation}\label{masdiskeq}
{\rm wind}(\alpha)={\rm wind}({\rm ev}(s_{y})(Q_y))_{y\in S^1_\delta}=\mu(F)-k=0.
\end{equation}
\end{folg}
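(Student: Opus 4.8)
The plan is to assemble Corollary \ref{generallag} as a direct synthesis of the lemmata already developed in Section \ref{boundingdisks}, transferred verbatim to the general-polynomial setting under Assumption \ref{ass3}. First I would fix notation: let $f:(X,Y)\rightarrow(D_\delta,S^1_\delta)$ be the good representative with fibre $M=Y_x$, assume $[\rho^k]=\mathrm{Id}$ in $\pi_0(\mathrm{Symp}(M,\partial M,\omega))$, and form the symplectic extension $f^k_e:X_e^k\rightarrow D_\delta$ exactly as in Lemma \ref{extension} (whose proof only used simple connectivity of $M$ and $H^2(M,\mathbb{C})=0$, both still available since $M$ is the Milnor fibre of an isolated singularity, hence $(n-1)$-connected). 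Given $\rho^k(Q_x)\subset Q_x$, the submanifold $Q=\bigcup_{y\in S^1_\delta}Q_y\subset Y^k$ obtained by symplectic parallel transport is closed and Lagrangian by the argument of Lemma \ref{bla4562}, and its fibrewise pieces $Q_y$ are Lagrangian in $Y^k_y$. By hypothesis there is a horizontal section $u:D_\delta\rightarrow X_e^k$ with $\partial F\subset Q$, $F=\mathrm{im}(u)$, as in Lemma \ref{horsection}.

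The first claim — $\mu(F)=k$ — follows by repeating the computation of Lemma \ref{tildetheta3} combined with Proposition \ref{alphatau2}. The key point is that $\eta_F|\partial F=0$: this is where condition (3.) of Assumption \ref{ass3} enters, since the vanishing of $\eta_F$ along $\partial F$ was deduced in Lemma \ref{tildetheta3} purely from horizontality of $u$ together with the flatness and Kähler property of $X_e^k$ in a neighbourhood of $\partial F$, both now assumed outright. Then $2i\sigma_Q=\eta_F+id\vartheta$ of \cite{cielie} gives $\sigma_Q=\tfrac12 d\vartheta$ along $\partial F$, and the explicit mean-curvature computation in the Kähler collar (as in the $t=\kappa$ case of the proof of Proposition \ref{alphatau2}, using $\nabla^g_{e_0}(Je_0)=Je_0/\kappa$ for a unit horizontal $e_0$ and $\mathcal{L}_{X^h}e_i=0$) yields $\tfrac{1}{2\pi}\int_{\partial F}d\vartheta/2=k$, i.e. $\mu(F)=k$.

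The second claim — $\mathrm{wind}(\alpha)=\mu(F)-k=0$ — is then immediate once the apparatus of Corollary \ref{tildealpha2} is in place. Condition (2.) of Assumption \ref{ass3} produces precisely the function $\alpha:[0,1]/\{0,1\}\rightarrow\mathbb{C}^*$ with $\alpha(t)\cdot s^k_x=(\Phi_H(t)^{-1})^*\mathcal{P}^*_{x,y(t)}s^k_{y(t)}$ that plays the role of the function in Lemma \ref{bla456}; the existence of the Hamiltonian $\Phi_H$ connecting $\rho^k$ to the identity is what, in the quasihomogeneous case, came for free from the weighted circle action. With this $\alpha$ one runs the argument of Lemma \ref{tildealpha}: first $\mathrm{wind}(\alpha)=\mathrm{wind}(e^{i\theta\circ c})-k$ for a lift $c$ of $t\mapsto xe^{2\pi it}$ to $Q$ (reducing, as in the proof of Proposition \ref{keylemma}, to the horizontal representative $c$ parametrizing $\partial F$), and then $\mathrm{wind}(e^{i\theta\circ c})=\mu(F)=k$ by Lemma \ref{tildetheta3} and the first claim. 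Combining the two identities gives $\mathrm{wind}(\alpha)=0$, which is $\mathrm{wind}(\mathrm{ev}(s_y)(Q_y))_{y\in S^1_\delta}=0$ by definition of $\alpha$.

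The main obstacle — and the reason the statement is only a corollary rather than a theorem — is that, unlike in the quasihomogeneous case treated in Theorem \ref{theorem34}, here there is no a priori reason for such a Lagrangian $Q$ with $\rho^k(Q_x)\subset Q_x$ and an accompanying horizontal bounding disk to exist, nor for Assumption \ref{ass3} (2.) to hold; the conclusion $\mathrm{wind}(\alpha)=0$ is therefore a genuine \emph{necessary condition} on the existence of such data, not an unconditional assertion. The only technical care needed in the proof itself is to check that the global arguments of Lemma \ref{extension} and Lemma \ref{horsection} — particularly the construction of the closed extending $2$-form $\Omega^k_e$ via \cite{gotay} and the flat/Kähler collar — survive without the $\mathbb{C}^*$-symmetry, which they do because they rely only on the topology of $M$ and on Assumption \ref{ass3} (3.); everything downstream is then a literal transcription of Section \ref{boundingdisks}.
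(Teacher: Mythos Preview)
Your proposal is essentially correct and follows the paper's own argument closely: assemble the extension $X_e^k$ via Lemma~\ref{extension}, use the horizontal disk and the vanishing $\eta_F|\partial F=0$ from Lemma~\ref{tildetheta3}, and then invoke the machinery of Proposition~\ref{alphatau2} and Corollary~\ref{tildealpha2}.

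One point of organisation differs from the paper and is worth noting. You attribute condition~(3.) of Assumption~\ref{ass3} to the derivation of $\mu(F)=k$, computing the mean curvature directly at $\partial F$ as in the $t=\kappa$ case of Proposition~\ref{alphatau2}. The paper instead obtains $\mu(F)=k$ via the \emph{homotopy} argument of Proposition~\ref{alphatau2}: the cobordism $\tilde Q$ of Lemma~\ref{lag3} yields a smooth family $(F_t,Q_t)$, $t\in[\kappa,\epsilon]$, so ${\rm wind}(e^{i\vartheta_t})$ is constant, and the computation at $t=\kappa$ (where the fibration is a genuine symplectic product, built into $X_e^k$ by construction) gives $k$. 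This route does \emph{not} require Assumption~\ref{ass3}~(3.), which is precisely what the Remark following the corollary emphasises. Condition~(3.) is used in the paper only to secure $\eta_F|\partial F=0$ at the \emph{outer} boundary, which in turn feeds into Corollary~\ref{tildealpha2} to relate ${\rm wind}(\alpha)$ to $\mu(F)$. Your direct computation at $\partial F$ is valid under~(3.) (since ``flat'' there means product structure, so the $t=\kappa$ computation transfers verbatim), but it obscures the fact that the first conclusion $\mu(F)=k$ is strictly weaker in its hypotheses than the second.
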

Conversely, if $\rho^k_*=0$ in ${\rm Aut}(H^*(M,\mathbb{C}))$ and assume that $\alpha(t):={\rm ev}(s^k_{y(t)})(\mathcal{P}_{x,y(t)}(Q_x))\neq 0$, for all $t \in [0,1]$ with the notation from above. Then if ${\rm wind}(\alpha)\neq 0$, then it follows that at least one of the above Assumptions are not satisfied. So, either $[\rho^k]\neq Id$ in $\pi_0({\rm Symp}(M,\partial M,\omega))$ (which is the case if $f$ is quasi-homogeneous by Theorem \ref{theorem34}) or if $[\rho^k] = Id$ and 2. in Assumption \ref{ass3} is satisfied (note that we did not assume $\Phi_H$ to be the identity on $\partial M$ for all $t$) there is no Lagrangian submanifold $Q\subset Y^k$ in $X^k$ which fibres over $S^1_\delta$ into fibrewise Lagrangians $Q_y, \ y\in S^1_\delta$ so that 1. and 3. in Assumption \ref{ass3} are satisfied and such that there is a horizontal section $u:D_\delta\rightarrow X^k_e$ in the extension $X^k_e$ of $X^k$ to $D_\delta$ which has boundary in $Q$.\\
{\it Remark.} In general, we have since $\mu(F):H_2(X_e^k, Q)\rightarrow \mathbb{Z}$ and since $H_1(X_e^k, \mathbb{Z})=H_2(X_e^k, \mathbb{Z})$ by the long exact sequence of the tuple $(X_e^k, Q)$, $\mu(F)$ depends only on the class in $H_1(Q,\mathbb{Z})$ defined by $\partial F \subset Q$. Now, while by the proof of Lemma \ref{tildetheta} $\mu(F)$ equals the evaluation of the mean curvature form $\sigma_Q$ of $Q\subset \tilde X^k$ on any {\it horizontal} path realizing the homology class of $\partial F$ in $Q$, the result of Corollary \ref{generallag} under the given assumptions gives the value of the Maslov-index of any disk $F$ with boundary in $Q$ so that $\partial F$ generates $H_1(Q,\mathbb{Z})$. So, the existence of a {\it horizontal} disk $F$ with boundary in $Q$ and the full Assumption \ref{ass3} is essential for the equality (\ref{masdiskeq}) that relates the Maslov index along $F$ to the cohomological winding number associated to the section $s \in \Gamma({\bf H}^n(Z,\mathbb{C}))$, while for the equality $\mu(F)=k$ the Assumption \ref{ass3} is not needed.

\section{Relative cohomology}\label{relcohom}
Let $U \subset \mathbb{C}^{n+1}$ be an open set and let 
$f:U\rightarrow \mathbb{C}$ be a holomorphic map so that $x \in
\mathbb{C}^{n+1}$ is an isolated singularity, that is $f$ outside $x$ is a
submersion, assume $f(x)=0$. Let $\epsilon$ and $\delta $ be positive real
numbers and  $S=\{u \in \mathbb{C} \mid |u| < \delta \}$, $X= \{z \in
\mathbb{C}^{n+1} \mid |z| < \epsilon,\ f(z) \in S \}$, $X_0=\{z \in X \mid
f(z)=0\}$ so that with $X' =X - X_0$, $S' =S -{0}$ one gets a locally trivial
$C^\infty$-fibration $f: X' \rightarrow S'$. Let $(\Omega^\cdot_{X'},d)$ be the
sheaf complex of holomorphic differential forms on $X'$, then with
$\Omega^i_{X'/S'}=\Omega^i_{X'} /df \wedge \Omega^{i-1}_{X'}$ we get the sheaf
complex of relative differential forms $(\Omega^\cdot_{X'/S'},d)$ on $X'$. By
the Lemma of Poincare and the regularity of $f|X'$ one has a resolution of
$f^{-1}\mathcal{O}_{S'}$ in the category of $(f^{-1}\mathcal{O}_{S'})$-modules
by
\begin{equation}\label{res}
0 \rightarrow f^{-1}\mathcal{O}_{S'} \rightarrow \Omega^0_{X'/S'} \rightarrow
\Omega^1_{X'/S'} \rightarrow \dots \ .
\end{equation}
Here, $f^{-1}\mathcal{O}_{S'}$ is the topological preimage of the sheaf
$\mathcal{O}'$ of holomorphic functions on $S'$. On the other hand, we observe
that the vector spaces $H^i(X_u,\mathbb{C})$, where $X_u$ are the fibres of
$f:X' \rightarrow S'$, are the fibres of the etale space of the sheaf $R^i 
f_{*}\mathbb{C}_{X'}$, where for an abelian sheaf $\mathcal{F}$ on $X$ and a mapping
$f:X \rightarrow S$ $R^if_*\mathcal{F}$ is the sheaf on $S$ associated to $V
\subset S, V \mapsto
H^p(f^{-1}(V),\mathcal{F})$ ($R^if_*$ is identical to the right derived functor
of the direct image functor $f_*$ and is calculated by 
injective or $f_*$-acyclic resolutions of $\mathcal{F}$, for details see Hartshorne
\cite{hartshorne} or Grothendieck \cite{ega}).
We have the following isomorphism, refer to Looijenga \cite{loo}. Note that the
sections of the vectorbundle accociated to 
$R^if_{*}\mathbb{C}_{X}$ (with fibres $H^i(X_u,\mathbb{C})$ over $S'$)
constitute the
sheaf $R^i f_{*}\mathbb{C}_{X}\otimes _{\mathbb{C}_{S}}\mathcal{O}_{S}$.
\begin{lemma}\label{sect}
With notation as above, the natural map
\[
(R^i f_{*}\mathbb{C}_{X}) \otimes _{\mathbb{C}_{S}}\mathcal{O}_{S}
\longrightarrow R^i f_{*}(f^{-1}\mathcal{O}_{S})
\]
is an isomorphism.
\end{lemma}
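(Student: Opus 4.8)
The plan is to prove Lemma \ref{sect} by reducing the claim to a local statement on the base $S$ and then checking it stalk-by-stalk, exploiting the fact that both sides are coherent $\mathcal{O}_S$-modules. First I would recall that the question is local on $S$, so it suffices to prove that the natural comparison morphism induces an isomorphism on stalks at every point $u \in S$. For $u \in S' = S \setminus \{0\}$, the map $f$ is a locally trivial $C^\infty$-fibration near $f^{-1}(u)$, so by the proper (or rather, the topological) base change together with the fact that $f^{-1}\mathcal{O}_S$ is $f_*$-acyclic in the relevant range, one sees directly that $(R^i f_* \mathbb{C}_X)_u \otimes_{\mathbb{C}} \mathcal{O}_{S,u} \to (R^i f_* (f^{-1}\mathcal{O}_S))_u$ is an isomorphism; this is the standard fact that local systems tensored with the structure sheaf compute the relative de Rham / relative constant-sheaf cohomology over the smooth locus. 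So the entire content is concentrated at the single point $u = 0$.

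Next I would treat the stalk at $0$. The key input is that $X$ retracts onto $X_0$ and that $f^{-1}(0) = X_0$ is contractible (it is the cone on the link of the isolated singularity), so $(R^i f_* \mathbb{C}_X)_0 = H^i(f^{-1}(U),\mathbb{C})$ for a fundamental system of neighbourhoods $U$ of $0$, and these stabilize to $H^i(X,\mathbb{C}) = H^i(X_0,\mathbb{C})$, which is $\mathbb{C}$ for $i=0$ and vanishes for $i>0$ (using that $X$ is contractible as a Milnor ball intersected with a hypersurface-type total space). On the other side, $(R^i f_* (f^{-1}\mathcal{O}_S))_0$ is computed by the resolution (\ref{res}) pushed forward: one uses that $\Omega^\cdot_{X'/S'}$ extends across $X_0$ in an appropriate sense, or more precisely one invokes the Grothendieck-type comparison (as in Looijenga \cite{loo}) identifying $R^i f_*(f^{-1}\mathcal{O}_S)$ near $0$ with the cohomology of the complex $(f_* \Omega^\cdot_{X/S}, d)$ where $\Omega^\cdot_{X/S} = \Omega^\cdot_X / df \wedge \Omega^{\cdot-1}_X$. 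Since $f$ has an isolated singularity, this relative de Rham complex is acyclic in positive degrees (its cohomology is concentrated in degree $n$ and is governed by the Brieskorn lattice), and in degree $0$ it returns $\mathcal{O}_{S,0}$; this matches $H^0(X_0,\mathbb{C}) \otimes_\mathbb{C} \mathcal{O}_{S,0} = \mathcal{O}_{S,0}$ and the vanishing in other low degrees.

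Then I would assemble the two stalk computations: the comparison morphism is a morphism of $\mathcal{O}_S$-modules that is an isomorphism on the stalk at $0$ and on every stalk over $S'$, hence an isomorphism of sheaves. To make the middle step rigorous one wants coherence of both sides; $R^i f_* (f^{-1}\mathcal{O}_S)$ is coherent because $f$ is (after shrinking) a proper-enough map — one restricts to the closed Milnor representative and uses Grauert's coherence theorem — and $(R^i f_* \mathbb{C}_X) \otimes \mathcal{O}_S$ is coherent because $R^i f_* \mathbb{C}_X$ is a local system on $S'$ with at worst the singularity at $0$ where its stalk is still finite-dimensional (the cohomology of the Milnor fibre is finite-dimensional), so the tensor product is a coherent $\mathcal{O}_S$-module. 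With coherence in hand, Nakayama plus the stalk isomorphisms finish the argument.

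The hard part will be the careful justification at the singular point $u=0$: one must pin down precisely in what sense the relative holomorphic de Rham complex $\Omega^\cdot_{X/S}$ (with the singular fibre included) computes $R^\cdot f_* (f^{-1}\mathcal{O}_S)$, since the resolution (\ref{res}) is only stated over $X' = X \setminus X_0$. This is exactly the Grothendieck–Deligne comparison for the singular total space, and invoking it correctly — together with the finiteness/coherence statements for the pushforwards across the singular fibre — is where the real work lies; the over-$S'$ part and the final patching are routine. I would cite Looijenga \cite{loo} for the comparison isomorphism and for the coherence of the Brieskorn-type modules, and simply verify that the identification is compatible with the natural comparison morphism in the statement.
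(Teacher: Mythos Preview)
The paper does not actually prove this lemma; it simply refers to Looijenga \cite{loo} and moves on. The lemma is only ever applied over the punctured disc $S'$ (in the proofs of Proposition~\ref{deRham} and Lemma~\ref{germs}), where $f$ is a locally trivial $C^\infty$-fibration, and there the statement is a standard universal-coefficients/base-change fact for local systems. So the elaborate stalk analysis at $u=0$ that you set up is not needed for the paper's purposes, and the part of your sketch over $S'$ is already the full content.

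More importantly, your treatment of the stalk at $0$ contains a genuine error. You propose to compute $(R^i f_*(f^{-1}\mathcal{O}_S))_0$ by identifying it with $\mathcal{H}^i(f_*\Omega^\cdot_{X/S})_0$ via ``the resolution (\ref{res}) pushed forward''. But (\ref{res}) is a resolution only on $X'$, not on $X$: at the singular point the relative de Rham complex $\Omega^\cdot_{X/S}$ is \emph{not} quasi-isomorphic to $f^{-1}\mathcal{O}_S$. Indeed, the very next result in the paper, Lemma~\ref{germs}, shows that $\mathcal{H}^n(f_*\Omega^\cdot_{X/S})_0 \cong H^n(\Omega^\cdot_{X/S,x})$ is free of rank $\mu$, while your own computation gives $(R^n f_*\mathbb{C}_X)_0 \otimes \mathcal{O}_{S,0}=0$ by contractibility of $X_0$. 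With your identification the comparison map would fail in degree $n$, contradicting the lemma. The exact sequence in Lemma~\ref{germs} displays precisely this discrepancy: the term $f_*\mathcal{H}^n(\Omega^\cdot_{X/S})$ measures the failure of the de Rham complex to resolve $f^{-1}\mathcal{O}_S$ across the singular fibre. If you do want the statement over all of $S$, the stalk at $0$ of the right-hand side must be computed directly (e.g.\ via a retraction of $f^{-1}(U)$ onto $X_0$ compatible with $f$, so that $f^{-1}\mathcal{O}_S$ pulls back to the constant sheaf $\underline{\mathcal{O}_{S,0}}$ on $X_0$), not through $\Omega^\cdot_{X/S}$.
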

Now consider the complex of direct image sheafs $f_*\Omega^\cdot_{X'/S'}$, this
is a complex of $\mathcal{O}_{S'}$-modules, its cohomology sheafs will be
denoted by $\mathcal{H}^p(f_*\Omega^\cdot_{X'/S'})$ for all $p$. The following
result identifies these with the space of sections in the bundle of fibrewise
cohomology groups, for details we refer to Looijenga \cite{loo}.
\begin{prop}\label{deRham}
In the above situation, the fibrewise de Rham evaluation maps
\[
DR_u: \mathcal{H}^p(f_*\Omega^\cdot_{X'/S'})_u \longrightarrow
H^i(X_u,\mathbb{C})
\]
given by integration over the fibre $f^{-1}(u), u \in S'$ are isomorphisms.
Furthermore, they fit together to define a sheaf isomorphism
\[
DR: \mathcal{H}^p(f_*\Omega^\cdot_{X'/S'})_u \longrightarrow (R^i
f_{*}\mathbb{C}_{X'}) \otimes _{\mathbb{C}_{S'}}\mathcal{O}_{S'}.
\]
\end{prop}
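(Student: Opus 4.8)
The plan is to obtain Proposition~\ref{deRham} by exhibiting $DR_u$ as the stalk at $u$ of a composite of natural isomorphisms of $\mathcal{O}_{S'}$-modules, and then identifying that composite with fibrewise integration. The first ingredient is already displayed in~(\ref{res}): by the relative holomorphic Poincar\'e lemma the complex $(\Omega^\cdot_{X'/S'},d)$ is a resolution of $f^{-1}\mathcal{O}_{S'}$ in the category of $f^{-1}\mathcal{O}_{S'}$-modules, so that $\mathbf{R}f_*(f^{-1}\mathcal{O}_{S'})\cong \mathbf{R}f_*\,\Omega^\cdot_{X'/S'}$ in the derived category over $S'$. To pass from this derived-category statement to the complex of honest direct images $f_*\Omega^\cdot_{X'/S'}$, whose cohomology sheaves are the $\mathcal{H}^p(f_*\Omega^\cdot_{X'/S'})$, I would run the hypercohomology spectral sequence $E_1^{p,q}=R^qf_*\,\Omega^p_{X'/S'}\Rightarrow R^{p+q}f_*(f^{-1}\mathcal{O}_{S'})$: it degenerates at $E_2$ as soon as the higher direct images of the individual terms vanish, giving $\mathcal{H}^p(f_*\Omega^\cdot_{X'/S'})\cong R^pf_*(f^{-1}\mathcal{O}_{S'})$. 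Composing with the isomorphism of Lemma~\ref{sect}, and with the identification of $(R^pf_*\mathbb{C}_{X'})\otimes_{\mathbb{C}_{S'}}\mathcal{O}_{S'}$ with the sheaf of holomorphic sections of the flat bundle associated to the local system $R^pf_*\mathbb{C}_{X'}$ (legitimate since $f$ is a locally trivial $C^\infty$-fibration, so that this local system has stalk $H^p(X_u,\mathbb{C})$), then produces the asserted sheaf isomorphism $DR$, whose fibre at $u$ is $H^p(X_u,\mathbb{C})$.

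The decisive point is the vanishing $R^qf_*\,\Omega^p_{X'/S'}=0$ for $q>0$. Each $\Omega^p_{X'/S'}=\Omega^p_{X'}/df\wedge\Omega^{p-1}_{X'}$ is coherent over $\mathcal{O}_{X'}$ (a quotient of coherent sheaves on the smooth space $X'$), so it suffices to check that $f\colon X'\to S'$ is a Stein morphism, i.e. that every $u\in S'$ has a disc neighbourhood $V$ with $f^{-1}(V)$ Stein. I would verify this directly: for $V=\{|w-u|<r\}$ the open set $f^{-1}(V)=\{z:|z|<\epsilon,\ |f(z)-u|<r\}$ carries the strictly plurisubharmonic exhaustion $-\log(\epsilon^2-|z|^2)-\log(r^2-|f(z)-u|^2)$ (the first summand is strictly psh and exhausts the ball, the second is psh because $|f-u|^2$ is psh and $t\mapsto-\log(r^2-t)$ is convex increasing), whence $f^{-1}(V)$ is Stein. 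Granting this, the relative form of Cartan's Theorem~B for Stein morphisms (Forster--Knorr; see also Looijenga \cite{loo}) yields the required acyclicity and the spectral sequence collapses.

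What is left is purely formal: checking that the stalk of the composite isomorphism $DR$ at $u$ coincides with $DR_u$, integration over $f^{-1}(u)$. For this I would restrict the whole picture to a fibre $X_u$, where the quasi-isomorphism $f^{-1}\mathcal{O}_{S'}\hookrightarrow\Omega^\cdot_{X'/S'}$ becomes the inclusion $\mathbb{C}\hookrightarrow(\Omega^\cdot_{X_u},d)$ of the holomorphic de Rham complex; proper base change (here just flatness of $\mathcal{O}_{S'}\to\mathcal{O}_{S',u}$ together with Lemma~\ref{sect} applied stalkwise) identifies the stalk computation with $\mathbb{H}^p(X_u,\Omega^\cdot_{X_u})$, and since $X_u$ is a Stein manifold it has the homotopy type of a finite CW complex, so Grothendieck's comparison theorem identifies $\mathbb{H}^p(X_u,\Omega^\cdot_{X_u})=H^p_{dR}(X_u)$ with $H^p(X_u,\mathbb{C})$ by integration over cycles — exactly $DR_u$. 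Naturality of all the maps involved then gives both the global sheaf statement and the holomorphic dependence of $DR_u$ on $u$ simultaneously.

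The main obstacle is the acyclicity of the second paragraph: unlike the proper case it is not covered by Grauert's direct image theorem and genuinely relies on the Stein geometry of the Milnor fibration, which is precisely why the sources (and the present paper) defer the details to \cite{loo}. Everything else — the collapse of the spectral sequence, the invocation of Lemma~\ref{sect}, and the fibrewise reduction to classical de Rham — is bookkeeping that I would carry out after the acyclicity is in hand.
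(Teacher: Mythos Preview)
Your proof is correct and follows essentially the same route as the paper's: both arguments hinge on (i) the Stein property of $f$ to kill the higher direct images $R^qf_*\Omega^p_{X'/S'}$ for $q>0$ and collapse one hypercohomology spectral sequence, and (ii) the relative Poincar\'e lemma~(\ref{res}) to collapse the other, then compose with Lemma~\ref{sect}. The paper phrases this as two spectral sequences abutting to the same hyper-direct-image, while you phrase the Poincar\'e step as a derived-category quasi-isomorphism followed by a single spectral sequence; your explicit verification of Stein-ness and the fibrewise identification via Grothendieck comparison are welcome additions the paper omits.
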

\begin{proof}
We will briefly describe the arguments. Note first, that, taking the canocial
soft resolution for the complex $f_*\Omega^\cdot_{X'/S'}$, one has two spectral
sequences with $E_2$-terms
\[
'E_2^{p,q}  = \mathcal{H}^p(R^qf_*\Omega^\cdot_{X'/S'}), \quad ''E_2^{p,q}=R^p
f_*(\mathcal{H}^q(\Omega^\cdot_{X'/S'})),
\]
both converging to the cohomology of the full complex,
$\mathbb{R}^\cdot(\Omega^\cdot_{X'/S'})$.
Here, $R^qf_*\Omega^\cdot_{X'/S'}$ denotes the complex
$R^qf_*(\Omega^i_{X'/S'})_{i \in \mathbb{Z}}$. Since $f$ is Stein, the first
spectral sequence degenerates which gives
\[
\mathcal{H}^p(f_*\Omega^\cdot_{X'/S'})\simeq
\mathbb{R}^p(\Omega^\cdot_{X'/S'}).
\]
On the other hand, considering the resolution (\ref{res}), we also have
$\mathcal{H}^p(\Omega^\cdot_{X'/S'})=0,\ p> 0$, that is, the second spectral
sequence degenerates, giving
\[
R^pf_*(f^*\mathcal{O}_{S'})\simeq  \mathbb{R}^p(\Omega^\cdot_{X'/S'}).
\]
Putting this together and using Lemma \ref{sect}, we arrive at the assertion.
\end{proof}
Note that in the above, we worked outside the 'critical set', that is, over
$S'$, which implied that $\mathcal{H}^p(\Omega^\cdot_{X'/S'})=0,\ p> 0$.  Now
note that the sheaf complex of relative differential forms is equally well
defined on $X$ over $S$, so for further use we state the following refinement of
Lemma \ref{deRham}, we will only sketch its proof, for details see
Looijenga \cite{loo}, Greuel \cite{greu} or Brieskorn \cite{bries}.
\begin{lemma}\label{germs}
Let $f:X\rightarrow S$ be a good Stein representative of a smoothing of an
isolated singularity as described above. Then, after possibly shrinking $S$
we have $\mathcal{H}^p(f_*\Omega^\cdot_{X/S})=0,\ n >p> 0$ and
$\mathcal{H}^n(f_*\Omega^\cdot_{X/S})$ is a free $\mathcal{O}_S$-module
of rank $\mu$, where $\mu$ is the $n$-th Betti number of a Milnor fibre. The
former is fitting in the exact sequence 
\[
0 \rightarrow R^i f_{*}\mathbb{C}_{X} \otimes
_{\mathbb{C}}\mathcal{O}_{S}\xrightarrow
{\alpha^n}\mathcal{H}^n(f_*\Omega^\cdot_{X/S})
\xrightarrow {\beta^n}f_*\mathcal{H}^n(\Omega^\cdot_{X/S})\rightarrow 0.
\]
whgich implies lemma \ref{deRham}. Furthermore, in $0 \in S$, there is a
canonical isomorphism
\begin{equation}\label{stalk2}
\beta^n:\mathcal{H}^p(f_*\Omega^\cdot_{X/S})_0 \longrightarrow
f_*\mathcal{H}^p(\Omega^\cdot_{X/S})_0=H^p(\Omega^\cdot_{X/S,x})
\end{equation}
for $p > 0$.
\end{lemma}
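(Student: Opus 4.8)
The plan is to reduce the statement to the known local structure of the relative de Rham complex at the singular point $x$ and then to run the two hyperdirect-image spectral sequences exactly as in the proof of Lemma \ref{deRham}, but now keeping track of the part supported on the critical fibre.

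First I would collect the local inputs. Away from $x$ the map $f$ is a submersion, so the relative Poincare lemma (as in (\ref{res})) gives $\mathcal{H}^p(\Omega^\cdot_{X/S})=0$ off $x$ for $p>0$, hence these cohomology sheaves are supported at $\{x\}$. At $x$ the hypothesis of an isolated singularity means $\partial_0 f,\dots,\partial_n f$ is a regular sequence in $\mathcal{O}_{X,x}$, so the Koszul complex $(\Omega^\cdot_{X,x},df\wedge)$ is exact except in degree $n+1$, where its cohomology is the $\mu$-dimensional space $\Omega^{n+1}_{X,x}/df\wedge\Omega^{n}_{X,x}$. Playing the de Rham differential $d$ against $df\wedge$ on $\Omega^\cdot_{X,x}$ (Brieskorn's argument, see \cite{bries}, \cite{greu}, \cite{loo}) one obtains $\mathcal{H}^0(\Omega^\cdot_{X/S})_x=\mathcal{O}_{S,x}$, $\mathcal{H}^p(\Omega^\cdot_{X/S})_x=0$ for $0<p<n$, and $\mathcal{H}^n(\Omega^\cdot_{X/S})_x$ a finite-dimensional $\mathbb{C}$-vector space; together with the support statement this gives $\mathcal{H}^p(\Omega^\cdot_{X/S})=0$ globally for $0<p<n$ and makes $f_*\mathcal{H}^n(\Omega^\cdot_{X/S})$ a coherent sheaf concentrated at $x$. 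I would also use that $\mathcal{H}^0(\Omega^\cdot_{X/S})=f^{-1}\mathcal{O}_S$ (fibrewise constants) and that $R^pf_*(f^{-1}\mathcal{O}_S)\cong R^pf_*\mathbb{C}_X\otimes_{\mathbb{C}}\mathcal{O}_S$ by Lemma \ref{sect}, which vanishes for $0<p<n$ because the stalks of $R^pf_*\mathbb{C}_X$ are $H^p$ of a Milnor fibre (zero for $0<p<n$, $n\geq 2$) respectively $H^p$ of the contractible central fibre $X_0$.

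Next, since $f$ is Stein, $R^qf_*\Omega^p_{X/S}=0$ for $q>0$, so the first spectral sequence degenerates and $\mathcal{H}^p(f_*\Omega^\cdot_{X/S})$ computes the hypercohomology $\mathbb{R}^p(f_*\Omega^\cdot_{X/S})$. In the second spectral sequence $''E_2^{p,q}=R^pf_*\mathcal{H}^q(\Omega^\cdot_{X/S})\Rightarrow\mathcal{H}^{p+q}(f_*\Omega^\cdot_{X/S})$ the only nonzero terms are $(0,0)$, $(n,0)$ (the $R^nf_*\mathbb{C}_X\otimes\mathcal{O}_S$) and $(0,n)$ (the skyscraper $f_*\mathcal{H}^n(\Omega^\cdot_{X/S})$); by sparsity all differentials vanish, the sequence degenerates at $E_2$, and the column filtration on $\mathcal{H}^n(f_*\Omega^\cdot_{X/S})$ has exactly two nonzero graded pieces, yielding the short exact sequence
\[
0\to R^nf_*\mathbb{C}_X\otimes_{\mathbb{C}}\mathcal{O}_S\xrightarrow{\ \alpha^n\ }\mathcal{H}^n(f_*\Omega^\cdot_{X/S})\xrightarrow{\ \beta^n\ }f_*\mathcal{H}^n(\Omega^\cdot_{X/S})\to 0,
\]
while for $0<p<n$ all relevant terms vanish, so $\mathcal{H}^p(f_*\Omega^\cdot_{X/S})=0$. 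Taking stalks at $x$, where $(R^nf_*\mathbb{C}_X)_x=0$ because a small preimage $f^{-1}(\{|u|<\delta'\})$ deformation retracts onto the contractible fibre $X_0$, turns $\beta^n$ into the isomorphism $\mathcal{H}^n(f_*\Omega^\cdot_{X/S})_x\xrightarrow{\sim}f_*\mathcal{H}^n(\Omega^\cdot_{X/S})_x=H^n(\Omega^\cdot_{X/S,x})$, and for $0<p<n$ both sides are zero; restricting the sequence to $S'$, where $\mathcal{H}^n(\Omega^\cdot_{X'/S'})=0$, recovers Lemma \ref{deRham}. It remains to get freeness: the restriction of $\mathcal{H}^n(f_*\Omega^\cdot_{X/S})$ to $S'$ is the locally free rank-$\mu$ Gauss-Manin sheaf, with fibre $H^n(X_u,\mathbb{C})$ of dimension $\mu$ over $u\in S'$ by definition of $\mu$; over the one-dimensional disk $S$ it then suffices to know the stalk at $x$ is a finitely generated torsion-free module over the discrete valuation ring $\mathcal{O}_{S,x}=\mathbb{C}\{t\}$, which is Brieskorn's coherence/freeness theorem (\cite{bries}), after which the structure theorem over a DVR forces it to be free of rank $\mu$ on a possibly smaller disk. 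The only non-formal step is exactly this finiteness input, which is where isolatedness of the singularity (finiteness of the Milnor number) enters essentially, and I expect Brieskorn's finiteness to be the main obstacle to a fully self-contained argument; everything else is bookkeeping with the two spectral sequences.
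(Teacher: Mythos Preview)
Your argument is correct and follows essentially the same route as the paper: both run the two hypercohomology spectral sequences, use Steinness for the first and the relative Poincar\'e lemma plus the local exactness of $\Omega^\cdot_{X/S,x}$ in degrees $0<p<n$ for the second, and then invoke the Brieskorn/Looijenga freeness result for the stalk. One inaccuracy worth noting: the stalk $\mathcal{H}^n(\Omega^\cdot_{X/S})_x$ is \emph{not} a finite-dimensional $\mathbb{C}$-vector space---it is the kernel of $d:\Omega^n_{X/S,x}/d\Omega^{n-1}_{X/S,x}\to\Omega^{n+1}_{X/S,x}$, hence a free $\mathcal{O}_{S,0}$-module of rank $\mu$ (so $f_*\mathcal{H}^n(\Omega^\cdot_{X/S})$ is a skyscraper with infinite-dimensional stalk and is not $\mathcal{O}_S$-coherent)---but your spectral-sequence degeneration uses only the support statement, so the slip is harmless; the paper obtains the stalk isomorphism (\ref{stalk2}) via a direct-limit argument over shrinking Stein representatives rather than your $(R^nf_*\mathbb{C}_X)_0=0$, which amounts to the same thing.
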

\begin{proof}
The first thing to prove (\cite{loo}, Prop. 8.5) is the long exact sequence for
$p>0$
\[
.. \rightarrow R^p f_{*}\mathcal{H}^0(\Omega^\cdot_{X/S})\xrightarrow
{\alpha^p}\mathcal{H}^p(f_*\Omega^\cdot_{X/S})
\xrightarrow {\beta^p}f_*\mathcal{H}^p(\Omega^\cdot_{X/S})\rightarrow  R^p
f_{*}\mathcal{H}^0(\Omega^\cdot_{X/S})\rightarrow ..
\]
Lemma \ref{deRham} then follows from
$f_*\mathcal{H}^p(\Omega^\cdot_{X'/S'})=0$,
$\mathcal{H}^0(\Omega^\cdot_{X'/S'})=f^{-1}\mathcal{O}_{S'}$ and Lemma
\ref{sect}.
Then one proves that if $X''\subset X$ so that $f|X''$ is also a Stein
representative then the restriction homomorphism between the corresponding exact
sequences is an isomorphism, taking direct limits, one infers that $\beta^p$ is
an isomorphism. Now in \cite{loo}, Prop 8.20, one proves that
one has an {\it exact} sequence of stalks
\[
0 \rightarrow \mathcal{O}_{S,0} \rightarrow \mathcal{O}_{X,x} \rightarrow \Omega^0_{X/S,x}
\rightarrow
\Omega^1_{X/S,x} \rightarrow \dots \rightarrow \Omega^n_{X/S,x}
\]
and $\Omega^n_{X/S,x}/d\Omega^{n-1}_{X/S,x}$ is free of rank $\mu$ (as a
$\mathcal{O}_{S,0}$-module). Then from (\ref{stalk2}), the exact sequence 
\begin{equation}\label{stalk3}
\mathcal{H}^p(\Omega^\cdot_{X/S,x})\rightarrow
\Omega^n_{X/S,x}/d\Omega^{n-1}_{X/S,x}\xrightarrow {d}\Omega^{n+1}_{X/S,x}
\end{equation}
and the fact that $\mathcal{H}^p(f_*\Omega^\cdot_{X/S})$ is coherent, it already
follows that for sufficiently small $S$, $\mathcal{H}^p(f_*\Omega^\cdot_{X/S})$
is a free $\mathcal{O}_S$-module of rank $\mu$ for $p=n$ and is trivial for
$0<p<n$. Note again that for $S$ small enough one has
$\mathcal{H}^0(\Omega^\cdot_{X/S})=f^{-1}\mathcal{O}_{S}$, so $R^p
f_{*}\mathcal{H}^0(\Omega^\cdot_{X/S})$ may be identified with 
$R^i f_{*}\mathbb{C}_{X} \otimes_{\mathbb{C}}\mathcal{O}_{S}$.
\end{proof}
Note that from the above Lemma and the sequence (\ref{stalk3}) it follows that a
basis spanning the $\mathcal{O}_{S,0}$-module 
$\Omega^n_{X/S,x}/d\Omega^{n-1}_{X/S,x}$ will already span the coherent
$\mathcal{O}_S$-module $\mathcal{H}^p(f_*\Omega^\cdot_{X/S})$, provided $S$ is
small enough. However, $df: \Omega^n_{X/S}\rightarrow \Omega^{n+1}_X\simeq
\mathcal{O}_X$, is only an isomorphism outside $\{x\}$, so if $j:X'\rightarrow
X$ denotes the inclusion, we have in our case (see \cite{loo})
$\omega_f:=j_*j^{-1}\Omega^n_{X/S}\simeq \Omega^{n+1}_X$ and one has the
sequence:
\[
0\rightarrow \Omega^n_{X/S,x}\rightarrow \omega_{f,x}\rightarrow
\omega_{f,x}\otimes\mathcal{O}_{\{x\},x}
\]
i.e. $\omega_f$ and $\Omega^n_{X/S}$ coincide outside of $\{x\}$. One then has
the exact sequence
\begin{equation}\label{dualizing}
0\rightarrow \Omega^n_{X/S,x}/d\Omega^{n-1}_{X/S,x}\rightarrow
\omega_{f,x}/d\Omega^{n-1}_{X/S,x}\rightarrow
\omega_{f,x}\otimes\mathcal{O}_{\{x\},x}
\end{equation}
so $\omega_{f,x}/d\Omega^n_{X/S,x}$ is also a free (\cite{loo}, Prop. 8.20)
$\mathcal{O}_{S,0}$-module of rank $\mu$ (note that $\mathcal{O}_{\{x\},x}\simeq \mathcal{O}_{\mathbb{C}^{n+1},x}/(\frac{\partial f}{\partial z_0},\dots,\frac{\partial f}{\partial
z_n})\mathcal{O}_{\mathbb{C}^{n+1},x}$). Then identifying $\omega_{f,x}$ with
$\mathcal{O}_{\mathbb{C}^{n+1},0}$ by means of $\alpha \mapsto df\wedge \alpha$
there is a correspondence of $d\Omega^{n-1}_{X/S,x}$ with a certain $\mathbb{C}\{f\}$ submodule of
$\mathcal{O}_{\mathbb{C}^{n+1},x}$ which we denote by $\hat M(f)$. For $f$
quasihomogeneous, that is, there are positive integers
$\beta_0,\dots,\beta_n,\beta$ so that $f$ is a $\mathbb{C}$-linear combination
of monomials $z_0^{i_0}\dots z_n^{i_n}$ so that $i_0\beta_0+\dots
+i_n\beta_n=\beta$ one deduces that $\mathcal{O}_{\mathbb{C}^{n+1},x}/M(f)$
coincides with $\mathcal{O}_{\mathbb{C}^{n+1},x}/(\frac{\partial 
f}{\partial z_0},\dots,\frac{\partial f}{\partial
z_n})\mathcal{O}_{\mathbb{C}^{n+1},x}$ and 
a basis fort the latter module can be chosen to consists of monomials
$\alpha_1,\dots,\alpha_r$, so that for every $\alpha_j$ there is a number $d_j$
such that $\alpha_j=z_0^{i_0}\cdot \dots\cdot z_n^{i_n}$ with
$i_0w_0+\dots+i_nw_n=d_j$ where $w_i=\beta_i/\beta$ ($d_j$ will be called the
degreee of $\alpha_j$). Summing up, we have (\cite{loo})
\begin{lemma}\label{poly}
For $f:X\rightarrow S$ quasihomogeneous with $0 \in \mathbb{C}^{n+1}$ an
isolated singularity there are global sections $\phi_1,\dots,\phi_\mu$ of $\mathcal{H}^i(f_*\Omega^\cdot_{X/S})$ 
which represent a basis of $H^n(X_u,\mathbb{C})$ for any $u \in S'$ that can be represented by monomials
$\alpha_1,\dots,\alpha_\mu \in \mathbb{C}[z_0\dots,z_{n}]$ by the correspondence $\phi\mapsto$ [coefficient of $df\wedge \phi_i$]. Here, $\mu$ is the Milnor number of $f$. These monomials project onto a $\mathbb{C}$-basis of $\mathcal{O}_{\mathbb{C}^{n+1},0}/(\frac{\partial f}{\partial z_0},\dots,\frac{\partial f}{\partial z_n})\mathcal{O}_{\mathbb{C}^{n+1},0}$.
\end{lemma}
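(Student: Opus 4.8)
The plan is to reduce everything to the algebraic facts about quasihomogeneous singularities collected in Looijenga's book and in Brieskorn's work, most of which are essentially stated above. First I would recall the setup of Lemma \ref{germs}: after shrinking $S$, the stalk $\omega_{f,x}/d\Omega^{n-1}_{X/S,x}$ is a free $\mathcal{O}_{S,0}$-module of rank $\mu$, and via $\alpha\mapsto df\wedge\alpha$ it is identified with $\mathcal{O}_{\mathbb{C}^{n+1},0}/\hat M(f)$, where $\hat M(f)$ corresponds to $d\Omega^{n-1}_{X/S,x}$. The first genuine step is therefore to show that, in the quasihomogeneous case, $\hat M(f)$ coincides with the Jacobian ideal $(\partial f/\partial z_0,\dots,\partial f/\partial z_n)\mathcal{O}_{\mathbb{C}^{n+1},0}$; this is exactly the assertion $\mathcal{O}_{\mathbb{C}^{n+1},x}/M(f)=\mathcal{O}_{\mathbb{C}^{n+1},x}/(\partial f/\partial z_0,\dots,\partial f/\partial z_n)$ recalled in the paragraph before the statement, and follows from the quasihomogeneity via the Euler relation $\sum_i \beta_i z_i\,\partial f/\partial z_i=\beta f$ together with the standard computation that for quasihomogeneous $f$ every relative $(n+1)$-form $df\wedge dg$ with $g\in\Omega^{n-1}_{X/S}$ can be reduced, modulo exact forms, to a combination of the partials — this is the Brieskorn/de Rham division argument (cf. \cite{bries}, \cite{loo}).

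Next I would invoke the exact sequences (\ref{dualizing}) and (\ref{stalk3}): since $\Omega^n_{X/S,x}/d\Omega^{n-1}_{X/S,x}$ injects into $\omega_{f,x}/d\Omega^{n-1}_{X/S,x}$ with cokernel supported at $x$, and $\mathcal{H}^n(\Omega^\cdot_{X/S,x})$ is the kernel of $d:\omega_{f,x}/d\Omega^{n-1}_{X/S,x}\to \Omega^{n+1}_{X/S,x}$, a monomial basis $\alpha_1,\dots,\alpha_\mu$ of $\mathcal{O}_{\mathbb{C}^{n+1},0}/(\partial f/\partial z_i)$ — which exists and can be taken homogeneous for the weights $w_i=\beta_i/\beta$ by the Euler relation — lifts to sections $\phi_j$ with $df\wedge\phi_j = \alpha_j\,dz_0\wedge\cdots\wedge dz_n$. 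By the coherence of $\mathcal{H}^n(f_*\Omega^\cdot_{X/S})$ stated in Lemma \ref{germs}, and Nakayama, a collection of sections whose germs at $0$ generate the stalk generates the sheaf over a possibly smaller $S$; since the stalk is free of rank $\mu$ and the $\alpha_j$ form a $\mathbb{C}$-basis of the fibre $\mathcal{O}_{\mathbb{C}^{n+1},0}/(\partial f/\partial z_i)$, the $\phi_j$ form an $\mathcal{O}_S$-basis of $\mathcal{H}^n(f_*\Omega^\cdot_{X/S})$.

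Finally, to get that the $\phi_j$ restrict to a $\mathbb{C}$-basis of $H^n(X_u,\mathbb{C})$ for $u\in S'$, I would use Lemma \ref{deRham}/Proposition \ref{deRham}: the de Rham evaluation $DR_u:\mathcal{H}^n(f_*\Omega^\cdot_{X'/S'})_u\to H^n(X_u,\mathbb{C})$ is an isomorphism for $u\ne 0$, and over $S'$ one has $\mathcal{H}^n(f_*\Omega^\cdot_{X'/S'})=\mathcal{H}^n(f_*\Omega^\cdot_{X/S})|_{S'}$, which is locally free of rank $\mu=b_n(X_u)$. An $\mathcal{O}_S$-basis of a locally free sheaf restricts to a basis of every fibre, so the $\phi_j$ give a $\mathbb{C}$-basis of each $H^n(X_u,\mathbb{C})$, $u\in S'$. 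The correspondence $\phi\mapsto$ [coefficient of $df\wedge\phi$] is then the inverse of $\alpha_j\mapsto\phi_j$ by construction, which completes the statement.

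The main obstacle is the first step — showing $\hat M(f)$ equals the Jacobian ideal for quasihomogeneous $f$, equivalently that relative de Rham cohomology in degree $n$ is computed by the Milnor algebra. This is the place where quasihomogeneity is genuinely used (the Euler vector field gives an explicit homotopy contracting the relative de Rham complex away from its top cohomology), and it is the content one cannot simply quote from the formal machinery of Lemmas \ref{deRham} and \ref{germs}; everything after it is bookkeeping with coherence and freeness. I would present this step by citing the division-type argument in \cite{bries} and \cite{loo} rather than redoing it, since in the quasihomogeneous case it is classical.
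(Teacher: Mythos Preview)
Your proposal is correct and follows essentially the same approach as the paper: the lemma is stated there without its own proof, as a summary (``Summing up, we have \cite{loo}'') of the preceding discussion, which already records the identification of $\omega_{f,x}/d\Omega^{n-1}_{X/S,x}$ with $\mathcal{O}_{\mathbb{C}^{n+1},0}/\hat M(f)$, the coincidence of $\hat M(f)$ with the Jacobian ideal in the quasihomogeneous case, and the freeness/coherence statements of Lemma~\ref{germs}. Your write-up simply makes the logical dependencies explicit and adds the Nakayama and de Rham steps that the paper leaves implicit.
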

We finally note that $R^i f_{*}\mathbb{C}_{X'} \otimes_{\mathbb{C}}\mathcal{O}_{S'}$ carries a canonical flat connection, the Gauss-Manin connection. Using the correspondence describes in lemma \ref{germs} one can extend this to sections of the sheaf $\mathcal{H}^p(f_*\Omega^\cdot_{X/S})$. For this, one sets over $S'$ if $\omega \in \mathcal{H}^p(f_*\Omega^\cdot_{X/S})$, so $d\omega=df\wedge \tilde \omega$ for a certain $\tilde \omega \in f_*\Omega^n_{X}$,
\[
\begin{split}
\nabla_\psi\omega:=\mathcal{L}_\psi(\omega)&=i_\psi d\omega\ {\rm mod}(df_*\Omega^{n-1}_X)\\
&=\tilde \omega \ {\rm mod}(df\wedge f_*\Omega^ {n-1}_X+df_*\Omega^{n-1}_X),
\end{split}
\]
where $\psi$ lifts $\frac{\partial}{\partial z}$, so $\nabla_\psi\omega=dz\otimes [\tilde \omega]$. It is then well-known (\cite{loo}) that $\nabla$ extends 'regular-singular' along $S$ and that $\nabla$ maps each of the the modules $ \mathcal{H}^p(\Omega^\cdot_{X/S,x})\subset  \Omega^n_{X/S,x}/d\Omega^{n-1}_{X/S,x}\subset \omega_{f,x}/d\Omega^{n-1}_{X/S,x}$ into the next in the chain of inclusions.\\
{\it Remark.} Instead of working with the sheaf $\omega_{f}$ whose quotient by $d\Omega^{n-1}_{X/S}$ at $x$ fits into the short exact sequence (\ref{dualizing}) and which is isomorphic to $\Omega^n_{X/S}$ outside of $\{x\}$ (this approach goes back to Looijenga \cite{loo}) we will in the following also frequently refer to a more common definition of the Brieskorn lattice $\mathcal{H}''$ which is equivalent to the above for our case of an isolated singularity. $\mathcal{H}''$, understood as a sheaf over $S$, fits into the exact sequence (see \cite{bries})
\[
 0 \rightarrow f_*\Omega^n_{X/S}/d(f_*\Omega^{n-1}_{X/S}) \xrightarrow{df \wedge} \mathcal{H}'':=f_*\Omega^{n+1}_{X}/df\wedge d(f_*\Omega^{n-1}_{X/S}) \rightarrow  f_*\Omega^{n+1}_{X/S}  \rightarrow 0
\]
while its stalk at $s=0$ is isomorphic to $\mathcal{H}''_0=\Omega^{n+1}_{X,x}/df\wedge d(\Omega^{n-1}_{X/S,x})$ and, by the above sequence, $\mathcal{H}''$ coincides with $\mathcal{H}^n(f_*\Omega^\cdot_{X/S})$ outside of $0$.\\
Let now $\omega$ be a section of $\mathcal{H}''$ over a neighbourhood $S\subset \mathbb{C}$ around $s=0$ and consider this as a section of $\mathcal{H}^n(f_*\Omega^\cdot_{X/S})$ on $S'=S\setminus \{0\}$, those sections are called by Varchenko \cite{varchenko2} 'geometric sections'. Consider now over $S'$ the locally constant sheaf $\underline H^n=R^i f_{*}\mathbb{C}_{X'}$ and its dual $\underline H_n={\rm Hom}(\underline H^n, \mathbb{C})$ as the sheaf of homomorphisms from $\underline H^n$ to $\mathbb{C}$. For any $s \in S'$ we have $\underline H_n(s)\simeq H_n(X_s, \mathbb{C})$ and there is a natural isomorphism $T: \underline H^n(\gamma(0))\rightarrow \underline H^n(\gamma(1))$ for any smooth path $\gamma:[0,1]\rightarrow S'$ induced by the fibre bundle structure of $X'$. I.e. we obtain a morphism 
\[
M:\pi_1(S', s)\rightarrow {\rm Aut}(\underline H^n(s))
\]
whose evaluation $M(1)$ at the generator $1 \in \pi_1(S', s)$ we will call the monodromy $M$ of $f$. We can 'sheafify' these topological constructions and the result coincides with he Gauss-Manin connection restricted to $\mathcal{H}^p(f_*\Omega^\cdot_{X'/S'})$ described above. Dualizing the above topological notion of parallel transport to isomorphisms $T^*: \underline H_n(\gamma(0))\rightarrow \underline H_n(\gamma(1))$ for smooth paths $\gamma$ as above, we can consider a covariant constant (multivalued) section $\delta$ of $\underline H^n$ over $S'$. Let $s(\omega)$ be the section of $\underline H^n$ over $S'$ represented by $\omega$, then by a Theorem of Malgrange (\cite{malg2}) one has for the dual pairing of $\delta$ and $s(\omega)$ over $S'$:
\begin{theorem}\label{malgrangeth}
The series 
\begin{equation}
(s(\omega), \delta)(t)= \sum_{\alpha}\sum_{k=0}^n\frac{1}{k!}a_{k,\alpha}t^\alpha ({\rm ln}\ t)^k
\end{equation}
where $\alpha >-1$, $e^{-2\pi i\alpha}$ is an eigenvalue of $M$, converges in each sector $a< {\rm arg} t< b$ if $0<|t|$ is sufficiently small in $S'$.
\end{theorem}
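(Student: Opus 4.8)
# Proof Proposal for Theorem \ref{malgrangeth}

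The plan is to reduce the statement to the classical asymptotic expansion theorem for sections of Deligne's canonical extension, exploiting the fact that $f:X\to S$ is a good representative of an isolated hypersurface singularity, so that the cohomology bundle $R^n f_*\mathbb{C}_{X'}$ over $S'$ is a local system of finite rank $\mu$ with quasi-unipotent monodromy $M$. First I would recall that over $S'$ the pairing $(s(\omega),\delta)(t)$ is a holomorphic function of $t$ on the universal cover of $S'$ (equivalently, a multivalued holomorphic function on $S'$), because $\delta$ is flat for the Gauss--Manin connection and $s(\omega)$ is a holomorphic section of $\mathcal{H}^n(f_*\Omega^\cdot_{X'/S'})\otimes\mathcal{O}$, using Proposition \ref{deRham} to identify de Rham and topological pictures. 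The key structural input is that, by Lemma \ref{germs}, $\omega$ extends as a section of the Brieskorn lattice $\mathcal{H}''$ across $0$, hence $s(\omega)$ is a section of $\mathcal{H}^n(f_*\Omega^\cdot_{X/S})$ which by Brieskorn's regularity theorem has moderate growth at $0$, i.e. it lies in the Deligne extension after a bounded twist.

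The main steps, in order, would be: (1) Write $M = M_s M_u$ for the multiplicative Jordan decomposition into semisimple and unipotent parts, and on each generalized eigenspace for eigenvalue $\lambda = e^{-2\pi i\alpha}$ (with the normalization $\alpha > -1$ fixed by the choice of branch, exactly the normalization fixed by the asymptotic Hodge filtration referred to in Section \ref{relcohom}), introduce the multivalued flat frame $t^{\alpha}(\log t)^j/j!$; the unipotent part $M_u$ contributes the polynomial-in-$\log t$ factors up to degree $n$ since $N = \log M_u$ is nilpotent with $N^{n+1}=0$ on $H^n$ of an $(n)$-dimensional Milnor fibre. (2) Expand $(s(\omega),\delta)(t)$ in this flat frame: the coefficients become single-valued holomorphic functions on $S'$. (3) Invoke the moderate-growth / regular-singularity property of the Gauss--Manin connection on $\mathcal{H}''$ (Brieskorn \cite{bries}, Malgrange \cite{malg2}, already cited in the excerpt): this forces each such single-valued coefficient to be meromorphic at $0$, hence to have a convergent Laurent expansion, and moderate growth bounds the order of the pole, so only finitely many negative powers occur; combined with $\alpha>-1$ this yields the asserted series $\sum_\alpha\sum_{k=0}^n \frac{1}{k!}a_{k,\alpha}t^\alpha(\log t)^k$. (4) Finally, check convergence in sectors $a<\arg t<b$ for $|t|$ small: this is immediate once the coefficients are genuinely convergent Laurent/Taylor series, because on such a sector $\log t$ is single-valued and bounded by $C(1+|\log|t||)$, and the $t^\alpha$ are bounded, so the series converges absolutely and locally uniformly there.

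I expect the main obstacle to be step (3): making precise that the regular-singularity property of $\nabla$ on the Brieskorn lattice $\mathcal{H}''$ translates into meromorphy (with controlled pole order) of the expansion coefficients of the period pairing, rather than merely into a formal asymptotic expansion. The cleanest route is to use that $\mathcal{H}''_0$ is a free $\mathcal{O}_{S,0}$-module of rank $\mu$ on which $t\nabla_{\partial_t}$ acts with a connection matrix that is meromorphic (indeed, by Brieskorn's theorem, after saturating, one gets a logarithmic pole), so that $(s(\omega),\delta)$ satisfies a Fuchsian ODE of rank $\le \mu$ with regular singular point at $0$; the classical Frobenius theory for such ODEs then gives convergent (not merely formal) expansions of exactly the stated shape, with the exponents among the $\alpha$ with $e^{-2\pi i\alpha}\in\operatorname{spec}(M)$ and logarithm powers bounded by the size of the largest Jordan block, hence by $n$. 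One should also take care that the pairing with a flat multivalued cycle $\delta$ does not introduce extra branching beyond that of the flat frame — this follows since $\delta$ is covariantly constant, so all multivaluedness is carried by the frame. With these points addressed, assembling steps (1)--(4) gives the theorem.
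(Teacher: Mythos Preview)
The paper does not supply its own proof of this theorem: it is stated as a result of Malgrange with a bare citation to \cite{malg2}, and the text immediately continues with consequences (the expansion of $s(\omega)$ in terms of covariantly constant sections $A^\omega_{k,\alpha}$). There is therefore nothing in the paper to compare your argument against.

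That said, your outline is the standard route to Malgrange's theorem and is essentially correct: the Brieskorn lattice is a free $\mathcal{O}_{S,0}$-module on which the Gauss--Manin connection has a regular singularity (this is exactly what the paper records just above the theorem, citing \cite{loo} and \cite{bries}), so the period pairing $(s(\omega),\delta)(t)$ satisfies a Fuchsian ODE at $t=0$, and classical Frobenius theory then yields a convergent expansion in $t^\alpha(\log t)^k$ with exponents $\alpha$ determined by the eigenvalues of $M$ and $k\le n$ by nilpotency of $\log M_u$ on $H^n$. One small point worth tightening is your justification of the bound $\alpha>-1$: in your sketch you attribute it to ``moderate growth'' and a choice of branch, but the sharp lower bound $-1$ is a specific feature of the Brieskorn lattice $\mathcal{H}''$ (its position relative to the $V$-filtration), not just of regularity; you should invoke that directly rather than leave it as a normalization.
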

Furthermore, the coefficients $a_{k,\alpha}$ depend linearly on the section $\delta$, which implies (cf. Varchenko \cite{varchenko2}) there is a set of covariantly constant sections $A^\omega_{k,\alpha}(t)$ of $\underline H^n(t)$ over (a eventually smaller) $S'$ so that
\[
s(\omega)(t)=\sum_{\alpha}\sum_{k=0}^n\frac{1}{k!}A^\omega_{k,\alpha}(t)t^\alpha ({\rm ln}\ t)^k
\]
and by \cite{varchenko3} for any $t, k, \alpha$ the $A^\omega_{k,\alpha}(t)$ belong to the generalized eigenspace of $M$ associated to $e^{-2\pi i\alpha}$. Then one calls the weight $\alpha(\omega)$ of $\omega$ the number $\alpha(\omega):=\{{\rm  min}(\alpha)|{\rm at \ least \ one \ of\ the\ sections \ } A^\omega_{0,\alpha}(t),\dots, A^\omega_{n,\alpha}(t)\neq 0\}$. Then the {\it principal part} of $s(\omega)$ is defined as
\[
s_{max}(\omega)(t)=\sum_{k=0}^n\frac{1}{k!}A^\omega_{k,\alpha(\omega)}(t)t^{\alpha(\omega)} ({\rm ln}\ t)^k,
\]
and the prinicipal parts of geometric sections of one weight are linearly independent at all points $t\in S'$ if they are at one point $t$. Then one calls the Hodge filtration of each $\underline H^n(t)$ the sequence of subspaces $\{F^p\}$ in $\underline H^n(t)$ generated by the principal parts of all geometric sections $\omega$ of $f$, evaluated at $t$, so that $\alpha(\omega)\leq n-p$. Note that since $s(f\omega)=ts(\omega)$, we have $F^{p+1}\subset F^{p}$ and the $F^p$ form a subbundle of $\underline H^n$ (cf. \cite{varchenko3}). We can now define the {\it spectrum} of a singularity due to Varchenko \cite{varchenko2}:
\begin{Def}\label{spectrumdef}
Let the principal parts of sections $\omega^p_1, \dots, \omega^p_{j(p)} \in \mathcal{H}''$ be a basis of $F^p/F^{p+1}$, i.e. their weights satisfy $\alpha(\omega^p_j)\in (n-p-1,n-p]$. Then the union of all such weights $\alpha(\omega^p_j)$ for all geometric sections $\omega^p_j$ and $(p,j)$ satisfying the above is called the spectrum of $f$.
\end{Def}
Note that by \cite{varchenko3}, at each point $t \in S'$, $F^p$ is left invariant by the semismple part $M_s$ of $M$. So the spectrum of $f$ is just the union over all $p$ of the set of numbers $n-l_p(\lambda)$ being asscociated to each eigenvalue $\lambda$ of the action of $M_s$ on $F^p/F^{p+1}$ that satisfy $exp(2\pi i l_p(\lambda))=\lambda$ and the normalization condition $p\leq l_p(\lambda)< p+1$. It is an unordered collection of $\mu$ numbers, $\mu$ being the Milnor number of $f$. We now have the following celebrated theorem due to Varchenko \cite{varchenko2}.
\begin{theorem}\label{definvspec}
The spectrum of $f$ having an isolated singularity at the origin does not change under a deformation (depending holomorphically on the deformation parameters) of $f$ leaving its Milnor number unchanged (these deformations we will refer to as $\mu$-constant deformations).
\end{theorem}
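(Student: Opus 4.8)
The plan is to prove Theorem~\ref{definvspec} by a rigidity argument. One first shows that the spectrum of $f_s$ lies, for every deformation parameter $s$, in one and the same discrete subset of the divisor group $\Z^{(\Q)}$; it then suffices to rule out jumps of the multiplicities, and this follows from the flatness of the family of Brieskorn lattices together with their $V$-filtrations, which is where the $\mu$-constant hypothesis enters.

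First I would fix the set-up. A $\mu$-constant deformation is a holomorphic family $F\colon\mathcal X\to S\times T$ with $T$ a connected (poly)disk of parameters, such that each $F_s:=F(\cdot,s)$ has an isolated critical point at the origin with Milnor number $\mu$ independent of $s$; after shrinking we may assume, as in Section~\ref{relcohom}, that each $f_s\colon X_s\to S$ is a good Stein representative and that $F$ restricts to a locally trivial $C^\infty$ fibration over $S'\times T$. As $T$ is connected it is enough to show that $s\mapsto\operatorname{Sp}(f_s)$ is locally constant. For each $s$ one has the vanishing cohomology local system with fibres $H^n(X_{u,s},\C)$, its monodromy $M_s$, the Brieskorn lattice $\mathcal H''_s$, and the Hodge filtration $F^{\bullet}_s$ of Definition~\ref{spectrumdef}. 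By local triviality of $F$ over $S'\times T$ the operator $M_s$ is locally constant in $s$, so its eigenvalues $\lambda$ and multiplicities $\mu_\lambda$ are $s$-independent, and so is the weight filtration attached to the nilpotent part of $M_s$. Hence every spectral number of every $f_s$ lies in the fixed finite set
\[
A=\bigl\{\,\alpha\in\Q:\ e^{-2\pi i\alpha}\in\operatorname{spec}(M_0),\ p\le l_p(\alpha)<p+1\text{ for some }0\le p\le n\,\bigr\},
\]
so $s\mapsto\operatorname{Sp}(f_s)$ takes values in the discrete set of effective divisors of degree $\mu$ supported on $A$; it remains only to prove that the multiplicity of a given $\alpha\in A$ in $\operatorname{Sp}(f_s)$ does not jump.

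The core step reformulates this multiplicity Hodge-theoretically and proves its constancy. Writing $d_{p,\lambda}(s):=\dim\operatorname{gr}_{F_s}^{\,p}H^n(X_{u,s},\C)_\lambda$, Definition~\ref{spectrumdef} says that $\alpha=n-l_p(\lambda)$ occurs in $\operatorname{Sp}(f_s)$ with multiplicity $d_{p,\lambda}(s)$, while $\sum_p d_{p,\lambda}(s)=\mu_\lambda$ is constant; so the theorem is equivalent to the constancy of all the $d_{p,\lambda}$. Now carry out Lemma~\ref{germs} in the family $F$: coherence of $\mathcal H^n(F_*\Omega^{\cdot}_{\mathcal X/(S\times T)})$ together with $\mu$-constancy (Greuel, Sebastiani, Looijenga) forces, after shrinking, the sheaf $\mathcal H''$ to be $\mathcal O_T$-flat, and moreover the Kashiwara--Malgrange $V$-filtration on the Gauss--Manin system to extend to a filtration of $\mathcal H''$ by $\mathcal O_T$-flat submodules with $\mathcal O_T$-flat graded pieces. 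Passing to the $\mu$-dimensional quotient $\mathcal H''_{0,s}/t\,\mathcal H''_{0,s}\cong M(f_s)$ of Lemma~\ref{poly}, the induced $V$-filtration on this family of vector spaces of constant dimension then has locally constant graded dimensions; and by the Scherk--Steenbrink / M.~Saito description the spectrum $\operatorname{Sp}(f_s)$ is read off precisely from those graded dimensions (they equal the $d_{p,\lambda}(s)$). Hence the $d_{p,\lambda}$ are locally constant, $\operatorname{Sp}(f_s)$ is locally constant, and connectedness of $T$ yields the theorem. As a consistency check, in the quasihomogeneous case Lemma~\ref{poly} exhibits the spectral numbers as $l(\alpha(i))-1$ with $l(\alpha)=\sum_k(\alpha_k+1)w_k$, manifestly unchanged by $\mu$-constant deformations within the quasihomogeneous class.

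The main obstacle is the flatness assertion: that the $V$-filtration on the Brieskorn lattice behaves well across the whole $\mu$-constant family. This is exactly where $\mu$-constancy is indispensable — without it the lattice genuinely degenerates and spectral numbers jump — and it is the one point that does not reduce to the formal homological algebra of Section~\ref{relcohom}. I would establish it either structurally, deducing the $\mathcal O_T$-flatness of $\mathcal H''$ and the holomorphic variation of the $V$-decomposition from Sebastiani's finiteness theorem together with base change, or else along Varchenko's original analytic route: make the contours in the proof of Malgrange's Theorem~\ref{malgrangeth} depend holomorphically on $s$, obtain expansions $\bigl(s(\omega_i(s)),\delta_j(s)\bigr)(t)=\sum_{\alpha,k}\tfrac1{k!}\,a^{ij}_{k,\alpha}(s)\,t^{\alpha}(\ln t)^k$ with coefficients holomorphic in $s$, and infer that $\dim F^p_s(\lambda)$ is upper semicontinuous in $s$; combined with Steenbrink's symmetry $\alpha\leftrightarrow n-1-\alpha$ of the spectrum (which, the eigenvalue multiplicities being $s$-independent, makes $\dim F^p_s(\lambda)$ also lower semicontinuous) this forces $\dim F^p_s(\lambda)$, hence each $d_{p,\lambda}(s)$, to be continuous, and being integer-valued, locally constant.
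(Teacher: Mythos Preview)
The paper does not prove this statement at all: Theorem~\ref{definvspec} is stated as ``the following celebrated theorem due to Varchenko \cite{varchenko2}'' and is simply quoted from the literature, with no proof or even sketch given in Section~\ref{relcohom}. So there is no ``paper's own proof'' to compare against; the result is used as a black box (and in fact only tangentially, as background for Theorem~\ref{foureq}).

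That said, your outline is a reasonable summary of the standard arguments for Varchenko's theorem. The two routes you describe --- the Hodge-module/flatness approach via the $V$-filtration on the family of Brieskorn lattices (essentially M.~Saito's viewpoint), and Varchenko's original analytic semicontinuity argument combined with the Steenbrink symmetry $\alpha\leftrightarrow n-1-\alpha$ --- are exactly the two proofs in the literature. A couple of small points if you want to tighten it: the constancy of the monodromy operator $M_s$ in a $\mu$-constant family is itself nontrivial for $n=2$ (it uses L\^e--Ramanujam \cite{ramanujam}, which you should cite rather than just ``local triviality''); and in the flatness route, the genuinely hard input is not just that $\mathcal{H}''$ is $\mathcal{O}_T$-flat (which is essentially $\mu$-constancy) but that the $V$-filtration on it varies coherently, which is where the real work of Saito/Varchenko lies --- you correctly flag this as the main obstacle but should not present it as something routinely deducible from Sebastiani plus base change.
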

Note that the spectrum of an isolated holomorphic singularity $f:\mathbb{C}^{n+1}\rightarrow \mathbb{C}$ is a topological invariant for $n\leq 2$. However, as Saeki shows, that result remains true for $n=3$ if $f$ is quasihomogeneous, moreover we have (cf. \cite{saeki}, \cite{varchenko2}):
\begin{theorem}\label{foureq}
Let $f$ and $g$ be quasihomogeneous polynomials with an isolated singularity at the origin in $\mathbb{C}^{n+1}$ for $n\geq 1$. Then the following four are equivalent:
\begin{enumerate}
\item  $f$ and $g$ are connected by a $\mu$-constant deformation.
\item  $f$ and $g$ are connected by a topologically constant deformation.
\item $f$ and $g$ have the same weights.
\item $f$ and $g$ have the same spectrum.
\end{enumerate}
\end{theorem}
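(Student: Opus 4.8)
The plan is to deduce the four-fold equivalence from a short cycle of implications, isolating the one genuinely hard input — topological invariance of the spectrum (equivalently the weights) of a quasihomogeneous isolated singularity — from the soft ones. Concretely I would establish $(3)\Rightarrow(1)$, $(1)\Rightarrow(4)$ and $(4)\Rightarrow(3)$, which already shows that $(1)$, $(3)$, $(4)$ are mutually equivalent, and then close the loop through $(2)$ by proving $(1)\Rightarrow(2)$ and $(2)\Rightarrow(3)$.

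For $(3)\Rightarrow(1)$ I fix the common weight system $w=(w_0,\dots,w_n)$, $w_k=\beta_k/\beta$, and let $V_w\subset\C[z_0,\dots,z_n]$ be the finite-dimensional space of polynomials quasihomogeneous of weighted degree $\beta$. Since each $\partial h/\partial z_k$ is itself quasihomogeneous, the common zero locus of the partials is a cone, so ``$h$ has at most an isolated singularity at $0$'' cuts out a Zariski-open subset $U_w\subset V_w$: its complement is the image under projection of a subvariety of $V_w\times\mathbb{P}^n$, hence Zariski-closed. A weight equal to $1$ on a genuinely occurring variable is incompatible with an isolated singularity, so $U_w$ in fact consists of polynomials with an honest isolated singularity at the origin, and $f,g\in U_w$ already shows $U_w\neq\emptyset$. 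A nonempty Zariski-open subset of $V_w\cong\C^N$ is connected in the classical topology, so I may join $f$ to $g$ by a path — indeed by a one-parameter holomorphic family — inside $U_w$; along it the Milnor number is constant, by the Milnor--Orlik formula $\mu=\prod_k(1/w_k-1)$ which depends only on $w$. This is the required $\mu$-constant deformation.

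The implication $(1)\Rightarrow(4)$ is immediate from Theorem \ref{definvspec}. For $(4)\Rightarrow(3)$ I would use the explicit description of the spectrum of a quasihomogeneous singularity recalled in the Introduction: in terms of a monomial basis $(z^{\alpha(i)})$ of $M(f)$ one has $\gamma_i=l(\alpha(i))-1$ with $l(\alpha)=\sum_k(\alpha_k+1)w_k$, and summing the Poincar\'e series of the Koszul-resolved Milnor algebra gives
\begin{equation*}
\sum_{i=1}^{\mu} t^{\gamma_i+1}=\prod_{k=0}^{n}\frac{t-t^{w_k}}{t^{w_k}-1}.
\end{equation*}
The left-hand side is determined by the spectrum, and from the rational function on the right the multiset $\{w_k\}$ is recovered by an elementary (if slightly fiddly) computation — e.g. by examining, after the substitution $t=e^{s}$, the poles of the resulting meromorphic function on the imaginary $s$-axis — so equal spectra force equal weights. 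This settles the equivalence of $(1)$, $(3)$, $(4)$.

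It remains to splice in $(2)$. For $(1)\Rightarrow(2)$ I invoke the theorem of L\^e--Ramanujam: a $\mu$-constant family of isolated hypersurface singularities in $\C^{m}$ with $m\neq 3$ has constant embedded topological type (the Milnor fibrations are even diffeomorphic); here $m=n+1$, and the excluded case $n=2$ is likewise known. Finally $(2)\Rightarrow(3)$: the weights of a quasihomogeneous isolated singularity are a topological invariant — classical for $n\le 2$ (for $n=1$ via the Puiseux data of the link, for $n=2$ via the topological invariance of weights of surface singularities) and, for $n=3$, precisely Saeki's theorem \cite{saeki}, which rests on Varchenko's topological invariance of the spectrum \cite{varchenko2} together with the combinatorial step $(4)\Leftrightarrow(3)$ above. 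I expect this last step — extracting the weights from the topological type in dimension three — to be the substantive obstacle; all the other implications are either elementary (the connectedness argument for $(3)\Rightarrow(1)$ and the generating-function identity for $(4)\Rightarrow(3)$) or direct appeals to Theorem \ref{definvspec} and to L\^e--Ramanujam.
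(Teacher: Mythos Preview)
The paper does not supply a proof of this theorem; it is quoted as a known result with citations to \cite{saeki} and \cite{varchenko2}. So there is no ``paper's own proof'' to compare against, and your outline has to stand on its own.

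Your core cycle $(3)\Rightarrow(1)\Rightarrow(4)\Rightarrow(3)$ is sound: the Zariski-openness/connectedness argument, the appeal to Theorem~\ref{definvspec}, and the spectral generating-function identity are all standard and correct. The weakness is in how you splice in $(2)$. For $(2)\Rightarrow(3)$ you invoke topological invariance of the weights, but you justify this only for small $n$ (and there is an indexing slip: Saeki's result \cite{saeki} is about $\mathbb{C}^3$, i.e.\ $n=2$ in the present convention, not $n=3$); for $n\ge 4$ you give no argument, so as written the implication is incomplete. For $(1)\Rightarrow(2)$ you appeal to L\^e--Ramanujam \cite{ramanujam} and then assert that ``the excluded case $n=2$ is likewise known'' --- but for \emph{general} $\mu$-constant families of surface singularities this is precisely what is \emph{not} known (it borders on Zariski's multiplicity conjecture), so that step is a genuine gap.

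Both problems evaporate if you reroute the logic. First, replace $(2)\Rightarrow(3)$ by the trivial $(2)\Rightarrow(1)$: the Milnor number is a topological invariant of the singularity, so a topologically constant deformation is automatically $\mu$-constant. Second, for $(1)\Rightarrow(2)$ you do not need L\^e--Ramanujam at all once $(1)\Leftrightarrow(3)$ is in hand: the explicit deformation you built in $(3)\Rightarrow(1)$ stays inside $U_w$, i.e.\ runs through quasihomogeneous polynomials with the \emph{same} weights and isolated singularity, and along such a family the weighted $\mathbb{C}^*$-action lets one replace the germ Milnor fibration by the global one at a fixed radius, after which an Ehresmann-type argument over the parameter interval gives topological (indeed smooth) triviality, uniformly in $n$. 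With these two swaps your argument covers all $n\ge 1$ and the appeals to Saeki and to the $n=2$ case of L\^e--Ramanujam become unnecessary.
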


\section{Appendix A}\label{app1}
This Appendix explains briefly some facts about the 'perturbed' Milnor fibration $\hat Y$ as introduced by Seidel (\cite{seidel}). For a quasihomogeneous polynomial $f:\mathbb{C}^{n+1}\rightarrow \mathbb{C}$, define again the usual Milnor fibres $X_u$ by $X_u=f^{-1}(u) \cap B^{2n+2}$ where $B^{2n+2}\subset \mathbb{C}^{n+1}$ is the closed unit ball and $u \in \mathbb{C}\backslash\{0\}$. Furthermore, for a fixed $m\in  \mathbb{N}, m>2$ fix a cutoff function $\psi_m$ with $\psi_m(t^2)=1$ for $t \leq 1-2/m$ and $\psi(t^2)=0$ for $t \geq 1-1/m$. Set $F_u=\{z \in B^{2n+2}|f(z) = \psi_m(|z|^2)u\}$. Then set
\begin{equation}\label{milnorapp}
f: \hat Y\ = \ \bigcup_{|u|=\delta}\ F_u \times \{u\} \longrightarrow \delta S^1,
\end{equation}
Note that for any $n\in \mathbb{N}, n>2$, there is a $\delta>0$ sufficiently small, so that we can choose $m\leq n$ in the above definition and the $F_u$ remain regular. The following Lemma states the fact that there is a basis of $H_n(F;\mathbb{C})$ which consists of Lagrangian spheres, this is used in Section \ref{chapter2sympl}, the proof relies on a two-fold application of Moser's technique and classical results about vanishing cycles of Milnor fibres and will be sketched below, for details see Ebeling \cite{ebeling} and Seidel \cite{Seidel3}, note that the Lemma is valid for any isolated singularity $f:(\mathbb{C}^{n+1},0)\rightarrow (\mathbb{C},0)$ where $f$ is holomorphic.
\begin{lemma}\label{lagrangianbasis}
Let $\mu$ be the Milnor number of $F_u$. Then $(F_u;\omega)$ contains a collection of $\mu$ embedded Lagrangian $n$-spheres $i_j:S^n_j \hookrightarrow F$ so that setting $\delta_j:=[i_j(S^n_j)] \in H_n(F,\mathbb{Z})$, for $j\in \{1,\dots,\mu\}$, the 'vanishing cycles' $(\delta_1,\dots, \delta_\mu)$ form a basis of $H_n(F;\mathbb{Z})$. Furthermore setting $A^m_u:=\{z\in F_u | \psi_m(|z|^2)=1\}$ using the notation above (\ref{milnorapp}) we can find pairs $\delta>0, m>2$ so that $i_j(S^n_j)\subset A^m_u$ for all $j=1,\dots,\mu$.
\end{lemma}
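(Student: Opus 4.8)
\textbf{Proof plan for Lemma \ref{lagrangianbasis}.}

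The plan is to combine two classical ingredients: (a) the Picard--Lefschetz description of $H_n(F_u;\mathbb{Z})$ as generated by vanishing cycles arising from a Morsification of $f$, and (b) the fact that the vanishing cycles of a Morse (i.e.\ $A_1$) critical point can be realized as embedded Lagrangian spheres in the Milnor fibre. First I would replace $f$ by a small generic deformation $f_\epsilon = f + \epsilon g$ which is a Morsification, i.e.\ has exactly $\mu$ nondegenerate critical points $p_1,\dots,p_\mu$ with distinct critical values $c_1,\dots,c_\mu$, all lying in a small disk; this is possible since the Milnor number $\mu$ is preserved under such deformations and a generic perturbation splits the singularity into $\mu$ ordinary double points (see Ebeling \cite{ebeling}). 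Choosing a regular value $u$ and a system of paths from $u$ to the $c_j$ avoiding each other, the associated Lefschetz thimbles have boundary spheres $\delta_1,\dots,\delta_\mu \subset F_u$ that form a (distinguished) basis of $H_n(F_u;\mathbb{Z})$.

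The second step is to upgrade each $\delta_j$ to an embedded \emph{Lagrangian} sphere with respect to $\omega = \Omega_{\mathbb{C}^{n+1}}|TF_u$. Near a nondegenerate critical point $p_j$, after a holomorphic change of coordinates $f_\epsilon$ looks like $z_0^2 + \cdots + z_n^2$, and the local model of the Milnor fibre $\{\sum z_i^2 = c\}$ contains the standard Lagrangian sphere $\{z_i \in \sqrt{c}\cdot\mathbb{R}\}$, which represents the local vanishing cycle. The subtlety is that the ambient symplectic form on the true fibre $F_u$ is not literally the local Kähler model, so I would invoke Moser's argument (as in Seidel \cite{Seidel3}): the restriction of $\Omega_{\mathbb{C}^{n+1}}$ to a neighbourhood of $p_j$ in the fibre is, after a symplectomorphism isotopic to the identity, the standard form, so the standard Lagrangian sphere is carried to an embedded Lagrangian sphere in $F_u$ in the correct homology class. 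Doing this simultaneously for all $j$ (the local neighbourhoods being disjoint) yields the collection $i_j : S^n_j \hookrightarrow F_u$ with $[\,i_j(S^n_j)\,] = \delta_j$ a basis of $H_n(F_u;\mathbb{Z})$.

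The final step is the containment $i_j(S^n_j)\subset A^m_u$. Here one uses that the vanishing cycles concentrate near the critical points $p_j$, which for a sufficiently small generic perturbation lie close to $0 \in \mathbb{C}^{n+1}$, hence well inside the region $\{|z| \le 1 - 2/m\}$ where $\psi_m \equiv 1$; since the construction of the Lagrangian spheres is local near the $p_j$, their images stay in $A^m_u$ provided $\delta$ is chosen small enough relative to $m$. One also checks compatibility with the equivariant setup of (\ref{milnorapp}): choosing $m \le n$ and $\delta$ correspondingly small keeps all $F_u$ regular, which is needed so that $A^m_u$ and the fibres $X_u$ behave as stated. The main obstacle I expect is the careful bookkeeping in the Moser step --- ensuring the symplectic isotopies near different critical points can be performed with supports disjoint from each other and from a collar of $\partial F_u$, so that the resulting spheres are genuinely embedded, pairwise in general position, and contained in $A^m_u$ --- but this is exactly the content of the arguments in Ebeling \cite{ebeling} and Seidel \cite{Seidel3}, which I would cite rather than reproduce in full.
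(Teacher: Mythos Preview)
Your proposal is essentially correct and follows the same strategy as the paper: Morsify, realise vanishing cycles locally as the standard Lagrangian sphere in the $A_1$ model, and invoke Moser/Seidel to correct the symplectic form. Two points of comparison are worth noting.

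First, you are tacit about a step the paper makes explicit: the Lagrangian spheres are produced in a fibre of the \emph{Morsified} family $f_{\lambda t}$, not directly in $F_u$. The paper shows separately (again by Moser, using $H^2(F_u,\partial F_u;\mathbb{C})=0$) that the smooth fibres $\hat Y_{\lambda t,u}$ are symplectomorphic to $F_u$, and then uses symplectic parallel transport along a system of weakly distinguished paths to carry all $\mu$ local spheres into a single fibre before applying this identification. Your Lefschetz--thimble language encodes the parallel transport, but the passage back to the unperturbed $F_u$ deserves a sentence.

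Second, for the containment $i_j(S^n_j)\subset A^m_u$ the paper does \emph{not} argue directly from locality of the construction as you do; it instead appeals to Lemma~\ref{conditionsproof}, whose proof takes an arbitrary Lagrangian sphere $Q_x\subset M$ with $Q_x\cap\partial M=\emptyset$ and, by running the Liouville flow and symplectic parallel transport in an auxiliary family $\hat Y$ over a parameter interval $[m,s']$, finds $s'$ and a rescaled base point so that the transported sphere lands inside $\tilde A_{\phi(s')}(s')$. Your direct argument (critical points near $0$, Moser isotopies compactly supported away from a collar, then take $m$ large) is valid and arguably simpler for the existence statement, but the paper's route yields the sharper conclusion used elsewhere: one can push \emph{any} given Lagrangian sphere into $\tilde A_x(s)$ by adjusting $(x,s)$, not only the particular spheres produced by the Morsification.
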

\begin{proof}
Let $g_0=-1,g_1,\dots, g_{\mu-1}$ be representatives of a basis for the $\mathbb{C}$-vectorspace $\mathcal{O}_{\mathbb{C}^{n+1}}/{\rm grad}(f)\mathcal{O}_{\mathbb{C}^{n+1}}$ an consider the miniversal unfolding of $f$ as given by
\[
\begin{split}
F:(\mathbb{C}^{n+1}\times \mathbb{C}^\mu,0)&\rightarrow (\mathbb{C},0)\\
(z,u)&\mapsto f(z)+\sum_{j=0}^{\mu-1}g_j(z)u_j. 
\end{split}
\]
We choose a representative $F:B_\epsilon \times U\rightarrow \mathbb{C}$, where $U$ is of the form $U=\Delta \times T$, where $\Delta\subset \mathbb{C}$ is a disc of radius $\eta_1$ and $T\subset \mathbb{C}^{\mu-1}$ is an open ball of radius $\eta_2$ and $\epsilon$ is chosen so that for any $u \in \Delta \times T$, the set $ \mathcal{X}_u:=\{z \in B_\epsilon|F(z,u)=0\}$ intersects $S_\epsilon=\partial B_\epsilon$ transversally. With this, set $\mathcal{X}:=\{(z,u)\in B_\epsilon\times U|F(z,u)=0\}$ and $U_t=\Delta\times \{t\}$ 
\[
\begin{split}
p:\mathcal{X}\rightarrow \Delta\times T\\
(z,u)\mapsto u.
\end{split}
\]
Let $C$ denote the set of critical points of $p$ and $D=p(C)\subset U$ its image, then it is well-known  (see Ebeling \cite{ebeling}) that $D\subset \Delta\times T$ is an analytic hypersurface and the projection $\pi:\Delta\times T \rightarrow T$, restricted to $D$, is a finite branched covering, set $D_t= D\cap U_t$. Now it is a classical result (see \cite{ebeling}, Lemma 3.9), that the function 
\begin{equation}\label{morsification}
f_{\lambda t}:= F(\cdot,(0,\lambda t))\times \{\lambda t\}:B_\epsilon\rightarrow \Delta\times \{\lambda t\},
\end{equation}
is a Morse function for generic values of $t\in T$ and $\lambda \neq 0$ (namely, for $\mathbb{C}\times \{\lambda t\}$ intersecting $D$ in regular points transversally).
Now consider $F_{\lambda t,u}=\{z \in B_\epsilon|f(z) = \psi_m(|z|^2)(u-\sum_{j=1}^{\mu-1}g_j(z)u_j)\}$ for any $u\in \Delta$ and we choose $m>0$ so that all critical points lie in $A^m_u:=\{z\in F_u | \psi_m(|z|^2)=1\}$, this is possible since the set of critical points of $f_{\lambda t}$ are disjoint from the boundary of $B_\epsilon$.
Now set
\begin{equation}\label{perturbedmors}
f_{\lambda t}: \hat Y_{\lambda t}\ = \ \bigcup_{|u| \in \Delta}\ F_{\lambda t,u} \times \{u\} \longrightarrow \Delta,
\end{equation}
and observe that the smooth fibres $\hat Y_{\lambda t, u}$ are diffeomorphic to the fibres $F_u=\hat Y_{u}, u \neq 0$ of $\hat Y$. We now claim that they are also symplectomorphic.  For this, note first that $H^2(\hat Y_u,\partial \hat Y_u,\mathbb{C})=0$ as can be for instance seen by using the long exact sequence of the pair $(\hat Y_u;\partial \hat Y_u)$. Then fix one $\{\lambda t\}\in T$ as above with $t$ sufficiently small, so that with $0\leq \lambda'<\lambda$ one can choose a path $u$ in $U-D$ so that $u(\lambda') \in U_{\lambda' t} \setminus D_{\lambda 't}$ for any such $\lambda'$. Fix a set of diffeomorphisms $\phi_{\lambda'}:\hat Y_{u(0)}\rightarrow \hat Y_{\lambda' t,u(\lambda')}$ and consider the family of symplectic forms on $\hat Y_{u(0)}$
\[
\omega_{\lambda'}=\phi_{\lambda'}^*\omega_{\hat Y_{\lambda' t, u(\lambda')}}
\]
where the $\omega_{\hat Y_{\lambda_t, u(\lambda')}}$ are the symplectic forms on $\hat Y_{\lambda_t, u(\lambda')}$ induced by restriction. Then, by a version of Moser's argument, the vanishing of $H^2(\hat Y_u,\partial \hat Y_u,\mathbb{C})$ implies that there is a diffeomorphism $\psi_\lambda:\hat Y_{u(0)}\rightarrow \hat Y_{u(0)}$ so that $\psi_\lambda^*\omega_\lambda=\omega_{\hat Y_{u(0)}}$ which was the assertion.\\
We now fix a $\lambda \neq 0$ and a $t$ as above and show that each critical point $p_i, i=1,\dots,\lambda$ of $f_{\lambda t}$ gives rise to an embedded Lagrangian  sphere $S_i$ in a nearby fibre. We will work with the (singular) fibration $Y$ determined by (\ref{morsification}) and then embed the $S_n$ into the corresponding fibres in (\ref{perturbedmors}). Around any $p_i$, that is, on a small (closed) ball $B_{\epsilon_i}$ around $p_i$, $f_{\lambda t}$ can be written as
\begin{equation}\label{morsecoord}
f(z_0,\dots, z_n)=f(p_i)+z_0^2+\dots+z_n^2.
\end{equation}
Let $\eta_3>0$ so that $f(B_{\epsilon_i})\subset \Delta_{\eta_3}$, where $\Delta_{\eta_3}$ is a small (closed) disk around $s_i=f(p_i)$ and set $Y_u:=f^{-1}(u)\cap B_{\epsilon_i}$ for any $u \in\Delta_ {\eta_3}$. Then, assuming $s_i=0$, $Y_{\eta_3}$ has the the description
\[
Y_{\eta_3}=\{x+iy \in \mathbb{C}^{n+1}||x|^2+|y|^2\leq \epsilon_i^2, |x|^2-|y|^2=\eta_3, <x,y>=0\}
\]
writing $<,>$ for the Euclidean scalar product on $\mathbb{R}^{n+1}$. But this set is easily seen (see \cite{ebeling}, Lemma 5.2) to be diffeomorphic to the $1$-disk-bundle of $T^*S^n$, which has the description
\[
DS^n=\{u+iv\in \mathbb{C}^{n+1}||u|=1,|v|\leq1, <u,v>=0\},
\]
denote the diffeomorphism by $\psi_i:DS^n\rightarrow Y_{\eta_3}$. Assuming now $Y_{\eta_3}$ would carry the standard symplectic structure relative to the Morse coordinate system chosen in (\ref{morsecoord}) we could infer that $\psi_i$ is a symplectomorphism, since that fact is well-known (see McDuff/Salamon \cite{duff}), which means $\psi_i$ is a symplectic embedding of the zero section in $DS^n$, hence Lagrangian. This is in general not the case, however, as Seidel (\cite{Seidel3}) shows, there is an {\it exact} isotopy $\omega_t, t\in [0,1]$ of symplectic forms on $B_{\epsilon_i}$ so that for for some $ 0 <\tilde \epsilon_i \leq \epsilon_i$, the isotopy is constant on the boundary of $B_{\epsilon_i}$, and on $B_{\tilde \epsilon_i}$, $\omega_0$ coincides with the given one whereas $\omega_1$ is the standard Kaehler form 
\[
\omega_1=\sum_{i=0}^n z_i\wedge \overline z_i
\]
with respect to the coordinates used in (\ref{morsecoord}) on  $B_{\tilde \epsilon_i}$. Hence there is a self-diffeomorphism $\tilde \psi_i$ of $B_{\epsilon_i}$ so that $\tilde \psi_i^*\omega_1=\omega_0$, hence $\Psi:=\tilde \psi_i^{-1}\circ \psi_i:DS^n\rightarrow Y_{\eta_3}\cap B_{\epsilon_i}$ gives the desired Lagrangian embedding $i_i=\Psi_i|S^n:S^n \hookrightarrow Y_{\eta_3}$ for $i=1,\dots, \mu$.\\
We now consider the $S^n_j, j=1,\dots,\mu$ as embedded into a set of distinct fibres $\hat Y_{u_j}$ in $\hat Y_{\lambda t}$. Choose an arbitrary smooth fibre $\hat Y_{\lambda t,u}$ and use symplectic parallel transport in $\hat Y_{\lambda t}$ (which is well-defined) to parallel transport the $S^n_j$ along a set of 'weakly distinguished' paths (see \cite{ebeling}) into $Y_{\lambda t,u}$, giving a set of embedded Lagrangian spheres $\tilde S^n_j$ which by Theorem 5.6 of \cite{ebeling} and by the fact that $\hat Y_{\lambda t,u}$ is symplecticomorphic to $\hat Y_u$ induces the required basis $\delta_j:=[i_*(\tilde S^n_j)]\in H_n(\hat Y_{u},\mathbb{Z}),\ j=1,\dots,\mu$. Finally, the last assertion was proven in Lemma \ref{conditionsproof}.
\end{proof}

\end{document}